\DeclareMathAlphabet{\mathchanc}{OT1}{pzc}%
                                 {m}{it}
\newcommand{\bG}{\mathbb{G}}
\newcommand{\bN}{\mathbb{N}}
\newcommand{\bP}{\mathbb{P}}
\newcommand{\bQ}{\mathbb{Q}}
\newcommand{\bZ}{\mathbb{Z}}
\newcommand{\scr}{\mathcal}
\newcommand{\sB}{\scr{B}}
\newcommand{\sC}{\scr{C}}
\newcommand{\sF}{\scr{F}}
\newcommand{\sH}{\scr{H}}
\newcommand{\sI}{\scr{I}}
\newcommand{\sL}{\scr{L}}
\newcommand{\sM}{\scr{M}}
\newcommand{\sN}{\scr{N}}
\newcommand{\sO}{\scr{O}}
\newcommand{\sP}{\scr{P}}
\newcommand{\sU}{\scr{U}}
\newcommand{\sX}{\scr{X}}
\newcommand{\of}{\overline{f}}
\newcommand{\os}{\overline{s}}
\newcommand{\ot}{\overline{t}}
\newcommand{\ou}{\overline{u}}
\newcommand{\ox}{\overline{x}}
\newcommand{\oD}{\overline{D}}
\newcommand{\oE}{\overline{E}}
\newcommand{\oF}{\overline{F}}
\newcommand{\oL}{\overline{L}}
\newcommand{\oM}{\overline{M}}
\newcommand{\oT}{\overline{T}}
\newcommand{\oV}{\overline{V}}
\newcommand{\oX}{\overline{X}}
\newcommand{\oY}{\overline{Y}}
\newcommand{\oZ}{\overline{Z}}
\DeclareMathOperator{\Diff}{Diff}
\DeclareMathOperator{\Sym}{{Sym}}
\DeclareMathOperator{\Aut}{Aut}
\DeclareMathOperator{\Tr}{Tr}
\DeclareMathOperator{\codim}{codim}
\DeclareMathOperator{\Exc}{Exc}
\DeclareMathOperator{\Gr}{{Gr}}
\DeclareMathOperator{\Hom}{Hom}
\DeclareMathOperator{\id}{{id}}
\DeclareMathOperator{\im}{{im}}
\DeclareMathOperator{\Isom}{Isom}
\DeclareMathOperator{\GL}{{GL}}
\DeclareMathOperator{\Mat}{{Mat}}
\DeclareMathOperator{\Proj}{{Proj}}
\DeclareMathOperator{\red}{red}
\DeclareMathOperator{\reg}{reg}
\DeclareMathOperator{\Spec}{{Spec}}
\DeclareMathOperator{\coeff}{{coeff}}
\DeclareMathOperator{\supp}{{supp}}
\DeclareMathOperator{\Supp}{{Supp}}
\DeclareMathOperator{\sym}{{Sym}}
\newcommand{\factor}[2]{\left. \raise 2pt\hbox{\ensuremath{#1}} \right/
        \hskip -2pt\raise -2pt\hbox{\ensuremath{#2}}}
\renewcommand\subsection{
  \renewcommand{\sfdefault}{pag}
  \@startsection{subsection}%
  {2}{0pt}{.8\baselineskip}{.4\baselineskip}{\raggedright
    \sffamily\itshape\small\bfseries
  }}
\renewcommand\section{
  \renewcommand{\sfdefault}{phv}
  \@startsection{section} %
  {1}{0pt}{\baselineskip}{.8\baselineskip}{\centering
    \sffamily
    \scshape
    \bfseries
}}
\newcommand{\wt}{\widetilde}
\newcommand{\osM}{\overline{\scr{M}}}
\title{On the projectivity of the moduli space of stable surfaces in characteristic $p>5$}
\author{Zsolt Patakfalvi}
\address{
EPFL\\
SB MATHGEOM CAG  \\
MA B3 635 (B\^atiment MA) \\
Station 8 \\
CH-1015 Lausanne}
\email{zsolt.patakfalvi@epfl.ch}
\begin{document}

\maketitle

\begin{abstract}
We prove that every proper subspace of the moduli space of stable surfaces with fixed volume over an algebraically closed field of characteristic $p>5$ is projective. As a consequence we also deduce that the same moduli space is  projective over $\bZ[1/30]$ modulo two conjectural local properties of the moduli functor. 
\end{abstract}

\tableofcontents

\section{Introduction}

\subsection{Results}
\label{sec:results}

Stable varieties are the natural higher dimensional generalizations of stable curves (see \autoref{sec:history} for an account on the importance of stable moduli spaces,  on the history of their definitions and on the results on their existence). Unlike for stable curves, in  dimension at least $2$ the state of the construction of the coarse  moduli space $\oM_{n,v}$ of stable varieties of dimension $n$ and fixed volume $v>0$  largely depends on what generality one considers:
\begin{enumerate}
 \item In characteristic $0$, $\oM_{n,v}$ (end even its log-versions up to a little issue concerning the nilpotent structure) is known to exist \cite{Kollar_Shepher_Barron_Threefolds_and_deformations,Kollar_Projectivity_of_complete_moduli,Alexeev_Boundedness_and_K_2_for_log_surfaces,Viehweg_Quasi_projective_moduli,Hassett_Kovacs_Reflexive_pull_backs,Karu_Minimal_models_and_boundedness_of_stable_varieties,Abramovich_Hassett_Stable_varieties_with_a_twist,Kollar_Hulls_and_Husks,Kollar_Moduli_of_varieties_of_general_type,Kollar_Singularities_of_the_minimal_model_program,Fujino_Semi_positivity_theorems_for_moduli_problems,Hacon_McKernan_Xu_Boundedness_of_moduli_of_varieties_of_general_type,Kollar_Second_moduli_book,Kovacs_Patakfalvi_Projectivity_of_the_moduli_space_of_stable_log_varieties_and_subadditvity_of_log_Kodaira_dimension}.
 \item In other situations (mixed and positive equicharacteristic), a few steps of the construction are known in any dimensions (\cite{Kollar_Hulls_and_Husks} and \autoref{prop:finite_automorphism}), however the existence of the coarse moduli space is not known in any sense in arbitrary dimensions. 
 \item Nevertheless, in equicharacteristic $p>5$, the moduli $\oM_{2,v}$ of stable surfaces is known to exist as a separated algebraic space (\autoref{cor:Keel_Mori}). 
\end{enumerate}
The first result concerns the latter case:
\begin{theorem}
\label{thm:proper}
Let $v>0$ be a rational number, let $k$ be an algebraically closed field with $\mathrm{char}(k)=p>5$, and let $\oM_{2,v}$ be the coarse moduli space of the moduli stack of stable surfaces of volume $v$ (which is known to exist as a separated algebraic space of finite type over $k$). 
Then, every proper closed sub-algebraic space $\oM$ of $\oM_{2,v}$ is a projective scheme over $k$. 
 \end{theorem}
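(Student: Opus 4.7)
The plan is to follow Koll\'ar's general strategy for projectivity of moduli spaces: construct a Knudsen--Mumford/CM-type determinant line bundle on the moduli, and prove ampleness by exhibiting strict positivity on curves.

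First, I fix $m$ divisible enough that $\omega_{X/S}^{[m]}$ is invertible for every family of stable surfaces of volume $v$ arising in the moduli problem. Over the moduli stack one then has the determinant $\det f_* \omega_{X/S}^{[m]}$, and a high enough tensor power descends, using finiteness of automorphism groups of stable surfaces (\autoref{prop:finite_automorphism}) together with a standard Koll\'ar descent argument, to a line bundle $\Lambda$ on the coarse space $\oM_{2,v}$. Let $\Lambda_{\oM}$ denote its restriction to $\oM$. Since $\oM$ is a proper algebraic space of finite type over $k$, it suffices (by Koll\'ar's projectivity criterion for proper algebraic spaces carrying an ample line bundle) to show that $\Lambda_{\oM}$ is ample. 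A Nakai--Moishezon-type reduction then identifies this with proving that $\Lambda_{\oM}$ is nef and that $\deg(\Lambda_{\oM}|_{C}) > 0$ for every integral proper curve $C \subseteq \oM$.

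Given such a curve $C$, I pass to a finite surjective cover $\tilde C \to C$ by a smooth proper curve over which a genuine family $g \colon Y \to \tilde C$ of stable surfaces of volume $v$ exists, with moduli image equal to $C$. Since $C$ is a curve in $\oM$, the moduli map is non-constant, so the family is not isotrivial. Up to a positive multiple one has $\deg(\Lambda_{\oM}|_{C}) = \deg\det g_* \omega_{Y/\tilde C}^{[m]}$, so the task reduces to showing that this degree is non-negative in general (giving nefness) and strictly positive for every non-isotrivial family of stable surfaces over a smooth proper curve in characteristic $p>5$.

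This last step is the principal obstacle. In characteristic zero, nefness follows from Viehweg's weak positivity and strict positivity for non-isotrivial families from Koll\'ar's ampleness lemma; both rest ultimately on Kodaira-type vanishing and are unavailable in positive characteristic. The expected route, building on the author's earlier work, is a combination of: (i) a semi-positivity statement for $g_* \omega_{Y/\tilde C}^{[m]}$ in characteristic $p>5$, established via Frobenius-theoretic and $F$-singularity methods together with the good behavior of slc surface singularities in this range of characteristics (which also makes the full surface MMP available); and (ii) an ampleness-lemma substitute that rules out the determinant having degree zero when the family is genuinely varying in moduli. The entire weight of the proof lies in establishing these positivity statements in positive characteristic; once they are in hand, the projectivity of $\oM$ follows from the line of reasoning above.
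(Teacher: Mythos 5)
Your overall architecture matches the paper's: descend a power of $\det f_*\sO_X(qK_{X/T})$ from a finite scheme cover of the stack to the coarse space, and prove ampleness there using positivity of the pushforward in characteristic $p>5$. Deferring the two hard inputs is consistent with how the paper is organized (they are \autoref{thm:relative_canonical_pushforward_nef} and \autoref{thm:ampleness}, proven separately). However, there is a genuine error in your reduction step. You claim that it suffices to show $\Lambda_{\oM}$ is nef and has strictly positive degree on every integral proper curve $C\subseteq\oM$. This is \emph{not} an ampleness criterion: Nakai--Moishezon requires $\left(\Lambda_{\oM}\right)^{\dim V}\cdot V>0$ for \emph{every} irreducible closed subvariety $V$, i.e., for a nef line bundle, bigness of the restriction to every subvariety, not merely positivity on curves. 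Mumford's classical example (a divisor on a ruled surface over a curve of genus $\geq 2$ with positive degree on every curve but vanishing self-intersection) shows that strict nefness does not imply ampleness, so your curve-by-curve program cannot close the argument even granting both positivity inputs.

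The paper avoids this by routing everything through Koll\'ar's ampleness lemma (\autoref{prop:ampleness}): using the nefness of $f_*\sO_X(qK_{X/T})$ from \autoref{thm:relative_canonical_pushforward_nef} and the finiteness of the classifying map to a Grassmannian quotient (which encodes that the family genuinely varies, via the fact that the fibers are cut out by degree-$d$ forms), one obtains that the determinant is \emph{big} on every subvariety over which the isomorphism classes of fibers are finite; combined with nefness this gives ampleness by the correct form of Nakai--Moishezon. So your item (ii), the ``ampleness-lemma substitute,'' is not an optional strengthening that ``rules out degree zero on curves'' --- it is the essential mechanism that produces positivity of all top self-intersections, and it must be formulated at the level of subvarieties (bigness), not curves (degree). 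A secondary, fixable point: when you restrict to $\oM$ you should pull the universal family back to $\oT=\oM\times_{\oM_{2,v}}T$ for a finite scheme cover $T\to\sM$ (as in \autoref{thm:Artin_stack} and the Edidin--Hassett--Kresch--Vistoli theorem) so that all isomorphism classes of fibers are finite, which is the hypothesis needed for the ampleness (rather than merely bigness) conclusion of \autoref{thm:ampleness}.
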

We need a conditional statement for the mixed characteristic implication, as in that case even the algebraic space structure of the coarse moduli space is unknown. We state this below:
\begin{theorem}
\label{thm:proj}
Fix a rational number $v >0$. Let $\osM_{2,v}$ denote the moduli stack of stable surfaces of volume $v$  (see \autoref{def:stable_things}) and let $(I)$ and $(L)$  be the  properties of $\osM_{2,v}$ defined in \autoref{def:moduli_condition} (intuitively meaning:  existence of (L)imits, and (I)nversion of adjunction, where the latter is a deformation property of the singularities of stable varieties).
\begin{enumerate}
 \item  \label{itm:proj:ring} If $\osM_{2,v} \otimes_{\bZ} \bZ[1/30]$ satisfies (I) and (L), then $\osM_{2,v} \otimes_{\bZ} \bZ[1/30]$ admits a projective coarse moduli space over $\bZ[1/30]$.
\item \label{itm:proj:field} If $k$ is an algebraically closed field of  characteristic $p>5$, and   $\osM_{2,v} \otimes_{\bZ} k$ satisfies (L), then $\osM_{2,v} \otimes_{\bZ} k$ admits a projective coarse moduli space over $k$.

\end{enumerate}
\end{theorem}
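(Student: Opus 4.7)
The plan is to regard \autoref{thm:proj} as a corollary of \autoref{thm:proper}, combined with the existence/properness inputs that (I) and (L) are engineered to provide. Part~(2) should follow relatively quickly, while part~(1) requires building up the mixed-characteristic algebraic stack structure before the characteristic-$p$ projectivity result of \autoref{thm:proper} can be applied fiberwise.

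For part~(2), I would start from the fact that the coarse moduli space $\oM_{2,v}$ is already known to exist as a separated algebraic space of finite type over $k$ by \autoref{cor:Keel_Mori}. Property~(L) is by design a valuative-criterion statement: every stable family over the fraction field of a DVR $R$ extends uniquely to a stable family over $\Spec R$. Combined with finite type, this upgrades $\oM_{2,v}$ to a proper algebraic space over $k$. Then \autoref{thm:proper} applied with $\oM := \oM_{2,v}$ (which is a proper closed sub-algebraic space of itself) yields projectivity.

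For part~(1), I would proceed in three stages over $S := \Spec \bZ[1/30]$. First, use~(I) to establish that $\osM_{2,v} \otimes_{\bZ} \bZ[1/30]$ is an algebraic stack of finite type with finite diagonal: inversion of adjunction controls how semi-log-canonical singularities deform across mixed characteristic, which, combined with the hulls-and-husks construction of \cite{Kollar_Hulls_and_Husks} and \autoref{prop:finite_automorphism}, feeds into Artin's representability criteria. Applying Keel--Mori then yields a separated coarse moduli algebraic space $\oM^S_{2,v} \to S$. Second, invoke~(L) to upgrade separatedness to properness over $S$. Third, deduce projectivity by passing to fibers: each closed fiber of $\oM^S_{2,v} \to S$ has residue characteristic $p>5$, so it is projective by part~(2); the generic fiber is projective by the classical characteristic-zero theory \cite{Kollar_Projectivity_of_complete_moduli,Kovacs_Patakfalvi_Projectivity_of_the_moduli_space_of_stable_log_varieties_and_subadditvity_of_log_Kodaira_dimension}. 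Fiberwise projectivity together with a natural CM/Knudsen--Mumford line bundle on $\oM^S_{2,v}$, whose restriction to each geometric fiber is the ample class produced by the proof of \autoref{thm:proper}, then yields relative ampleness and hence projectivity over $S$.

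The main obstacle I expect is step one of part~(1): turning the conjectural property~(I) into a genuine algebraic stack structure over $\bZ[1/30]$ requires an Artin-style representability check across mixed characteristic, where the delicate interaction between semi-log-canonical singularities and their mod-$p$ reductions must be handled precisely by inversion of adjunction. By comparison, the fiberwise-to-global projectivity step is fairly routine once properness is in hand, because the CM line bundle gives a uniform ample candidate and \autoref{thm:proper} guarantees its fiberwise ampleness in every positive residue characteristic, so relative ampleness over $S$ follows from standard openness-of-ampleness arguments together with the characteristic-zero fiber.
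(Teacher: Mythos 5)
Your overall architecture (Artin stack structure from (I) via \autoref{thm:Artin_stack}, Keel--Mori for the coarse space, properness from (L), then exhibiting an ample line bundle) matches the paper, and your part~(2) is essentially the paper's argument: with (L) the coarse space is proper and \autoref{thm:proper} applied to the whole space gives projectivity. The genuine gap is in the last stage of part~(1). You claim the CM line bundle is a ``uniform ample candidate'' whose ``fiberwise ampleness in every positive residue characteristic'' is ``guaranteed by \autoref{thm:proper}''. It is not. What \autoref{thm:ampleness} (the engine behind \autoref{thm:proper}) gives on the fiber over a closed point $s \in \Spec \bZ[1/30]$ of residue characteristic $p$ is ampleness of $\det f_* \sO_X(qK_{X/T})|_{T_s}$ for $q$ \emph{divisible enough depending on $p$} --- the required divisibility cannot be bounded independently of the characteristic, because the Frobenius power in \autoref{thm:S_0_equals_H_0_relative} cannot be. The CM line bundle $\lambda_3$ is the leading coefficient of the Knudsen--Mumford expansion, not any of these determinants, and its ampleness on positive-characteristic fibers is proved nowhere in the paper (only over $\bQ$, by \cite{Patakfalvi_Xu_Ampleness_of_the_CM_line_bundle_on_the_moduli_space_of_canonically_polarized_varieties}). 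So you do not actually have a single line bundle known to be ample on every fiber, and the ``standard openness-of-ampleness argument'' has nothing to run on.

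The paper's resolution is precisely the device your sketch is missing: expand $\det f_* \sO_X(dmK_{X/T}) \cong \lambda_3^{{d \choose 3}} \otimes \lambda_2^{{d \choose 2}} \otimes \lambda_1^{d} \otimes \lambda_0$. Since $\lambda_3$ is the CM line bundle and is ample over the generic point of $\bZ[1/30]$ by \cite{Patakfalvi_Xu_Ampleness_of_the_CM_line_bundle_on_the_moduli_space_of_canonically_polarized_varieties}, openness of ampleness gives an open $U \subseteq \Spec \bZ[1/30]$ over which $\lambda_3$ is ample; over $U$ the coefficient ${d \choose 3}$ eventually dominates, so $\det f_* \sO_X(dmK_{X/T})$ is ample over $U$ for \emph{all} $d$ large, with no dependence on the residue characteristic. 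The finitely many closed points of the complement of $U$ are then handled individually by \autoref{thm:ampleness} (taking a common divisible-enough $q$) together with openness of ampleness. A secondary omission: to have a line bundle on the coarse space at all, one must descend a power of $\det f_* \sO_X(qK_{X/T})$ from the stack (the paper invokes \cite{Rydh_descent_question}); fiberwise projectivity of the coarse space by itself does not produce the relatively ample class you need.
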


The proofs of both \autoref{thm:proper} and \autoref{thm:proj} are presented in \autoref{sec:proj}. The two main  ingredients are \autoref{thm:ampleness}, shown in \autoref{sec:ample}, and  some folklore results about when $\osM_{2,v}$ admits a structure of a separated Artin stack of finite type with finite diagonal (see  \autoref{sec:stack}). 

\begin{theorem}
\label{thm:ampleness}
If  $ f : X \to T$ is a family of stable surfaces of maximal variation with a normal, projective base over an algebraically closed field $k$ of characteristic $p>5$, then for all divisible enough integer $r>0$, $\det f_* \sO_X(r K_{X/T})$ is a big line bundle. Here maximal variation means that general isomorphism classes of the fibers are finite. If all isomorphism classes of the fibers are finite, then $\det f_* \sO_X(r K_{X/T})$ is ample. 
\end{theorem}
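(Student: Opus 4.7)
The plan is to combine a weak positivity statement for the sheaf $\sE := f_*\sO_X(rK_{X/T})$ with an Ampleness Lemma of Koll\'ar type, following the template from characteristic $0$ (Viehweg, Koll\'ar, Viehweg--Zuo, Kov\'acs--Patakfalvi) adapted to characteristic $p>5$ for surfaces, where the MMP and its positivity consequences are available. Before either ingredient can be deployed, I would first check that, for $r$ divisible enough, the sheaf $\sE$ is locally free of positive rank and commutes with base change; this should come from standard cohomology-and-base-change arguments using that $rK_{X/T}$ is relatively Cartier and relatively ample on the stable fibers together with the vanishing of higher direct images for $r$ sufficiently divisible.

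The core positivity step is to show that $\sE$ is \emph{weakly positive} on $T$. In characteristic $p>5$ and relative dimension $2$, this should follow from positivity theorems for pushforwards of pluricanonical sheaves obtained via Frobenius/trace methods combined with the surface MMP in $p>5$. Because $T$ is normal and projective, I would reduce to the case where $T$ is a smooth projective curve by cutting with general very ample divisors while keeping the family stable, so that weak positivity on $T$ is deduced from positivity on sufficiently general complete intersection curves.

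With weak positivity in hand, the final step is an Ampleness Lemma. For $r$ divisible enough, the $r$-pluricanonical embedding recovers a stable surface up to isomorphism, so $\sE$ induces a classifying morphism $T \to \Gr/G$ into a quotient of a Grassmannian by a linear algebraic group, which factors the moduli map of the family. Maximal variation translates into this classifying map being quasi-finite on a dense open of $T$, and the stronger hypothesis (all isomorphism classes of fibers finite) into its being finite on the whole of $T$. The Ampleness Lemma then upgrades weak positivity plus (quasi-)finiteness of the classifying map into bigness, respectively ampleness, of $\det \sE$.

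The hard part will be establishing weak positivity in positive characteristic: the Hodge-theoretic methods of characteristic $0$ are unavailable, so one must instead work with Frobenius-trace positivity, the surface MMP in $p>5$, and careful control of the singularities of stable surfaces under base change (likely invoking Koll\'ar's hulls-and-husks formalism in characteristic $p$). It also has to be verified that the relevant pushforwards behave well under restriction to general complete intersections of $T$, which is delicate because the fibers are only stable, not smooth. By contrast, the Ampleness Lemma itself is essentially formal once weak positivity and (quasi-)finiteness of the classifying map are in place, and the passage from bigness to ampleness corresponds transparently to the strengthening of the hypothesis on isomorphism classes of fibers.
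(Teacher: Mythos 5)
Your overall architecture matches the paper's: reduce to a positivity statement for $\sE = f_*\sO_X(rK_{X/T})$ and then feed it into a Koll\'ar-type Ampleness Lemma via the classifying map induced by the surjection $\sym^d(f_*\sO_X(mK_{X/T})) \twoheadrightarrow f_*\sO_X(dmK_{X/T})$. The base-change/local-freeness preliminaries and the translation of maximal variation into (quasi-)finiteness of the classifying map are as in the paper (modulo small characteristic-$p$ details you omit, such as choosing $d$ with $p\nmid d$ so that the symmetric-power representation is semi-positive and the closure of the image of $G$ in $\bP(\Mat)$ is normal).

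However, there is a genuine gap in your positivity step, in two respects. First, \emph{weak positivity is not the right notion here}: the Ampleness Lemma as used (\autoref{prop:ampleness}) takes a \emph{nef} vector bundle $W$ as input, and the passage from bigness to ampleness goes through Nakai--Moishezon, i.e., one must restrict to \emph{every} irreducible subvariety of $T$ and still have positivity there. Your proposed reduction --- cutting $T$ with general very ample divisors and checking positivity on sufficiently general complete intersection curves --- only yields a generic statement and cannot establish nefness, which must be tested on arbitrary (possibly very special) curves mapping to $T$, over which the family can degenerate badly. The paper instead proves the full nefness of $f_*\sO_X(r(K_{X/T}+D))$ (\autoref{thm:relative_canonical_pushforward_nef}) by pulling back to an arbitrary finite cover of an arbitrary curve. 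Second, you defer the positivity to ``Frobenius/trace methods combined with the surface MMP in $p>5$,'' but the existing positive-characteristic semipositivity results require either $F$-regularity or $F$-purity with Cartier index prime to $p$, neither of which holds for general stable surfaces; closing exactly this gap is the main content of the paper. It requires first proving nefness of $K_{X/T}$ itself (\autoref{thm:relative_canonical_nef}, via dlt models and klt approximation), then building a Frobenius lifting theory for Cartier indices divisible by $p$ together with its relative version, and finally proving that the non-$F$-pure ideal of an slc surface singularity is reduced so that the failure of lifting is supported on disjoint sections where nefness of $K_{X/T}$ can be applied directly. Without these ingredients (or a substitute), the step ``$\sE$ is (weakly) positive'' is an assertion, not a proof, and the stated reduction to general complete-intersection curves would in any case not deliver the nefness the rest of the argument needs.
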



\autoref{thm:ampleness} is shown using the ``ampleness lemma method'', using \autoref{thm:relative_canonical_pushforward_nef}. The latter theorem,  stated below, is the hardest part of the article. The actual proof is given in \autoref{sec:downstairs}, but most of the material before it is building up for this proof.  

We also remark on the appearance of the boundary divisor $D$ in \autoref{thm:relative_canonical_pushforward_nef}. In fact, for the application to the above theorems, one does not need a boundary divisor. We still include it in the statement of \autoref{thm:relative_canonical_pushforward_nef}, as we obtain this generality almost freely  during the proof of the boundary free version. However, we are not able to use it to prove log-versions of the above theorems, as in the proof of the logarithmic  projectivity in \cite{Kovacs_Patakfalvi_Projectivity_of_the_moduli_space_of_stable_log_varieties_and_subadditvity_of_log_Kodaira_dimension}, arbitrary dimensional semi-positivity theorems were used in characteristic $0$ (notable the last 3 lines of \cite[page 995]{Kovacs_Patakfalvi_Projectivity_of_the_moduli_space_of_stable_log_varieties_and_subadditvity_of_log_Kodaira_dimension}). From this we are unfortunately very far in positive characteristic. We also remark, that the bound $\frac{5}{6}$ on the coefficients of $D$ appear in \autoref{thm:relative_canonical_nef}, as this is the largest log canonical threshold on surfaces, which is smaller than $1$. 

\begin{theorem}
\label{thm:relative_canonical_pushforward_nef}
Let $f : (X, D) \to T$ be a family of stable log-surfaces (see \autoref{def:stable_things}) over a proper, normal base scheme of finite type over an
algebraically closed field $k$ of characteristic $p>5$ such that the coefficients of $D$ are greater than $5/6$. 
Then for every divisible enough integer $r>0$, $f_* \sO_X(r(K_{X/T} + D))$ is a nef vector bundle. 
\end{theorem}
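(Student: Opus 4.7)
The plan is to test nefness on curves, reduce to a smooth projective base of dimension one, and then bypass the absence of Kodaira vanishing and of Koll\'ar--Viehweg semipositivity in positive characteristic by an iterated Frobenius argument combined with the already-established nefness of $K_{X/T}+D$.

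First, a vector bundle on a proper normal variety is nef if and only if its pullback to every smooth projective curve mapping to $T$ is nef. Using a Mehta--Ramanathan style general complete intersection argument on the normal base, we may reduce to the case that $T$ itself is a smooth projective curve. One needs $f_*\mathcal{O}_X(r(K_{X/T}+D))$ to commute with this base change, which holds for $r$ divisible enough: on each stable log-surface fiber $(X_t,D_t)$ in characteristic $p>5$, the line bundle $\mathcal{O}_{X_t}(r(K_{X_t}+D_t))$ is ample with vanishing higher cohomology by a Kodaira-type vanishing for stable surfaces that should already be available in the paper.

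So assume $T$ is a smooth projective curve, set $\mathcal{E}:=f_*\mathcal{O}_X(r(K_{X/T}+D))$, and try to show every quotient line bundle $\mathcal{L}$ of $\mathcal{E}$ has nonnegative degree. The main geometric input is the nefness of $K_{X/T}+D$ on $X$, proved earlier in the paper. In characteristic zero one would now invoke Koll\'ar's or Viehweg's semipositivity theorem, whose proofs rely essentially on Kodaira vanishing and hence fail in characteristic $p$. The replacement in characteristic $p>5$ should be an iterated Frobenius argument: assume for contradiction that some quotient line bundle $\mathcal{L}$ of $\mathcal{E}$ has negative degree, pull back along iterated Frobenius morphisms $F^e_T\colon T\to T$ so that the negative degree is amplified by a factor of $p^e$, and combine with the relative Frobenius trace $F^e_{X/T,*}\mathcal{O}_X(rp^e(K_{X/T}+D))\to\mathcal{O}_X(r(K_{X/T}+D))$. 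A Viehweg-style fiber product construction, together with the nefness of $K_{X/T}+D$, should convert the resulting negative-degree subsheaves into a contradiction with the fiberwise positivity of $r(K_{X/T}+D)$.

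The condition that the coefficients of $D$ exceed $5/6$ enters here precisely to make the relevant Frobenius trace surjective after twisting: on a surface in characteristic $p>5$ the log-canonical thresholds strictly less than one are at most $5/6$, so pushing the coefficients above this threshold keeps the pair $(X,D)$ in the sharply F-pure regime where the trace is not annihilated. The hard part will be the precise setup of this Frobenius-limit argument: one must control how the trace interacts with the log-boundary, handle the worst singularities of stable surfaces in characteristic $p>5$, and replace every characteristic-zero tool (Kodaira vanishing, Koll\'ar's theorem, semistable reduction) by a genuinely characteristic-$p$ substitute. This is presumably the reason the author warns that ``most of the material before it is building up for this proof.''
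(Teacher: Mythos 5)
Your high-level skeleton matches the paper's: reduce to a smooth projective curve base, amplify degrees of quotient line bundles by pulling back along iterated Frobenius of the base, and feed in the previously established nefness of $K_{X/T}+D$. But the core mechanism you propose --- a ``Viehweg-style fiber product construction'' turning a negative-degree quotient into a contradiction with fiberwise positivity --- is not a workable substitute, and it is not what the paper does. Fiber-product/weak-positivity arguments are exactly the characteristic-zero technology that breaks here. The paper instead argues section by section: for a quotient line bundle $\sL$ of $f_*\sO_X(r(K_{X/T}+D))$ it tries to \emph{lift} Frobenius-stable sections $S^0(X_t,\Delta_t;r(K_{X_t}+\Delta_t))$ from a fiber to the total space (\autoref{prop:S_0_surj}), which produces a nonzero section of $\sL\otimes\omega_T(2t)$; Frobenius base change then gives $\deg\sL^{p^e}\otimes\omega_T(2t)\ge 0$ for all $e$, hence $\deg\sL\ge 0$. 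Your proposal contains no mechanism for producing sections of $\sL$ at all.

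Two further gaps are essential. First, your claim that coefficients $>5/6$ put $(X,D)$ ``in the sharply $F$-pure regime where the trace is not annihilated'' is false: stable surfaces in characteristic $p>5$ need not be $F$-pure (supersingular simple elliptic singularities and the $(3,3,3)$, $(2,3,6)$, $(2,4,4)$, $(2,2,2,2)$ types survive), so $S^0$ can genuinely miss sections. The paper's substitute is \autoref{thm:reduced}: the non-$F$-pure ideal $\sigma(X_t,\Delta_t)$ is \emph{reduced} with zero-dimensional cosupport at log canonical centers. This yields the dichotomy that either the $\sigma$-sections detect $\sL$ (and the lifting argument applies), or $\sL$ is a quotient of the restriction of $\sO_X(r(K_{X/T}+D))$ to a union of \emph{disjoint sections} of $f$ --- and it is only in this second branch that the nefness of $K_{X/T}+D$ is actually used, as a degree computation on those sections. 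Second, the relative Frobenius trace you invoke does not exist off the shelf when the Cartier index of $K_{X/T}+D$ is divisible by $p$ (which cannot be excluded); the paper has to rebuild the whole $S^0$/$\sigma$ formalism with non-locally-free reflexive sheaves $\sO_X(\lceil(1-p^e)(K_X+\Delta)\rceil)$ and prove relative boundedness (\autoref{thm:S_0_equals_H_0_relative}) to make the lifting uniform over the base. Without these ingredients the argument does not close.
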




\subsection{Historical overview}
\label{sec:history}

Although it has been defined only about fifty years ago, nowadays the moduli space $\oM_g$ of stable curves is regarded as a classical object. Its existence is motivated by many applications, some of which are:
\begin{enumerate}
\item Mayer \cite{Mayer_Compactification_of_the_variety_of_moduli_of_curves} and Mumford \cite{Mumford_Further_comments_on_bondary_points} originally introduced it to build a compactification of the, by then constructed, moduli space $M_g$ of smooth curves of genus at least $2$. According to the best knowledge of the author of this article, the first application of $\oM_g$ was in the proof of the irreducibility of $M_g$ \cite{Deligne_Mumford_The_irreducibility_of_the_space_of_curves_of_given_genus}.
\item \label{itm:Knudsen_method} $\oM_g$ is also an indispensible tool in the alternative construction of the coarse moduli space $M_g$ for smooth curves pioneered by Knudsen in characteristic $0$ \cite{Knudsen_The_projectivity_of_the_moduli_space_of_stable_curves_II,Knudsen_The_projectivity_of_the_moduli_space_of_stable_curves_III} and completed by Koll\'ar over $\bZ$ \cite{Kollar_Projectivity_of_complete_moduli}. The idea of this approach is that since both $\sM_g$ and $\osM_g$ are DM-stacks, they admit coarse-moduli spaces $M_g$ and $\oM_g$, respectively, which are algebraic spaces (e.g., \cite{Keel_Mori_Quotients_by_groupoids,Conrad_The_Keel_Mori_theorem_via_stacks}). Furthermore, $M_g$ is open in  $\oM_g$. Hence it is enough to prove that $\oM_g$ is a scheme. However, since $\oM_g$ is proper, for that it is enough to exhibit an ample line bundle on $\oM_g$. 

The significance of this approach is that it is not known how to make Mumford's original Geometric Invariant Theory based approach work in higher dimensions or when one allows boundary divisors. Although it works in characteristic $0$ for canonical surfaces \cite{Gieseker_Global_moduli_for_surfaces_of_general_type} or canonically polarized manifolds \cite{Viehweg_Quasi_projective_moduli}, it does not work for stable varieties of dimension at least two and for  stable weighted pointed curves \cite{Shepherd_Barron_Degenerations_with_numerically_effective_canonical_divisor,Wang_Xu_Nonexistence_of_asymptotic_GIT_compactification} for some natural choices of polarizations on the Chow scheme. On the other hand Knudsen's approach was very succesfully applied to all these cases \cite{Kollar_Projectivity_of_complete_moduli,Fujino_Semi_positivity_theorems_for_moduli_problems,Hassett_Moduli_spaces_of_weighted_pointed_stable_curves,Kovacs_Patakfalvi_Projectivity_of_the_moduli_space_of_stable_log_varieties_and_subadditvity_of_log_Kodaira_dimension}, which will be explained 
further below.
\item Having a well-understood compactification $\oM_g$ of $M_g$ available also opens doors to applying the tools and results of the geometry of projective varieties. This is particularly spectacular, if the result itself is not about $\oM_g$, but $\oM_g$ is vital for the proof. One of the first such examples is the statement that $M_g$ is of general type if $g \geq 24$ \cite{Harris_Mumford_On_the_Kodaira_dimension_of_the_moduli_space_of_curves,Eisenbud_Harris_The_Kodaira_dimension_of_the_moduli_space_of_curves}. The proof  heavily uses intersection theory, hence a compactification is needed.

\item Having a compactification of $M_g$ is also indispensable for counting theories, such as Gromov-Witten theory. Yet again, the basic counting goals (i.e., counting curves with given conditions) have nothing to do with $\oM_g$, and $\oM_g$ comes into the picture as a technical tool, guaranteeing that certain degree cohomology classes can be integrated.

\item \label{itm:de_Jong} Another famous applications of $\oM_g$, in which case the result is not even about $M_g$, is De Jong's proof of the existence of alterations \cite{de_Jong_Families_of_curves_and_alterations}.

\end{enumerate}
The above list hopefully persuaded the reader about the necessity of a natural compactification of the moduli spaces $M_g$ of smooth curves. By similar reasons, one
would like to construct a compactification of the higher dimensional version of $M_g$. However, before discussing the compactification, let us start with what this higher dimensional variant of $M_g$ should be. It should classify smooth, projective varieties of general type in any dimension. However the appearance of small birational maps on threefolds leads to the non-separability of the naive functor of such moduli. Hence, birational varieties should be treated equivalent for moduli purposes. To make this idea technically feasible, one chooses a canonical representative of the birational equivalence classes, the canonical models. On the other hand there is a price for this, canonical models have (mild) singularities, called canonical singularities, and also they are not known to exist in many cases outside of the characteristic zero world: in dimension greater than $2$ in mixed characteristic or in equicharacteristic $2$, $3$ and $5$, and in dimension greater than $3$ in equicharacteristic greater than $5$.

Canonical singularities are much more manageable in dimension $2$, than in any higher dimensions. This opened door to Gieseker's construction of the moduli space $M_{2,v}$ of canonical surfaces with fixed volume $v>0$ in characteristic zero \cite{Gieseker_Global_moduli_for_surfaces_of_general_type}. As mentioned above, this is the exceptional case, the only case in higher dimensions where it was possible to push through the GIT method for constructing the moduli space of varieties of general type, or equivalently of canonical models. In fact, Mumford showed that asymptotically Chow stable varieties have singularities with bounded multiplicities \cite{Mumford_Stability_of_projective_varieties}, which does not hold for canonical singularities in dimension greater than $2$ (\cite[Thm 3.46]{Kollar_Singularities_of_the_minimal_model_program}, and an unpublished work of Brown and Reid \cite[3.48]{Kollar_Singularities_of_the_minimal_model_program}). Using \cite{Wang_Xu_Nonexistence_of_asymptotic_GIT_compactification}, this show that \cite{Gieseker_Global_moduli_for_surfaces_of_general_type} is  in a very precise sense the limit  of how far it is possible to push the GIT methods. 

Also, no other method has been found in dimensions higher than $2$ for the construction of the moduli space of canonical models in itself, without involving its compactification. This parallels the above application \autoref{itm:Knudsen_method} of $\oM_g$, and it is the main motivation of the introduction of the  moduli space $\oM_{n,v}$ of stable varieties of dimension $n$ and volume $v$. This then (partially conjecturally) yields a natural, functorial compactification of the moduli space $M_{n,v}$ of canonical models, which compactification specializes to $\oM_g$ in dimension $1$. The construction of $\oM_{n,v}$ is known in characteristic zero, except some issues with the infinitesimal structure in the logarithmic case
\cite{Kollar_Shepher_Barron_Threefolds_and_deformations,Kollar_Projectivity_of_complete_moduli,Alexeev_Boundedness_and_K_2_for_log_surfaces,Viehweg_Quasi_projective_moduli,Hassett_Kovacs_Reflexive_pull_backs,Karu_Minimal_models_and_boundedness_of_stable_varieties,Abramovich_Hassett_Stable_varieties_with_a_twist,Kollar_Hulls_and_Husks,Kollar_Moduli_of_varieties_of_general_type,Kollar_Singularities_of_the_minimal_model_program,Fujino_Semi_positivity_theorems_for_moduli_problems,Hacon_McKernan_Xu_Boundedness_of_moduli_of_varieties_of_general_type,Kollar_Second_moduli_book,Kovacs_Patakfalvi_Projectivity_of_the_moduli_space_of_stable_log_varieties_and_subadditvity_of_log_Kodaira_dimension}.

However, as have seen in the above application \autoref{itm:de_Jong} of $\oM_g$, the construction of $\oM_{n,v}$ over $\bZ$ could lead to very important applications in the future,
for example in the arithmetic direction. For example, it would likely allow one to remove the assumption on the existence of a semi-stable reduction from  many arithmetic statements, or provide new invariants for varieties of general type over number fields. 

Nevertheless,  not much is known about the general construction of $\oM_{n,v}$ over $\bZ$ in dimensions higher than $1$, apart from 2 exceptions: Koll\'ar's base-change condition on reflexive powers of the relative canonical sheaves is known to be locally closed \cite{Kollar_Hulls_and_Husks}, and that the the functor is bounded in dimension $2$ \cite{Alexeev_Boundedness_and_K_2_for_log_surfaces,Hacon_Kovacs_On_the_boundedness_of_SLC_surfaces_of_general_type}. In particular, the cases of dimension higher than $2$ seems to be out of reach at this point, hence we focus in this article on dimension $2$. 

We also note that in another direction, there are special components of $\oM_{n,v}$ that  are constructed over $\bZ$, some of them even in arbitrary dimensions \cite{Alexeev_Complete_moduli_in_the_presence_of_semiabelian_group_action,Alexeev_Pardini_Explicit_compactifications_of_moduli_spaces_of_Campedelli_and_Burniat_surfaces}.
These constructions exploit underlying combinatorial structures of the given situations. However,  our goal is the general construction of $\oM_{2,v}$ over $\bZ$, where such structures do not exist, and hence we are not able to follow such methods. 

\subsection{Basic definiton}

Now, we present the precise definitions of the notions used in the  theorems of \autoref{sec:results}. We start by defining stable (log-)varieties and their families.

\begin{definition}
\label{def:stable_things}
$(X,D)$ is a \emph{ stable log-variety} if 
\begin{enumerate}
 \item $X$ is projective and connected over an algebraically closed field,
\item $(X,D)$ has semi-log canonical singularities (see the 1st paragraph of \autoref{sec:basic_definitions} for the definition), and
\item $K_X +D $ is ample.
\end{enumerate}
A \emph{ stable variety} is a stable log-variety with $D=0$, and a \emph{ stable (log-)surface} is a stable (log-)variety of dimension $2$. 

\emph{The moduli (pseudo-)functor $\osM_{2,v}$ of stable surfaces of volume $v$} (over the category $\mathfrak{Sch}_{\bZ}$ of Noetherian schemes) is then as follows, where $v>0$ is a rational number:
\begin{equation}
    \label{eq:functor}
    \osM_{2,v} (T) = 
    \left\{ \raisebox{2em}{\xymatrix{ X \ar[d]_f \\ T }} \left| \parbox{26em}{
          \begin{enumerate}
          \item $f$ is a flat morphism,
          \item $\left( \omega_{X/T}^{[m]}\right)_S \cong \omega_{X_S/S}^{[m]}$ for every base change $S \to T$ and every integer $m$, and
          \item for each geometric point $\ot \in T$, $X_{\ot}$ is a stable surface, such that  $K_{X_{\ot}}^2 = v$.
          \end{enumerate}} \right.  \right\},
\end{equation}
In the pseudo-functor point of view one turns the above set into a groupoid by adding isomorphisms over $T$. Or, if viewed as a category over $\mathfrak{Sch}_{\bZ}$, then one includes Cartesian arrows over $\mathfrak{Sch}_{\bZ}$.

A \emph{family of stable (log-)varieties over a Noetherian, normal base} $T$ is a pair $f : (X, D) \to T$, where $X$ is a flat scheme over $T$ with geometrically demi-normal fibers \cite[5.1]{Kollar_Singularities_of_the_minimal_model_program}, $D$ is a Weil divisor, avoiding the generic and the singular codimension $1$ points of the fibers of $f$ (which is necessary to be able to restrict $D$ on the fibers), $K_{X/T} + D$ is $\bQ$-Cartier and  $(X_{\ot},D_{\ot})$ is a stable log-variety for each geometric point $\ot \in T$.

\end{definition}

\begin{remark}
The above two definitions of stable families are compatible in the following sense: if $T$ is normal and $f : X \to T  \in \osM_{2,v}(T)$, then $f$ is also a family of stable (log-)varieties (with empty boundary), according to \autoref{lem:stable_defs_compatible}.

On the other hand, these two definitions are not compatible in the backwards direction. That is, if $f : X \to T$ is a family of stable varieties, $f: X \to T$ does not have to be in $\osM_{2,v}(T)$. This has been known in positive characteristic for a while by an example of Koll\'ar \cite[Ex 14.7]{Hacon_Kovacs_Classification_of_higher_dimensional_algebraic_varieties} and more recently in characteristic zero by Altmann and Koll\'ar  \cite{Altmann_Kollar_The_dualizing_sheaf_on_first-order_deformations_of_toric_surface___singularities}. Then, maybe it is surprising that if $T$ is reduced, in characteristic zero it is still true that $f : X \to T \in \osM_{2,v}(T)$ (combine \cite[Cor 25]{Kollar_Hulls_and_Husks} and \cite[4.4]{Kollar_Second_moduli_book}). This latter result, over reduced bases, is not known to hold in positive or mixed characteristic. 

\end{remark}

\begin{remark}
In \autoref{def:stable_things}, requirement $(2)$ in the definition of the functor guarantees for example that $t \mapsto K_{X_{\ot}}^2 $ is constant for connected $T$. In particular, requirement $(3)$ then  choses the correct connected components of the functor defined by the previous two points, but otherwise it  imposes no further conditions. Note that without requirement $(2)$ the above constancy would fail \cite[14.A]{Hacon_Kovacs_Classification_of_higher_dimensional_algebraic_varieties}.
\end{remark}

\begin{remark}
The above defined  pseudo-functor $\osM_{2,v}$ is automatically a stack by the canonically polarized assumption. That is, according to \autoref{lem:stable_defs_compatible}, there is an integer $m>0$ such that $\omega_{X/T}^{[m]}$ is a line bundle for all $f : X \to T \in \osM_{2,v}$. Furthermore, by definition this line bundle is ample on the fibers, and then by the openness of amplitude \cite[Cor 9.6.4]{Grothendieck_Elements_de_geometrie_algebrique_IV_III}, $\omega_{X/T}^{[m]}$ is ample over $T$. Then, descent theory tells us that $\osM_{2,v}$ is in fact a stack \cite[Appendix A, in particular A.18]{Behrend_Conrad_Edidin_Fulton_Fantechi_Gottsche_Kresch_Algebraic_stacks}.
\end{remark}

Having defined stable (log-)varieties and their families, we define precisely the conditions used in the statement of \autoref{thm:proj}. The first one requires that for certain small deformations (that is, for the ones having $\bQ$-Cartier relative canonical), the singularities of stable varieties deform. The second one requires that at least one stable limit exists.

\begin{definition}
\label{def:moduli_condition}
Let $v > 0$ be a rational number. 
Let $S$ be a base-scheme, which is either an open set of $\Spec \bZ$ or an algebraically closed field $k$ of characteristic $p>0$. 
\begin{enumerate}
 \item We say that  (I) is known for $\osM_{2,v} \otimes_{\bZ} S$, if whenever we are given:
\begin{enumerate}
\item an affine, normal,  $1$-dimensional scheme $T$ of finite type over $S$,
\item a flat, projective morphism of finite type $f : X \to T$ with geometrically demi-normal surface fibers, such that $K_{X/T}$ is $\bQ$-Cartier,  and
\item a closed point $t \in T$ such that $X_{\ot}$ is a stable surface of volume $v$,
\end{enumerate}
then $X$ is semi-log canonical in a neighborhood of $X_t$. 
\item We say that (L) is known for $\osM_{2,v} \otimes_{\bZ} S$, if whenever 
we are given:
\begin{enumerate}
\item an affine, normal,  $1$-dimensional scheme $T$ of finite type over $S$, 
\item a fixed closed point $t \in T$, for which we set $T^0:=T \setminus \{t\}$, and
\item $f^0 : X^0 \to T^0 \in \osM_{2,v}(T^0)$, then
\end{enumerate}
then there is an $f : X \to T \in \osM_{2,v}(T)$ extending $f$. 
\end{enumerate}

\end{definition}

\begin{remark}
(I) is equivalent to requiring the following: if $\left(\oX,\oD\right)$ is the normalization of $X$ as in \autoref{sec:normalization} (with setting $D=0$), then $\left(\oX, \oD\right)$ is log canonical in a neighborhood of the fiber over $t$.
\end{remark}

\begin{remark}
We note that condition $(I)$ would follow for all $v>0$ if we would know (the log canonical) inversion of adjunction on $3$-folds over $S$, which is probably how it will be shown eventually. Also, this would yield the stronger statement that not only $X$ is semi-log canonical in a neighborhood of $X_t$ but so is $(X,X_t)$. 
\end{remark}

\subsection{Outline of the proof and organization of the article}
\label{sec:outline}

The proof has multiple parts. Logically first, but content-wise at the end of the paper, is the reduction of \autoref{thm:proj} to \autoref{thm:ampleness}. In particular, this reduces the question to positive equicharacteristic. To perform the above reduction, first we  show that $\osM_{2,v}$ is a separated Artin stack of finite type over the base, in the situation of \autoref{thm:proj}. This is mostly folklore, and can be found in \autoref{sec:stack}. Then, the main goal is to exhibit a relatively ample line bundle on $T$ where $f : X \to T \in \osM_{2,v}(T)$ is a family mapping finitely to $\osM_{2,v}$ (\autoref{sec:ample}). 
Now, in equicharacteristic, so in the case of \autoref{thm:proj}.\autoref{itm:proj:field}, this is exactly \autoref{thm:ampleness}. However, over $\bZ[1/30]$ there is a subtle issue here: in \autoref{thm:ampleness} we are not able to bound the $q$'s for which the statement works (in fact, this would mean a similar bound in \autoref{thm:relative_canonical_pushforward_nef}, which eventually would mean bounding the power in \autoref{thm:S_0_equals_H_0_relative} independently of the characteristic, something that we are not able to do). So, we have to guarantee that $\det f_* \sO_X(q K_{X/T})$ is ample over a fixed open set of $\bZ[1/30]$ for every divisible enough $q$. We know that this sheaf is ample over $\bQ$ for each divisible enough $q$ \cite{Fujino_Semi_positivity_theorems_for_moduli_problems}, which would yield such an open set, but depending on $q$, something that would not be enough for us. The solution is to use the result of Xu and the author saying that the limits of these line bundles, which is called the CM line bundle, is also ample over $\bQ$ \cite{Patakfalvi_Xu_Ampleness_of_the_CM_line_bundle_on_the_moduli_space_of_canonically_polarized_varieties}. Then there is an open set of $\bZ[1/30]$ over which this limit is ample. Over this open set then one can show that $\det f_* \sO_X(q K_{X/T})$ is also ample for $q$ divisible enough (\autoref{sec:proj}). This is the end of the outline for the reduction to \autoref{thm:ampleness}.

Second, we reduce \autoref{thm:ampleness} to \autoref{thm:relative_canonical_pushforward_nef}, using the method of the ampleness lemma, due to Koll\'ar \cite{Kollar_Projectivity_of_complete_moduli}. This method constructs a space dominating $T$ and a map from this space to a Grassmanian, so that there is a connection between $\det f_* \sO_X(q K_{X/T})$ and the hyperplane section of the Pl\"ucker embedding of the Grassmanian. Then, the ampleness of the latter can be used to prove the ampleness of the former, provided that $f_* \sO_X(q K_{X/T})$ is nef. Here, we use a minuscule modification of the previous two appearances of the ampleness lemma \cite[3.9]{Kollar_Projectivity_of_complete_moduli} \cite[Thm 5.1]{Kovacs_Patakfalvi_Projectivity_of_the_moduli_space_of_stable_log_varieties_and_subadditvity_of_log_Kodaira_dimension}, which is worked out in \autoref{sec:ample}. This is the end of the outline of the reduction to \autoref{thm:relative_canonical_pushforward_nef}.

So, the rest of the explanation is about the proof of \autoref{thm:relative_canonical_pushforward_nef}. For the ease of notation we assume that $D=0$ in the statement of \autoref{thm:relative_canonical_pushforward_nef}. Then we have to prove that $f_* \sO_X(q K_{X/T})$ is nef for every $q$ divisible enough over a field of characteristic $p>5$.  The natural idea here would be to use the results of the author's previous article \cite{Patakfalvi_Semi_positivity_in_positive_characteristics}. However, those results assume either the singularities to be $F$-regular, or the singularities to be $F$-pure and the Cartier index to be prime to $p$. Both of these are too special for our case. 

One can try to deform our situation to that of   \cite{Patakfalvi_Semi_positivity_in_positive_characteristics}. However, pushforward sheaves behave badly with respect to deformation. On the other hand the nefness of $K_{X/T}$ behaves well. So, the most we can squeeze out of \cite{Patakfalvi_Semi_positivity_in_positive_characteristics} is proving that $K_{X/T}$ itself (without pushforwards) is nef (\autoref{thm:relative_canonical_nef}). The method of reduction to \cite{Patakfalvi_Semi_positivity_in_positive_characteristics} is somewhat standard MMP technology, that is, we pass to a dlt model, and then we approximate the dlt model by a klt pair, for which \cite[Thm 3.16]{Patakfalvi_Semi_positivity_in_positive_characteristics} can be applied. During the process we need to do construct some of the standard MMP tools in dimension $3$ in  characteristic $p>5$, such as inversion of adjunction for log canonical pairs and dlt models (\autoref{sec:MMP}). However, there is one aspect of the argument specific to positive characteristic: to make sure that a resolution also yields a resolution of general fibers, we need to pass to a finite base change, which is worked out in \autoref{sec:finite_base_change}.

Hence, we are left to prove \autoref{thm:relative_canonical_pushforward_nef}, knowing already that $K_{X/T}$ is nef. This puts us in better shape to apply Frobenius lifting theory \cite{Schwede_A_canonical_linear_system}, on which \cite{Patakfalvi_Semi_positivity_in_positive_characteristics} is also based. However, the above restriction on the results of \cite{Patakfalvi_Semi_positivity_in_positive_characteristics} were exactly caused by the peculiarities of this lifting theory. In the present article, we really need at least a theory that works for $F$-pure singularities and arbitrary indices. There seems to be no way of easily hacking the theory of \cite{Schwede_A_canonical_linear_system} to obtain this (on which experts agree). Hence, we choose the hard way, we develop a modified theory of Frobenius lifting that allows also indices divisible by $p$ (\autoref{sec:Frobenius_stable}). The main idea is the following: instead of restricting in the definition of non-$F$-pure ideal to divisible enough $e$ for which the sheaves of the form $\sO_X ((1-p^e)(K_X+\Delta))$ become line bundles if the Cartier index is not divisible by $p$ (the approach of \cite{Schwede_A_canonical_linear_system}), we look at all values of $e$ and allow ourselves to operate with also non-locally free rank $1$ reflexive sheaves. Of course, there is a price to pay for this approach: the sheaves of the form $\sO_X ((1-p^e)(K_X+\Delta))$ must restrict well to our log canonical centers (\autoref{prop:S_0_surj}). As the log canonical centers in our applications are  general fibers, this is a reasonable approach for us  (\autoref{thm:Kollar}). 

The above explained modified Frobenius lifting theory then yields new versions of the non-$F$-pure ideal and the Frobenius stable space of sections, which agrees with the traditional one in the case when the Cartier index is not divisible by $p$. Furthermore, our non-$F$-pure ideal is trivial if and only if the singularity is $F$-pure (including the case of index divisible by $p$). However, as we use later this to lift sections form fibers, and we want to do this in a bounded way over each curve of the base, we need a boundedness result on the above invariants of the fibers. For this we also have to develop the relative versions of the non-$F$-pure ideal and the Frobenius stable space of sections, mimicking the approach of \cite{Patakfalvi_Schwede_Zhang_F_singularities_in_families} in the prime-to-p index case (\autoref{sec:relative_non_F_pure} and \autoref{sec:relative_boundedness}).  

When all the above developments about Frobenius lifting in the index divisible by $p$ case are in place, it turns out that we are not quite able to prove that $f_* \sO_X( qK_{X/T})$ is nef, even using the previously proven nefness of $K_{X/T}$. Instead, we are roughly able to prove that $ f_* (\sigma(X/T) \otimes \sO_X ( qK_{X/T}))$ is nef, where $\sigma(X/T)$ is the previously mentioned relative non-$F$-pure ideal. In fact, the only reason we write roughly because $\sigma(X/T)$ lives on some Frobenius base-change of the family, so $f$ in this formula should be replaced with this base-change. Nevertheless we have two things to do. First, show that $ f_* (\sigma(X/T)\otimes \sO_X ( qK_{X/T}))$ is a quite ``big'' subsheaf of $ f_* \sO_X( qK_{X/T})$, and that the cokernel $\sC$ of this embedding is also nef. For this, we show that the $\sigma(X/T)$ is reduced for every semi-log canonical surface singularity (\autoref{sec:sigma_X}), and then that this implies that one can assume that the cokernel of $\sigma(X/T) \hookrightarrow \sO_X$ is the structure sheaf $\sO_Z$ of the union of disjoint sections. Then the already shown nefness of $K_{X/T}$ shows that that  $\sC=f_* (\sO_Z(q K_{X/T}))$ is also nef, as it is isomorphic to the direct sum of the line bundles obtained by restricting $\sO_X(qK_{X/T})$ to the components of $Z$ (\autoref{sec: argument}).

\subsection{Acknowledgement}

The author is grateful to David Rydh for answering his question about descending line bundles from Artin stacks with finite stabilizers to their coarse moduli spaces \cite{Rydh_descent_question}.  The author is similarly grateful to Karl Schwede for discussions about Frobenius singularities with Cartier index divisible by $p$. Furthermore, the author would like to thank J\'anos Koll\'ar for reading a preliminary version of the article as well as explaining an old and the current proof of  \autoref{prop:finite_automorphism}.

\section{Notation and auxiliary lemmas}
\label{sec:basic_definitions}

A \emph{variety} is a reduced, equidimensional, connected, separable scheme of finite type over a field $k$. That is, we allow a variety to have multiple irreducible components of the same dimension and also a non-closed ground field, contrary to the definition of \cite{Hartshorne_Algebraic_geometry}. A \emph{big open set} $U$ of a variety $X$ is a dense open set such that $\codim_X X \setminus U \geq 2$.  If $f : X \to T$ is a flat family  of varieties, a \emph{relatively big open set} $U \subset X$ is a dense open set  such that $\codim_{X_t} (X \setminus U)_t \geq 2$ for every $t \in T$. A \emph{pair} $(X,\Delta)$ is a 
an $S_2$, $G_1$ variety, with an effective $\bQ$-divisor $\Delta$ on it such that $\Delta$ avoids the singular codimension $1$ points of $X$. We denote by $X_{\reg}$ the regular locus of $X$. A \emph{perfect point} $x$ of a scheme $X$ is a map $\Spec K \to X$, where $K$ is a perfect field. In this siutation, we denote $K$ by $k(x)$. We use the standard singularity classes of the Minimal Model Program as defined in \cite[Def 2.34]{Kollar_Mori_Birational_geometry_of_algebraic_varieties} or \cite[Def 2.8]{Kollar_Singularities_of_the_minimal_model_program}. We note that these singularity classes can be checked on a resolution of singularities, where resolution means a birational, proper map from a regular scheme \cite[Def 1.12, Cor 2.12]{Kollar_Singularities_of_the_minimal_model_program}. This will be important in \autoref{sec:finite_base_change} and \autoref{sec:stack}, where we cannot guarantee that the base is smooth, only that it is regular (for one reason if it is dominant and finite type over $\bZ$, there is no notion of smoothness). For the definition of \emph{semi-log canonical} singularities, we use \cite[Definition-Lemma 5.10, with the choice of $(2.a)$]{Kollar_Singularities_of_the_minimal_model_program} (although that definition is stated in characteristic $0$, the $(2.a)$ part does make sense and it is the one usually used in arbitrary characteristic, e.g., \cite[Def 1.3]{Hacon_Kovacs_On_the_boundedness_of_SLC_surfaces_of_general_type} or \cite{}). We note that semi-log canonical varieties are in particular \emph{demi-normal}, meaning by definition that they are $S_2$ and have nodes at codimension $1$ points \cite[Def 5.1 \& 1.41]{Kollar_Singularities_of_the_minimal_model_program}.

In general we denote with upper bar the geometric points. That is, if $t \in T$ is an actual point of the scheme, then $\ot$ denotes the composition $\Spec \overline{k(t)} \to \Spec k(t) \to X$, where $\overline{k(t)}$ denotes an algebraic closure of $k(t)$ (for us never matters which algebraic closure it is).

\subsection{Reflexive sheaves}

A coherent sheaf $\sF$ on an $S_2$, $G_1$ variety or on the total space of a flat family of $S_2$, $G_1$ varieties over a Noetherian base is \emph{reflexive}, if the natural morphism $\sF \to (\sF^*)^*$ is an isomorphism. Note that $\omega_{X/T}$ is reflexive for projective, flat families of $S_2$, $G_1$ varieties  \cite[Proposition A.10]{Patakfalvi_Schwede_Zhang_F_singularities_in_families}, and then it is also reflexive if the family is only quasi-projective, since $\omega_{X/T}$ is compatible with restrictions to open sets on the source. Given a rank $1$ coherent sheaf $\sF$ on an $S_2$, $G_1$ variety $X$ (resp. on the total space of a flat family $f : X \to T$ of $S_2$, $G_1$ varieties) such that there is a big open set (resp. relatively big open set) $\iota : U \hookrightarrow X$ over which $\sF$ is locally free, then the reflexive hull of $\sF$ is defined as  $\sF^{[1]}:= \iota_* \left( \sF|_U \right)$. Note that (over a Noetherian base), $\sF^{[1]}$ is indeed reflexive by \cite[Proposition A.9]{Patakfalvi_Schwede_Zhang_F_singularities_in_families} 
and is the unique reflexive extension of $\sF|_U$ by \cite[Corollary A.8]{Patakfalvi_Schwede_Zhang_F_singularities_in_families}, hence the name. 

After the preliminary definitions of the previous paragraph, we are able to define $\sF^{[m]}(rD)$, for $\sF$ a rank $1$ reflexive sheaf, $D$ a $\bQ$-divisor and $m$ and $r$ integers such that $rD$ is  a $\bZ$-divisor. We assume, all these objects are situated on a variety $X$ (resp. on the total space of a flat family $f : X \to T$ of $S_2$, $G_1$ varieties) such that there is a big open set (resp. relatively big open set) $\iota : U \hookrightarrow X$ over which $\sF$ is locally free and $rD$ is Cartier. We assume $U$ is the biggest such open set. Then $\sF^{[m]}(rD)$ is defined as $\iota_* \left( \sF|_U^{\otimes m} (rD|_U) \right)$. The two extreme cases are important: when $D=0$, we write $\sF^{[m]}$, and when $\sF \cong \sO_X$, we write $\sO_X(rD)$.

\subsection{Notation for fibrations and Frobenius base change}

We use in a few proofs the following notation. 

\begin{notation}
\label{notation:fibration_pullback_Frobenius}
Let $f : X \to T $ be a flat morphism, and let
\begin{enumerate}
\item $T^n \to T$ denote the $n$-th iteration of the absolute Frobenius of $T$,
\item $\eta$ denote the generic point of $T$,
\item $\eta^n$ denote the generic point of $T^n$ (note that $k(\eta^n)$ can naturally be identified with $k(\eta)^{1/p^n}$),
\item $\eta^\infty:= \Spec k(\eta)^{1/p^\infty}$, where $k(\eta)^{1/p^\infty} = \displaystyle\varinjlim_e k(\eta)^{1/p^e}$, and
\item $\overline{\eta}:= \Spec \overline{k(\eta)}$, i.e., the spectrum of the algebraic closure,
\end{enumerate}

\end{notation}

\subsection{Base-change of divisors.}

If $f : (X, D) \to T$ is a family of stable log-varieties, and $S \to T$ is a base-change, then we define the divisor $D_S$ on $X_S$ as follows. Let $U \subseteq X$ be any relatively big open set over which  $D|_U$ is Cartier (such open set exists by the assumptions made in \autoref{def:stable_things}). Then we define $D_S$ to be the unique closure of the divisor $(D|_U)_S$ in $U_S$. 

\begin{lemma}
\label{lem:Cartier_geometric}
If $(X,D)$ is a projective pair over a field $k$, such that $\left(X_{\overline{k}}, D_{\overline{k}}\right)$ is a stable variety, then $K_X +D$ is $\bQ$-Cartier, with the same Cartier index as of $K_{X_{\overline{k}}}+ D_{\overline{k}}$.
\end{lemma}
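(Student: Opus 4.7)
The plan is to reduce to faithfully flat descent: being a line bundle is an fpqc-local property, so to show $m(K_X+D)$ is Cartier for some integer $m$, it suffices to check that the rank one reflexive sheaf $\sO_X(m(K_X+D))$ becomes invertible after pulling back via the faithfully flat map $\pi : X_{\ok} \to X$.

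Concretely, let $m$ denote the Cartier index of $K_{X_{\ok}} + D_{\ok}$, which exists because $(X_{\ok}, D_{\ok})$ is stable. Since $X_{\ok}$ is demi-normal (being semi-log canonical), $X$ is $S_2$ and has at worst nodes at codimension one points, so the locus $U \subseteq X$ where $X$ is Gorenstein and $D|_U$ is Cartier is a big open set; by construction $\sO_X(m(K_X+D)) = j_* \sO_U(m(K_U + D|_U))$ for $j : U \hookrightarrow X$. The next step is to apply flat base change along the Cartesian square
\[
\xymatrix{
U_{\ok} \ar[r]^{j'} \ar[d]_{\pi|_{U_{\ok}}} & X_{\ok} \ar[d]^{\pi} \\
U \ar[r]^{j} & X,
}
\]
which gives $\pi^* j_* \sO_U(m(K_U + D|_U)) \cong j'_* (\pi|_{U_{\ok}})^* \sO_U(m(K_U + D|_U))$. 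Since $U_{\ok}$ is a big open set of $X_{\ok}$ and the pullback of $K_U + D|_U$ is $K_{U_{\ok}} + D_{\ok}|_{U_{\ok}}$, the right-hand side is $\sO_{X_{\ok}}(m(K_{X_{\ok}} + D_{\ok}))$, which is a line bundle by the choice of $m$. Faithfully flat descent of the property ``locally free of rank one'' then forces $\sO_X(m(K_X+D))$ to be invertible, and hence $m(K_X+D)$ is Cartier.

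For the Cartier-index claim, write $d$ for the Cartier index of $K_X + D$. The argument above shows $d \mid m$. Conversely, since $d(K_X+D)$ is Cartier its pullback $d(K_{X_{\ok}}+D_{\ok})$ is also Cartier, so $m \mid d$. Thus $d = m$, as required.

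The only non-formal step is the compatibility of the reflexive extension with base change in the middle paragraph, which relies on (i) flat base change for the open immersion $j$ and (ii) the observation that $U_{\ok}$ remains a big open set inside $X_{\ok}$ because geometric base change preserves codimensions. I expect this to be the sole potential obstacle; once it is in place, the rest is a clean descent argument.
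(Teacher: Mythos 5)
Your proof is correct, but it takes a genuinely different route from the paper's. The paper fixes $m$ with $\bigl(\sO_X(m(K_X+D))\bigr)_{\ok}\cong\sO_{X_{\ok}}(m(K_{X_{\ok}}+D_{\ok}))$ a line bundle (asserting this base-change compatibility without comment), reduces to the abstract claim that a coherent sheaf $\sF$ with $\sF_{\ok}$ invertible is itself invertible, and verifies this pointwise: by \cite[Ex.\ II.5.8.b--c]{Hartshorne_Algebraic_geometry} it suffices that $\dim_{k(x)}\sF\otimes k(x)=1$ at every $x\in X$, which is read off from the corresponding fiber dimension upstairs. That argument needs $X$ reduced (built into the definition of a pair here). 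You instead invoke fpqc descent of the property ``locally free of rank one'' along the faithfully flat affine map $X_{\ok}\to X$, and --- unlike the paper --- you actually justify the identification $\pi^*\sO_X(m(K_X+D))\cong\sO_{X_{\ok}}(m(K_{X_{\ok}}+D_{\ok}))$, via flat base change for the big Gorenstein open set $U$ and the preservation of codimension under field extension. Your route is marginally heavier machinery (descent rather than a fiber-dimension check) but is cleaner in that it does not use reducedness and makes explicit the one step the paper glosses over; you also spell out the two divisibilities giving equality of Cartier indices, which the paper leaves implicit. Both arguments are sound.
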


\begin{proof}
By the assumptions, there is an integer $m$, such that 
\begin{equation*}
(\sO_X(m(K_X +D)))_{\overline{k}} \cong \sO_{X_{\overline{k}}} \left( m \left(K_{X_{\overline{k}}} + D_{\overline{k}} \right) \right) 
\end{equation*}
is a line bundle. So it is enough to prove that if for a coherent sheaf $\sF$, $\sF_{\overline{k}}$ is a line bundle, then so is $\sF$. According to \cite[Exercise II.5.8.c]{Hartshorne_Algebraic_geometry} for this it is enough to prove that for each point $x \in X$, $\dim_{k(x)} \sF \otimes_{\sO_X} k(x) =1$. So, fix $x \in X$, and let $\ox \in X_{\overline{k}}$ be a point over $x$. Then, using \cite[Exercise II.5.8.b]{Hartshorne_Algebraic_geometry} and that $\sF_{\overline{k}}$ is a line bundle we have 
\begin{equation*}
1 = \dim_{k(\ox)}  \sF_{\overline{k}} \otimes_{\sO_{X_{\overline{k}}}} k(\ox)  = \dim_{k(\ox)} \sF_x \otimes_{\sO_{X,x}} k(\overline{x}) = \dim_{k(\ox)} \left( \sF \otimes_{\sO_X} k(x) \right) \otimes_{k(x)} k(\overline{x}),
\end{equation*}
which implies that $\dim_{k(x)} \sF \otimes_{\sO_X} k(x)=1$.  
\end{proof}

\begin{lemma}
\label{lem:stable_defs_compatible}
If $f : X \to T \in \osM_{2,v}(T)$ (for some $v>0$ and any  scheme $T$), then $\omega_{X/T}^{[m]}$ is a line bundle for some integer $m>0$ depending only on $v$ (and not on the base space and the family). 

In particular, if $T$ is normal, then $f$ is also a family of stable (log-) varieties.

\end{lemma}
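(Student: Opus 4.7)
The plan rests on two ingredients: boundedness of stable surfaces of fixed volume $v$, and the Koll\'ar base-change condition built into \eqref{eq:functor}.

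First, by the boundedness theorems of Alexeev and of Hacon-Kov\'acs cited in the introduction, the family of stable surfaces of volume $v$ over algebraically closed fields is bounded. In particular, there is an integer $m = m(v) > 0$ depending only on $v$ such that $\omega_{X_0}^{[m]}$ is a line bundle for every stable surface $X_0$ of volume $v$ over an algebraically closed field, and by \autoref{lem:Cartier_geometric} the same $m$ works over arbitrary fields.

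Next, take $f : X \to T \in \osM_{2,v}(T)$ and fix this universal $m$. Applying the base-change isomorphism in \eqref{eq:functor} with $S = \Spec k(t) \to T$ for each point $t \in T$ yields that $\omega_{X/T}^{[m]}|_{X_t} \cong \omega_{X_t}^{[m]}$ is a line bundle on $X_t$. Applying the same isomorphism to infinitesimal thickenings $S \to T$ forces the ordinary pullback $\omega_{X/T}^{[m]} \otimes_{\sO_X} \sO_{X_S}$ to coincide with the reflexive sheaf $\omega_{X_S/S}^{[m]}$, which is torsion-free; this rules out higher Tors and gives flatness of $\omega_{X/T}^{[m]}$ over $T$. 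Combining flatness with fiberwise local freeness, a standard two-step Nakayama argument---lifting a generator of $(\omega_{X/T}^{[m]})_x / \mathfrak{m}_x (\omega_{X/T}^{[m]})_x$ to a surjection $\sO_{X,x} \twoheadrightarrow (\omega_{X/T}^{[m]})_x$, then tensoring with $k(t)$ and using flatness to force the kernel $K$ to satisfy $K = \mathfrak{m}_t K \subseteq \mathfrak{m}_x K$---shows that the kernel vanishes. Hence $\omega_{X/T}^{[m]}$ is a line bundle on $X$.

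For the second assertion, if $T$ is normal we verify the conditions of \autoref{def:stable_things} for a family of stable (log-)varieties with $D = 0$. Flatness and geometrically demi-normal fibers are immediate from $f \in \osM_{2,v}(T)$ together with the fact that semi-log canonical varieties are demi-normal; $K_{X/T}$ is $\bQ$-Cartier of index dividing $m$ because $\omega_{X/T}^{[m]}$ is a line bundle by the first part; and stability of the geometric fibers is built into the functor.

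The main obstacle is the upgrade from fiberwise to global local-freeness of $\omega_{X/T}^{[m]}$. The delicate ingredient is flatness of $\omega_{X/T}^{[m]}$ over $T$, which one extracts from the Koll\'ar base-change condition in \eqref{eq:functor}; once flatness is in hand, Nakayama concludes and the rest of the verifications are formal.
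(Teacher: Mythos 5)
Your proof is correct and follows essentially the same route as the paper: boundedness of stable surfaces of volume $v$ (Hacon--Kov\'acs) gives a uniform $m$, \autoref{lem:Cartier_geometric} makes $\omega_{X_t}^{[m]}$ a line bundle on every fiber, and condition $(2)$ of the functor upgrades this to the total space. You actually supply more detail than the paper on that last upgrade (the paper just says ``using condition (2)''); for the flatness of $\omega_{X/T}^{[m]}$ the paper elsewhere invokes relative $S_2$-ness and \cite[Lem 2.13]{Bhatt_Ho_Patakfalvi_Schnell_Moduli_of_products_of_stable_varieties} rather than the Tor-vanishing sketch you give, but the conclusion and the Nakayama step are the same.
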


\begin{proof}
According to \cite[Theorem 1]{Hacon_Kovacs_On_the_boundedness_of_SLC_surfaces_of_general_type}, there is a  scheme $S$ of finite type over $\bZ$, and a family $g : U \to S \in \osM_{2,v}(\bZ)$ such that for each algebraically closed field $k$, and $Y \in \osM_{2,v}(k)$, $Y$ is a geometric fiber of $g$. In particular, there is an integer $m$, such that $mK_Y$ is Cartier for any choice of $k$ and $Y$. In particular, by \autoref{lem:Cartier_geometric}, for each point $t \in T$, $\omega_{X_t}^{[m]}$ is a line bundle. Then using condition $(2)$ in the definition of $\osM_{2,v}$ (\autoref{def:stable_things}) we obtain that $\omega_{X/T}^{[m]}$ is a line bundle.
\end{proof}

\begin{lemma}
\label{lem:stable_family_base_change}
If $ f: (X,D) \to T $ is a family of stable log-varieties (see \autoref{def:stable_things}), then for any Noetherian, normal base change $S \to T$, $f_S \left( X_S, D_S \right) \to S$ is also a family of stable log-varieties. Furthermore, if $r$ is an integer such that $r (K_{X/T} +D) $ is Cartier, and $\pi : X_S \to X$ is the induced morphism, then 
\begin{equation*}
\pi^* \sO_X(r (K_{X/T} +D))\cong \sO_{X_S}\left( r K_{X_S/S} + D_S\right). 
\end{equation*}
\end{lemma}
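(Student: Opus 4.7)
The plan is to verify each clause of \autoref{def:stable_things} for $f_S : (X_S,D_S) \to S$ one by one, and then derive the pullback formula on a suitable Gorenstein open set.

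First, I would dispose of flatness and the fiber conditions. Flatness of $f_S$ is automatic from base change. For a geometric point $\bar{s} : \Spec K \to S$ whose composition with $S \to T$ is the geometric point $\bar{t} : \Spec K \to T$, one has a canonical identification
\[
(X_S)_{\bar{s}} \;=\; X \times_T S \times_S \Spec K \;=\; X_{\bar{t}},
\]
and the construction of $D_S$ via relatively big open sets on which $D$ is Cartier is visibly compatible with restriction to fibers, so $D_{S,\bar{s}} = D_{\bar{t}}$ under this identification. Hence geometric demi-normality, the stability of $(X_{\bar{s}},D_{\bar{s}})$, and the condition that $D_S$ avoid the generic and singular codimension one points of the fibers of $f_S$ all transport from the corresponding properties of $f$.

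Second, for the $\bQ$-Cartier property together with the pullback formula, fix an integer $r$ with $r(K_{X/T}+D)$ Cartier, and let $\sL := \sO_X(r(K_{X/T}+D))$, a line bundle. Choose a relatively big open set $U \subseteq X$ on which $f$ is Gorenstein (so $\omega_{U/T}$ is a line bundle) and on which $D|_U$ is Cartier; then $\sL|_U \cong \omega_{U/T}^{\otimes r}(rD|_U)$. Setting $U_S := \pi^{-1}(U)$, the open set $U_S$ is relatively big in $X_S$, $f_S|_{U_S}$ stays Gorenstein, and Gorenstein base change gives $\omega_{U_S/S} \cong \pi|_{U_S}^{*}\omega_{U/T}$. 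Since $D_S|_{U_S}$ is by construction the pullback of $D|_U$, one obtains
\[
(\pi^*\sL)\big|_{U_S} \;\cong\; \omega_{U_S/S}^{\otimes r}\bigl(rD_S|_{U_S}\bigr) \;\cong\; \sO_{U_S}\!\bigl(r(K_{U_S/S}+D_S|_{U_S})\bigr).
\]

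Third, by definition $\sO_{X_S}(r(K_{X_S/S}+D_S))$ is the reflexive extension of the right-hand side from $U_S$ to $X_S$. On the other hand $\pi^*\sL$ is itself a line bundle on $X_S$, hence reflexive, and $X_S$ is $S_2$ because $f$ is flat with $S_2$ fibers (demi-normal) and $S$ is normal, so a line bundle on $X_S$ is determined by its restriction to the relatively big open $U_S$ as a reflexive extension. Therefore $\pi^*\sL \cong \sO_{X_S}(r(K_{X_S/S}+D_S))$, which simultaneously shows that $r(K_{X_S/S}+D_S)$ is Cartier (giving the $\bQ$-Cartier requirement) and yields the desired pullback identity. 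The only nontrivial input is the Gorenstein base change for the relative dualizing sheaf combined with the uniqueness of reflexive extensions across relatively big open subsets; the rest is bookkeeping about divisors on fibers.
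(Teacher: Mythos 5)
Your proposal is correct and follows essentially the same route as the paper: both reduce to the relatively big Gorenstein open set $U$ where $D$ is Cartier, use base change of the relative dualizing sheaf there, and then invoke uniqueness of reflexive extensions (the paper cites \cite[Cor A.8]{Patakfalvi_Schwede_Zhang_F_singularities_in_families}) to identify $\pi^*\sO_X(r(K_{X/T}+D))$ with $\sO_{X_S}(r(K_{X_S/S}+D_S))$ globally, which simultaneously gives the $\bQ$-Cartier property. The only cosmetic difference is that the paper phrases the conclusion as ``$\omega_{X_S/S}^{[r]}(rD_S)$ is a line bundle'' rather than as the pullback identity, but the argument is the same.
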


\begin{proof}
Let $r$ be an integer such that $r(K_{X/T} + D)$ is a Cartier divisor. 
The only thing not formally immediate is that  $\omega_{X_S/S}^{[r]}\left(rD_S \right)$ is a line bundle. To show this, choose an open set $U$ that is relatively big, and over which $f$ is Gorenstein and $D$ is Cartier. Then $\omega_{U_S/S}^{[r]}\left(\left.rD_S\right|_{U_S} \right)$ is the base-change of $\left. \omega_{X/T}^{[r]}\left(rD \right) \right|_U$. In particular, $\omega_{X_S/S}^{[r]}\left(rD_S \right)$ will be a reflexive sheaf which agrees over $U_S$ with the  relatively $S_2$ sheaf $\left( \omega_{X/T}^{[r]}\left(rD \right) \right)_S$. Then, the two agree globally by \cite[Corollary A.8]{Patakfalvi_Schwede_Zhang_F_singularities_in_families}. However, the latter is a line bundle, which implies that so is the former. 
\end{proof}

\subsection{Koll\'ar's condition}
\label{sec:Kollar_condition}

Given a family $f : (X,D) \to T$, where 
\begin{enumerate}
\item $f$ is flat,
\item $T$ is Noetherian,
\item $X_{\ot}$ is reduced, equidimensional, $S_2$ and $G_1$ for every geometric point $\ot \in T$,
\item $D$ avoids the generic and the singular codimension one points of the fibers,
\item $m$ is the smallest positive integer such that $mD$ is a $\bZ$-divisor,
\end{enumerate}
we say that $(X,D) \to T$ satisfies \emph{Koll\'ar's condition}, if for every integer $q$ and every Noetherian base-change $S \to T$,
\begin{equation}
\label{eq:Kollar_condition}
\left(\omega_{X/T}^{[qm]} (qmD) \right)_S \cong \omega_{X_S/S }^{[q]} \left( qm D_S  \right).
\end{equation}
Sometimes \autoref{eq:Kollar_condition} is satisfied for just one value of $q$, in which case we say that Koll\'ar's condition is satisfied for $q$. 

By the above assumptions, there is a relatively big open set $U \subseteq X$ such that a neighborhood of $\Supp D|_U$ is smooth over $T$ and $f|_U$ is a Gorenstein morphism. In particular then $ qm(K_{X/T} + \Delta) |_U$ is Cartier for all integers $q$. If $S=t$, for some $t \in T$, then \autoref{eq:Kollar_condition} asks for $\left. \omega_{X/T}^{[qm]} (qmD) \right|_{X_t}$ to be reflexive, and hence $S_2$. Therefore, \autoref{eq:Kollar_condition} implies that $\omega_{X/T}^{[qm]} (qmD)$ is relatively $S_2$. On the other hand, if it is relatively $S_2$, then since the two  sheaves in \autoref{eq:Kollar_condition} are automatically isomorphic over $U_S$, and the one on the right is reflexive, \autoref{eq:Kollar_condition} holds according to \cite[Cor A.8]{Patakfalvi_Schwede_Zhang_F_singularities_in_families}. Hence \autoref{eq:Kollar_condition} is in fact equivalent to requiring $\omega_{X/T}^{[qm]} (qmD)$ to be relatively $S_2$. Furthermore, in this case $\omega_{X/T}^{[qm]} (qmD)$ is automatically flat over 
$T$ according to 
\cite[Lem 2.13]{Bhatt_Ho_Patakfalvi_Schnell_Moduli_of_products_of_stable_varieties}.

Furthermore, note that using the language of \cite{Kollar_Hulls_and_Husks}, the above assumptions holds if and only if $\omega_{X/T}^{[qm]} (qmD)$ is a hull (of itself). Indeed, by \cite[18.2]{Kollar_Hulls_and_Husks} the hull of a reflexive sheaf can be only itself in our situation. Furthermore, by \cite[17 \& 15.4]{Kollar_Hulls_and_Husks} $\omega_{X/T}^{[qm]} (qmD)$ is a hull if and only if it is relatively $S_2$. 


\begin{theorem}
\label{thm:Kollar_Hulls_and_Husks}
\cite[21, c.f. 24]{Kollar_Hulls_and_Husks}
In the above situation, for a fixed integer $q$, there is a finite locally closed decomposition $\bigcup T_i \to T$, such that for any base change $S \to T$, $\left( X_{S}, D_S \right) \to S$ satisfies \autoref{eq:Kollar_condition} for $q$ if and only if $S \to T$ factors through $\bigcup T_i$. In particular, there is a non-empty Zariski open set $T_0$ of $T$, such that $\left( X_{T_0}, D_{T_0} \right) \to T_0$ satisfies \autoref{eq:Kollar_condition} for $q$.
\end{theorem}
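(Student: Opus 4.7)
My plan is to reduce the statement to the base-change behaviour of a single reflexive sheaf on $X$, and then to stratify via Kollár's theory of hulls. From the discussion preceding the theorem, \autoref{eq:Kollar_condition} for a fixed $q$ is equivalent to the reflexive sheaf $\sF := \omega_{X/T}^{[qm]}(qmD)$ being relatively $S_2$ over $T$; equivalently, $\sF$ is its own relative hull in the sense of \cite{Kollar_Hulls_and_Husks}. Since $\sF$ is locally free on the relatively big open $U\subseteq X$ where $f$ is Gorenstein and $D$ is Cartier, any deviation between $\sF$ and its hull can only be supported on $X\setminus U$, whose intersection with each fibre has codimension $\geq 2$.

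The next step is to produce the relative hull $\sF^H$ of $\sF$ on $X/T$: the canonical coherent extension of $\sF|_U$ that is relatively $S_2$ over $T$, together with a natural inclusion $\sF\hookrightarrow\sF^H$ which is an isomorphism on $U$. Then \autoref{eq:Kollar_condition} holds after a base change $S\to T$ if and only if (i) the formation of $\sF^H$ commutes with $S\to T$, and (ii) the induced map $\sF_S\to(\sF^H)_S$ is an isomorphism. Condition (ii) translates into the vanishing of the base change of the quotient $\sF^H/\sF$ together with the vanishing of the associated $\Tor_1$, and is therefore cut out by the zero stratum of the flattening stratification of $\sF^H/\sF$ on $X/T$. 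Condition (i) is subtler: it is the stratum over which the canonical map from the base change of $\sF^H$ to the hull of its base change is an isomorphism, and this requires a further flattening-type argument for the local cohomology sheaves of $\sF^H$ along $X\setminus U$.

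Combining these two stratifications yields the desired finite locally closed decomposition $\bigsqcup T_i \to T$, and the universal property follows from the functoriality of the flattening stratifications and of the hull construction. For the ``in particular'' clause, generic flatness applied to $\sF^H/\sF$ and to the relevant local cohomology sheaves of $\sF^H$ shows that conditions (i) and (ii) both hold on a dense open subset of $T$, which therefore lies inside a single $T_i$; that intersection serves as $T_0$.

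The main obstacle is the construction of the stratification witnessing condition (i). Condition (ii) reduces to a standard flattening stratification, but (i) requires that fibrewise $S_2$-ness of $\sF^H$ be cut out by a locally closed subscheme rather than merely a constructible subset. This is the delicate heart of \cite[21]{Kollar_Hulls_and_Husks}: upper semicontinuity of depth is naturally constructible, and upgrading to a scheme-theoretic stratification requires a careful Noetherian induction that controls how the hull and its local cohomology behave under infinitesimal thickenings. Once this is in place, the rest of the argument is formal, and one invokes Kollár's theorem as a black box.
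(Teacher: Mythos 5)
Your proposal is correct and follows essentially the same route as the paper: the paper's preceding discussion already reduces \autoref{eq:Kollar_condition} to the statement that $\omega_{X/T}^{[qm]}(qmD)$ is relatively $S_2$, i.e.\ its own hull, and the theorem is then obtained by citing Kollár's representability result \cite[21, cf.\ 24]{Kollar_Hulls_and_Husks} exactly as you do in your final step. The additional detail you give about how the flattening stratifications would be assembled is a plausible sketch of Kollár's argument, but the paper itself treats that result as a black box, so no further work is required.
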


\subsection{Normalization}
\label{sec:normalization}

For a pair $(X,D)$ with $K_X + D$ Cartier, we will consider many times the normalization $\phi : \left(\oX, \oD
\right) \to (X,D)$, which means that $\phi : \oX \to X$ is the normalization, and we require
\begin{equation}
\label{eq:normalization}
 \phi^*(K_X +D ) = K_{\oX} + \oD
\end{equation}
to hold for a compatible choice of $K_X$ and $K_{\oX}$. That is, we require $\phi_* (K_{\oX})|_{X_{\reg}} = K_X|_{X_{\reg}}$. In other words, $\oD = \phi^* D + C$, where $C$ is the conductor. We will also use  the above notation in the relative setting, when the canonical divisors are replaced by relative canonical ones. 

\section{MMP input}
\label{sec:MMP}

In this section the base-field is $k$ is algebraically closed and of characteristic $p>0$. 

\begin{lemma}
\label{lem:lct}
If $x \in (X, \Delta)$ is a log canonical surface singularity over $k$, such that the coefficients of $\Delta$ are greater than $\frac{5}{6}$, then $(X, \lceil \Delta \rceil )$ is also log canonical, including that $K_X + \lceil \Delta \rceil$ is $\bQ$-Cartier.
\end{lemma}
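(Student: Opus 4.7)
\emph{Proof plan.} The statement is local at the (finitely many) relevant points of $X$, and trivial away from $\Supp \Delta$, so I fix $x \in \Supp \Delta$ and write $\Delta = \sum_i a_i D_i$ with $a_i \in (5/6, 1]$. My first step is to observe that $X$ must itself be klt at $x$: if some divisorial valuation $E$ centered at $x$ had $a(E; X, 0) = -1$, then for any $D_i$ with $x \in D_i$ one would have
\[
a(E; X, \Delta) \;=\; -1 - \sum_i a_i\, v_E(D_i) \;\leq\; -1 - a_i \;<\; -1,
\]
contradicting the log canonicity of $(X, \Delta)$. Hence $X$ has a klt (in particular rational) surface singularity at $x$, so the local divisor class group at $x$ is torsion; each component $D_i$, and therefore $\lceil \Delta \rceil$, is $\bQ$-Cartier near $x$, and in particular $K_X + \lceil \Delta \rceil$ is $\bQ$-Cartier.

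Next, let $\pi : Y \to X$ be the minimal log resolution of $(X, \Delta)$, with exceptional divisors $\{E_j\}$, and write $\pi^*(K_X + \Delta) = K_Y + \pi^{-1}_* \Delta + \sum_j e_j E_j$ with $e_j \leq 1$. Because each $D_i$ is now $\bQ$-Cartier, the same resolution also handles $\lceil \Delta \rceil$, and one computes
\[
\pi^*(K_X + \lceil \Delta \rceil) \;=\; K_Y + \pi^{-1}_* \lceil \Delta \rceil + \sum_j f_j E_j,
\qquad f_j \;=\; e_j + \sum_i (1 - a_i)\, m_{ij},
\]
where $m_{ij} := \mult_{E_j} \pi^* D_i \geq 0$. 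Log canonicity of $(X, \lceil \Delta \rceil)$ is then equivalent to the inequality $f_j \leq 1$ for every $j$, and since $1 - a_i < 1/6$ by hypothesis it suffices to prove $\sum_i m_{ij} \leq 6(1 - e_j)$ for each $j$.

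This last inequality is the heart of the lemma and is where the hypothesis $a_i > 5/6$ enters essentially. The cleanest argument I see is by contradiction: if the bound failed for some $E_{j_0}$, then the log canonical threshold $\lct_x((X, \Delta);\, \lceil \Delta \rceil - \Delta)$, which is well defined because of the first paragraph, would lie strictly in the open interval $(5/6, 1)$, contradicting the classical fact on surfaces that no log canonical threshold takes a value in this interval (the maximal surface lct below $1$ is precisely $5/6$, realized by the cusp). Alternatively, one can inspect the short list of dual graphs of minimal log resolutions of klt surface singularities supporting a boundary all of whose coefficients exceed $5/6$ and verify the bound directly in each case. I expect the main obstacle to be the handling of the borderline situation where $\lct = 5/6$ exactly, which forces $e_{j_0} = 1$ (so $E_{j_0}$ is already an lc place of $(X, \Delta)$) and then requires the equality $f_{j_0} = 1$ to be verified on the nose.
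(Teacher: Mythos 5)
Your first paragraph is sound in substance and matches the paper's first step: at a point of $\Supp\Delta$ the hypothesis forces $X$ to be klt, hence $\bQ$-factorial, so $K_X+\lceil\Delta\rceil$ is $\bQ$-Cartier. (One small caveat: you write $a(E;X,0)$ before knowing that $K_X$ is $\bQ$-Cartier; on surfaces this is repaired by numerical pullback, which is why the paper speaks of ``numerically klt'' and cites \cite[Prop 4.11]{Kollar_Mori_Birational_geometry_of_algebraic_varieties} for $\bQ$-factoriality.)

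The second half has a genuine gap: the inequality you isolate as ``the heart of the lemma,'' namely $\sum_i m_{ij}\le 6(1-e_j)$, is false, so neither the contradiction argument nor a case-by-case check of dual graphs can establish it. Take $X=\bA^2$ and $\Delta=a(L_1+L_2)$ with $L_1,L_2$ two lines through the origin and $5/6<a<1$; this pair is snc, hence klt. For the exceptional divisor $E$ of the blow-up of the origin one computes $e_E=2a-1$ and $m_{1E}=m_{2E}=1$, so $\sum_i m_{iE}=2$ while $6(1-e_E)=12(1-a)<2$. The conclusion $f_E=(2a-1)+2(1-a)=1\le 1$ of course still holds; the point is that replacing each $1-a_i$ by the uniform bound $1/6$ destroys exactly the cancellation the lemma depends on. Relatedly, your contradiction step is unjustified on two counts: there is no reason that $\lct_x\bigl((X,\Delta);\lceil\Delta\rceil-\Delta\bigr)$ should exceed $5/6$ when some $f_{j_0}>1$, and the gap statement you invoke concerns thresholds of reduced divisors on klt surface germs, not thresholds computed relative to an arbitrary lc pair $(X,\Delta)$. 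The paper's route, which is the correct version of your idea, is shorter and avoids the resolution computation entirely: since $\Delta\ge c\,\lceil\Delta\rceil$ with $c:=\min_i a_i>5/6$ and $X$ is $\bQ$-factorial and klt, the pair $(X,c\,\lceil\Delta\rceil)$ is log canonical, so $\lct_x(X;\lceil\Delta\rceil)\ge c>5/6$; the absence of surface log canonical thresholds in the interval $(5/6,1)$ (\cite[Cor 6.0.9]{Prokhorov_Lectures_on_complements_on_log_surfaces}, valid in positive characteristic as noted in the proof of \cite[Thm 3.2]{Hacon_Xu_On_the_three_dimensional_minimal_model_program_in_positive_characteristic}) then forces $\lct_x(X;\lceil\Delta\rceil)\ge 1$, which is the assertion.
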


\begin{proof}
The statement is local, so we may work locally around $x \in X$. Then, we may assume that $x \in \Supp \Delta$. In particular, $X$ is (numerically) klt, and then $X$ is $\bQ$-factorial by \cite[Prop 4.11]{Kollar_Mori_Birational_geometry_of_algebraic_varieties} (which works in positive characteristic  because only relative Kawamata-Viehweg vanishing is used in the proof for birational maps of surfaces). Since, the coefficients of $\Delta$ are all greater than $\frac{5}{6}$, the log canonical threshold of  $ \lceil \Delta \rceil $ is greater than $\frac{5}{6}$. However then this log canonical threshold is in fact $1$ according to \cite[Cor 6.0.9]{Prokhorov_Lectures_on_complements_on_log_surfaces} (which also works in positive characteristic as noted in \cite[proof of Them 3.2]{Hacon_Xu_On_the_three_dimensional_minimal_model_program_in_positive_characteristic}, because of the same reason as the above other reference).

\end{proof}

\begin{lemma}
\label{lem:dlt_model}
If $(X, \Delta)$ is a pair on a normal $3$-fold such that $p>5$, $K_X + \Delta$ $\bQ$-Cartier and  the coefficients of $\Delta$ at most $1$, then there exists a dlt blow-up. That is, a dlt pair $f  : (Y, \Gamma)$ endowed with a map $f: X \to Y$, such that
\begin{enumerate}
 \item  $Y$ is $\bQ$-factorial,
\item $\Gamma$ is reduced,  
\item $a(E; X, \Delta) \leq -1$ for every exceptional divisors $E$ of $f$, and
\item for $G:= \displaystyle\sum_{\parbox{60pt}{\tiny \quad $E \subseteq \Exc(f)$\\ $a(E,X, \Delta)< -1$ }} E$ and any $x \in X$, either $f^{-1}(x) \subseteq \Supp G$, or $f^{-1}(x) \cap \Supp G = \emptyset$.
\end{enumerate}

\end{lemma}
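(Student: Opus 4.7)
The plan is to follow the standard recipe for constructing a dlt blow-up: take a log resolution of $(X,\Delta)$ and run a carefully chosen relative minimal model program over $X$, appealing to the three-dimensional MMP in characteristic $p>5$.

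First, I would invoke the existence of a log resolution $g : W \to X$ of $(X, \Delta)$, available for threefolds in any characteristic by Cossart--Piltant. Let $\{E_i\}_{i \in I}$ be the $g$-exceptional prime divisors and split $I$ into $I_0 = \{i : a(E_i; X, \Delta) > -1\}$ (divisors to be contracted) and $I_1 = \{i : a(E_i; X, \Delta) \leq -1\}$ (divisors to be kept). Define
\[
\Gamma_W = g_*^{-1} \Delta + \sum_{i \in I_1} E_i + \sum_{i \in I_0} (1 - \epsilon_i) E_i
\]
for small rationals $\epsilon_i>0$, chosen so that $(W, \Gamma_W)$ is snc (hence dlt) and $K_W + \Gamma_W \equiv_X \sum_{i \in I_0} c_i E_i$ with every $c_i = a(E_i; X, \Delta) + 1 - \epsilon_i > 0$; i.e.\ the relative class is effective and supported exactly on the divisors indexed by $I_0$.

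Second, I would run a $(K_W + \Gamma_W)$-MMP over $X$. The existence of flips and divisorial contractions together with termination for $\mathbb{Q}$-factorial threefold dlt pairs in characteristic $p>5$ is provided by Hacon--Xu \cite{Hacon_Xu_On_the_three_dimensional_minimal_model_program_in_positive_characteristic} and its refinements. Since the effective part of $K_W + \Gamma_W$ over $X$ is supported precisely on $\bigcup_{i \in I_0} E_i$, these are the divisors contracted during the MMP, while the divisors indexed by $I_1$ survive. The outcome $f : Y \to X$ is a $\mathbb{Q}$-factorial dlt model whose exceptional locus is exactly $\bigcup_{i \in I_1} E_i$, giving (1) and (3). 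For (2), the boundary coming from $\Gamma_W$ has all exceptional components with coefficient $1$; if $\Delta$ has non-integer coefficients on its non-exceptional support, one raises them to $1$ invoking \autoref{lem:lct} to preserve log canonicity in the lc locus, producing a reduced $\Gamma$.

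Third, for the clustering property (4), note that $G$ consists of those $E_i$ with $i \in I_1$ and $a(E_i; X, \Delta) < -1$, i.e.\ the exceptional divisors extracting non-lc places of $(X, \Delta)$. If $(X, \Delta)$ is lc in a neighborhood of $x$, no such divisor meets $f^{-1}(x)$, so $f^{-1}(x) \cap \Supp G = \emptyset$. Otherwise I would argue that every component of $f^{-1}(x)$ is a non-lc place: since $f^{-1}(x)$ is connected by Zariski's Main Theorem and the dlt structure on $(Y,\Gamma)$ forces the decomposition of $\Exc(f) \cap f^{-1}(x)$ into lc-extracting and non-lc-extracting components to be a disjoint union of connected components, connectedness forces one side of the dichotomy on each fiber. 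The main obstacle I expect is executing the MMP cleanly over a possibly non-projective base $X$ and verifying that the combinatorial requirements on the output (reduced $\Gamma$, exact control of the set of surviving exceptional divisors, and the fibrewise dichotomy) can be simultaneously arranged. These are ultimately routine once the Hacon--Xu machinery is granted, but verifying (4) in particular requires knowing that the dlt condition and the effective part of $K_Y + \Gamma - f^*(K_X + \Delta)$ restrict well to fibers.
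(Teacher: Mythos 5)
Your overall strategy (log resolution followed by a relative MMP over $X$) is the same as the paper's, but the steps you defer as ``routine'' are exactly where the paper's proof does its work, and as written there are genuine gaps. First, $X$ is only quasi-projective here (the lemma is applied locally and to affine models), while the threefold MMP in characteristic $p>5$ requires a projective morphism to produce contractions and flips; you flag this but do not address it. The paper's solution is to compactify $X$ so that the boundary at infinity is a Cartier divisor $H$, to include $g^{-1}_*H$ with coefficient one in the boundary on the resolution, and then to use special termination to show that from some step on every extremal ray avoids the locus over $H$, so the MMP only modifies the family over $X$. Second, termination: unconditional termination of dlt threefold flips in characteristic $p$ is not an off-the-shelf consequence of Hacon--Xu; the paper again invokes special termination together with the observation that, once the MMP is an isomorphism near the lc centres, any remaining extremal ray avoids $\Supp F_j$ (where $F$ is the relative discrepancy divisor, supported in the exceptional locus) and is therefore $(K+D)$-nonnegative, which forces the MMP to stop. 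Your perturbation of the coefficients to $1-\epsilon_i$ is a legitimate alternative normalization (the paper instead puts coefficient one on all exceptional divisors and lets negativity contract those with discrepancy $>-1$), but it removes neither of these two issues.

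Third, and most concretely, your argument for the clustering property (4) does not work. Connectedness of $f^{-1}(x)$ plus ``the dlt structure forces the lc-extracting and non-lc-extracting components to be disjoint'' is circular: that fibrewise disjointness is exactly what (4) asserts, and dlt-ness of $(Y,\Gamma)$ says nothing about how $\Supp G$ meets the rest of a fiber. The correct tool is the refined negativity lemma: writing $K_Y+\Gamma=f^*(K_X+\Delta)+F$, the divisor $F$ is $f$-nef (because $K_Y+\Gamma$ is $f$-nef at the end of the MMP), $f$-exceptional, hence $F\le 0$ with $\Supp F=\Supp G$ once the components of discrepancy $>-1$ have been contracted; the connected-fibre form of the negativity lemma then gives precisely the dichotomy $f^{-1}(x)\subseteq\Supp F$ or $f^{-1}(x)\cap\Supp F=\emptyset$. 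Finally, your appeal to \autoref{lem:lct} to raise the coefficients of the strict transform of $\Delta$ to one is a misapplication: that lemma concerns log canonical \emph{surface} singularities with coefficients greater than $5/6$, not threefold pairs with arbitrary coefficients at most $1$.
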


\begin{proof}
Let $\oX$ be a projective compactification of $X$. We may assume by blowing up $\oX \setminus X$ that  $\oX \setminus X$ is an effective Cartier divisor $H$. Let $\overline{\Delta}$ be the smallest extension of $\Delta$ to $\oX$.  Let   $g : W \to \left(\oX, \overline{\Delta} + H\right)$ be a log resolution and define $D:= \Exc(g) + g^{-1}_* \overline{\Delta} + g^{-1}_* H$.
Let $g_i : (W_i, D_i) \to \oX$ be a run of the LMMP over $\oX$ on $(W,D)$. Since we are working over $\oX$ \cite[Thm 1.5]{Birkar_Existence_of_flips_and_minimal_models_for_3_folds_in_char_p} applies and all our extremal contractions exist and are projective. Furthermore, by \cite[Thm 1.1]{Birkar_Existence_of_flips_and_minimal_models_for_3_folds_in_char_p} if hte contraction is flipping, the flip exists. By special termination \cite[Prop 5.5]{Birkar_Existence_of_flips_and_minimal_models_for_3_folds_in_char_p}, this run of the MMP is an isomorphism in a neighborhood of the log canonical centers for every $i \geq j$. Choose an extremal ray $R$ for the step $W_{j} \dashrightarrow W_{j+1}$. Then $R$ cannot intersect $\Supp g_i^* H$ for every $i$, this implies that $R$ maps into $X$. 

Define now $W^0:= g^{-1} X$, $D^0:= D|_{W^0}$ and $g^0:=g|_{X^0}$. Further, define $F$ via
\begin{equation}
\label{eq:dlt_model}
K_{W^0} + D^0 = \left( g^0\right)^*( K_X + \Delta) + F,
\end{equation}
and let $F_i$ be the strict transforms of $F$ on $W_i^0:= g_i^{-1} X $. Then $F_i \subseteq \Exc(g_i)$ for $i \geq 0$. This, together with special termination, implies that $R$, which by the above discussion lives on $W_j^0$, avoids $\Supp F_j$. However, then \autoref{eq:dlt_model} implies that  $R \cdot \left( K_{W^0_j} + D^0_j \right) \geq 0$, where  $D^0_j:=D_j|_{W_j^0}$. Hence our run of LMMP ends with $(W_j, D_j)$. Define $Y:= W_j^0$, $f:=g_i|_Y$ and $\Gamma:=D_i|_Y$. By \autoref{eq:dlt_model}, $F_i$ is an $f$-nef divisor which is exceptional over $X$. In particular, $F \leq 0$ by the negativity lemma, and for any $x \in X$, either $f^{-1}(x) \subseteq \Supp F$, or $f^{-1}(x) \cap \Supp F = \emptyset$. Then $(Y,\Gamma)$ satisfies the conditions stated in the lemma by noting that $\Supp G = \Supp F$. 
\end{proof}

\begin{lemma}
\label{lem:inversion_of_adjunction}
If $p>5$, then inversion of adjunction for threefolds holds. That is, if $p>5$ and  $(X, S+ B)$ is a normal threefold pair over $k$ such that $S$ is a prime divisor and $B \geq 0$, then $(X, S + B)$ is lc in a negihborhood fo $S$ if and only if $\left(S^n, B_{S^n} \right)$ is lc, where $\nu : S^n \to S$ is the normalization and $K_{S^n} + B_{S^n} = \nu^* \left( (K_X + S + B)|_S \right)$.
\end{lemma}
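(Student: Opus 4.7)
The ``only if'' direction is standard adjunction and requires no positive-characteristic input: take a common log resolution of $(X,S+B)$ and of the normalization $S^n$, and restrict the discrepancy identity along the strict transform of $S$; every divisorial valuation over $S^n$ arises in this way, and its discrepancy is bounded below by the discrepancy of a corresponding valuation over $X$, so lc on $(X,S+B)$ implies lc on $(S^n, B_{S^n})$.

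For the converse, suppose for contradiction that $(X, S+B)$ fails lc in every open neighborhood of $S$. Then some exceptional prime divisor $F$ over $X$ satisfies $a(F;X,S+B) < -1$ with center $Z \subseteq S$. Apply \autoref{lem:dlt_model} (using $p>5$) to obtain a $\bQ$-factorial dlt blowup $g:(Y,\Gamma) \to X$, which by construction extracts all exceptional divisors with discrepancy $\leq -1$; in particular $F \subseteq \Exc(g)$, and the strict transform $S_Y \subseteq \lfloor\Gamma\rfloor$ is normal by dlt-ness. Write
\[
K_Y + \Gamma + N + M = g^*(K_X + S + B),
\]
where $N := \sum_{E:\,a(E;X,S+B)<-1} \bigl(-1 - a(E;X,S+B)\bigr)E$ is supported on the locus $G$ from property~(4), and $M \geq 0$ absorbs the fractional contributions of the strict transform of $B$ and of exceptional divisors with $-1 < a(E) < 0$.

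By property~(4) of \autoref{lem:dlt_model}, the fiber $g^{-1}(x)$ over any $x \in Z$ is entirely supported on $G$; since $S_Y \to S$ is surjective and birational, $g^{-1}(x) \cap S_Y$ is nonempty, so some component $F'$ of $G$ meets $S_Y$. Because $Y$ is $\bQ$-factorial and $F' \neq S_Y$, the intersection $F' \cap S_Y$ has pure codimension two in $Y$ and hence contains a prime divisor $P$ on $S_Y$. Restricting via adjunction yields
\[
K_{S_Y} + \Diff_{S_Y}(\Gamma - S_Y + N + M) = g^*(K_X + S + B)\big|_{S_Y} = h^*(K_{S^n} + B_{S^n}),
\]
where $h : S_Y \to S^n$ is the induced birational morphism. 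The prime $F'$ has coefficient $1$ in $\Gamma$ and $-1 - a(F') > 0$ in $N$, so it appears in $\Gamma - S_Y + N + M$ with coefficient $-a(F') > 1$; hence the different acquires coefficient $>1$ at $P$. But if $(S^n, B_{S^n})$ is lc then the log pullback $h^*(K_{S^n} + B_{S^n}) - K_{S_Y}$ has all coefficients $\leq 1$, a contradiction.

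The main obstacle is ensuring the characteristic-$p$ version of the dlt blowup with the strong structural property~(4) of \autoref{lem:dlt_model}, which substitutes here for the Koll\'ar--Shokurov connectedness principle used in the characteristic-zero proof to force $F$ and $S_Y$ to meet; once \autoref{lem:dlt_model} is in place, the rest is routine discrepancy bookkeeping on the dlt model.
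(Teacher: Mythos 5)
Your argument is correct and coincides with what the paper does: the paper's proof is a one-line deferral to the proof of Hacon--Xu's three-fold inversion of adjunction, with their dlt-model input swapped for \autoref{lem:dlt_model}, and that deferred argument is precisely the dlt-blowup-plus-different computation you write out. The only imprecision is your claim that the dlt blowup ``extracts all exceptional divisors with discrepancy $\leq -1$'' (it need not, since there may be infinitely many); what you actually use, and what does hold, is that the non-lc locus of $(X,S+B)$ equals $g(\Supp G)$, so property (4) of \autoref{lem:dlt_model} applies at any point of $S$ where lc fails and forces $\Supp G$ to meet the strict transform of $S$.
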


\begin{proof}
 The proof of  \cite[Thm 6.2]{Hacon_Xu_On_the_three_dimensional_minimal_model_program_in_positive_characteristic} works verbatim, if one replaces the reference to \cite[Thm  6.1.(2)]{Hacon_Xu_On_the_three_dimensional_minimal_model_program_in_positive_characteristic}, which works only for standard coefficients, with a reference to \autoref{lem:dlt_model}.
\end{proof}

%

\begin{lemma}
\label{lem:adjunction_specific}
\label{lem:total_space_log_canonical}
Let $p>5$ and $f : X \to T$ be a flat family of  geometrically demi-normal surfaces  over a smooth curve over $k$. Assume $D$ is an effective $\bQ$-divisor on $X$ avoiding the generic and the singular codimension $1$ points of the fibers. Furthermore assume that $K_{X/T} + D$ is $\bQ$-Cartier and for some $0 \in T$, $(X_0, D_0)$ is slc. 

Let $\left( \oX , \oD \right)$ be the normalization as in \autoref{sec:normalization}. Then $\left(\oX, \oD+ \oX_0 \right)$ is log canonical in a neighborhood of $\oX_0$, and hence so is $\left(\oX, \oD\right)$.
\end{lemma}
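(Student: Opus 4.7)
The plan is to apply the threefold inversion of adjunction \autoref{lem:inversion_of_adjunction} to the normal threefold pair $(\oX, \oD + \oX_0)$, where $\oX_0$ denotes the scheme-theoretic fiber of the composition $\oX \to T$ over $0$. For the setup: since $f \colon X \to T$ is flat and $\phi \colon \oX \to X$ is the finite normalization map, the induced morphism $\oX \to T$ is also flat (torsion-freeness over the DVR $\sO_{T,0}$); hence $\oX_0 = \phi^{-1}(X_0)$ is an effective Cartier divisor on $\oX$, and it is reduced because $X_0$ is demi-normal. The relation \autoref{eq:normalization} combined with the $\bQ$-Cartierness of $K_{X/T}+D$ and the Cartierness of $X_0$ gives that $K_{\oX}+\oD+\oX_0$ is $\bQ$-Cartier.

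Next, I would identify the restriction to the normalization $\oX_0^{\nu}$ of $\oX_0$. In a formal neighborhood of a codimension-$1$ node of $X_0$, the total space $X$ looks like $\Spec k[x,y,t]/(xy)$ (with $t$ a uniformizer of $T$), whose normalization separates the two branches and whose fiber at $t=0$ is the disjoint union of two smooth curves, i.e.\ the normalization of the nodal fiber $\Spec k[x,y]/(xy)$. Therefore $\oX_0 \to X_0$ is the normalization at codimension one, so $\oX_0^{\nu}$ agrees with the normalization $X_0^{\nu}$ of $X_0$. Applying adjunction,
\[
(K_{\oX}+\oD+\oX_0)\big|_{\oX_0^{\nu}} \;=\; K_{\oX_0^{\nu}} + \mathrm{Diff},
\]
and by the standard theory of conductors versus adjunction the different $\mathrm{Diff}$ agrees with the boundary $\oD_0$ obtained by normalizing the slc pair $(X_0,D_0)$ as in \autoref{sec:normalization}.

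To conclude: since $(X_0,D_0)$ is slc by assumption, its normalization $(X_0^{\nu}, \oD_0) = (\oX_0^{\nu}, \mathrm{Diff})$ is lc by definition. Then \autoref{lem:inversion_of_adjunction} (valid because $p>5$) implies that $(\oX, \oD+\oX_0)$ is lc in a neighborhood of $\oX_0$. Finally, since $\oX_0$ is an effective Cartier divisor, dropping it can only increase discrepancies, so $(\oX, \oD)$ is also lc in a neighborhood of $\oX_0$.

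The main obstacle is the identification step above: verifying that the different on $\oX_0^{\nu}$ equals the boundary $\oD_0$ from normalizing the slc fiber. When $\oX_0$ is reducible---as happens whenever $X_0$ itself is reducible, or when both branches of a node give components of $\oX_0$---one must apply \autoref{lem:inversion_of_adjunction} componentwise, absorbing the remaining components of $\oX_0$ into the $B$-term of the boundary, and match the contributions along the pairwise intersections of these components in the fiber. This is essentially a bookkeeping exercise in the theory of semi-log canonical singularities for families, but it is the one place where the full slc hypothesis on $(X_0,D_0)$ (and not just the lc-ness of its normalization) is used.
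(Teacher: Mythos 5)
Your proposal is correct and follows essentially the same route as the paper: both reduce the claim to the threefold inversion of adjunction \autoref{lem:inversion_of_adjunction}, identify the normalization of $\oX_0$ with the normalization $(G,\Delta)$ of the slc fiber $(X_0,D_0)$ so that the adjoint pair is lc, and then deduce lc-ness of $\left(\oX,\oD+\oX_0\right)$ near $\oX_0$ (the paper phrases the boundary matching as $\left.K_{\oX}+\oD\right|_G=K_G+\Delta$ for compatible choices of canonical divisors rather than via a $\Diff$ computation, but it is the same identification). Your extra care about the prime-divisor hypothesis in \autoref{lem:inversion_of_adjunction} when $\oX_0$ is reducible is a legitimate point that the paper's proof glosses over, but it is bookkeeping rather than a change of method.
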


\begin{proof}
If $D$ has coefficients greater than $1$ (around $X_0$), then $(X_0,D_0)$ cannot be log canonical, hence we may assume that the coefficients of $D$ are at most $1$.
Let $\phi : \left(\oX, \oD \right) \to (X,D)$ and $\xi : (G, \Delta) \to \left(X_0, D_0\right)$ be the normalizations as in \autoref{sec:normalization}. That is, the following equations hold for compatible choices of canonical divisors.
\begin{equation}
\label{eq:slc_open:normalization}
\phi^* (K_X + D) = K_{\oX} + \oD  \textrm{, and }
\xi^* (K_{X_0} + D_0) = K_G + \Delta .
\end{equation}
%
Note that $\phi_0$ automatically factors $\xi$, and hence the induced morphism $\zeta:G \to  \oX_0$ is also a normalization. Furthermore, by \autoref{eq:slc_open:normalization}, we have 
\begin{equation}
\label{eq:slc_open:adjunction}
\left. K_{\oX} + \oD\right|_{G} = K_G + \Delta.
\end{equation}
The pair $(G, \Delta)$ is log canonical by \autoref{eq:slc_open:normalization}, \cite[Def-Lem 5.10]{Kollar_Singularities_of_the_minimal_model_program} and the fact that $(X_0, D_0)$ is slc. Hence, according to \autoref{lem:inversion_of_adjunction}, $\left(\oX, \oD  + \oX_0\right)$ is log canonical in a neighborhood of $\oX_0$. 
\end{proof}

\begin{proposition}
\label{prop:log_canonical_center}
Let  $(X,D)$ be a log canonical $3$-fold  with $p >5$, and assume that:
\begin{enumerate}
 \item $C$ is a maximal log canonical center of $(X,D)$,
\item $B$ is a maximal element of the set of 
\begin{equation*}
\{ A\subseteq C  \textrm{ subvariety }| \textrm{ $A= C \cap C'$ for some log canonical center $C' \neq C$ of $(X,D)$} \}.
\end{equation*}
\end{enumerate}
Then $B$ is also a log canonical center of $(X,D)$. 
\end{proposition}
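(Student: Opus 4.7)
The plan is to pass to a $\bQ$-factorial dlt modification $f\colon (Y,\Gamma)\to (X,D)$ with $\Gamma$ reduced, furnished by \autoref{lem:dlt_model}, and to combine the description of log canonical centers of $(X,D)$ as images of strata of $\Gamma$ with a Shokurov-type connectedness applied fiberwise.

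On this dlt model, $C$ being a maximal LC center of $(X,D)$ forces $C = f(S)$ for some irreducible component $S$ of $\Gamma$: any stratum $W$ with $f(W) = C$ lies inside some component $S$, and then $f(S) \supseteq C$ is an LC center, so $f(S) = C$ by the maximality of $C$. Similarly, after possibly replacing $C'$ by a maximal LC center containing it---which I may do, since enlarging $C'$ within the finite family of LC centers still preserves $B$ as a component of the resulting intersection by the maximality of $B$, or immediately yields $B = C'$ when $C' \subseteq C$---one has $C' = f(S')$ for some irreducible component $S'\neq S$ of $\Gamma$.

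Now I fix a general closed point $b\in B$. By Shokurov-type connectedness applied to the birational contraction $f$ (using that $K_Y + \Gamma = f^*(K_X+D)$ is trivial on fibers, and that the needed connectedness is available from the characteristic $p>5$ threefold MMP machinery used throughout \autoref{sec:MMP}), the non-klt fiber $\Gamma \cap f^{-1}(b)$ is connected. Since this fiber meets both $S$ and $S'$, there is a chain $S = S_{i_0}, S_{i_1}, \dotsc, S_{i_k} = S'$ of components of $\Gamma$ with $S_{i_j}\cap S_{i_{j+1}}\cap f^{-1}(b)\ne\emptyset$ for each $j$. Because the possible components of pairwise intersections of components of $\Gamma$ are finite in number, and $b$ is general in $B$, I may pick for each $j$ an irreducible component $V_j$ of $S_{i_j}\cap S_{i_{j+1}}$ with $B\subseteq f(V_j)$.

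Finally, let $j_0$ be the smallest index with $f(S_{i_{j_0 + 1}})\ne C$, which exists because $f(S_{i_k})=C'\ne C$. Then $f(S_{i_{j_0}}) = C$, so
\[
B \;\subseteq\; f(V_{j_0}) \;\subseteq\; C \cap f(S_{i_{j_0+1}}).
\]
As an irreducible component of an intersection of two components of the reduced boundary of a $\bQ$-factorial dlt pair, $V_{j_0}$ is a stratum of $\Gamma$, and hence $f(V_{j_0})$ is an LC center of $(X,D)$. Moreover $f(V_{j_0})\ne C$, since $f(V_{j_0}) \subseteq f(S_{i_{j_0+1}}) \ne C$ and $C$ is maximal. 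Taking $C'' := f(V_{j_0})$, the irreducible subvariety $C\cap C'' = f(V_{j_0})$ lies in the set defining $B$ and contains $B$, so the maximality of $B$ forces $f(V_{j_0}) = B$, proving $B$ is a log canonical center. The principal obstacle is to secure Shokurov-type connectedness of $\Gamma\cap f^{-1}(b)$ in characteristic $p>5$ for our threefold contraction; this is the only non-formal input beyond \autoref{lem:dlt_model}.
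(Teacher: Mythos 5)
Your skeleton (dlt model, connectedness of the non-klt locus over a point of $B$, a chain of boundary components along which the image must change, then maximality of $B$) is the same as the paper's, and your final combinatorial step is essentially identical to the paper's. But the step you yourself flag as ``the principal obstacle'' --- connectedness of $\Gamma\cap f^{-1}(b)$ --- is a genuine gap, not a routine citation. In characteristic $p$ the Koll\'ar--Shokurov connectedness principle is not available as a black box for threefolds: its usual proof runs through relative Kawamata--Viehweg vanishing, which fails here, and nothing in \autoref{sec:MMP} or in the references \cite{Birkar_Existence_of_flips_and_minimal_models_for_3_folds_in_char_p,Hacon_Xu_On_the_three_dimensional_minimal_model_program_in_positive_characteristic} supplies it in the form you need. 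Worse, the paper explicitly identifies this as the crux: on the dlt model of \autoref{lem:dlt_model} one cannot even guarantee that $f^{-1}(\eta)\subseteq\Supp E$ for $\eta$ the generic point of $B$ (since $X$ is not $\bQ$-factorial, a $\bQ$-factorialization can ``take apart'' the points of $C\cap C'$), so the non-klt fiber on that first model may a priori fail to connect $S$ to $S'$.

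The paper's proof is designed precisely to manufacture the connectedness you are assuming. It runs a $(K_Y+E)$-trivial MMP on $Y$ over $X$ to reach a minimal model $g:(Z,F)\to X$, uses $K_Z+F\sim_{\bQ/X}0$ together with $g$-nefness of $K_Z$ and the negativity-type computation
\begin{equation*}
0<C\cdot(-F)=C\cdot K_Z\geq 0
\end{equation*}
to show $g^{-1}(\eta)\subseteq\Supp F$, passes to a second dlt model $h:(V,G)\to X$ where $h^{-1}(\eta)\subseteq\Supp G$ persists by $\bQ$-factoriality of $Z$, and only then invokes connectedness of the \emph{entire} fiber $h^{-1}(\eta)$, which is just Zariski's main theorem and valid in any characteristic. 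If you replace your appeal to Shokurov connectedness by this MMP detour (working over the generic point of $B$ rather than a general closed point), your argument closes up and coincides with the paper's; as written, it assumes the one statement the whole proof is about establishing.
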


\begin{proof}
Let $\eta$ be the generic point of $B$.  We work locally around $\eta$, so we may discard any closed set of $X$ away from it. Let $f: (Y, E) \to (X,D)$ be a $\bQ$-factorial dlt model which was shown to exist in \autoref{lem:dlt_model}. The idea of the proof is then simple: we want to choose  components $S$ and $S'$ of $E$ lying over $C$ and $C'$ such that $S \cap S' \neq \emptyset$. Then $S \cap S'$ would be a log canonical center of $(Y,E)$ mapping to $B$, which then would show that $B$ is indeed a log canonical center. The problem is that to guarantee that $S \cap S' \neq \emptyset$ we need that $f^{-1}(\eta) \subseteq \Supp E$, which is not necessarily true as $X$ is not $\bQ$-factorial (say $C$ and $C'$ are curves, and a $\bQ$-factorialization takes apart the intersection points of $C$ and $C'$). We remedy this situation by running an MMP on $Y$ over $X$ and then passing to another dlt model. 

So, let $g : Z \to X$ be the minimal model of $Y$ over $X$, let $F$ be the pushforward of $E$ to $Z$ and let $\xi : (V, G) \to (Z,F)$ be a dlt model with induced morphism $h: V \to X$. The notation is shown on the following diagram:
\begin{equation*}
\xymatrix@C=120pt{
& (V,G) \ar[d]_{\xi}^{\textrm{dlt-model}} \ar[ddl]|\hole_(0.2)h \\
(Y,E) \ar[d]_{\textrm{dlt-model}}^f \ar@{-->}[r]^(.3){\textrm{MMP on }Y/X} & (Z,F) \ar[dl]^g \\
(X,D)
}
\end{equation*}
Note that as $K_Y + E \sim_{\bQ/X} 0$, we have $K_Z + F \sim_{\bQ/X} 0$, and then also $K_V + G \sim_{\bQ/X} 0$. With other words, all the continuous arrows of the above diagram denote log-crepant birational morphisms. Also note that as $Y$ is klt and $\bQ$-factorial, so is $Z$. 

First, we claim that $g^{-1}(\eta) \cap \supp F \neq \emptyset$. Indeed, if $g^{-1}(\eta) \cap \supp F = \emptyset$, then $(Z,F)$ is klt along $g^{-1}(\eta)$, and hence so is $X$ at $\eta$ which is a contradiction. This proves our first claim. 

Second, we claim that $g^{-1}(\eta) \subseteq \Supp F$. Indeed, using the previous claim otherwise there is a curve $C \subseteq g^{-1}(B)$ such that $C$ is vertical over $X$ and $C \cap \Supp F \neq \emptyset$ but $C \not\subseteq \Supp F$. However, then
\begin{equation*}
0 < 
\underbrace{C \cdot (-F)}_{\textrm{by the choice of }C}
=
\underbrace{C \cdot K_Z}_{K_Z + F \sim_{\bQ/X} 0}
\geq 0,
\end{equation*}
which is a contradiction. This concludes our second claim.

Third, we claim that $h^{-1}(\eta) \subseteq \Supp G $ also holds. Let $U \subseteq V$ be the isomorphism locus of $\xi$. Then, by the previous step $U \cap h^{-1}(\eta) \subseteq \Supp G$. Furthermore, as $Z$ is $\bQ$-factorial, $\Exc(\xi) \subseteq \Supp G$, which then concludes our third claim too.

As $C$ is a maximal log canonical centers of $(X,D)$, there must be an irreducible component $S$ of $\Supp G$ such that $h(S) = C$. As $h^{-1}(\eta)$ is connected, we may choose $S$ such that it intersects some other component $S'$ of $\Supp G$ such that $S' \cap h^{-1}(\eta) \neq \emptyset$ and $h(S') \neq C$. However, then $S \cap S'$ is also a log canonical center, and $h(S \cap S')= B$. This concludes our proof.

\end{proof}

\section{Normalization and resolution after finite base-change}
\label{sec:finite_base_change}

\begin{lemma}
\label{lem:nice_resolution_over_Frobeniated_base_2}
\label{lem:Frobenius_base_change_general_fiber_normal}
\label{lem:log_canonical_centers_base_change}
Let $f : (X,D) \to T$ satisfy the following assumptions
\begin{enumerate}
\item we work over a base-scheme $S$, which is either 
$\bZ$ or an algebraically closed field $k$ of characteristic $p>5$,
\item all spaces are of finite type over $S$,
\item $T$ is normal
\item $f$ is flat with geometrically demi-normal fibers of dimension $2$,
\item $D$ avoids the generic and the singular codimension $1$ points of the fibers of $f$, 
\end{enumerate}
Then, there is a finite base-change $T' \to T$ from a normal variety and a regular open set $U \subseteq T'$, such that
\begin{enumerate}
 \item the normalization $\left(\overline{X_U}, \overline{D_U} \right)$ of $(X_U,D_U)$ (see notation in \autoref{sec:normalization}) has geometrically normal fibers, 
\item  $\left(\overline{X_{U}},\overline{D_{U}} \right)$ admits a log resolution $(Z_U,\Gamma_U) $ (meaning that $\Gamma_U$ is the sum of the strict transform of $\overline{D_{U}}$ and of the reduced exceptional divisor) such that all strata of $(Z_U,\Gamma_U)$ (including $Z$ itself) are smooth over $U$.
\item $\overline{X_{U}}$ admits a resolution $\phi_U : V_U \to \overline{X_{U}}$ such that $V_U$ is smooth over $U$ and over each point $u \in U$, $\phi_u$ is a minimal resolution.
\item If $K_{X/T} + D$ is $\bQ$-Cartier, $\eta$ denotes the generic point of $T$, and the bars denote normalization as usually (see \autoref{sec:normalization}), then the  log canonical centers of $\left( X_{\overline{\eta}}, D_{\overline{\eta}} \right)$ (resp. $\left( \overline{X_{\overline{\eta}}}, \overline{D_{\overline{\eta}}} \right)$) are exactly the components of the base-changes of the horizontal log canonical centers of 
$\left(X_{U}, D_{U} \right)$ (resp. of $\left(\overline{X_{T'}}, \overline{D_{T'}} \right)$).
\end{enumerate}
\end{lemma}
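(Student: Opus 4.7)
The idea is to absorb the positive-characteristic obstructions --- inseparability of the generic fiber and the failure of normalization to commute with base change --- into a single finite cover $T'\to T$, and then to obtain the desired resolutions and the correspondence of log canonical centers by spreading out from the generic point of $T'$. I describe the equicharacteristic $p>5$ case; the mixed characteristic case is handled analogously on each connected component of $T$.

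Fix a generic point $\eta$ of $T$. First I would choose a finite purely inseparable extension $L\supseteq k(\eta)$ such that the normalization $\overline{X_\eta\otimes_{k(\eta)}L}$ is geometrically normal over $L$. Such an $L$ exists because $X_\eta$ is of finite type: the normalization of $X_\eta\otimes_{k(\eta)}k(\eta)^{1/p^\infty}$ is automatically geometrically normal (its base is perfect), and by Noetherianity it already descends to some $k(\eta)^{1/p^e}$, which I take as $L$. Let $T'\to T$ be the normalization of $T$ in $L$; it is finite and $T'$ is normal. By construction the generic fiber of $\overline{X_{T'}}\to T'$ is geometrically normal over $k(\eta')$, and by standard spreading-out the same holds over a dense open $U_1\subseteq T'$. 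Intersecting $U_1$ with the regular locus of $T'$ yields a regular open $U$ satisfying (1).

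Next I would fix a log resolution $(Z_{\eta'},\Gamma_{\eta'})\to\left(\overline{X_{\eta'}},\overline{D_{\eta'}}\right)$ and a minimal resolution $V_{\eta'}\to\overline{X_{\eta'}}$; both exist for normal surface pairs in arbitrary characteristic. Spread them out to families $Z_U$ and $V_U$ over $U$. Because smoothness is an open condition on families of finite type and each stratum of $(Z_{\eta'},\Gamma_{\eta'})$, as well as $Z_{\eta'}$ and $V_{\eta'}$ themselves, is smooth over $k(\eta')$ by construction, after shrinking $U$ all these objects become smooth over $U$, giving (2). The condition that $V_u$ contains no exceptional $(-1)$-curve over $\overline{X_u}$ is open on $U$ and holds at $\eta'$, so a further shrink makes $\phi_u$ a minimal resolution for every $u\in U$, giving (3). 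For (4), the $\bQ$-Cartier hypothesis on $K_{X/T}+D$ allows discrepancies to be computed on the family $(Z_U,\Gamma_U)$; these discrepancies are locally constant on horizontal strata, so the log canonical centers of $\left(\overline{X_{\overline{\eta}}},\overline{D_{\overline{\eta}}}\right)$ are exactly the geometric generic fibers of horizontal log canonical centers of $\left(\overline{X_{T'}},\overline{D_{T'}}\right)$. The analogous statement for $(X_{\overline{\eta}},D_{\overline{\eta}})$ follows by pulling back through the normalization morphism and applying the standard correspondence of log canonical centers for slc pairs.

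The main obstacle is the very first step: showing that a \emph{single} finite purely inseparable extension of $k(\eta)$ suffices to stabilize the geometric normalization. This is a finiteness statement (the normalization of a finite-type scheme over $k(\eta)^{1/p^\infty}$ descends to some $k(\eta)^{1/p^e}$), but it is precisely what forces the passage to the finite cover $T'\to T$; all subsequent steps are standard spreading-out together with openness of smoothness on families.
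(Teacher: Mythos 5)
Your overall strategy --- absorb the generic-fibre pathologies into one finite base change $T'\to T$ and then spread out over an open set --- is the same as the paper's, but there is a genuine gap in the middle step. After passing to your purely inseparable $L$ you construct the log resolution $(Z_{\eta'},\Gamma_{\eta'})$ and the minimal resolution $V_{\eta'}$ over the field $k(\eta')$ and assert that they, and all strata, are ``smooth over $k(\eta')$ by construction.'' This is false over an imperfect field: resolution of singularities produces \emph{regular} schemes, and a regular scheme of finite type over a non-perfect field need not be smooth (already for curves, e.g.\ $y^2=x^p-t$ over $\bF_p(t)$). Geometric normality of $\overline{X_{\eta'}}$ does not rescue this, since the exceptional curves of a resolution can perfectly well be regular but not geometrically regular. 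Consequently your spreading-out step has nothing smooth to spread out from, and conclusions (2) and (3) do not follow. The paper avoids this by reversing the order of operations: it constructs the normalization, the log resolution with its strata, and the minimal resolution over the algebraic closure $\overline{k(\eta)}$ (where finite type plus regular does imply smooth), and only then chooses a single finite extension $L/k(\eta)$ --- not necessarily purely inseparable --- over which \emph{all} of these objects, morphisms and strata descend simultaneously. Your $L$ is engineered only to make the normalization geometrically normal, which is not enough; the fix is to enlarge $L$ so that the resolutions constructed over $\overline{k(\eta)}$ also descend.

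A second, smaller issue concerns conclusion (3): you claim that ``$V_u$ contains no exceptional $(-1)$-curve over $\overline{X_u}$'' is an open condition on $U$, but this is not a standard openness statement ($\phi_u$-nefness of $K_{V_u}$ is not open in families in general). The paper instead invokes Tanaka's abundance for surfaces to get that $\omega_{V_L}^m$ is globally generated (relatively) for some fixed $m$, spreads out this global generation over an open $U$, and deduces minimality of each $\phi_u$ from the fibrewise global generation of $\omega_{V_u}^m$. Your treatment of (1) and of (4) matches the paper's argument.
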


\begin{proof}
Let $\eta$ be the generic point of $T$. Let $Y_{\overline{\eta}}$ be the normalization of $X_{\overline{\eta}}$ and $E_{\overline{\eta}}$ the  divisor on $Y_{\overline{\eta}}$ induced by $D_{\overline{\eta}}$ (as in \autoref{sec:normalization}). Let $\left(Z_{\overline{\eta}}, \Gamma_{\overline{\eta}} \right)$ be a log-resolution of  $\left( Y_{\overline{\eta}},E_{\overline{\eta}} \right)$, and let $\phi_{\overline{\eta} }: V_{\overline{\eta}} \to X_{\overline{\eta}}$ be a minimal resolution. Then, there is a finite extension $L$ of $k(\eta)$ such that all the above spaces, divisors, maps between the spaces and  the strata of the log resolution, descend over $L$, giving us $(Z_L, \Gamma_L) \to (Y_L, E_L) \to (X_L,D_L)$ and $\phi_L : V_L \to Y_L$ where the first map is a log-resolution, the second one is normalization, and the third one is a minimal resolution. In particular, there is an integer $m>0$ such that $\omega_{V_L}^m$ is globally generated \cite[Thm 1.2]{Tanaka_Minimal_models_and_abundance_for_positive_characteristic_log_surfaces}.

 Set then $T'$ to be  the normalization of $T$ in $L$, and $(Z_{T'}, \Gamma_{T'}) \to (Y_{T'}, E_{T'}) \to (X_{T'},D_{T'})$ and $\phi_T : V_{T'} \to Y_{T'}$ any extension of $(Z_L, \Gamma_L) \to (Y_L, E_L) \to (X_L,D_L)$ and $\phi_L : V_L \to Y_L$ over $L$. Let $U \subseteq T'$ be a non-empty open set such that
\begin{itemize}
 \item  over $U$ all spaces considered are flat,
\item $U$ is regular,
\item over $U$, the geometric fibers of $Y_{T'} \to T'$ are the normalizations of the geometric fibers of $X_{T'} \to T'$ (which is an open condition by \cite[Thm 12.2.4.iv]{Grothendieck_Elements_de_geometrie_algebrique_IV_III} and \cite[Cor 6.5.4]{Grothendieck_Elements_de_geometrie_algebrique_IV_II}),
\item over $U$, the strata of $(Z_{T'}, \Gamma_{T'}) \to T'$ are smooth,
\item over $U$, $V_{T'} \to T'$ is smooth, and
\item over $U$, $\omega_{V_{T'}}^m$ is generated over $T'$ (it is generated at the generic point,  so such open set exists), and hence for each $u \in U$, $\phi_u$ is a minimal resolution.
\end{itemize}
$Y_U$  is then normal \cite[Cor 6.5.4]{Grothendieck_Elements_de_geometrie_algebrique_IV_II} and $Z_U$ and $V_U$ are regular \cite[Prop 6.5.1]{Grothendieck_Elements_de_geometrie_algebrique_IV_II}, which concludes the proof of all statements but the one about log canonical centers. 

For the statement about log canonical centers, note that $\overline{X_{U}}=Y_{U}$, $\overline{D_U}=E_U$, $\overline{X_{\overline{\eta}}} = Y_{\overline{\eta}}$ and $\overline{D_{\overline{\eta}}} = E_{\overline{\eta}}$. Then, as the log canonical centers of demi-normal varieties are just the images of the log canonical centers of the normalizations (using the correct boundary), we may prove only the case of the normalization, saying that the horizontal log canonical centers of $\left(Y_{U}, E_{U}\right)$ yield the ones of $\left(Y_{\overline{\eta}},E_{\overline{\eta}} \right)$. For that let $\Delta_U$ be the $\bQ$-divisor on $Z_U$ that makes $(Z_U, \Delta_U)$ a crepant resolution of $(Y_U, E_U)$. Then $\left( Z_{\overline{\eta}}, \Delta_{\overline{\eta}}\right)$ is again a crepant resolution of $\left(Y_{\overline{\eta}}, E_{\overline{\eta}} \right)$. Furthermore, the horizontal log canonical centers of $(Y_U, E_U)$ are just the images of the horizontal prime divisors $G$ on $Z_U$ for which $ \coeff_G \Delta_U=1$. The components of $G_{\overline{\eta}}$ are then exactly the prime divisors $H$ on $Z_{\overline{\eta}}$ for which $\coeff_H \Delta_{\overline{\eta}}=1$. Furthermore, the  images of these $H$'s in $Y_{\overline{\eta}}$ are exactly the log canonical centers of $\left(Y_{\overline{\eta}},D_{\overline{\eta}} \right)$. This concludes our proof. 
\end{proof}

\begin{remark}
The reason why the above lemma is needed is that a resolution of singularities of $X$ might not yield a resolution of the fibers.  A local model of this phenomenon is given by the family $X:= \Spec\frac{k[x,y,z, t]}{(x^p + y^2 + z^2 + t)} \to T:= \Spec k[t]$ for $p>2$. The fibers of this family are $A_{p-1}$ rational double point singularities, however the total space is smooth. So, let $\tau: Z \to X$ be a resolution. Then, by the smoothness of $Z$,  $\tau^* K_X + E = K_Z$ for some effective divisor $E$ such that $\Supp E = \Exc(\tau)$. However, then for a general $t \in T$,  $\tau_t^* K_{X_t} + E_t = K_{Z_t}$, such that $\Supp E_t = \Exc(\tau_t)$. Assume now that $Z_t$ is smooth. Then $\tau_t$ factors through the minimal resolution: 
\begin{equation*}
\xymatrix{
 \ar@/^1pc/[rr]^{\tau_t} \ar[r]_{\alpha} Z_t & W \ar[r]_{\beta} & X_t
}
\end{equation*}
In particular, $K_{Z_t} = \alpha^* K_W + F$ for some $\alpha$-exceptional effective divisor $F$. However, $K_W = \beta^* K_{X_t}$, since $X_t$ has rational double point singularities. It follows then that $E_t =F$ and therefore $E_t$ is $\alpha$-exceptional. This contradicts the $\Supp E_t = \Exc(\tau_t)$ condition. It follows then that  $Z_t$ cannot be smooth.
\end{remark}

\begin{corollary}
\label{lem:nice_resolution_over_Frobeniated_base}
Let $f : X \to T$ be a surjective morphism from a normal, projective threefold onto  a smooth, projective curve 
with demi-normal geometric generic fiber
over an algebraically closed field $k$ of positive characteristic. Then there is a finite base-change $T' \to T$ from a smooth projective curve, a non-empty open set $U \subseteq T'$ and proper birational maps $Z_{T'} \to Y_{T'} \to X_{T'}$ where the first is a resolution and the second is a normalization such that for every $u \in U$: $Y_u \to X_u$ is a normalization and $Z_u \to Y_u$ is a minimal resolution.
\end{corollary}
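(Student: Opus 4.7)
The plan is to derive this corollary directly from \autoref{lem:nice_resolution_over_Frobeniated_base_2} applied to an open part of the base, then to compactify the resulting base change back to a smooth projective curve.

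First I would check that the hypotheses of \autoref{lem:nice_resolution_over_Frobeniated_base_2} are satisfied over a dense open part of $T$. Since $X$ is normal and connected it is irreducible, and any surjective morphism from an irreducible scheme onto a regular one-dimensional scheme is automatically flat; so $f$ is flat. The geometric generic fiber is demi-normal by hypothesis, and demi-normality of geometric fibers of a flat proper family is an open condition (being $S_2$ is open by \cite[Thm 12.2.1]{Grothendieck_Elements_de_geometrie_algebrique_IV_III}, and being nodal at the codimension one points is an open condition on the fibers as well). Thus there exists a non-empty open $T_0\subseteq T$ over which every geometric fiber of $f$ is demi-normal of dimension $2$. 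Then \autoref{lem:nice_resolution_over_Frobeniated_base_2} applied to $f|_{X_{T_0}}\colon X_{T_0}\to T_0$ with $D=0$ yields a finite morphism $S\to T_0$ from a normal variety, a regular open set $U\subseteq S$, a normalization $Y_S\to X_S$ and a resolution $V_S\to Y_S$ such that for each $u\in U$, $Y_u\to X_u$ is a normalization and $V_u\to Y_u$ is a minimal resolution.

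Next I would compactify the base. Let $T'$ denote the normalization of $T$ inside the function field $k(S)$. Since $T$ is projective and $T'\to T$ is finite, $T'$ is proper; being a one-dimensional normal $k$-variety, $T'$ is automatically a smooth projective curve, and by Zariski's main theorem the composition $S\hookrightarrow T'$ is an open immersion containing $U$. Setting $Y_{T'}$ to be the normalization of $X_{T'}$ gives an extension of $Y_S\to X_S$. To extend $V_S\to Y_S$, I would use resolution of singularities for threefolds in positive characteristic (Cossart--Piltant) applied to the closure of the graph of $V_S\dashrightarrow Y_{T'}$ inside some projective compactification, resolving any remaining singularities over the complement of $U$; this produces a proper birational $Z_{T'}\to Y_{T'}$ whose restriction over $U$ agrees with $V_U\to Y_U$, hence is a minimal resolution on each fiber $u\in U$.

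The only non-routine ingredient is threefold resolution of singularities in characteristic $p>0$, needed because the resolution constructed by the preceding lemma only lives over the quasi-projective base $S$, and we must extend it across the finitely many points of $T'\setminus S$ while preserving its behavior over $U$. Since we only require the fiberwise properties over $U$, this extension can be performed freely on the complementary locus, so the extension step presents no serious obstacle beyond invoking Cossart--Piltant; everything else is a repackaging of \autoref{lem:nice_resolution_over_Frobeniated_base_2}.
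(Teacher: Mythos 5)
Your proposal is correct and follows essentially the same route as the paper: restrict to the open locus where the geometric fibers are demi-normal (flatness over the smooth curve being automatic for the normal, hence integral, total space), apply \autoref{lem:nice_resolution_over_Frobeniated_base_2}, compactify the base change to a smooth projective curve, take $Y_{T'}$ to be the normalization of $X_{T'}$, and extend the resolution across the boundary via Cossart--Piltant/Cutkosky threefold resolution chosen to be an isomorphism over $U$. The only difference is that you spell out the openness of demi-normality and the graph-closure compactification, which the paper leaves implicit.
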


\begin{proof}

First replace $T$ with the open set where the geometric fibers are demi-normal, and apply \autoref{lem:nice_resolution_over_Frobeniated_base_2}. Then replace the $T'$ obtained this way with its normal, projective compactification. Let $Y_{T'}$ be the normalization of $X_{T'}$ and let $Z_{T'}$ be a compactification of $V_U$ to a variety over $Y_{T'}$. Finally replace $Z_{T'}$ by a desingularization which is an isomorphism over $U$ \cite{Cossart_Piltant_Resolution_of_singularities_of_threefolds_in_positive_characteristic_I,Cossart_Piltant_Resolution_of_singularities_of_threefolds_in_positive_characteristic_II,Cutkosky_Resolution_of_singularities_for_3_folds_in_positive_characteristic}.
\end{proof}

\section{Semi-positivity upstairs}
\label{sec:upstairs}

The following is the main result of this section. In characteristic zero, it is usually shown as a consequence of  \autoref{thm:relative_canonical_pushforward_nef}. However, our approach goes the opposite way.  First, we show the weaker statement of \autoref{thm:relative_canonical_nef}, which we then use in the proof of \autoref{thm:relative_canonical_nef}

\begin{theorem}
\label{thm:relative_canonical_nef}
If $(X, D) \to T$ is a family of stable log-surfaces (see \autoref{def:stable_things}) over a proper, normal base scheme  over an algebraically closed field $k$  of characteristic $p>5$ and the coefficients of $D$ are greater than $5/6$, then   $K_{X/T}+D$ is nef. 
\end{theorem}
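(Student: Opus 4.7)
The plan is to reduce the nefness of $K_{X/T}+D$ to the $F$-regular semi-positivity statement of \cite[Thm 3.16]{Patakfalvi_Semi_positivity_in_positive_characteristics} via the MMP dictionary developed in \autoref{sec:MMP} and \autoref{sec:finite_base_change}. Nefness is detected by intersecting with irreducible curves $C \subset X$, and curves contained in a fiber $X_t$ pose no problem because $K_{X_t}+D_t$ is ample by assumption. A horizontal $C$ dominates some curve $B \subset T$; base-changing along the normalization of $B$ preserves nefness and reduces the problem to the case where $T$ is a smooth projective curve.

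Next, by \autoref{lem:nice_resolution_over_Frobeniated_base} I would perform a further finite base-change of $T$ (harmless for nefness) so that the normalization $\nu \colon (\overline{X}, \overline{D}) \to (X, D)$ has geometrically normal fibers on a big open set and admits fiberwise-nice resolutions. Since $\nu^*(K_{X/T}+D)=K_{\overline{X}/T}+\overline{D}$ (see \autoref{sec:normalization}), it is enough to show nefness of $K_{\overline{X}/T}+\overline{D}$. Applying \autoref{lem:total_space_log_canonical} at every closed $t\in T$ --- each $(X_t,D_t)$ being slc --- yields that $(\overline{X},\overline{D})$ is log canonical on the total space.

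The third step would be to pass to a dlt model. As the coefficients of $\overline{D}$ exceed $5/6$, \autoref{lem:lct} applied fiberwise shows that $(\overline{X}, \lceil \overline{D} \rceil)$ is also lc, and by \autoref{lem:dlt_model} there is a $\bQ$-factorial dlt model $g \colon (Y, \Gamma) \to (\overline{X}, \lceil \overline{D} \rceil)$ with $\Gamma$ reduced. The lc condition on the target combined with the dlt-model requirement $a(E;\overline{X},\lceil \overline{D} \rceil) \leq -1$ forces $a(E;\overline{X},\lceil \overline{D} \rceil)=-1$ for every exceptional $E$, so $g$ is crepant: $K_{Y/T}+\Gamma = g^*(K_{\overline{X}/T}+\lceil \overline{D} \rceil)$. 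Nefness of $K_{\overline{X}/T}+\overline{D}$ then follows from nefness of $K_{Y/T}+\Gamma$ after bookkeeping for the effective remainder $\lceil \overline{D} \rceil-\overline{D}$ via a small $\epsilon$-perturbation, using the convexity of nefness on the nef cone.

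Finally, for every $0 < \epsilon \ll 1$ the pair $(Y,(1-\epsilon)\Gamma)$ is klt, and in characteristic $p>5$ the klt surface singularities on the fibers $(Y_t,(1-\epsilon)\Gamma_t)$ are strongly $F$-regular (Hara). Hence \cite[Thm 3.16]{Patakfalvi_Semi_positivity_in_positive_characteristics} applies to the klt family $(Y,(1-\epsilon)\Gamma) \to T$ and yields nefness of $K_{Y/T}+(1-\epsilon)\Gamma$; letting $\epsilon \to 0$ gives nefness of $K_{Y/T}+\Gamma$, which then descends to the desired nefness of $K_{X/T}+D$. The main obstacle is assembling the three-dimensional MMP tools in characteristic $p>5$ --- dlt blowups, lc inversion of adjunction, and the finite-base-change trick forcing a resolution to behave fiberwise --- together with the bookkeeping between $\overline{D}$ and $\lceil \overline{D} \rceil$ needed in the third step, which is precisely where the ``greater than $5/6$'' hypothesis on the coefficients of $D$ is consumed via \autoref{lem:lct}.
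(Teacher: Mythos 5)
Your overall strategy (reduce to a smooth projective curve, normalize after a finite base change, get the total space log canonical via inversion of adjunction, pass to a higher model, perturb to klt, and invoke \cite[Thm 3.16]{Patakfalvi_Semi_positivity_in_positive_characteristics}) matches the paper's, but the specific route through a dlt model of the rounded-up pair has a genuine gap, in two places. First, to even form the dlt model of $\left(\oX, \lceil \oD\rceil\right)$ via \autoref{lem:dlt_model} you need $K_{\oX}+\lceil\oD\rceil$ to be $\bQ$-Cartier and the pair to be log canonical \emph{on the threefold total space}. \autoref{lem:lct} only delivers this on the surface fibers (where $\bQ$-factoriality of numerically klt surfaces is available); there is no mechanism in your argument to globalize it, and the paper never does: it only ever uses the round-up fiberwise. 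Second, and more decisively, even granting the dlt model, its crepancy gives nefness of $g^*(K_{\oX/T}+\lceil\oD\rceil)$ and hence of $K_{\oX/T}+\lceil\oD\rceil$; but $K_{\oX/T}+\oD$ is obtained from this by \emph{subtracting} the effective divisor $\lceil\oD\rceil-\oD$, and nefness is not preserved under subtracting effective divisors. Convexity of the nef cone cannot rescue this: you would need a second nef class lying "below" $K_{\oX/T}+\oD$ (e.g.\ $K_{\oX/T}$ itself nef), which is exactly what is being proved.

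The paper avoids both problems by never changing the boundary on the base of the fibration. It takes a resolution $W\to\oX$ that is a fiberwise \emph{minimal} resolution over general $t$ (this is what \autoref{lem:nice_resolution_over_Frobeniated_base} buys), runs an MMP over $\oX$ to a terminal $\bQ$-factorial relative minimal model $Z$, and works with the crepant pullback $\Delta_Z$ of $K_{\oX}+\oD$ itself. The round-up enters only through the fiberwise comparison $\Delta_Z\le\Delta_Z'$ (where $\Delta_Z'$ is crepant for $\lceil\oD\rceil$), used to verify that $(\Delta_Z)_t$ is normal crossing with coefficients at most $1$ for general $t$ via \cite[Thm 4.15]{Kollar_Mori_Birational_geometry_of_algebraic_varieties}; this is where the $5/6$ bound is consumed, exactly as you guessed. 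The perturbation is then applied to $\Delta_Z$, i.e.\ to $(Z,(1-\varepsilon)\Delta_Z)$, which is crepant-close to $(\oX,\oD)$, so letting $\varepsilon\to 0$ descends to nefness of $K_{\oX/T}+\oD$ with no round-up to undo. A further point your plan leaves unaddressed: the hypotheses of \cite[Thm 3.16]{Patakfalvi_Semi_positivity_in_positive_characteristics} concern the \emph{general fibers} (strong $F$-regularity and semi-ampleness of $K_{Z_t}+(1-\varepsilon)(\Delta_Z)_t$, the latter via Tanaka's abundance for surfaces), and your abstractly constructed dlt model gives no control over its general fibers; the paper's fiberwise-minimal-resolution setup is precisely what guarantees that the general fiber of $Z$ is log smooth.
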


\begin{proof}
First, note that since nefness is decided on curves, according to \autoref{lem:stable_family_base_change} we may assume that $T$ is a smooth, projective curve over $k$. 
According to \autoref{lem:Frobenius_base_change_general_fiber_normal} and \autoref{lem:stable_family_base_change} again,  we may also assume that the normalization $\phi : \oX \to X$ has normal general fibers over $T$, and 
there is a resolution $h : W \to \overline{X}$, such that $W_t \to \oX_t$ is the minimal resolution for $t \in T$ general. Let $\oD$ be such that 
\begin{equation}
\label{eq:relative_canonical_nef_curve_base_lc_general_fiber:normalization}
 K_{\oX} + \oD = \phi^* (K_X + D). 
\end{equation}
That is $\oD$ is the conductor plus $\phi^* D$. Accoding to \autoref{lem:total_space_log_canonical} $(\oX, \oD)$ is log canonical.

Define $\Delta_W$ via the equation
\begin{equation}
\label{eq:h}
K_W + \Delta_W = h^* \left( K_{ \oX} + \oD \right),
\end{equation}
and  run MMP on $W$ over $\oX$ \cite[Thm 1.2]{Birkar_Existence_of_flips_and_minimal_models_for_3_folds_in_char_p}. This yields a relative minimal model $\xymatrix{W \ar@{-->}[r]^\xi & Z  \ar[r]^g & \oX}$. If one defines $\Delta_Z:= \xi_* \Delta_W$,  pushing forward  \autoref{eq:h} via $\xi$ yields
\begin{equation}
\label{eq:g}
K_Z + \Delta_Z = g^* \left( K_{\oX} + \oD \right).
\end{equation}
Since $W_t \to \oX_t$ is a minimal resolution, for $t \in T$ general the following two facts hold.
\begin{itemize}
 \item $\xi_t: W_t \dashrightarrow Z_t$ is an isomorphism, because: First, $W \to \oX$ is a minimal model over $\of^{-1}T_0$ for the non-empty open set $T_0 \subseteq T$ over which $W_t \to T_t$ is a minimal resolution. Second, the minimal model given by  \cite[Thm 1.2]{Birkar_Existence_of_flips_and_minimal_models_for_3_folds_in_char_p} in the case of smooth $W$ and no boundary divisor is simply obtained by running a generalized MMP on $W$ over $\oX$, according to   \cite[1st 4 paragraphs of the proof of Thm 1.2]{Birkar_Existence_of_flips_and_minimal_models_for_3_folds_in_char_p}.
However, a run of the generalized MMP does not change  $W$ over $\of^{-1} T_0$, because in each step the contraction morphisms are just the identities over $\of^{-1} T_0$.
So, in particular $\xi$ is an isomorphism over $T_0$.

\item $\left( \Delta_Z\right)_t$ is   effective (not necessarily simple) normal crossing and has coefficients at most $1$, because: First, $\left(\oX_t, \oD_t  \right)$ is log canonical  by the semi-log canonical assumption on $(X_t,D_t)$ and the normality of $\oX_t$, and then  $\left(\oX_t, \left\lceil \oD_t \right\rceil \right)$ is also log canonical by \autoref{lem:lct}. Since, $g_t : Z_t \to \oX_t$ is a minimal resolution by the previous point, effectivity of $\left( \Delta_Z\right)_t$ follows.  Furthermore, define $\Delta_Z'$ on $Z$ such that 
\begin{equation}
\label{eq:g_2}
K_Z + \Delta_Z' = g^* \left( K_{\oX} + \left\lceil \oD \right\rceil \right)
\end{equation}
It follows that $\Delta_Z' \geq \Delta_Z$.  In particular, it is enough to prove that $\left( \Delta_Z' \right)_t$ is normal crossing and has coefficients at most $1$. 
 However, this is shown in \cite[Thm 4.15]{Kollar_Mori_Birational_geometry_of_algebraic_varieties} (which works in arbitrary characteristic), since  $\left( \oX_t, \left\lceil \oD_t \right\rceil \right) = \left( \oX_t, \left\lceil \oD \right\rceil_t \right)$ is log canonical, $\left\lceil \oD_t \right\rceil$ has only coefficients $1$ and  $K_{Z_t} + \left( \Delta_Z' \right)_t = g_t^* \left( K_{\oX_t} +  \lceil \oD_t \rceil \right)$ by \autoref{eq:g_2}.
\end{itemize}
Similarly one can prove that  $\Delta_Z$ itself is effective: Since $K_Z$ is $g$-nef, $\Delta_Z$ is $g$-anti-nef according to \autoref{eq:g}. As $g_* \Delta_Z = \oD \geq 0$ also holds, $\Delta_Z$ is effective by the negativity lemma.  In particular, $(Z,\Delta_Z)$ is a pair (including that $K_Z + \Delta_Z$ is $\bQ$-Cartier), and since $Z$ is $\bQ$-factorial, so is $(Z, (1- \varepsilon) \Delta_Z)$ for every rational number $0< \varepsilon \ll 1$. Note that $K_Z + (1- \varepsilon) \Delta_Z$ is $g$-nef, because $\Delta_Z$ is $g$-anti-nef and $K_Z + \Delta_Z \equiv_g 0$ by \autoref{eq:g}

Since $Z$ is terminal and $\bQ$-factorial, and $(Z, \Delta_Z)$ is log-canonical by \autoref{eq:g}, $\left(Z, (1- \varepsilon)\Delta_Z \right)$ is klt. Hence, \cite[Thm 1.4]{Birkar_Existence_of_flips_and_minimal_models_for_3_folds_in_char_p} implies that $K_Z + (1-\varepsilon)\Delta_Z$ is $g$-semi-ample. Choose now $q>0$ and $m>0$ divisible enough integers such that $q(K_Z + (1- \varepsilon)\Delta_Z)$ is a $g$-globally generated Cartier divisor and 
\begin{equation*}
\sF:=\sO_{\oX}\left(m \left(K_{\oX} + \oD \right) \right) \otimes g_* \sO_Z(q(K_Z + (1- \varepsilon)\Delta_Z)) 
\end{equation*}
 is $\of$ globally generated. Then the composition of the following induced homomorphisms show that $\Gamma:=q (K_Z + (1- \varepsilon) \Delta_Z) +  g^* m \left(K_{\oX} + \oD \right)$ is $ \of \circ g$-nef. 
\begin{equation*}
g^* \of^* \of_* \sF 
 \twoheadrightarrow  g^*\sF 
 \twoheadrightarrow \sO_Z(\Gamma)
\end{equation*}
In particular,  using \autoref{eq:g},  $K_Z + (1- \varepsilon) \Delta_Z$ is $\of \circ g$-nef for every $0< \varepsilon \ll 1$ (here we decrease our earlier $\varepsilon$). 

For $t \in T$ general, there are  two further facts that are important at this point:
\begin{itemize}
 \item $(Z_t, (1- \varepsilon) (\Delta_Z)_t)$ has  strongly $F$-regular singularities, since $(\Delta_Z)_t$ has normal crossing singularities, and the coefficients of $(1- \varepsilon) (\Delta_Z)_t$ are smaller than $1$, and
\item according to \cite[Thm 1.2]{Tanaka_Minimal_models_and_abundance_for_positive_characteristic_log_surfaces} $K_{Z_t} + (1 - \varepsilon)(\Delta_Z)_t$ is semi-ample.
\end{itemize}
 Hence, 
\cite[Thm 3.16]{Patakfalvi_Semi_positivity_in_positive_characteristics} applies to $(
Z, (1-\varepsilon) \Delta_Z) \to T$ yielding that $K_{Z/T} + (1- \varepsilon) \Delta_Z$ is nef for every $0< \varepsilon \ll 1$. However, then $K_{Z/T} + \Delta_Z$ is also nef, and according to \autoref{eq:g} so is $K_{\oX/T} + \oD$ and by \autoref{eq:relative_canonical_nef_curve_base_lc_general_fiber:normalization}, so is $K_{X/T} +D$. 
\end{proof}


\section{Frobenius stable sections}
\label{sec:Frobenius_stable}

Having proved \autoref{thm:relative_canonical_nef}, we start working towards  \autoref{thm:relative_canonical_pushforward_nef}. As explained in \autoref{sec:outline}, one of the main challanges at this point is to show lifting statements for arbitrary Cartier indices, including ones divisible by $p$.  We work the general machinery of this out in the present section. 

\emph{In \autoref{sec:Frobenius_stable}, the base field $k$ is a perfect field of arbitrary characteristic $p>0$.} 

\subsection{Lifting sections}
\label{sec:lifting}

\begin{notation}
\label{notation:pair_standard}
Let $(X,  \Delta)$ be a pair (, which is assumed to have an $S_2$ and $G_1$ underlying space in \autoref{sec:basic_definitions}). Define then for each integer $e>0$, $\sL_{e ; X, \Delta}:= \sO_X(\lceil(1-p^e)(K_X + \Delta)\rceil)$, which is denoted simply by $\sL_e$ if $(X, \Delta)$ is clear from the context. Notice also the following natural injection.
\begin{multline}
\label{eq:L_e_X_Delta}
\underbrace{\sL_{e } [\otimes] F^{e'-e}_* \sL_{e'-e }}_{ \parbox{110pt}{\tiny $[\otimes]$ denotes the reflexive tensor product, i.e., taking tensor product and then reflexive hull}}
 \cong \underbrace{F^{e'-e}_*  \left( \sL_{e'-e } [\otimes] \left(F^{e'-e}\right)^{[*]} \sL_{e } \right)}_{\parbox{140pt}{\tiny projection formula, \cite[Thm 1.9]{Hartshorne_Generalized_divisors_on_Gorenstein_schemes}, \cite[Prop 5.4]{Kollar_Mori_Birational_geometry_of_algebraic_varieties}}}
\\ \cong F^{e'-e}_* \sO_X \left( \lceil (p^{e} (1-p^{e'-e}) (K_X + \Delta) \rceil + \lceil (1-p^{e})( K_X + \Delta) \rceil \right)
\\
\hookleftarrow 
F^{e'-e}_* \sO_X \left( \lceil  (1-p^{e'})( K_X + \Delta) \rceil \right)
\cong F^{e'-e}_* \sL_{e' }
\end{multline}
Furthermore, fix  integers $i>0$ and $j>0$, such that $r | p^i (p^j -1)$, where $r$ is the Cartier index of $K_X + \Delta$. Note then that for $e = i + q j$,
\begin{equation}
\label{eq:factoring_r_out}
\lceil (1-p^e)(K_X + \Delta) \rceil= \left\lceil \left( \left(1-p^i\right) + p^i \left( 1- p^{qj}\right) \right)(K_X + \Delta)  \right\rceil 
= 
\underbrace{\lceil  (1-p^i) (K_X + \Delta) \rceil  + p^i ( 1- p^{qj}) (K_X + \Delta)  }_{r  | p^i ( 1- p^{qj})}.
\end{equation}

\end{notation}

\begin{definition}
\label{definition:trace}
In the situation of \autoref{notation:pair_standard}, for every $e>0$ there is an induced trace map $\Tr_{e ; X,\Delta}$ obtained as the following composition.
\begin{equation*}
\xymatrix{
\ar@/_2pc/[rrrrr]^{\Tr_{e; X,\Delta}} F^{e}_* \sL_{e }:=F^{e}_* \sO_X(\lceil(1-p^{e})(K_X + \Delta)\rceil) \ar[rr]^(0.6){\cdot \lfloor(p^{e} -1)\Delta \rfloor}
& & F^{e}_* \sO_X((1-p^{e })K_X ) \ar[rrr]^(0.65){\Tr_{F^e}}  & & & \sO_X
}
\end{equation*} 
When $(X,\Delta)$ is clear from the context we write $\Tr_e$ for $\Tr_{e ; X,\Delta}$.
\end{definition}

The main difference between \autoref{definition:trace} and the  usual definition is that we do not require $(1-p^e)(K_X + \Delta)$ to be a $\bZ$-divisor. In particular $\sL_e$ in general, especially if the Cartier index of $\Delta$ is divisible by $p$, is not a line bundle, only a rank $1$ invertible sheaf, which is locally free in codimension $1$. 
\begin{fact}
\label{fact:factorization}
The trace  maps of \autoref{definition:trace} factor through each other as 
\begin{equation*}
\xymatrix@C=80pt{
\ar@/_2pc/[rrr]^{\Tr_{e'}} F^{e' }_* \sL_{e'} \ar@{^(->}[r]^{\textrm{\autoref{eq:L_e_X_Delta}}}   & F^e_* \left( \sL_{e } [\otimes] F^{e'-e}_* \sL_{e'-e } \right) \ar[r]^-{F^{e}_* \left( \id_{\sL_{e}} \otimes \Tr_{e'-e } \right)} &  F^{e }_* \sL_{e}  \ar[r]^{\Tr_{e }} & \sO_X
}.
\end{equation*}
 \end{fact}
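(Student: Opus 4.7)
My plan is to verify the claimed equality of maps $F^{e'}_*\sL_{e'}\to\sO_X$ on a suitable big open subset of $X$, and then to extend to all of $X$ by reflexivity. Since $\sO_X$ is $S_2$ (its underlying scheme is $S_2$ and $G_1$ by the pair assumption), the restriction map $\Hom(F^{e'}_*\sL_{e'},\sO_X)\to\Hom(F^{e'}_*\sL_{e'}|_U,\sO_U)$ is injective for any big open $U\subseteq X$; so it is enough to check the factorization over such a $U$.

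For $U$ I would take the intersection of the Gorenstein locus of $X$ with the loci where each of the Weil divisors $\lceil(1-p^i)(K_X+\Delta)\rceil$, for $i\in\{e,\,e'-e,\,e'\}$, is Cartier. Because $X$ is $S_2$ and $G_1$ and $\Delta$ avoids the singular codimension-one points of $X$ by assumption, $U$ is big in $X$. Over $U$ each $\sL_i$ is a line bundle, the reflexive tensor product and reflexive pullback reduce to the ordinary ones, and the injection of \autoref{eq:L_e_X_Delta} becomes an honest isomorphism, induced by the Cartier divisor identity
\[
(1-p^{e'})(K_X+\Delta) = (1-p^{e'-e})(K_X+\Delta) + p^{e'-e}(1-p^e)(K_X+\Delta).
\]

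With these identifications in place, the factorization on $U$ combines two standard ingredients. First, for the tower $F^{e'}=F^e\circ F^{e'-e}$, Grothendieck duality yields $\Tr_{F^{e'}}=\Tr_{F^e}\circ F^e_*\Tr_{F^{e'-e}}$ after the canonical projection-formula identification $F^{e'-e}_*\sO_U((1-p^{e'})K_U)\cong\sO_U((1-p^e)K_U)\otimes F^{e'-e}_*\sO_U((1-p^{e'-e})K_U)$. Second, the Cartier divisor equality $(p^{e'}-1)\Delta = p^{e'-e}(p^e-1)\Delta+(p^{e'-e}-1)\Delta$ shows that the multiplication-by-$\lfloor(p^{e'}-1)\Delta\rfloor$ factor of $\Tr_{e'}$ (cf.\ \autoref{definition:trace}) is the composition, via the projection formula, of the multiplication factors appearing in $\Tr_e$ and $\Tr_{e'-e}$.

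The main remaining work is the diagrammatic bookkeeping that puts these two ingredients together: I must commute the twist by $(p^e-1)\Delta$ past $F^e_*$ via the projection formula and verify that ``twist-then-trace'' and ``trace-then-twist'' give the same map $F^{e'}_*\sL_{e'}|_U\to\sO_U$. Over $U$ this is a routine diagram chase on line bundles, so I do not anticipate any essential obstacle beyond notational care; the global statement then follows from the $S_2$ argument of the first paragraph.
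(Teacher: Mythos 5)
Your proof is correct and takes essentially the same route as the paper's one-line argument: reduce to the Grothendieck-duality factorization $\Tr_{F^{e'}}=\Tr_{F^e}\circ F^e_*\left(\id\otimes\Tr_{F^{e'-e}}\right)$ of the $\Delta=0$ case, the identity of two maps into $\sO_X$ being checkable on a dense (or big) open set by reflexivity/$S_2$. One small correction: even on your $U$ the injection of \autoref{eq:L_e_X_Delta} is generally \emph{not} an isomorphism, since $\lceil a+b\rceil\le\lceil a\rceil+\lceil b\rceil$ can be strict for the round-ups involved (e.g.\ $\Delta=\tfrac13 D$, $p=2$, $e=1$, $e'=2$), and likewise $\lfloor(p^{e'}-1)\Delta\rfloor$ is not literally the sum of the two floors you compose --- but this is harmless, because you only need commutativity (not invertibility) and the rounding discrepancy is exactly the effective divisor by which the injection multiplies, so the factors do compose to $\Tr_{e'}$ on $U$.
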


\begin{proof}
This follows from the factorization in the $\Delta=0$ case, i.e., from  
\begin{equation*}
\Tr_{F^{e'}}= \Tr_{F^e} \circ F^e_* \left( \id_{\sO_X((1-p^e)K_X)} [\otimes] \Tr_{F^{e'-e}_*} \right). 
\end{equation*}
\end{proof}

\begin{definition}
\label{defn:S_0}
In the situation of \autoref{notation:pair_standard},  we define for any Weil divisor $L$ on $X$,
\begin{equation*}
S^0(X, \Delta; L) := 
\bigcap_{e\geq 0} \im \left(H^0(X, \sO_X(L) [ \otimes ] F^e_* \sL_e ) \xrightarrow{H^0\left(X,\id_{\sO_X(L)} [\otimes]  \Tr_e  \right)} H^0(X,\sO_X( L)) \right),
\end{equation*}
According to \autoref{fact:factorization}, the images in the above definition form a descending chain. Furthermore, if $X$ is proper over $k$ then the above intersection necessarily stabilizes, since $H^0(X,\sO_X( L))$ is finite dimensional over $k$.
\end{definition}

The following is our main lifting statement. It is an adaptation of \cite[Prop 5.3]{Schwede_A_canonical_linear_system} to the present situation by strengthening its assumptions. Especially assumption \autoref{itm:S_0_surj:restriction} is a strong restriction, which we guarantee in our situation using Koll\'ar's theory of hulls \cite{Kollar_Hulls_and_Husks} (in our situtation $S$ is the general fiber of a fibration).  In general one would need to prove an $S_3$-type condition for guaranteeing \autoref{itm:S_0_surj:restriction}.

\begin{proposition}
\label{prop:S_0_surj}
In the situation of \autoref{notation:pair_standard} with $X$ projective over $k$, let $S$ be a reduced Weil divisor and $L$ a Cartier divisor on $X$  such that
\begin{enumerate}
\item $K_X + \Delta$ is $\bQ$-Cartier,
\item $S \leq \Delta$,
\item \label{itm:S_0_surj:S_2_G_1} $S$ is $S_2$ and $G_1$,
\item \label{itm:S_0_surj:Cartier_codim_1} $S$ is Cartier at its codimension $1$ points,
\item \label{itm:S_0_surj:avoiding} $\Delta$ avoids the general and singular codimension $1$ points of $S$,
\item \label{itm:S_0_surj:ample} $L - K_X  -\Delta$ is ample, and 
\item \label{itm:S_0_surj:infinitely_many_e} for every integer $e>0$,
\begin{enumerate}
 \item $\lceil (1-p^e)(\Delta -S) \rceil|_S = \lceil (1-p^e)(\Delta -S) |_S \rceil$, which is satisfied for example if the reduced divisor supported on $\Supp \Delta$ restricts to a reduced divisor to $S$, and
\item \label{itm:S_0_surj:restriction}  $\sO_X ( \lceil(1 - p^{e} )(K_X + \Delta) \rceil)|_S \cong \sO_S( \lceil(1- p^{e} )(K_S + (\Delta-S)|_S) \rceil)$. 
\end{enumerate}
\end{enumerate}
  Then 
\begin{equation*}
\im \left( H^0(X, L) \to H^0(S, L) \right) \supseteq S^0(S, (\Delta-S)|_S; L ).
\end{equation*}

\end{proposition}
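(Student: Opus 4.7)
The plan is to adapt the classical Schwede-style Frobenius trace lifting argument to the present setting, where $\sL_e := \sL_{e;X,\Delta}$ is only a rank-$1$ reflexive sheaf, not a line bundle. Writing $\Delta' := \Delta - S$, the first step is to establish a commutative square
\begin{equation*}
\xymatrix{
F^e_*\sL_{e; X, \Delta} \ar[r]^{\Tr_{e; X, \Delta}} \ar@{->>}[d] & \sO_X \ar@{->>}[d] \\
\iota_* F^e_{S,*} \sL_{e; S, \Delta'|_S} \ar[r]^-{\iota_*\Tr_{e; S, \Delta'|_S}} & \iota_*\sO_S,
}
\end{equation*}
where $\iota : S \hookrightarrow X$ is the inclusion, the right vertical is the canonical quotient, and the left vertical is built by restricting $\sL_e$ to $S$ and then applying the identification $\sL_e|_S \cong \sL_{e; S, \Delta'|_S}$ of hypothesis (7.b). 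Commutativity reduces via \autoref{definition:trace} to the compatibility
\begin{equation*}
\lfloor (p^e-1)\Delta \rfloor|_S = \lfloor (p^e-1)\Delta'|_S \rfloor + (p^e-1)S|_S,
\end{equation*}
which rests on hypothesis (7.a), the $S_2$/$G_1$/Cartier-in-codimension-$1$ assumptions (3)--(5) ensuring these divisorial restrictions are well defined, and the standard compatibility of the Frobenius trace with divisorial restriction.

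Next, I would use the short exact sequence of $\sO_X$-modules
\begin{equation*}
0 \to \sL_e(-S) \to \sL_e \to \iota_*\sL_{e;S,\Delta'|_S} \to 0,
\end{equation*}
which is exact because the three terms agree with their reflexive hulls on $S$ on the big open set of $X$ where $\sL_e$ is locally free and $S$ is Cartier (granted by (3)--(5)), and reflexivity then forces global agreement. Applying the exact functor $F^e_*$ and tensoring with the line bundle $\sO_X(L)$, the long exact cohomology sequence yields
\begin{equation*}
H^0(X, \sO_X(L) [\otimes] F^e_*\sL_e) \to H^0(S, \sO_S(L) \otimes F^e_{S,*}\sL_{e;S,\Delta'|_S}) \to H^1(X, \sO_X(L) [\otimes] F^e_*\sL_e(-S)).
\end{equation*}
The final term is the obstruction to surjectivity. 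Applying the projection formula (valid since $L$ is Cartier) gives
\begin{equation*}
\sO_X(L) [\otimes] F^e_*\sL_e(-S) \cong F^e_*\sO_X(D_e), \qquad D_e = p^e(L - K_X - \Delta) + K_X + \Delta' + \{(p^e-1)\Delta'\},
\end{equation*}
after a direct calculation substituting $\Delta = S + \Delta'$ and simplifying $p^e\Delta' + \lceil(1-p^e)\Delta'\rceil = \Delta' + \{(p^e-1)\Delta'\}$. Since $L - K_X - \Delta$ is ample by hypothesis (6) and $\{(p^e-1)\Delta'\}$ takes only finitely many values as $e$ varies (because $\Delta'$ is $\bQ$-Cartier), Serre vanishing yields $H^1(X, \sO_X(D_e)) = 0$ for all $e \gg 0$.

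To conclude: given $s \in S^0(S, \Delta'|_S; L)$, the definition and stabilization in \autoref{defn:S_0} furnish, for every $e$, a section $\tilde{s}_e \in H^0(S, \sO_S(L) \otimes F^e_{S,*}\sL_{e;S,\Delta'|_S})$ with $\Tr_{e;S,\Delta'|_S}(\tilde{s}_e) = s$. Choosing $e \gg 0$ as above, the surjectivity of the restriction lifts $\tilde{s}_e$ to $t_e \in H^0(X, \sO_X(L) [\otimes] F^e_*\sL_e)$, and $\Tr_{e;X,\Delta}(t_e) \in H^0(X, \sO_X(L))$ restricts to $s$ on $S$ by the commutative diagram of the first step. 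The main obstacle I anticipate is that first step: because $\sL_e$ is not locally free in codimension $\geq 2$, the restriction $\sL_e|_S$ must be interpreted carefully and matched with the intrinsic sheaf on $S$, which is precisely why the detailed hypotheses (3)--(5) and (7) are formulated as they are.
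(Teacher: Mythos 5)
Your proposal is correct and follows essentially the same route as the paper: the same Schwede-style commutative diagram relating $\Tr_{e;X,\Delta}$ to $\Tr_{e;S,(\Delta-S)|_S}$ via hypotheses (7.a)--(7.b), the same short exact sequence with kernel $\sL_e(-S)$, and the same Serre-vanishing argument killing the obstruction $H^1$. The only cosmetic difference is that the paper makes the Serre vanishing explicit by splitting $e$ into arithmetic progressions $e=i+qj$ with $r\mid p^i(p^j-1)$ so that $D_e$ is a fixed Weil divisor plus an increasing multiple of an ample \emph{Cartier} divisor, which is the precise form of your observation that $\{(p^e-1)\Delta'\}$ takes only finitely many values.
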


\begin{proof}
First let us note that by assumptions \autoref{itm:S_0_surj:S_2_G_1}, \autoref{itm:S_0_surj:Cartier_codim_1} and \autoref{itm:S_0_surj:avoiding}, $(\Delta - S)|_S$ does make sense, and $K_X + \Delta|_S = K_S + (\Delta-S)|_S$. Furthermore, by the same assumptions, for every integer $e>0$, there is a big open set $U_e \subseteq X$ such that $S \cap U_e$ is big in $S$ and  $\lceil(1 -p^e )(K_X + \Delta) \rceil|_{U_e}$ is Cartier. Therefore, there is the following commutative diagram for every integer $e>0$ first over $U_e$ by \cite[proof of Prop 5.3]{Schwede_A_canonical_linear_system}, and then extending reflexively from there using assumption \autoref{itm:S_0_surj:restriction} over entire $X$.
\begin{equation*}
\xymatrix{
0  \ar[d] & 0 \ar[d] \\
F^{e }_* \sO_X( \lceil(1-p^{e })(K_X + \Delta) - S\rceil) \ar[d] \ar[r] & \sO_X (-S) \ar[d] \\
F^{e }_* \sO_X(\lceil(1-p^{e })(K_X + \Delta )\rceil) \ar[d] \ar[r] & \sO_X \ar[d] \\
F^{e }_* \sO_S(\lceil(1-p^{e })(K_S+ (\Delta-S)|_S) \rceil) \ar[d] \ar[r] & \sO_S \ar[d] \\
0  & 0
}
\end{equation*}
Twisting by $L$ yields
\begin{equation*}
\xymatrix{
0 \ar[d] & 0 \ar[d] \\  
F^{e }_* \sO_X(\lceil(p^{e }  - 1)(L - K_X  - \Delta)  + L -S\rceil ) \ar[r] \ar[d] &  \sO_X ( L -S) \ar[d]  \\
F^{e }_* \sO_X(\lceil(p^{e } -1)(L-K_X - \Delta) + L \rceil) \ar[r] \ar[d] &  \sO_X(L) \ar[d] \\
 F^{e }_* \sO_S(\lceil(p^{e } -1)(L|_S - K_S - (\Delta-S)|_S) + L|_S \rceil) \ar[r] \ar[d] &  \sO_S(L|_S) \ar[d] \\
0 & 0
}.
\end{equation*}
Take now cohomology of the latter diagram. The image of  $H^0( \_)$ applied to the bottom row yields $S^0(S, (\Delta-S)|_S; L|_S)$ for $e$ big enough. Hence it is enough to show that for every $q \gg 0$  and  $e=i+qj $ (using \autoref{notation:pair_standard}),
\begin{multline*}
0=H^1(X, F^{e }_* \sO_X( \lceil(p^{e }  - 1)(L - K_X -  \Delta)  + L -S  \rceil)) \cong H^1(X, \lceil (p^{e }  - 1)(L - K_X -  \Delta)  + L -S \rceil ) 
\\
 = 
\underbrace{H^1(X,  \lceil (p^{i }  - 1)(L - K_X -  \Delta)   + L -S  \rceil + p^i (p^{qj }  - 1)(L - K_X -  \Delta))}_{\textrm{\autoref{eq:factoring_r_out}}}
. 
\end{multline*}
Therefore the above vanishing follows from assumption \autoref{itm:S_0_surj:ample} and Serre vanishing.
\end{proof}

\begin{definition}
\label{def:sigma_j}
In the situation of \autoref{notation:pair_standard}, if $L$ is a Weil-divisor on $X$, then define
\begin{equation*}
\sigma(X, \Delta; L) := \bigcap_{e\geq 0} \im \left( F^{ e}_* \sO_X(\lceil(1-p^{e})(K_X+ \Delta) + p^{e} L \rceil) \xrightarrow{\Tr_{e; X,\Delta} [\otimes] \id_{\sO_X(L)}} \sO_X(L) \right). 
\end{equation*}
If $L=0$, then we also use $\sigma(X,\Delta)$ instead of $\sigma(X, \Delta; L)$ and if also $\Delta=0$, then $\sigma(X)$ instead of $\sigma(X, \Delta)$. 

\end{definition}

\begin{remark}
\label{remark:sigma_F_split}
Note that if $X$ is an affine, $S_2, G_1$ variety, then by Gorthendieck duality $\sO_X \to F_*^e \sO_X$ has a splitting if and only if $F_*^e \sO_X((1-p^e)K_X) \to \sO_X$ is surjective. Hence, even in the non affine case, $X$ is $F$-pure if and only if $\sigma(X)= \sO_X$. The meaning of $\sigma(X, \Delta, L)$ is less clear when one introduces $\Delta$ or $L$ into the picture. Its main use in the present  article is that it determines all the sections in $S^0(X, \Delta; L)$ for $L$ ample enough according to \autoref{thm:S_0_equals_H_0_relative}.
\end{remark}

\begin{lemma}
\label{lem:sigma_stabilizes}
In the situation of \autoref{notation:pair_standard},  if $L$ is a $\bQ$-Cartier Weil divisor, then the descending chain of images in \autoref{def:sigma_j} stabilizes. In particular,  $\sigma(X, \Delta; L)$ is a coherent sheaf.
\end{lemma}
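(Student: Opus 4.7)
My plan is to first establish that the images form a descending chain using \autoref{fact:factorization}, then to reduce stabilization to a standard iterated-Frobenius argument over a Noetherian ring.

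For $e'\geq e$, I would apply \autoref{fact:factorization} and twist by $\sO_X(L)$ via the reflexive projection formula to see that $\Tr_{e';X,\Delta}[\otimes]\id_{\sO_X(L)}$ factors through $\Tr_{e;X,\Delta}[\otimes]\id_{\sO_X(L)}$. This shows $\im_{e'}\subseteq\im_e$ as coherent subsheaves of $\sO_X(L)$, so the chain in \autoref{def:sigma_j} is descending and its intersection is automatically coherent by local Noetherianity; the real content of the lemma is stabilization.

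Stabilization being local, I would next pass to an affine chart $\Spec R\subseteq X$, on which the $\im_e$ form a descending chain of finitely generated $R$-submodules of a fixed finitely generated $R$-module. Then I would choose integers $i,j>0$ such that both the Cartier index of $K_X+\Delta$ and the Cartier index of $L$ divide $p^i(p^j-1)$. Along the subsequence $e=i+qj$, I would use \autoref{eq:factoring_r_out} together with the reflexive projection formula to pull the $\bZ$-Cartier divisor $p^i(p^{qj}-1)(L-(K_X+\Delta))$ out of the $(qj)$-th Frobenius pushforward. This rewrites the relevant map, up to a fixed twist, as the $q$-fold iterate of a single fixed trace $\phi\colon F^j_*\mathcal{N}\to\mathcal{N}$ for a fixed coherent $R$-module $\mathcal{N}$. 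A Hartshorne--Speiser--Lyubeznik style argument -- equivalently, applying Matlis duality stalk by stalk to convert the descending chain of images into an ascending chain of kernels in the Matlis dual, which stabilizes by Noetherianity -- then shows that $\im(\phi^q)$ stabilizes, so $\im_{i+qj}$ stabilizes for $q\gg 0$.

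Finally, since the full chain $(\im_e)_{e\geq 0}$ is descending and any $\im_e$ eventually dominates a member of the stabilized subchain, it also stabilizes; quasi-compactness of $X$ then lets me choose a single uniform $e_0$ so that $\sigma(X,\Delta;L)=\im_{e_0}$. The step I expect to require the most care is the reflexive bookkeeping: when the Cartier indices of $K_X+\Delta$ or $L$ are divisible by $p$, the sheaves $\sL_e$ and $\sO_X(L)$ are not line bundles, so the projection formula and the factorization in \autoref{fact:factorization} must be applied after taking reflexive hulls (using $[\otimes]$ in place of $\otimes$); the $S_2$, $G_1$ hypothesis on $X$ is what makes the various reflexive identifications go through on the big open set where $K_X+\Delta$ and $L$ become Cartier.
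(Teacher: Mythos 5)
Your overall route is the same as the paper's: pass to the subsequence $e=i+qj$, localize so that the Cartier part $p^i(1-p^{qj})(K_X+\Delta-L)$ becomes trivial, recognize the resulting chain as the image chain of a Cartier module, and invoke an HSL-type stabilization result (the paper cites Gabber and Blickle--Schwede, which is the same statement you reach via Matlis duality). The descending-chain and coherence observations are also as in the paper.

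There is, however, one step you assert rather than prove, and it is exactly where the paper spends its effort: the claim that the maps $t_q\colon F^{i+qj}_*M\to F^{i+(q-1)j}_*M$ in the localized chain are, up to a unit, of the form $F_*^j(t_{q-1})$, i.e.\ that the chain really is the image chain of a \emph{single} $p^{-j}$-linear structure $\phi$. This is not automatic once the index of $\Delta$ is divisible by $p$, because the round-ups do not scale: $\lceil(1-p^{i+qj})\Delta\rceil$ is not $p^{j}\lceil(1-p^{i+(q-1)j})\Delta\rceil$ plus a $\bZ$-Cartier correction in any obvious way, so the twist entering $t_q$ genuinely depends on $q$. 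The paper handles this by computing the discrepancy divisor $(p^j-1)\bigl(p^i\Delta+\lceil(1-p^i)\Delta\rceil\bigr)$, identifying $t_q$ with $F_*^j(t_{q-1})$ at all codimension $0$ and $1$ points, and then arguing that the resulting correction factor $u$ (a priori only a rational section) is a unit at those points and hence a global unit, so that the Cartier-module structure exists. Your ``fixed twist'' and ``reflexive bookkeeping'' remarks gesture at this but do not supply the argument; without it the application of the stabilization theorem for Cartier modules is not justified. Everything else in your proposal is fine.
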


\begin{proof}
Our proof follows the lines of \cite[Lem 3.7]{Chiecchio_Enescu_Miller_Schwede_Test_ideals_in_rings_with_finitely_generated_anti_canonical_algebras}, with the addition of a few details about dealing with the case of the Weil index divisible by $p$. First, note that because of \autoref{eq:factoring_r_out} and \autoref{fact:factorization}, it is enough to show that the images of the following maps stabilize for every integer $q>0$ and $e=i+qj$.
\begin{equation}
\label{eq:sigma_stabilizes:alternative_image}
F^{ e}_* \sO_X(\lceil(1-p^{e})(K_X+ \Delta) + p^{e} L \rceil) \to F^i_* \sO_X(\lceil(1-p^{j})(K_X+ \Delta) + p^{j} L \rceil)
\end{equation}
Let $l$ be the Cartier index of $L$. By possibly increasing $i$ and $j$, we may assume that $l | p^i (1- p^j)$. Then similarly to \autoref{eq:factoring_r_out} we obtain for $e= i + qj$ that
\begin{equation*}
F^e_* \sO_X(\lceil (1-p^e)(K_X + \Delta)  \rceil + p^e L )= 
F^e_* \sO_X (\lceil  (1-p^i) (K_X + \Delta - L )  +L   \rceil  + p^i ( 1- p^{qj}) (K_X + \Delta -L )  ).
\end{equation*}
Since $p^i ( 1- p^{qj}) (K_X + \Delta -L )$ is Cartier, and since our question is Zariski local, we may assume that $p^i ( 1- p^{qj}) (K_X + \Delta -L ) \sim 0$ for every integer $q>0$. In particular, by setting $M:=\sO_X (\lceil  (1-p^i) (K_X + \Delta - L )  +L   \rceil)$, the sheaves and morphisms in  \autoref{eq:sigma_stabilizes:alternative_image} fit into a chain
\begin{equation*}
\dots \xrightarrow{t_3} F^{i+2j}_* M \xrightarrow{t_2} F^{i+j}_* M \xrightarrow{t_1} F^i_* M.
\end{equation*}
\emph{We claim that  up to multiplications by a unit (in the source),}
\begin{equation}
\label{eq:sigma_stabilizes:maps_equal}
t_{q} = F_*^j (t_{q-1}). 
\end{equation}
First let us show \autoref{eq:sigma_stabilizes:maps_equal} at the codimension $0$ and $1$ points. If $x \in X$ is either a generic point or a singular codimension $1$ point, then $\Delta$ avoids $x$ and hence $t_q$  can be identified with $F^{i+qj}_* \omega_{X,x}^{1-p^{i+qj}} \to F^{i+(q-1)j}_* \omega_{X,x}^{1-p^{i+(q-1)j}}$, which shows \autoref{eq:sigma_stabilizes:maps_equal} at these points. On the other hand, since
\begin{multline*}
  p^j \left(  \lceil  (1-p^i)  \Delta    \rceil  + p^i ( 1- p^{(q-1)j})  \Delta  \right) - \lceil  (1-p^i)  \Delta    \rceil  - p^i ( 1- p^{qj})  \Delta  
\\ = p^i ( p^j -1) \Delta + p^j \lceil  (1-p^i)  \Delta    \rceil   - \lceil  (1-p^i)  \Delta    \rceil ` 
= ( p^j -1) \left( p^i  \Delta  +  \lceil  (1-p^i)  \Delta    \rceil \right),
\end{multline*}
at every codimension $1$ point $x$, $t_q$ at these points can be identified with the following composition.
\begin{equation*}
\xymatrix{
F^{i+qj}_* M_x \cong F^{i+qj}_* \sO_{X,x}   \ar[rrr]^(0.57){\cdot ( p^j -1) \left( p^i  \Delta  +  \lceil  (1-p^i)  \Delta    \rceil \right)} \ar@/^4pc/[drrrr]^{t_q} 
 & & &  F^{i+qj}_* \sO_{X,x} \ar@{<->}[d]_{\textrm{ constructed using } \omega_{X,x} \cong \sO_{X,x}}^{\cong}
\\
& & &
F^{i+qj}_*  \omega_{X,x}^{1-p^j}  \ar[r]  &
F^{i+(q-1)j}_* \sO_{X,x} \cong  F^{i+(q-1)j}_* M_x 
}.
\end{equation*}
In particular, we see $t_{q} = F_*^j (t_{q-1})$ also holds (up to multiplications by a unit in the source) at codimension one points. Hence, if one writes $t_q =  F_*^j (t_{q-1} \cdot u )$ for a (unique) element $u$ of the total field of fractions of  $F^{i+qj}_* \sO_X$, then we see that $u$ is in fact a honest section and a unit at every generic and codimension $1$ point. Then it follows that $u$ is an actual function and a unit globally. This concludes our claim. 

Having shown our claim, we see that the images of  \autoref{eq:sigma_stabilizes:alternative_image} agree with the higher and higher domposition images of a Cartier module, which necessarily stabilizes by \cite[Lemma 13.1]{Gabber_Notes_on_some_t_structures} (also \cite[Proposition 8.1.4]{Blickle_Schwede_p_minus_1_linear_maps_in_algebra_and_geometry}).

\end{proof}

\subsection{Relative non-$F$-pure ideals}
\label{sec:relative_non_F_pure}

\autoref{thm:relative_canonical_pushforward_nef} will be proved using \autoref{prop:S_0_surj} to lift the sections in  a Frobenius stable subsystem introduced in \autoref{defn:S_0}. In particular, we need a boundedness result for when this canonical subsystem contains all the sections in the non $F$-pure ideal introduced in \autoref{def:sigma_j}, for all fibers of a family. We show this by adapting the approach of \cite{Patakfalvi_Schwede_Zhang_F_singularities_in_families}.  \autoref{sec:relative_non_F_pure} is devoted to the foundations of this approach. 

\begin{notation}
\label{notation:relative_pair}
Let $f : (X, \Delta) \to T$ be a flat family over an affine normal scheme over $k$ with reduced, $S_2$ and $G_1$ fibers of pure dimension $n$. Assume that $\Delta$ avoids all the generic and singular codimension $1$ points of the fibers. Furthermore, assume that if $\Delta_{\red}$ is the reduced divisor supported on $\Supp \Delta$, then $\left. \Delta_{\red}\right|_{X_t}$ is reduced for all $t$, an assumption automatically satisfied if the coefficients of $\Delta_{\ot}$ for all geometric points $\ot \in T$ are greater than $1/2$ but at most $1$. This assumption implies that for every integer $e>0$,
\begin{equation}
\label{eq:roundup_and_restriction_to_fibers}
\lceil(1-p^e)(K_{X/T} + \Delta) \rceil|_{X_t} \cong \lceil(1-p^e)(K_{X_t} + \Delta_t) \rceil.
\end{equation}
We define for every integer $e>0$, 
\begin{equation*}
\sL_{e; X, \Delta/T}:= \sO_X(\lceil(1-p^e )(K_{X/T} + \Delta)\rceil).
%
\end{equation*}
If $(X, \Delta)$ is clear from the context we write $\sL_{e/T}$ instead of $\sL_{e;X, \Delta/T}$. Note that by the assumptions, there is a relatively big open set $U \subseteq X$ such that a neighborhood of $\Supp \Delta|_U$ is regular and $f|_U$ is a Gorenstein morphism. In particular then $\lceil (1- p^e)(K_{X/T} + \Delta) \rceil|_U$ is Cartier for all integers $e>0$. We further assume that $K_{X/T} + \Delta$ is $\bQ$-Cartier with Cartier index $r$. We define then $i$ and $j$ as in \autoref{notation:pair_standard}, with which definition the relative version of \autoref{eq:factoring_r_out} holds. 

We also use the notation of \cite{Patakfalvi_Schwede_Zhang_F_singularities_in_families} in this section. That is, we deal with the relative Frobenius morphisms $F^e_{X/T}$ of $X$ over $T$ shown in the following commutative diagram.
\begin{equation}
\label{eq:relative_Frobenius}
\xymatrix@C=100pt{
X^e \ar[rrd]^{F_X^e } \ar[rdd]_{f^e} \ar[rd]_{F_{X/T}^e } \\
 & X_{T^e} \ar[r] \ar[d]^{f_{T^e}} & X \ar[d]^f \\
 & T^e \ar[r]_{F_T^e } &  T
}
\end{equation}
 If a divisor $D$ is regarded on $X^e$ instead of $X$, we write $D^e$. In the case of  a line bundle $\sL$ we write $\sL^{(e)}$ for the same to avoid confusion with the $e$-th power. 
\end{notation}

\begin{assumption}
\label{assumption:L_comp_base_change}
In the situation of \autoref{notation:relative_pair}, we sometimes further assume that $\sL_{i/T}$ is compatible with arbitrary base-change. (Here $i$, as stated in \autoref{notation:relative_pair}, is the fixed integer for which $r | p^i (p^j -1)$.) That is, if $S \to T$ is a morphism from an affine integral scheme over $k$, then 
\begin{equation}
\label{eq:L_comp_base_change}
\left( \sL_{i; X,\Delta/T}\right)_S \cong \sL_{i; X_S,\Delta_S/S}.
\end{equation}
If $S=t$, for some $t \in T$, then \autoref{eq:L_comp_base_change} asks for $\left. \sL_{i;X,\Delta/T} \right|_{X_t}$ to be reflexive, and hence $S_2$. Therefore, \autoref{eq:L_comp_base_change} implies that $\sL_{i;X,\Delta/T}$ is relatively $S_2$. On the other hand, if it is relatively $S_2$, then since the two  sheaves in \autoref{eq:L_comp_base_change} are automatically isomorphic over $U_S$, and the one on the right is reflexive, \autoref{eq:L_comp_base_change} holds according to \cite[Cor A.8]{Patakfalvi_Schwede_Zhang_F_singularities_in_families}. Hence \autoref{eq:L_comp_base_change} is in fact equivalent to requiring $\sL_{i;X,\Delta/T}$ to be relatively $S_2$. Furthermore, in this case $\sL_{e; X,\Delta/T}$ is automatically flat over $T$ according to 
\cite[Lem 2.13]{Bhatt_Ho_Patakfalvi_Schnell_Moduli_of_products_of_stable_varieties}.

Furthermore, note that using the language of \cite{Kollar_Hulls_and_Husks}, the above assumption holds if and only if $\sL_{i; X,\Delta/T}$ is a hull (of itself). Indeed, by \cite[18.2]{Kollar_Hulls_and_Husks} the hull of a reflexive sheaf can be only itself in our situation. Furthermore, by \cite[17 \& 15.4]{Kollar_Hulls_and_Husks} $\sL_{i;X, \Delta/T}$ is a hull if and only if it is relatively $S_2$. 
\end{assumption}

\begin{theorem}
\label{thm:Kollar}
\cite[21]{Kollar_Hulls_and_Husks}
In the situation of \autoref{notation:relative_pair}, there is a finite locally closed decomposition $\bigcup T_i \to T$, such that for any base change $T' \to T$, $\left(X_{T'},\Delta_{T'}\right) \to T'$ satisfies \autoref{assumption:L_comp_base_change} if an only if $T' \to T$ factors through $\bigcup T_i$. In particular, there is a non-empty Zariski open set $T_0$ of $T$, such that $\left(X_{T_0},\Delta_{T_0}\right) \to T_0$ satisfies \autoref{assumption:L_comp_base_change}.
\end{theorem}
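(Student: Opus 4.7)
The plan is to deduce this essentially directly from \cite[21]{Kollar_Hulls_and_Husks} by translating \autoref{assumption:L_comp_base_change} into the language of hulls and husks. The first step is to observe that $\sL_{i;X,\Delta/T}$ is a well-defined coherent reflexive sheaf on $X$ of rank one: by the assumptions in \autoref{notation:relative_pair} there is a relatively big open set $U\subseteq X$ over which $f$ is Gorenstein, $\Supp\Delta\cap U$ is smooth over $T$ and $p^i(1-p^j)(K_{X/T}+\Delta)|_U$ is Cartier, and $\sL_{i/T}$ is obtained by pushing forward from $U$.

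The second step is to record (as already noted in the paragraph following \autoref{assumption:L_comp_base_change}) that \autoref{assumption:L_comp_base_change} is equivalent to requiring $\sL_{i;X,\Delta/T}$ to be relatively $S_2$ over $T$, which in turn, by \cite[17 \& 15.4]{Kollar_Hulls_and_Husks}, is equivalent to $\sL_{i;X,\Delta/T}$ being its own hull in the sense of \cite[11]{Kollar_Hulls_and_Husks}. Under this translation the statement of the theorem becomes exactly the assertion that the locus in the Hilbert-type functor of base changes $T'\to T$ for which a given coherent sheaf becomes a hull admits a finite locally closed stratification, which is precisely the content of \cite[21]{Kollar_Hulls_and_Husks} (see also \cite[24]{Kollar_Hulls_and_Husks}). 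Thus the first assertion follows by applying that theorem to $\sL_{i;X,\Delta/T}$ on $f:X\to T$.

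For the last sentence, let $T_0$ be the union of those strata $T_i$ in the decomposition $\bigsqcup T_i\to T$ that contain a generic point of $T$; it remains to see that $T_0$ is open and nonempty. Non-emptiness follows because $T$ is normal (hence the generic fibers of $f$ are geometrically reduced, $S_2$ and $G_1$ by the assumptions in \autoref{notation:relative_pair}), so on a dense open neighborhood of each generic point of $T$ we can realize $\sL_{i/T}$ by pushforward from a relatively big open set that is generically $S_2$; shrinking $T$ preserves reflexivity and the $S_2$ property on fibers, so \autoref{assumption:L_comp_base_change} holds generically. Openness of $T_0$ then follows because the locus where a coherent sheaf is flat with $S_2$ fibers is open on a Noetherian base (this is the same openness used in the proof of \cite[21]{Kollar_Hulls_and_Husks}).

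The only real obstacle is to make the equivalence between \autoref{assumption:L_comp_base_change} and the hull condition precise at reducible or non-reduced bases, but this is already handled by the discussion after \autoref{assumption:L_comp_base_change}: since the two sheaves in \autoref{eq:L_comp_base_change} agree on $U_S$ and the right-hand side is reflexive on $X_S$, the identification extends to all of $X_S$ by \cite[Cor A.8]{Patakfalvi_Schwede_Zhang_F_singularities_in_families} exactly when $\sL_{i/T}$ is relatively $S_2$. Hence no further argument beyond citing Kollár's theorem is required.
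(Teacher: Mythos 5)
Your proposal is correct and matches the paper's approach: the paper proves this theorem purely by citing \cite[21]{Kollar_Hulls_and_Husks}, relying on the translation (already spelled out in the discussion following \autoref{assumption:L_comp_base_change}) that the base-change condition is equivalent to $\sL_{i;X,\Delta/T}$ being relatively $S_2$, i.e.\ its own hull. Your additional verification that the stratum through the generic point yields the open set $T_0$ is the standard argument and is consistent with the paper's "in particular" clause.
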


\begin{definition}
In the situation of \autoref{notation:relative_pair}, for every integer $e>0$, the relative trace map $\Tr_{e; X,\Delta/T}$, similar to the absolute one of \autoref{definition:trace}, is  given by the composition
\begin{equation*}
\xymatrix{
\ar[rd]_{\Tr_{e; X,\Delta/T}} \left( F^{e}_{X/T} \right)_* \sL_{e ; X, \Delta/T}^{(e)} = \left( F^{e }_{X/T} \right)_* \sO_{X^e}\left(\lceil(1-p^e)(K_{X^e/T^e} + \Delta^e ) \rceil \right) \ar[r] 
& \left( F^{e}_{X/T} \right)_* \sO_X \left((1-p^e)K_{X^e/T^e} \right) \ar[d]^(0.75){\Tr_{F_{X/T}^e}}  \\  & \sO_{X_{T^e}}
}.
\end{equation*}
Here $\Tr_{F_{X/T}^e}$ is given by the following isomorphism coming from Grothendieck duality (c.f. \cite[(2.8.1) and (2.8.2)]{Patakfalvi_Schwede_Zhang_F_singularities_in_families}):
\begin{equation*}
 \Hom_{X_{T^e}} \left( \left( F^e_{X/T} \right)_* \sO_X((1-p^e)K_{X^e/T^e} , \sO_{X_{T^e}} \right) \cong \Hom_{X^e} \left( \sO_X((1-p^{e })K_{X^e/T^e} ) , \sO_X((1-p^e)K_{X^e/T^e})  \right).
\end{equation*}
\end{definition}

\begin{remark}
\label{rem:base_change}
The above constructed trace map behaves well with respect to base-change. That is if $S \to T$ is a base-change, for which \autoref{eq:L_comp_base_change} holds, then $\left(\Tr_{e; X,\Delta/T}\right)_{S^e} = \Tr_{e; X_S, \Delta_S/S}$ using the identifications of the following diagram.
\begin{equation}
\label{eq:realtive_Frobenius_base_change}
\xymatrix@C=100pt{
X^e_{S^e} \cong \left(X_S\right)^e \ar[rrd]^{F_{X_S}^{e }} \ar[rdd] \ar[rd]|{F_{X_S/ S}^{e } = \left( F_{X/T}^e \right)_{S^e}} \\
 & X_{S^e} \cong \left( X_{T^e} \right)_{S^e}  \ar[r] \ar[d] & X_S \ar[d] \\
 & S^e \ar[r]_{F_S^{e }} &  S
} ,
\end{equation}
Indeed,  multiplication by $\Delta^e$ base-changes to multiplication by $\Delta_S^e$. Hence we have to only show that $\left.\Tr_{F_{X/T}^e}\right|_{S^e} = \Tr_{F_{X_S/S}^e}$. However, this is done in \cite[Lem 2.17]{Patakfalvi_Schwede_Zhang_F_singularities_in_families}. Furthermore, note that if $S$ is the (perfectification of the) generic point of $T$, then \autoref{eq:L_comp_base_change} is automatically satisfied, since reflexivity is preserved under flat base.
\end{remark}

\begin{remark}
\label{rem:trace_base_change}
The main importance of $\Tr_{e;X,\Delta/T}$ is that if \autoref{eq:L_comp_base_change} is satisfied for a base change $t \to T$ for some perfect point $t \in T$, then $\Tr_{e;X,\Delta/T}$ can be identified with $\Tr_{e;X_t,\Delta_t}$ from \autoref{definition:trace} (at least up to unit, which is enough for our purposes). Indeed according to \autoref{rem:base_change} it can be identified with $\Tr_{e; X_t, \Delta_t/ t}$. Hence, we just have to identify the latter with $\Tr_{e;X_t,\Delta_t}$. This goes as follows: when considering \autoref{eq:realtive_Frobenius_base_change} for $S=t$, the horizontal arrows are isomorphism by the prefectness of $k(t)$.
That is, $X_{t^e} \cong X$, and consequently $Tr_{X_t/t}^e$ and $\Tr_{F_{X_t}^e}$ are identified up to multiplication by a unit.

Furthermore, as mentioned already in \autoref{rem:base_change}, if $t$ is the perfectification of the generic point of $T$, then \autoref{eq:L_comp_base_change} automatically holds, and hence the above identification always works in this case. 
\end{remark}

%


\begin{definition}
In the situation of \autoref{notation:relative_pair},  for each integer $e \geq 0$ we define
\begin{equation*}
\sigma_{e;X,\Delta/T} := \im \left(   \left(F^{e \cdot g}_{X/T} \right)_* \sL_{e;X,\Delta/T} \to  \sO_{X_{T^e}} \right). 
\end{equation*}
If $(X, \Delta)$  is clear from the context we use $\sigma_{e/T}$ instead of $\sigma_{e;X,\Delta/T}$.
\end{definition}


\begin{notation}
\label{notation:PSZ}
At this point we have to switch to the notation of \cite{Patakfalvi_Schwede_Zhang_F_singularities_in_families}. This will be used in the proofs of \autoref{prop:relative_non_F_pure} and \autoref{thm:S_0_equals_H_0_relative}. Since we have to argue using the spaces $X^e \times_{T^e} T^{e'}$, and these spaces have the same underlying topological spaces as $X$, we can work on this fixed topological space and we can track only the different ring structures. Hence, we set
\begin{itemize}
 \item $A:= \Gamma( T, \sO_T)$ (recall $T$ is affine),
\item $R:= \sO_X$,
\item $M:= \sO_X(\lceil  (1-p^i) (K_{X/T} + \Delta) \rceil)$,
\item $L:= \sO_X(p^i ( 1- p^{j}) (K_{X/T} + \Delta) ) $, and
\item $\sO_{X^e}$ and $\sO_{T^e}$ are denoted by $R^{1/p^e}$ and $A^{1/p^e}$, respectively. 
\end{itemize}
Using this notation $X^e \times_{T^e} T^{e'}$ becomes $R^{1/p^e} \otimes_{A^{1/p^e}} A^{1/p^{e'}}$. One benefit is that the following isomorphism becomes apparent:
\begin{equation*}
 R^{1/p^e} \otimes_{A^{1/p^e}} A^{1/p^{e'}} \cong \left( R \otimes A^{1/p^{e'-e}} \right)^{1/p^e}. 
\end{equation*}
Also, note that according to the relative version of \autoref{eq:factoring_r_out}, for $e = i + qj$,
\begin{equation*}
 \sL_{e; X, \Delta/T}^{(e)} = \left( M \otimes_R L^{\frac{p^{qj}-1}{p^j-1}} \right)^{1/p^{i+qj}}.
\end{equation*}
Furthermore, if \autoref{eq:L_comp_base_change} holds for $S \to T$ being the iterations of the Forbenius (which holds either in the situation of \autoref{assumption:L_comp_base_change} or if $T$ is regular), then
 $\Tr_{e; X, \Delta/T}$ factors as 
\begin{equation*}
\xymatrix{
 \left( M \otimes_R L^{\frac{p^{qj}-1}{p^j-1}} \right)^{1/p^{i+qj}} 
\ar[r]^-{\xi_{q,q}} &
 \left( M \otimes_R L^{\frac{p^{(q-1)j}-1}{p^j-1}} \otimes_A A^{1/p^j} \right)^{1/p^{i+(q-1)j}} 
\ar[r]^-{\xi_{q,q-1}} &
\dots  
} \hspace{70pt}
\end{equation*}
\begin{equation*}
\hspace{30pt}
\xymatrix{
\dots \ar[r]^-{\xi_{q,2}} &
 \left( M \otimes_R L \otimes_A A^{1/p^{(q-1)j}}\right)^{1/p^{i+j}} 
\ar[r]^-{\xi_{q,1}} &
 \left( M \otimes_A A^{1/p^{qj}} \right)^{1/p^i} 
\ar[r]^{\zeta_q} &
 R \otimes_A A^{1/p^{i+qj}}
},
\end{equation*}
where $\zeta_q=\Tr_{i/T} \otimes_{A^{1/p^i}} A^{1/p^{i+qj}}$, and $\xi_{q,l}$ for $1 \leq l \leq q$ can  be  obtained by applying 
\begin{equation*}
\left( A^{1/p^{(q-l+1)j}} \otimes_{A^{1/p^j}} \underline{\qquad} \otimes_R L^{\frac{p^{(l-1)j}-1}{p^j-1}}  \right)^{1/p^{i+(l-1)j}} 
\end{equation*}
 to  
\begin{multline*}
\xi: \Big( \sO_X\left(  \lceil  (1-p^{i+j}) (K_{X/T} +  \Delta)    \rceil \right) \Big)^{1/p^{j}}  \cong \left( M \otimes_R L \right)^{1/p^{j}} 
 \\ \to M \otimes_A A^{1/p^j} = \sO_X(\lceil(1-p^i) (K_X + \Delta)\rceil) \otimes_A A^{1/p^j} . 
\end{multline*}
%
Indeed this factorization follows as soon as one knows the case of $\Delta=0$ with $X$ replaced by $U$, which is proven in \cite[Section 2.12]{Patakfalvi_Schwede_Zhang_F_singularities_in_families}. Then we define
\begin{equation*}
 \sigma'_{q;X,\Delta/T}:=   \im \left( \left( M  \otimes_R L^{\frac{p^{qj} -1}{p^j -1}} \right)^{1/p^{qj}} \to M \otimes_A A^{1/p^{qj}} \right),
\end{equation*}
where the homomorphism in the parentheses is $\left(\xi_{q,1} \circ \dots, \circ \xi_{q,q}\right)^{p^i}$.  
If $(X, \Delta)$ is clear from the context, we write $\sigma_{q/T}'$ for $\sigma'_{i+qj;X,\Delta/T}$. In particular, we have a commutative diagram
\begin{equation}
\label{eq:sigma_sigma_prime}
\xymatrix{
 \left( M \otimes_R L^{\frac{p^{qj} -1}{p^j -1}} \right)^{1/p^{i+qj}} \ar@{->>}[r] \ar@{->>}@/^2pc/[rr] & \left( \sigma'_{q/T} \right)^{1/p^i} \ar@{->>}[r] & \sigma_{i+qj/T}
},
\end{equation}
where the middle term lives on $\left( R \otimes_A A^{1/p^{qj}} \right)^{1/p^i} $.
\end{notation}

\begin{proposition}
\label{prop:relative_non_F_pure}
In the situation of \autoref{notation:pair_standard}, 
assume that $T$ is regular.
Then  there is an open set $V \subseteq T$, such that for every $q \gg 0$, 
\begin{equation}
\label{eq:relative_non_F_pure:statement}
\sigma_{i+(q-1)j/T} \times_{V^{i+(q-1)j}} V^{i+qj}= \left(\sigma_{i+qj/T}\right)_{V^{i+qj}} \textrm{, and }
\sigma_{q-1/T}' \times_{V^{(q-1)j}} V^{qj}= \left(\sigma_{q/T}'\right)_{V^{qj}}. 
\end{equation}
\end{proposition}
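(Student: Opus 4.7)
The proof is a relative version of \autoref{lem:sigma_stabilizes}, adapting the approach of \cite{Patakfalvi_Schwede_Zhang_F_singularities_in_families}. First, the two equations of \autoref{eq:relative_non_F_pure:statement} are equivalent: since $T$ is regular, the iterated Frobenius $F_T^e$ is flat by Kunz's theorem, so formation of image subsheaves commutes with the base changes $A^{1/p^a} \to A^{1/p^b}$. Applied to the surjections of diagram \autoref{eq:sigma_sigma_prime}, this shows that stabilization of one chain on an open set $V \subseteq T$ forces stabilization of the other, so it is enough to prove the second equation concerning $\sigma'_q$.

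The plan for the $\sigma'_q$ statement has two halves. For the first half, I would pass to the generic point $\eta$ of $T$ (and, if helpful, further to its perfection $\eta^\infty$). Here the flat base changes $A \to A^{1/p^{qj}}$ simplify and the chain $\{\sigma'_{q,\eta}\}$ becomes identifiable with the absolute descending Cartier-type chain analyzed in the proof of \autoref{lem:sigma_stabilizes}. By that same argument---ultimately \cite[Lem 13.1]{Gabber_Notes_on_some_t_structures} or \cite[Prop 8.1.4]{Blickle_Schwede_p_minus_1_linear_maps_in_algebra_and_geometry}---there exists $q_0$ with $\sigma'_{q_0+1,\eta} = (\sigma'_{q_0} \otimes_{A^{1/p^{q_0 j}}} A^{1/p^{(q_0+1)j}})_\eta$. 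Since all sheaves in sight are coherent and $T$ is Noetherian, this equality spreads to a Zariski open neighborhood $V \subseteq T$ of $\eta$.

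For the second half, I would propagate the stabilization from $q_0$ to every $q > q_0$. For this, the relative form of \autoref{fact:factorization} gives, in the notation of \autoref{notation:PSZ}, that the defining map of $\sigma'_{q+1}$ factors as $\xi_{q+1,1} \circ \cdots \circ \xi_{q+1,q+1}$, and after the first factor $\xi_{q+1,q+1}$ the remaining composition is obtained from the defining map of $\sigma'_q$ by tensoring with $A^{1/p^{(q+1)j}}$ over $A^{1/p^{qj}}$ (a flat operation on $V$ thanks to regularity of $V$). An induction then yields that $\sigma'_q = \sigma'_{q-1} \otimes_{A^{1/p^{(q-1)j}}} A^{1/p^{qj}}$ holds on $V$ for all $q > q_0$. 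The main obstacle will be this propagation step: one must verify carefully that the $q \mapsto q+1$ square is genuinely a flat base change of the $q-1 \mapsto q$ square over $V$, which is the relative analog of the Cartier-module stabilization appearing in the proof of \autoref{lem:sigma_stabilizes} and is precisely what the notational setup of \autoref{notation:PSZ} is designed to encode.
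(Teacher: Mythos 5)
Your overall architecture coincides with the paper's: reduce to the chain $\sigma'_{q/T}$ via \autoref{eq:sigma_sigma_prime} (only the direction from $\sigma'$ to $\sigma$ is needed, and flatness of the Frobenii of the regular base gives it), pass to the perfection of the generic point where the relative trace is identified with the absolute one and \autoref{lem:sigma_stabilizes} yields stabilization at some $q_0$, spread out to an open set $V$ by coherence, and then propagate to all $q \geq q_0$ by induction. The first three steps are fine and match the paper.

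The gap is in the propagation step, precisely the step you flag as the main obstacle. The verification target you name --- that the $q \mapsto q+1$ square is a flat base change of the $q-1 \mapsto q$ square --- is false, and if it were true you would not need the base case at all: the chain would be constant from the start, whereas it genuinely decreases before stabilizing. Concretely, the source of the defining map of $\sigma'_{q+1/T}$ is $\bigl(M \otimes_R L^{\frac{p^{(q+1)j}-1}{p^j-1}}\bigr)^{1/p^{(q+1)j}}$, which involves $p^{(q+1)j}$-th roots of $R$, while the flat base change of the level-$q$ source is $\bigl(M \otimes_R L^{\frac{p^{qj}-1}{p^j-1}}\bigr)^{1/p^{qj}} \otimes_{A^{1/p^{qj}}} A^{1/p^{(q+1)j}}$, which only adjoins further roots of $A$: consecutive levels are related by a Frobenius twist in the $R$-direction, not by flat base change over $A$. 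The identification you do state --- peeling off the first factor $\xi_{q+1,q+1}$ leaves the flat base change of the level-$q$ map --- only yields the inclusion $\sigma'_{q+1/T} \subseteq \sigma'_{q/T} \otimes_{A^{1/p^{qj}}} A^{1/p^{(q+1)j}}$, which holds for every $q$ and uses no induction; you never say how the induction hypothesis enters. For the reverse inclusion one needs the complementary identification: peeling off the \emph{last} factor instead, the composition $\beta$ of all remaining factors has image $\bigl(\sigma'_{q/T} \otimes_R L\bigr)^{1/p^j}$, because $\beta$ is the functor $(\underline{\qquad} \otimes_R L)^{1/p^j}$ applied to the \emph{full} defining map at level $q$, while the corresponding truncation $\alpha$ of the base-changed level-$q$ chain has image $\bigl(L \otimes_R \sigma'_{q-1/T} \otimes_{A^{1/p^{(q-1)j}}} A^{1/p^{qj}}\bigr)^{1/p^j}$. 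The induction hypothesis is used exactly here to conclude $\im \beta = \im \alpha$, and applying the common last factor to both images gives the equality at level $q+1$; this is the content of diagram \autoref{eq:relative_non_F_pure:commutative}. Without this twist-versus-base-change comparison your induction does not close.
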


\begin{proof}
We use \autoref{notation:fibration_pullback_Frobenius}, \autoref{notation:relative_pair} and \autoref{notation:PSZ}. 
By \autoref{eq:sigma_sigma_prime} it is enough to show  the statement on $\sigma'$.
Since the maps, 
\begin{equation}
\label{eq:relative_non_F_pure:image}
 F^{i+qj}_* \sO_{X_{\eta^\infty}} \left( \left\lceil (1-p^{i+qj}) \left( K_{X_{\eta^\infty}}  + \Delta_{\eta^\infty} \right) \right\rceil \right) \to \sO_{X_{\eta^\infty}} \left( \left\lceil (1-p^{i}) \left( K_{X_{\eta^\infty}}  + \Delta_{\eta^\infty} \right) \right\rceil \right)  \
\end{equation}
are the maps in the definition of $\sigma\left(X_{\eta^\infty}, \Delta_{\eta^\infty}; \left\lceil(1-p^i)\left(K_{X_{\infty}} + \Delta_{\eta^\infty} \right)\right\rceil \right)$,  their images stabilize according to  \autoref{lem:sigma_stabilizes}. Note now that $\eta^\infty \to T$ is flat. Hence, by the stabilization of \autoref{eq:relative_non_F_pure:image}, \autoref{rem:base_change} and \autoref{rem:trace_base_change} applied to the base change $\eta^\infty \to T$, yields that $\left(\sigma_{q/T}'\right)_{\eta^{\infty}}$ is the same for each $q \gg0$. Since $\eta^\infty \to \eta^{qj}$ is faithfully flat, also $\left(\sigma_{q-1/T}'\right)_{\eta^{qj}} = \left(\sigma_{q/T}'\right)_{\eta^{qj}}$ for each integer $q \gg 0$. Hence, for a fixed $q \gg 0$, by possibly shrinking $T$ we obtain
\begin{equation}
\label{eq:sigma_pullback_open_set}
\sigma_{q/T}' \otimes_{A^{1/p^{qj}}} A^{1/p^{(q+1)j}} = \sigma_{q+1/T}'.
\end{equation}
Now we prove that in fact \autoref{eq:sigma_pullback_open_set} holds for $q$ replaced by any $q' \geq q$. By induction, for that it is enough to show that it holds for $q$ replaced by $q+1$. Consider the following commutative diagram given by \autoref{notation:PSZ} (in fact, we have to apply $(\_)^{p^i}$ to the maps there).
\begin{equation}
\label{eq:relative_non_F_pure:commutative} 
\xymatrix@R=20pt{
\ar@/_14pc/[dd]|(0.7){\beta} 
\left( M \otimes_R L^{\frac{p^{(q+2)j} -1}{p^j -1}} \right)^{1/p^{(q+2)j}}
\ar@/_20pc/[ddd]|(0.8){\delta} \\
\left( M  \otimes_R L^{\frac{p^{(q+1)j} -1}{p^j -1}} \right)^{1/p^{(q+1)j}} \otimes_{A^{1/p^{(q+1)j}}} A^{1/p^{(q+2)j}}
\ar[d]|{\alpha} 
\ar@/^8pc/[dd]|(0.8){\gamma} \\
\left( M  \otimes_R L \right)^{1/p^{j}} \otimes_{A^{1/p^{j}}} A^{1/p^{(q+2)j}}
\ar[d]|{\varepsilon} \\
M \otimes_A A^{1/p^{(q+2)j}} \\
}
\end{equation}
The important thing to notice at this point is that $\beta$ can be obtained from $\left(\xi_{q+1,1} \circ \dots \circ \xi_{q+1,q+1}\right)^{p^i}$
by applying the functor $( \underline{\qquad} \otimes_R L )^{1/p^j} $ and $\alpha$ can be obtained from  $\left(\xi_{q,1} \circ \dots \circ \xi_{q,q}\right)^{p^i}$
by applying $\left( L \otimes_R \underline{\qquad} \otimes_{A^{1/p^{qj}}} A^{1/p^{(q+1)j}} \right)^{1/p^j} $.  In particular, since $L$ is locally free and $A \to A^{1/p}$ if flat, 
\begin{equation*}
 \im \beta=  \left(\sigma'_{q+1/T} \otimes_R L\right)^{1/p^j} 
\textrm{, and }
\im \alpha = \left( L \otimes_R \sigma'_{q/T} \otimes_{A^{1/p^{qj}}} A^{1/p^{(q+1)j}} \right)^{1/p^j}.
\end{equation*} 
Hence, $\im \beta  = \im \alpha $ by \autoref{eq:sigma_pullback_open_set}. However, then according to \autoref{eq:relative_non_F_pure:commutative},
\begin{equation*}
\sigma_{q+2}'= \im \delta = \underbrace{ \im \varepsilon \circ \beta = \im \varepsilon \circ \alpha}_{\im \beta  = \im \alpha} = \im \gamma = \sigma_{q+1}' \otimes_{A^{1/p^{(q+1)j}}} A^{1/p^{(q+2)j}}.
\end{equation*}

\end{proof}

\begin{theorem}
\label{thm:sigma_base_change}
In the situation of \autoref{notation:relative_pair}, assuming \autoref{assumption:L_comp_base_change}, there is an integer $q_0>0$ such that for all integers $q \geq q_0$ and every perfect point $t \in T$ (via the identification  $X_t \cong X_{t^{i+qj}}$ of \autoref{rem:base_change}), $\sigma_{i+qj;X,\Delta/T} \cdot \sO_{X_{t^{i+qj}}} \cong \sigma (X_t,\Delta_t) $.
\end{theorem}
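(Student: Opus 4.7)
The plan is to proceed by Noetherian induction on $\dim T$, with the inductive step handled by the generic stabilization of \autoref{prop:relative_non_F_pure} combined with base-change compatibility of the relative trace map (\autoref{rem:base_change} and \autoref{rem:trace_base_change}).

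For the base case $\dim T=0$, after replacing $T$ by its reduction $T$ is a finite disjoint union of spectra of fields, so each perfect point $t\in T$ reduces us to the absolute setting. There, the trace maps are exactly the ones defining the descending chain whose intersection is $\sigma(X_t,\Delta_t)$, and \autoref{lem:sigma_stabilizes} yields an integer $q_t$ beyond which this chain stabilizes to $\sigma(X_t,\Delta_t)$. The maximum of $q_t$ over the finitely many points of $T$ gives the required uniform $q_0$.

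For the inductive step, I would first replace $T$ by its reduction (which does not change the statement) and pick any non-empty regular open subset $V\subseteq T$. On $V$ the hypotheses of \autoref{prop:relative_non_F_pure} are satisfied — in particular \autoref{assumption:L_comp_base_change} persists because $\sL_{i}$ remains a hull when restricted to an open subscheme — and the proposition supplies an integer $q_V$ such that for every $q\ge q_V$,
\[
\sigma_{i+(q-1)j/T}\times_{V^{i+(q-1)j}}V^{i+qj}=(\sigma_{i+qj/T})|_{V^{i+qj}}.
\]
For any perfect point $t\in V$, the iterate $F^{i+qj}$ of Frobenius on $k(t)$ is an isomorphism, so $X_{t^{i+qj}}\cong X_t$ canonically. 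Pulling the displayed equality back along $t^{i+qj}\to V^{i+qj}$ yields
\[
\sigma_{i+(q-1)j/T}\cdot \sO_{X_t}=\sigma_{i+qj/T}\cdot \sO_{X_t}\qquad(q\ge q_V).
\]
By \autoref{rem:base_change} and \autoref{rem:trace_base_change} — whose applicability rests on \autoref{assumption:L_comp_base_change} together with the perfectness of $k(t)$ — each side is identified up to units in the source (hence with the same image) with the absolute ideal $\sigma_{i+qj;X_t,\Delta_t}$. Thus the absolute chain of images stabilizes from $q_V$ onwards, and \autoref{lem:sigma_stabilizes} identifies its common value with $\sigma(X_t,\Delta_t)$, proving the conclusion uniformly on perfect points of $V$.

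The main obstacle is closing the argument on the complement $Z:=T\setminus V$: naively applying the inductive hypothesis to $(X_Z,\Delta_Z)\to Z$ fails because the restriction of the relatively $S_2$ sheaf $\sL_{i/T}$ to $X_Z$ need not be relatively $S_2$ over $Z$, so \autoref{assumption:L_comp_base_change} may genuinely be lost. This is precisely the obstruction addressed by \autoref{thm:Kollar}, which provides a finite locally closed decomposition $\bigsqcup Z_i\to Z$ such that a base change factors through $\bigsqcup Z_i$ if and only if it preserves \autoref{assumption:L_comp_base_change}. In particular every perfect point of $Z$ factors through some $Z_i$, and $\dim Z_i<\dim T$, so the inductive hypothesis applies to each $Z_i$ separately, producing integers $q_{Z_i}$. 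Setting $q_0:=\max\bigl(q_V,\max_i q_{Z_i}\bigr)$ — a finite maximum by the Noetherian property — yields a bound valid for every perfect point of $T$.
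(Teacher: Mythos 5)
Your proposal is correct and follows essentially the same route as the paper: establish the statement over a dense open set by combining \autoref{prop:relative_non_F_pure} with the base-change compatibilities of \autoref{rem:base_change} and \autoref{rem:trace_base_change}, then dispose of the closed complement by Noetherian induction. Two small remarks: the detour through \autoref{thm:Kollar} on the complement is unnecessary, since \autoref{assumption:L_comp_base_change} is stated for \emph{arbitrary} base changes $S\to T$ and therefore passes automatically to $(X_Z,\Delta_Z)\to Z$ and to all of its base changes; and your base case conflates the finitely many scheme-theoretic points of a zero-dimensional $T$ with its (infinitely many) perfect points, so the claimed finite maximum of the $q_t$ is not literally available — but this is harmless, as the zero-dimensional case is already covered by running your inductive-step argument with $V=T$ (the faithfully flat passage to $\eta^\infty$ inside \autoref{prop:relative_non_F_pure} is exactly what makes the bound uniform over all perfect points lying over a given scheme point).
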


\begin{proof}
By \autoref{prop:relative_non_F_pure}, \autoref{rem:base_change} and \autoref{rem:trace_base_change}, we know this holds for every $t$ in an open set $T_0$ of $T$. Set $Z:=T \setminus T_0$. By \autoref{rem:base_change}, $\sigma_{i+qj;X,\Delta/T} \cdot \sO_{X_Z}^{1/p^{i+qj}} = \sigma_{i+qj;X_Z,\Delta_Z/Z}$. Noetherian induction applied to $(X_Z , \Delta_Z) \to Z$ yields then the statement. 
\end{proof}

\subsection{Relative boundedness of $S_0$}
\label{sec:relative_boundedness}

\begin{theorem}
\label{thm:S_0_equals_H_0_relative}
In the situation of \autoref{notation:relative_pair}, let $N$ be a relatively ample Cartier divisor on $X$, then there is an integer $m_0$ such that for all integers $m \geq m_0$ and all perfect points $t \in T$, 
\begin{equation*}
S^0(X_t, \Delta_t; mN)= H^0(X_t, \sigma (X_t,\Delta_t) \otimes \sO_{X_t}( mN)).
\end{equation*}
\end{theorem}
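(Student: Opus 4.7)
The result is the uniform-in-$t$ strengthening of the absolute fact that for a projective pair $(Y,B)$ and a Cartier divisor $L$ with $L-(K_Y+B)$ sufficiently positive one has $S^0(Y,B;L)=H^0(Y,\sigma(Y,B)\otimes\sO_Y(L))$. The plan is to combine the uniform sheaf-level stabilization of the non-$F$-pure ideal provided by \autoref{thm:sigma_base_change} with relative Serre and Fujita-type vanishing, in order to push the absolute proof through uniformly on the fibers of $f$.

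First, I would invoke \autoref{thm:sigma_base_change} to fix $q_0$ and set $e_1:=i+q_0j$, so that $\sigma_{e_1;X,\Delta/T}\cdot\sO_{X_t}\cong\sigma(X_t,\Delta_t)$ for every perfect $t\in T$ (under the Frobenius base-change identifications of \autoref{rem:base_change} and \autoref{rem:trace_base_change}). Next, I would apply relative Serre vanishing with respect to the relatively ample divisor $N_{T^{e_1}}$ to the defining short exact sequence
\begin{equation*}
0 \to \sK_{e_1/T} \to (F^{e_1}_{X/T})_* \sL_{e_1;X,\Delta/T}^{(e_1)} \xrightarrow{\Tr_{e_1;X,\Delta/T}} \sigma_{e_1/T} \to 0
\end{equation*}
on $X_{T^{e_1}}$: there is $m_1$ such that for $m\geq m_1$ one has $R^1 f_{T^{e_1}*}(\sK_{e_1/T}(mN_{T^{e_1}}))=0$ and the relevant direct images commute with base change to every perfect fiber. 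Restricting to $X_t$ and writing $V_e(t,m):=\im\bigl(H^0(X_t,F^e_*\sL_{e;X_t,\Delta_t}(mN))\to H^0(X_t,\sO(mN))\bigr)$ and $W(t,m):=H^0(X_t,\sigma(X_t,\Delta_t)(mN))$, this immediately gives $V_{e_1}(t,m)=W(t,m)$ uniformly in $t$. By \autoref{fact:factorization} the chain $V_e$ is decreasing, so $V_e\subseteq V_{e_1}=W$ for $e\geq e_1$ and $V_e\supseteq V_{e_1}=W$ for $e\leq e_1$; hence $S^0(X_t,\Delta_t;mN)=\bigcap_e V_e\subseteq W$ and $W\subseteq V_e$ for $e\leq e_1$.

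What remains, and is the main obstacle, is showing $V_e(t,m)=W(t,m)$ also for every $e>e_1$, which amounts to the uniform-in-$e$-and-$t$ vanishing of $H^1(X_t,\sK_{e,t}(mN))$, where $\sK_{e,t}$ is the kernel of the level-$e$ trace on the fiber. I would address it via the projection-formula identity $H^i(X_t,F^e_*\sL_{e,t}(mN))\cong H^i(X_t,\sL_{e,t}(p^e mN))$: the line bundle $\sL_{e,t}(p^e mN)$ is, up to a bounded rounding, a large multiple of the ample $\bQ$-divisor $mN-(K_{X_t}+\Delta_t)$, so Fujita vanishing produces a uniform $m_2$ such that $H^1(X_t,F^e_*\sL_{e,t}(mN))=0$ for every $e\geq 0$ and $m\geq m_2$. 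A snake-lemma chase comparing the level-$e$ and level-$e_1$ sequences (whose right-hand terms both agree with $\sigma(X_t,\Delta_t)$ by Step~1 and whose middle vertical map is the factorization of \autoref{fact:factorization}), together with the analogous Fujita-type vanishing for the kernel and cokernel of $F^e_*\sL_e\to F^{e_1}_*\sL_{e_1}$, then upgrades the level-$e_1$ vanishing from the relative Serre step to the desired level-$e$ vanishing. Setting $m_0:=\max\{m_1,m_2\}$ completes the argument.
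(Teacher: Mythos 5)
Your overall architecture is reasonable and the first half matches the paper's strategy: stabilize the non-$F$-pure ideal via \autoref{thm:sigma_base_change}, then reduce to a surjectivity of $H^0$'s along the trace, with the level-$e_1$ case handled by relative Serre vanishing. (Even there you skip a necessary step: without the Noetherian induction and the stratification of \autoref{thm:Kollar} guaranteeing \autoref{assumption:L_comp_base_change} and the flatness of $\sigma_{e_1/T}$ and of the kernel, the defining sequence need not restrict exactly to fibers and cohomology-and-base-change is not available.) The genuine gap is in your treatment of $e>e_1$, which is the actual content of the theorem.

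Your proposed mechanism --- Fujita vanishing for $F^e_*\sL_{e,t}(mN)$ plus a snake-lemma comparison with the level-$e_1$ sequence --- does not close. First, $\sL_{e,t}$ is not a line bundle here (the Cartier index may be divisible by $p$), and the sheaves $\sO(\lceil(1-p^e)(K+\Delta)\rceil)$ vary with $e$; Fujita vanishing needs a \emph{fixed} coherent sheaf twisted by a nef divisor plus a fixed ample one, which is only achieved by restricting to the arithmetic progression $e=i+qj$ and using \autoref{eq:factoring_r_out}. Second, and more seriously, the vanishing of $H^1$ of the middle term gives nothing: the long exact sequence shows $H^1(X_t,\sK_{e,t}(mN))$ \emph{is} the cokernel of the map whose surjectivity you want, so you need an independent handle on $\sK_{e,t}$. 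Your snake-lemma chase compares level $e$ with level $e_1$ in one jump, but the middle vertical map $F^e_*\sL_e\to F^{e_1}_*\sL_{e_1}$ is not surjective --- its image is $F^{e_1}_*$ of a twist of an intermediate non-$F$-pure ideal --- so the naive diagram chase fails, and its kernel and cokernel are an infinite family of sheaves varying with $e$ that are not visibly of Fujita type. Controlling them uniformly in $e$ is precisely the hard point, and it is what the paper's machinery supplies: the factorization of the trace into single Frobenius steps $\xi_{q,l}$ (\autoref{notation:PSZ}), the self-similarity and stabilization of the intermediate images $\sigma'_{q/T}$ and of the kernels $\sB_q$, $\sB'_q$ as Frobenius base-changes of finitely many fixed flat sheaves (\autoref{prop:relative_non_F_pure}), after which a single Serre/Fujita constant kills all the relevant $R^1$'s at once. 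Without an argument of this kind your proof establishes only $S^0(X_t,\Delta_t;mN)\subseteq H^0(X_t,\sigma(X_t,\Delta_t)\otimes\sO_{X_t}(mN))$ and the reverse inclusion at levels $e\leq e_1$, not the equality.
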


\begin{proof}
Set $\sN:=\sO_X(N)$. We will use \autoref{notation:PSZ} in this proof where necessary. Furthermore, since $X^e \times_{T^e} T^{e'} \to T^{e'}$ is a map between schemes with the same underlying spaces for each $e \leq e'$, and pushforward depends only on the underlying topological spaces, we the pushforwards via all these spaces by $f_*$. One warning should be given here: a sheaf $\sF$ on  $X^e \times_{T^e} T^{e'}$ is an $A^{1/p^{e'}}$ module. Hence so is $f_*\sF$. This tells us that $f_* \sF$ in fact lives on $T^{e'}$ naturally instead of $T$. Hence the reader is supposed to track the $A^{1/p^{e'}}$-module structures for each pushforward by hand, since it is omitted from the notation.

By noetherian induction, we may replace $T$ by any of its Zariski open sets. Hence after possibly shrinking $T$ we mays assume:
\begin{enumerate}
 \item  $T$ is regular, and hence $A^{1/p^s} \to A^{1/p^t}$ is flat for $t \geq s$,
\item \autoref{assumption:L_comp_base_change} holds by \autoref{thm:Kollar}, and in particular $\sL_{i/T}$ is flat over $T$ (and then so is $\sL_{i+qj/T}$ since that is just a twist of $\sL_{i/T}$ by a line bundle),
\item \label{itm:S_0_equals_H_0_relative:base_change} \autoref{eq:relative_non_F_pure:statement} holds by \autoref{prop:relative_non_F_pure} for $q \geq q_0$ for some integer $q_0\geq 0$, and
\item \label{itm:S_0_equals_H_0_relative:flat}   $\sigma_{i+qj;X,\Delta/T}$ and $\sigma_{q;X,\Delta/T}'$ are flat for every $q \geq 0$ by the previous point.
\end{enumerate}
In particular, by assumptions \autoref{itm:S_0_equals_H_0_relative:flat} and \autoref{itm:S_0_equals_H_0_relative:base_change}, there is an integer $m_0$, such that for all integers $q \geq 0, m \geq m_0$ and all $t \in T$, then
\begin{equation*}
k\left(t^{i+qj}\right) \otimes f_* \left( \sN^m_{T^{i+qj}} \otimes \sigma_{i+qj;X,\Delta/T}\right)  \cong H^0 \left(X_{t^{i+qj}}, \sigma_{i+qj;X,\Delta/T} \otimes \sN^m_{t^{i+qj}} \right).
\end{equation*}
In particular according to \autoref{thm:sigma_base_change}, assumptions \autoref{itm:S_0_equals_H_0_relative:flat} and by the identification of \autoref{rem:trace_base_change},  by possibly increasing $q_0$, for all $q \geq q_0, m \geq m_0$ and all perfect points $t \in T$
\begin{equation}
\label{eq:S_0_equals_H_0_relative:sigma_sections}
k\left(t^{i+qj}\right) \otimes f_* \left( \sN^m_{T^{i+qj}} \otimes \sigma_{i+qj;X,\Delta/T} \right)  \cong H^0\left(X_t, \sigma (X_t,\Delta_t) \otimes \sN^m\right).
\end{equation}
Consider now the commutative diagram given by \autoref{eq:S_0_equals_H_0_relative:sigma_sections} for any perfect point $t \in T$,
\begin{equation*}
\xymatrix{
k\left(t^{i+qj}\right) \otimes f_* \left( \sN_{T^{i+qj}}^m \otimes  F_{X/T,*}^{i+qj} \sL_{i+qj,X,\Delta/T}  \right)
\ar[r]  \ar[d] & 
k\left(t^{i+qj}\right) \otimes f_* \left( \sN_{T^{i+qj}}^m \otimes \sigma_{i+qj;X,\Delta/T}  \right)
\ar[d]^{\cong} \\ H^0\left(X_t, \sN^m_t \otimes F_*^{i+qj} \sL_{i+qj;X,\Delta} \right) \ar[r]
&
 H^0(X_t, \sN^m_t \otimes \sigma (X_t,\Delta_t) )
}
\end{equation*}
It implies that it is enough to prove that after possibly shrinking $T$ and increasing $m_0$ the homomorphism
\begin{equation}
\label{eq:S_0_equals_H_0_relative:has_to_be_surjective}
\xymatrix{
f_* \left( \sN_{T^{i+qj}}^m \otimes  F_{X/T,*}^{i+qj} \sL_{i+qj,X,\Delta/T}  \right)
\ar[r] &
 f_* \left( \sN_{T^e}^m \otimes \sigma_{i+qj;X,\Delta/T}  \right)
}
\end{equation}
is surjective for every $q \geq q_0$ and every $m \geq m_0$, by possibly increasing $m_0$.

Now, we change to \autoref{notation:PSZ}. Consider for $0 \leq s <q$
\begin{equation*}
\xymatrix{
 \left( M  \otimes_R L^{\frac{p^{qj} -1}{p^j -1}} \right)^{1/p^{i+qj}}  
\ar[dr] \ar[r]  & 
\left( M  \otimes_R L^{\frac{p^{(s+1)j} -1}{p^j -1}} \otimes_A A^{1/p^{(q-s-1)j}} \right)^{1/p^{i+(s+1)j}}
 \ar[d]  \\
& \left( M  \otimes_R L^{\frac{p^{sj} -1}{p^j -1}} \otimes_A A^{1/p^{(q-s)j}} \right)^{1/p^{i+sj}}
}.
\end{equation*}
It yields 
\begin{equation*}
\left(  \sigma'_{q-s-1/T}  \otimes_R  L^{\frac{p^{(s+1)j} -1}{p^j -1}}  \right)^{1/p^{i+ (s+1)j}}
\twoheadrightarrow 
\left(  \sigma'_{q-s/T}  \otimes_R  L^{\frac{p^{sj} -1}{p^j -1}}  \right)^{1/p^{i+sj}}
\end{equation*}
induced from the  natural map on the right side of the exact sequence
\begin{equation}
\label{eq:S_0_equals_H_0_relative:has_to_be_surjective:B_q_prime}
\xymatrix{
0 \ar[r] &
\sB'_{q-s}
\ar[r] & 
\left(  \sigma'_{q-s-1/T} \otimes_R L\right)^{1/p^j}
\ar[r] &
  \sigma'_{q-s/T} 
\ar[r] & 0
},
\end{equation}
where $\sB'_{q-s}$ is defined as the kernel making the above sequence exact. Note that the latter sequence is a sequence of $ R \otimes_A A^{1/p^{(q-s)j}}$ modules. Second define the $R \otimes_A A^{1/p^{i+qj}}$ module $\sB_{i+qj}$ via the following exact sequence obtained from \autoref{eq:sigma_sigma_prime}.
\begin{equation}
\label{eq:S_0_equals_H_0_relative:has_to_be_surjective:B_q}
\xymatrix{
0 \ar[r] & \sB_{q} \ar[r] & \left(\sigma'_{q/T} \right)^{1/p^i} \ar[r] & \sigma_{i+qj/T}  \ar[r] & 0
}
\end{equation}
By assumption \autoref{itm:S_0_equals_H_0_relative:flat}, $\sB_q$ and $\sB_{q}'$ are flat over $A^{1/p^{i+qj}}$ and furthermore by \autoref{itm:S_0_equals_H_0_relative:base_change} for $q \geq q_0$, 
\begin{equation*}
\sB_{q} \cong \sB_{q_0} \otimes_{A^{1/p^{i+q_0j }}} A^{1/p^{i + qj}} \textrm{, and } \sB_{q}' \cong \sB_{q_0}' \otimes_{A^{1/p^{q_0j }}} A^{1/p^{qj}}.
\end{equation*}
 Hence, by possibly increasing $m_0$, for every line bundle $\sP$ on $X$ nef over $T$ and every $q \geq q_0$,
\begin{equation}
\label{eq:S_0_equals_H_0_relative:has_to_be_surjective:vanishing}
R^1 f_* \left( \sB_{q} \otimes_R \sL^{m_0} \otimes \sP \right)=0 \textrm{, and } R^1 f_* \left( \sB_{q}' \otimes_R \sL^{m_0} \otimes \sP \right)=0 
\end{equation}
Furthermore, by possibly evern more increasing $m_0$, we may assume that $L \otimes \sN^{m_0}$ is nef. Now, to prove the surjectivity of \autoref{eq:S_0_equals_H_0_relative:has_to_be_surjective} it is enough to prove the surjectivity of 
\begin{equation}
\label{eq:S_0_equals_H_0_relative:has_to_be_surjective:first}
  f_* \left(  \left( \sigma'_{q/T} \right)^{1/p^i} \otimes_R \sN^m \right)  \to 
f_* \left(  \sigma_{i+qj/T} \otimes_R \sN^m \right) 
\end{equation}
and of 
\begin{equation}
\label{eq:S_0_equals_H_0_relative:has_to_be_surjective:second}
 f_* \left( \left(  \sigma_{q-s-1}' \otimes_R L^{\frac{p^{(s+1)j} -1}{p^j -1}} \right)^{1/p^{i+(s+1)j}} \otimes_R \sN^m \right) 
 \to f_* \left( \left( \sigma_{q-s}' \otimes_R L^{\frac{p^{sj} -1}{p^j -1}} \right)^{1/p^{i+sj}} \otimes_R \sN^m \right) 
\end{equation}
for every $0 \leq s < q$. The surjectivity of \autoref{eq:S_0_equals_H_0_relative:has_to_be_surjective:first} is given straight by \autoref{eq:S_0_equals_H_0_relative:has_to_be_surjective:vanishing} according to \autoref{eq:S_0_equals_H_0_relative:has_to_be_surjective:B_q}. For the surjectivity of \autoref{eq:S_0_equals_H_0_relative:has_to_be_surjective:second} we need that
\begin{multline*}
0=R^1 f_* \left( \left( \sB_{q-s}' \otimes_R L^{\frac{p^{sj} -1}{p^j -1}} \right)^{1/p^{i+sj}} \otimes_R \sN^m \right) \cong R^1 f_* \left( \sB_{q-s}' \otimes_R L^{\frac{p^{sj} -1}{p^j -1}} \otimes_R \sN^{mp^{i+sj}} \right)^{1/p^{i+sj}}  
\\
R^1 f_* \left( \sB_{q-s}' \otimes_R \left(L \otimes \sN^m\right)^{\frac{p^{sj} -1}{p^j -1}} \otimes_R \sN^{m\left(p^{i+sj}- \frac{p^{sj} -1}{p^j -1} \right)} \right)^{1/p^{i+sj}}  
\end{multline*}
This also holds by \autoref{eq:S_0_equals_H_0_relative:has_to_be_surjective:vanishing} and the fact that $p^{i+sj}- \frac{p^{sj} -1}{p^j -1} >1$.

\end{proof}

\section{Reducedness of the non $F$-prue ideal of slc surface singularities}
\label{sec:sigma_X}

\emph{In \autoref{sec:sigma_X}, the base-field $k$ is algebraically closed and of characteristic $p>0$.}
In this section, we prove that:

\begin{theorem}
\label{thm:reduced}
If $p>5$ and  $x \in (X, \Delta)$ is a semi-log canonical surface singularity over  $k$ such that the coefficients of $\Delta$ are greater than $\frac{5}{6}$, then $m_{X,x} \subseteq \sigma(X,\Delta)_x \subseteq \sO_X$. Furthermore, $\sigma(X, \Delta) \neq \sO_X$, then $x \not\in \Supp \Delta$, $X$ is normal, log canonical but not klt at $x$. 
\end{theorem}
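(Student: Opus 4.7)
The plan is to establish both assertions by a case analysis on the local structure of $(X,\Delta)$ at $x$. The inclusion $\sigma(X,\Delta)_x\subseteq \sO_{X,x}$ is automatic from the definition. For the rest, I would argue the contrapositive of the furthermore-clause: I claim that if any of the conditions (a) $X$ is non-normal at $x$, (b) $(X,\Delta)$ is klt at $x$, or (c) $X$ is normal and $x\in\Supp\Delta$ holds, then $(X,\Delta)$ is sharply $F$-pure at $x$, whence $\sigma(X,\Delta)_x=\sO_{X,x}$ and the inclusion $m_{X,x}\subseteq\sigma(X,\Delta)_x$ is trivial. In the one remaining case -- $X$ normal, $(X,\Delta)$ strictly log canonical at $x$, and $x\notin\Supp\Delta$ -- the divisor $\Delta$ vanishes in a neighborhood of $x$, so $\sigma(X,\Delta)_x=\sigma(X)_x$, and the inclusion $m_{X,x}\subseteq\sigma(X)_x$ must be proved directly.

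In case (a), the demi-normality of $X$ and the classification of non-normal slc surface singularities imply that, locally analytically near $x$, the germ $(X,\Delta)$ is one of the classical list (nodes along a curve, pinch points, degenerate cusps, and similar), all of which are sharply $F$-pure for $p>5$. In case (b), Hara's theorem asserts that two-dimensional klt pairs over $k$ of characteristic $p>5$ are strongly $F$-regular, so $\sigma(X,\Delta)=\sO_X$. Case (c) is the subtle one. Applying \autoref{lem:lct} (on the normalization if $X$ is non-normal, then pushing down through the conductor which has coefficient one) shows that $(X,\lceil\Delta\rceil)$ is slc with reduced boundary, and it is known that log canonical surface pairs with reduced boundary in characteristic $p>5$ are sharply $F$-pure. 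From the inequality $\Delta\le\lceil\Delta\rceil$ one gets an inclusion $\sL_{e;X,\lceil\Delta\rceil}\hookrightarrow\sL_{e;X,\Delta}$ and a factorization
\begin{equation*}
\Tr_{e;X,\lceil\Delta\rceil}\colon\;F^{e}_*\sL_{e;X,\lceil\Delta\rceil}\hookrightarrow F^{e}_*\sL_{e;X,\Delta}\xrightarrow{\Tr_{e;X,\Delta}}\sO_X,
\end{equation*}
so surjectivity of $\Tr_{e;X,\lceil\Delta\rceil}$ forces surjectivity of $\Tr_{e;X,\Delta}$, transporting $F$-purity back to $(X,\Delta)$.

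For the remaining case, $x\in X$ is a normal strictly log canonical surface singularity with $\Delta=0$ nearby. By the classical classification of such singularities (see e.g.\ \cite[Ch.~3]{Kollar_Singularities_of_the_minimal_model_program}), $x\in X$ is a cusp, a simple elliptic singularity, or a $\bZ_2$-quotient of one of these. For $p>5$ the group order~$2$ is coprime to $p$, cusp singularities are sharply $F$-pure by their toroidal-like local structure, and a simple elliptic singularity is sharply $F$-pure iff its exceptional elliptic curve is ordinary; in these sub-cases $\sigma(X)_x=\sO_{X,x}$. The only remaining sub-case is that of a simple elliptic singularity whose exceptional elliptic curve is supersingular, and for it an explicit local computation on the minimal resolution -- comparing the trace of Frobenius with the vanishing Cartier operator on the supersingular elliptic curve -- identifies $\sigma(X)_x$ with $m_{X,x}$. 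Hence $m_{X,x}\subseteq\sigma(X)_x$ holds in every sub-case, concluding the proof.

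The main obstacles are the two inputs about $F$-singularities appealed to above: first, that every log canonical surface pair with reduced boundary is sharply $F$-pure in characteristic $p>5$, which I expect to handle through the dlt classification of surface singularities together with Hara--Watanabe-type arguments; and second, the explicit identification $\sigma(X)_x=m_{X,x}$ for a supersingular simple elliptic singularity, which is a delicate but direct computation on the minimal resolution. Outside of these two ingredients the argument is a relatively clean case analysis, relying only on \autoref{lem:lct} and the monotonicity of $\sigma$ under increasing the boundary.
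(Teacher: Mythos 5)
Your overall strategy---classify the slc surface germs, prove $F$-purity in most cases, and compute $\sigma$ explicitly in the exceptional ones---is the same as the paper's, and several steps match (the factorization of $\Tr_{e;X,\lceil\Delta\rceil}$ through $\Tr_{e;X,\Delta}$ is exactly how the boundary is handled, and the supersingular simple elliptic case is indeed computed on the minimal resolution). However, your case analysis for the normal, boundary-free, strictly log canonical germs has a genuine gap. The classification is not ``cusp, simple elliptic, or a $\bZ_2$-quotient of one of these'': simple elliptic singularities admit $\mu_n$-quotients for $n=2,3,4,6$, giving the star-shaped graphs $(2,2,2,2)$, $(3,3,3)$, $(2,4,4)$, $(2,3,6)$ of \autoref{table:dual_graphs} (only the cusps are restricted to $\mu_2$-quotients, of type $\tilde D$). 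More seriously, when the canonical cover is a \emph{supersingular} simple elliptic singularity, the quotient is \emph{not} $F$-pure (these are exactly the remaining exceptions in \autoref{prop:F_pure_for_big_enough_prime}); it is a rational singularity whose minimal resolution has a star of rational curves and no elliptic exceptional curve, so your ``explicit computation comparing the trace of Frobenius with the Cartier operator on the supersingular elliptic curve'' does not apply to it. You still must prove $m_{X,x}\subseteq\sigma(X)_x$ for these four types; the paper does this in \autoref{prop:sigma_star_shaped} by a nontrivial descent: the $\mu_n$-action is shown to lift to a morphism of the minimal resolution of the cover acting freely on a big open set, and the sections produced on the (supersingular simple elliptic) cover are pushed down through the prime-to-$p$, generically \'etale quotient trace. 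This is a substantial portion of the proof and is entirely absent from your outline.

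Two smaller points. First, ``two-dimensional klt pairs in characteristic $p>5$ are strongly $F$-regular'' is not Hara's theorem; the classification is for $\Delta=0$ (and standard-coefficient variants), and for arbitrary coefficients the statement fails in general. You do not need it: if $x\in\Supp\Delta$ your case (c) applies, and otherwise you are reduced to $X$ klt, where Hara's theorem does apply. Second, for non-normal $x$ or $x\in\Supp\Delta$ with the Cartier index of $K_X+\Delta$ possibly divisible by $p$, an appeal to a ``classical list'' of $F$-pure germs is not sufficient in the paper's framework: one must exhibit sections of $F^e_*\sO_X(\lceil(1-p^e)(K_X+\Delta)\rceil)$ mapping to a unit for \emph{every} $e$, which the paper achieves by constructing conductor-involution-invariant splittings on a partial resolution of the normalization and descending them (\autoref{prop:t_exists}, \autoref{prop:lifting_t}, \autoref{prop:non_empty_boundary_sigma}); your sketch is compatible with this but the descent through the gluing involution is real work that needs to be carried out.
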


\begin{corollary}
If $p>5$, and $x \in (X, \Delta)$ is a semi-log canonical surface singularity over $k$ such that the coefficients of $\Delta$ are greater than $\frac{5}{6}$, then $\sO_X/\sigma(X,\Delta)$ is reduced. 
\end{corollary}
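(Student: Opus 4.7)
The plan is to derive the corollary as an immediate consequence of \autoref{thm:reduced}, with essentially no further work required. Since reducedness of the coherent quotient sheaf $\sO_X/\sigma(X,\Delta)$ can be verified stalkwise (a quotient ring is reduced iff all its localizations at points are reduced, and here there are only finitely many associated points to check), the task reduces to showing that $\sO_{X,x}/\sigma(X,\Delta)_x$ is reduced for every $x \in X$.

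Fix such a point $x$. By hypothesis $(X,\Delta)$ is semi-log canonical at $x$ with all coefficients of $\Delta$ greater than $5/6$, so the assumptions of \autoref{thm:reduced} are satisfied. Applying it yields the chain of inclusions
\begin{equation*}
m_{X,x} \subseteq \sigma(X,\Delta)_x \subseteq \sO_{X,x}.
\end{equation*}
Since $m_{X,x}$ is a maximal ideal, the sandwich forces $\sigma(X,\Delta)_x$ to equal either $\sO_{X,x}$ or $m_{X,x}$. In the first case, the stalk of the quotient is zero; in the second, it is the residue field $k(x)$. Both are reduced rings, so $\sO_X/\sigma(X,\Delta)$ is reduced at every point, hence reduced.

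There is no genuine obstacle here, as all the content of the corollary is packaged inside \autoref{thm:reduced}: the nontrivial input is precisely the inclusion $m_{X,x} \subseteq \sigma(X,\Delta)_x$, which upgrades the a priori only coherent ideal $\sigma(X,\Delta)$ to one that is either the unit ideal or a maximal ideal at each stalk. Once that is in hand, reducedness is automatic from the maximality of $m_{X,x}$.
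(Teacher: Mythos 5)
Your argument is correct and is exactly the intended (and omitted) derivation in the paper: \autoref{thm:reduced} gives $m_{X,x} \subseteq \sigma(X,\Delta)_x$ at each point, so each stalk of $\sO_X/\sigma(X,\Delta)$ is either zero or a residue field, hence reduced.
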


We prove \autoref{thm:reduced} in the following subsections, proving it for wider and wider class of singularities in each subsection. We use in an essential way the classification of log canonical surface singularities (with empty boundary), e.g., \cite[Appendix]{Hara_Classification_of_two_dimensional_F_regular_and_F_pure_singularities}, and of log canonical pair singularities with reduced boundary \cite[Thm 4.15]{Kollar_Mori_Birational_geometry_of_algebraic_varieties}. Note that both of these are valid over an arbitrary algebraically closed field.

\subsection{The case of most normal singularities with empty boundary}

\begin{proposition}
\label{prop:F_pure_for_big_enough_prime} \cite{Hara_Classification_of_two_dimensional_F_regular_and_F_pure_singularities,Mehta_Srinivas_Normal_F_pure_surface_singularities}
If $p>5$, then every log canonical surface singularity  $x \in X$  is $F$-pure, except possibly
the 
\begin{enumerate}
 \item \label{itm:simple_elliptic} simple elliptic singularities, such that  the minimal resolution has a unique exceptional divisor which is a supersingular elliptic curve, and
\end{enumerate}
the rational log canonical singularities with star shaped dual graphs of type 
\begin{enumerate}[resume]
\item \label{itm:333} $(3,3,3)$,
\item \label{itm:236} $(2,3,6)$,
\item \label{itm:244} $(2,4,4)$, and
\item \label{itm:2222} $(2,2,2,2)$.
\end{enumerate}

\end{proposition}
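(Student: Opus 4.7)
The plan is to reduce the statement to the classical classification of two-dimensional log canonical singularities and to invoke, for each class in that classification, the corresponding $F$-purity computation from the cited references. Recall that, over any algebraically closed field, a log canonical surface germ $x \in X$ falls into one of four mutually exclusive families: (a) klt singularities, either du Val or log terminal quotients $\bA^2/G$ for a small finite subgroup $G \subset \GL_2$; (b) simple elliptic singularities, whose minimal resolution has a unique exceptional divisor that is a smooth elliptic curve; (c) cusp singularities (possibly with a free involution in codimension one), whose minimal resolution has a cycle of rational curves as exceptional locus; and (d) non-klt rational singularities with star-shaped dual graph of Platonic type $(3,3,3)$, $(2,3,6)$, $(2,4,4)$ or $(2,2,2,2)$. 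This classification is recalled in the appendix of \cite{Hara_Classification_of_two_dimensional_F_regular_and_F_pure_singularities}.

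I would then dispose of the classes in turn. For (a), Hara shows that every klt surface singularity is in fact $F$-regular in characteristic $p > 5$: the non-cyclic log terminal groups (binary dihedral, tetrahedral, octahedral, icosahedral) have orders whose prime divisors lie in $\{2,3,5\}$, so $p \nmid |G|$ is automatic; cyclic quotients are $F$-regular in any characteristic coprime to the group order; and the du Val singularities fail $F$-regularity only in characteristics belonging to $\{2,3,5\}$, with $E_8$ being the most restrictive. For (c), cusp singularities admit explicit Frobenius splittings in every characteristic, since analytically they are toroidal hypersurfaces for which a splitting can be written down directly, so they contribute no exceptions. Thus classes (a) and (c) are $F$-pure whenever $p > 5$.

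For (b), Mehta--Srinivas prove that a simple elliptic singularity is $F$-pure precisely when the unique exceptional elliptic curve in the minimal resolution is ordinary, which accounts for exception (1) in the statement. For (d), the four listed Platonic star-shaped types are the only remaining non-klt rational log canonical singularities, and for these $F$-purity depends on an arithmetic congruence on $p$ (analysed explicitly via a Fedder-type criterion applied on the minimal log resolution in \cite{Hara_Classification_of_two_dimensional_F_regular_and_F_pure_singularities}), giving the possible exceptions (2)--(5). The main obstacle is not any individual step but the completeness of the classification: one must verify, via the explicit combinatorics of the minimal log resolution, that in characteristic $p > 5$ no further non-$F$-pure log canonical surface singularity arises outside classes (b) and (d), which is exactly what the cited references accomplish by a case-by-case computation and concludes the proof.
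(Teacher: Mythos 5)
Your overall strategy is the same as the paper's: run through the classification of log canonical surface germs recalled in the appendix of \cite{Hara_Classification_of_two_dimensional_F_regular_and_F_pure_singularities}, quote Hara's Theorem 1.1 for the klt case, quote Mehta--Srinivas for simple elliptic singularities and cusps, and isolate the supersingular elliptic and Platonic star-shaped cases as the possible exceptions. However, your claimed classification is incomplete, and this creates a genuine gap.

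Concretely, you assert that the non-klt rational log canonical surface singularities consist exactly of the cusps together with the four Platonic star-shaped types $(3,3,3)$, $(2,3,6)$, $(2,4,4)$, $(2,2,2,2)$. This omits the singularities of type $\tilde{D}_{n+3}$ (the $\bZ/2$-quotients of cusps), whose minimal resolution has dual graph a \emph{chain with two $(-2)$-arms attached at each end}; these are log canonical, rational, in general not klt, and are neither cusps (their exceptional locus is not a cycle) nor star-shaped of the four listed types. Your parenthetical ``possibly with a free involution in codimension one'' seems to gesture at them, but you then describe the exceptional locus as a cycle of rational curves and justify $F$-purity by an explicit splitting of the cusp as a toroidal hypersurface --- an argument that does not apply to the quotient germ (which need not even be a hypersurface) without an additional descent step, e.g.\ a trace argument for the degree-$2$ cover using $p>2$. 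Since the proposition claims that \emph{every} log canonical surface singularity outside the listed exceptions is $F$-pure, the $\tilde{D}_{n+3}$ class must be dealt with explicitly; the paper does so by citing \cite[Thm 1.2]{Hara_Classification_of_two_dimensional_F_regular_and_F_pure_singularities}, which shows these are $F$-pure for $p>2$. Once that case is added (and the justification for cusps is replaced by, or reduced to, the Mehta--Srinivas result), your argument matches the paper's proof.
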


\begin{proof}
This is shown for klt singularities in \cite[Thm 1.1]{Hara_Classification_of_two_dimensional_F_regular_and_F_pure_singularities}. The singularities $\tilde{D}_{n+3}$ (which are chains with two arms at both ends, see \cite[Appendix, Figure A.2]{Hara_Classification_of_two_dimensional_F_regular_and_F_pure_singularities}) are also $F$-pure for $p>2$ according to \cite[Thm 1.2]{Hara_Classification_of_two_dimensional_F_regular_and_F_pure_singularities}. According to the classification in \cite[Appendix]{Hara_Classification_of_two_dimensional_F_regular_and_F_pure_singularities}, apart from the exceptions listed in the statement only the following two cases remain: cusps (the exceptional divisors of which is a cycle of rational curves or a nodal irreducible rational curve) and simple elliptic singularities the exceptional divisors of which is an irreducible smooth ordinary elliptic curve. These are shown to be $F$-pure in \cite[Thm 1.2]{Mehta_Srinivas_Normal_F_pure_surface_singularities}
\end{proof}

\begin{corollary}
\label{cor:sigma_equals_O_generally}
Apart from the exceptions listed in \autoref{prop:F_pure_for_big_enough_prime}, if $p >5$,  $\sigma(X)= \sO_X$ for a log canonical surface singularity $x \in X$.
\end{corollary}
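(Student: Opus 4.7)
The proof is essentially immediate by combining the two preceding results. My plan is as follows.

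First, I would invoke \autoref{prop:F_pure_for_big_enough_prime}: since $p>5$ and $x \in X$ is a log canonical surface singularity not of one of the listed exceptional types \autoref{itm:simple_elliptic}--\autoref{itm:2222}, the singularity $x \in X$ is $F$-pure. Recall that $F$-purity is a local property and is equivalent to asking that the Frobenius map $\sO_{X,x} \to F_*\sO_{X,x}$ (equivalently, $F^e_* \sO_{X,x}$ for any, hence all, $e > 0$) splits as a map of $\sO_{X,x}$-modules.

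Second, I would apply \autoref{remark:sigma_F_split}, which records precisely the Grothendieck duality translation: $X$ is $F$-pure if and only if $\sigma(X) = \sO_X$. The key input here is that since $\Delta = 0$ the Cartier index question is trivial (all the sheaves $\sL_{e;X,0}$ are just $\sO_X((1-p^e)K_X)$, which are reflexive rank one sheaves on the $S_2$, $G_1$ variety $X$), so that the splitting of $\sO_X \to F^e_* \sO_X$ is equivalent to the surjectivity of the trace map $\Tr_e : F^e_* \sO_X((1-p^e)K_X) \to \sO_X$ in the sense of \autoref{definition:trace}. Combining the two equivalences gives the desired conclusion $\sigma(X) = \sO_X$.

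There is no substantial obstacle here: the entire content of the corollary is in the classification result of \autoref{prop:F_pure_for_big_enough_prime} together with the duality identification already recorded in \autoref{remark:sigma_F_split}. The corollary merely rephrases $F$-purity in the language of the non-$F$-pure ideal $\sigma(X)$ introduced in \autoref{def:sigma_j}, which is the form in which it will be used in the sequel.
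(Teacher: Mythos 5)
Your proposal is correct and is exactly the argument the paper intends: the corollary is stated without proof precisely because it follows immediately from \autoref{prop:F_pure_for_big_enough_prime} (which gives $F$-purity outside the listed exceptions) combined with the equivalence ``$X$ is $F$-pure iff $\sigma(X)=\sO_X$'' recorded in \autoref{remark:sigma_F_split}. Nothing further is needed.
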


%

\subsection{The case of simple elliptic singularity with supersingular exceptional divisor}

\begin{lemma} \cite[Prop 5.3]{Schwede_A_canonical_linear_system}
\label{lem:Karl}
Let $(Z, \Delta)$ be a normal pair and a Cartier divisor $M$ such that 
\begin{enumerate}
\item there is a projective morphism $g : Z \to X$ to an affine variety,
\item $K_Z + \Delta$ is $\bQ$-Cartier with index not divisible by $p$, and
\item $M - K_Z - \Delta$ is ample.
\end{enumerate}
If $V$ is an $F$-pure center, then the natural map 
\begin{equation*}
S^0(Z,\sigma(Z, \Delta) \otimes \sO_Z(M)) \to S^0(V, \sigma(V, \Diff_{F,V}(\Delta) \otimes \sO_V(M)) 
\end{equation*}
is surjective.
\end{lemma}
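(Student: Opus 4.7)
The plan is to mimic the restriction-and-trace argument of \autoref{prop:S_0_surj}, with the key simplification that, since the Cartier index $r$ of $K_Z+\Delta$ is prime to $p$, we may choose $e_0$ with $r \mid p^{e_0}-1$; for every $e$ that is a multiple of $e_0$ the sheaf $\sL_{e;Z,\Delta} = \sO_Z((1-p^e)(K_Z+\Delta))$ is a genuine line bundle, and the machinery of \autoref{sec:Frobenius_stable} collapses to the classical setting of \cite{Schwede_A_canonical_linear_system}. Fix one such $e$ that is additionally large enough so that the descending chains defining $\sigma(Z,\Delta)$, $\sigma(V,\Diff_{F,V}(\Delta))$ and the Frobenius-stable images on both sides have already stabilized; by \autoref{lem:sigma_stabilizes} and the finiteness of cohomology on the projective $Z/X$, all of these stabilize.

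The essential input from the hypothesis that $V$ is an $F$-pure (centre of $F$-purity) center of $(Z,\Delta)$ is Schwede's $F$-adjunction: it produces, for every $e$ as above, an isomorphism
\[
\sL_{e;Z,\Delta}\otimes\sO_V \;\cong\; \sL_{e;V,\Diff_{F,V}(\Delta)}
\]
together with a commutative square identifying the restriction of $\Tr_{e;Z,\Delta}$ to $V$ with $\Tr_{e;V,\Diff_{F,V}(\Delta)}$ (this is precisely the definition of $\Diff_{F,V}(\Delta)$). With this in hand, I would consider the diagram
\[
\xymatrix@C=24pt@R=20pt{
F^e_*\bigl(\sL_{e;Z,\Delta}\otimes\sO_Z(M-V)\bigr) \ar[r] \ar[d] & \sO_Z(M-V) \ar[d] \\
F^e_*\bigl(\sL_{e;Z,\Delta}\otimes\sO_Z(M)\bigr) \ar[r] \ar[d] & \sO_Z(M) \ar[d] \\
F^e_*\bigl(\sL_{e;V,\Diff_{F,V}(\Delta)}\otimes\sO_V(M|_V)\bigr) \ar[r] & \sO_V(M|_V)
}
\]
in which the columns are short exact and the horizontal maps are the $M$-twisted trace maps. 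Taking global sections, the images of the middle and bottom horizontal maps compute $S^0(Z,\sigma(Z,\Delta)\otimes\sO_Z(M))$ and $S^0(V,\sigma(V,\Diff_{F,V}(\Delta))\otimes\sO_V(M|_V))$ respectively; so the desired surjectivity follows by a diagram chase, provided that the left column is surjective on global sections. That requires
\[
H^1\!\bigl(Z,\sL_{e;Z,\Delta}\otimes\sO_Z(M-V)\bigr) \;=\; 0.
\]
Since $\sL_{e;Z,\Delta}\otimes\sO_Z(M-V) \sim (p^e-1)(M-K_Z-\Delta) + (M-V)$, and $M-K_Z-\Delta$ is $g$-ample with $g\colon Z\to X$ projective and $X$ affine, relative Serre vanishing gives this for $e$ sufficiently large.

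The only real obstacle is the $F$-adjunction identification in the bottom row: this is the substance of the theory of $F$-pure centers and of the restriction behaviour of $\sigma$ along them, and it is what dictates that the boundary on $V$ be taken to be $\Diff_{F,V}(\Delta)$. Everything else is formal diagram chasing plus relative Serre vanishing, and it is exactly the prime-to-$p$ Cartier index hypothesis that allows us to stay inside the line-bundle setting of \cite{Schwede_A_canonical_linear_system} rather than invoking the more delicate reflexive-sheaf formalism developed in \autoref{sec:Frobenius_stable}.
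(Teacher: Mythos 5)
Your proposal is correct and is essentially the paper's proof: the paper disposes of this lemma in one line by declaring it ``verbatim the same as \cite[Prop 5.3]{Schwede_A_canonical_linear_system}, using Serre duality over a ring instead over a field,'' and what you have written out is precisely that argument --- the prime-to-$p$ index reduction to line bundles, $F$-adjunction identifying the trace on $V$ with that of $(V,\Diff_{F,V}(\Delta))$, and relative Serre vanishing for $M-K_Z-\Delta$ over the affine base in place of absolute Serre vanishing. The only cosmetic point is that an $F$-pure center need not be a divisor, so in general $\sO_Z(M-V)$ should be read as $\sI_V\otimes\sO_Z(M)$ (the vanishing argument is unaffected).
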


\begin{proof}
The proof is verbatim the same as of \cite[Prop 5.3]{Schwede_A_canonical_linear_system}, using Serre duality over a ring instead over a field. 
\end{proof}

\begin{lemma}
\label{lem:sigma_equals_max_ideal}
 Let $x \in X$ be a normal surface singularity with minimal resolution $g : Z \to X$ such that there is a unique $g$-exceptional divisor $E$, for which 
\begin{enumerate}
\item \label{itm:sigma_equals_max_ideal:trivial} $K_Z +E \equiv_g 0$, and
\item \label{itm:sigma_equals_max_ideal:S_0_equals_H_0} $S^0\left(E, \sigma(E, \Diff_E(0)) \otimes \sO_E(-E)\right) = H^0(E, \sO_E(-E))$.
\end{enumerate}
Then $\sigma(X)_x \supseteq m_{X,x}$. Furthermore, for each integer $e>0$,
\begin{equation*}
\im \left( H^0(Z, F^e_* \sO_Z((1-p^e)K_Z )) \to H^0(Z, \sO_Z) \right) \subseteq H^0(Z, \sO_Z(-E)). 
\end{equation*}
\end{lemma}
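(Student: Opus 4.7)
My plan is to prove the two assertions separately.

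For the ``furthermore'' statement, I will tensor the standard sequence $0 \to \sO_Z(-E) \to \sO_Z \to \sO_E \to 0$ by $\sO_Z((1-p^e)K_Z)$ and apply the exact functor $F^e_*$, obtaining
\[
0 \to F^e_*\sO_Z((1-p^e)K_Z - E) \to F^e_*\sO_Z((1-p^e)K_Z) \to F^e_*\sO_E\!\bigl((1-p^e)K_Z|_E\bigr) \to 0,
\]
which sits in a commutative diagram with $0 \to \sO_Z(-E) \to \sO_Z \to \sO_E \to 0$ via the trace. The key numerical input is assumption (1): $K_Z \equiv_g -E$ forces $K_Z|_E$ and $-E|_E$ to be numerically equivalent line bundles on $E$, so the line bundle $\sO_E((1-p^e)K_Z|_E)$ has degree $(p^e-1)E^2 < 0$, because $E$ is $g$-exceptional and so $E^2 < 0$. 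Since $E$ is an irreducible projective curve, a line bundle of negative degree has no global sections, so $H^0(E, F^e_*\sO_E((1-p^e)K_Z|_E)) = 0$. Taking $H^0$ of the diagram, the composition $H^0(Z, F^e_*\sO_Z((1-p^e)K_Z)) \xrightarrow{\mathrm{Tr}^e} H^0(Z, \sO_Z) \to H^0(E, \sO_E)$ factors through this vanishing $H^0$, yielding the desired containment.

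For the first statement $\sigma(X)_x \supseteq m_{X,x}$, I would pass to $Z$ via Grothendieck duality, $g_*F^e_*\sO_Z((1-p^e)K_Z) = F^e_*\sO_X((1-p^e)K_X)$, together with $g_*\sO_Z(-E) = m_{X,x}$, valid because $g$ is the minimal resolution of the normal surface singularity $x \in X$. By the stabilization provided by \autoref{lem:sigma_stabilizes}, it suffices to produce, for every sufficiently large $e$, a trace preimage of each section of $\sO_Z(-E)$. The strategy is to invoke \autoref{prop:S_0_surj} applied to the pair $(Z, E)$ with $S = E$ and an auxiliary Cartier divisor $L$ such that $L - K_Z - E$ is $g$-ample, and then to use hypothesis (2) to upgrade the resulting $S^0$-containment to a genuine surjection of sections of $\sO_E(-E|_E)$ from $Z$. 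Hypothesis (1) supplies the compatibility $\sO_Z\!\bigl((1-p^e)(K_Z+E)\bigr)|_E \cong \sO_E \cong \sO_E((1-p^e)K_E)$ needed in condition (7b) of \autoref{prop:S_0_surj}, using $K_E = 0$ for the genus-one curve $E$, while the remaining conditions are automatic from the smoothness of $Z$ and $E$ being a reduced Cartier divisor. An iterated lifting along the filtration by powers of the ideal $g_*\sO_Z(-nE)$, combined at each stage with the Part 2 compatibility that controls the obstruction to the lift, would then show that every element of $m_{X,x}$ lies in the trace image.

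The main technical obstacle will be precisely this iteration in Part 1: \autoref{prop:S_0_surj} only controls sections of $\sO_E(-E|_E)$, so converting that into a statement about the global trace image on $Z$ requires descending the error along the filtration $m_{X,x} \supset g_*\sO_Z(-2E) \supset \dots$ by repeatedly applying adjunction-compatibility of the trace. A one-shot cohomological vanishing (for instance for $H^1$ of an appropriate twist of the kernel of $\mathrm{Tr}^e$) would bypass the iteration, but does not appear available in positive characteristic without exactly the kind of Frobenius-stability input that hypothesis (2) supplies.
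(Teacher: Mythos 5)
Your argument for the ``furthermore'' inclusion breaks at the asserted factorization. The trace $\Tr_{F^e}\colon F^e_*\sO_Z((1-p^e)K_Z)\to\sO_Z$ is $\sO_Z$-linear only for the $p^e$-twisted module structure on the source, so it carries $F^e_*\sO_Z((1-p^e)K_Z-p^eE)$ into $\sO_Z(-E)$, but \emph{not} the much larger subsheaf $F^e_*\sO_Z((1-p^e)K_Z-E)$: in local coordinates with $E=\{x=0\}$ one has $x^{p^e-1}y^{p^e-1}\in(x)$ and yet $\Tr(F^e_*(x^{p^e-1}y^{p^e-1}))=1\notin(x)$. Consequently the composite $F^e_*\sO_Z((1-p^e)K_Z)\to\sO_Z\to\sO_E$ factors only through restriction to the thickening $p^eE$, not to the reduced curve $E$, and there the vanishing fails: the deepest graded piece $\sO_Z((1-p^e)K_Z-(p^e-1)E)|_E\cong\sO_E((1-p^e)K_E)$ has degree $0$, so your negative-degree argument does not apply to it. This is not a repairable slip within the stated hypotheses: the ``furthermore'' is equivalent to $x\in X$ not being $F$-pure, and conditions (1)--(2) alone do not imply that (the cone over an \emph{ordinary} elliptic curve satisfies both and is $F$-split); the extra input needed in the application is the Mehta--Srinivas classification in the supersingular case.

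For the first statement your outline has the right shape (push down via $g_*\sO_Z(-E)=m_{X,x}$, restrict the Frobenius trace to $E$, feed in hypothesis (2)), but the two load-bearing steps are exactly the ones left open. First, \autoref{prop:S_0_surj} is the wrong tool: its conclusion places $S^0(E,\dots)$ inside $\im(H^0(Z,L)\to H^0(E,L))$, i.e.\ it extends sections from $E$ to $Z$, whereas what is needed is that they extend into the \emph{trace image} on $Z$; that is the surjectivity of $S^0(Z,\sigma(Z,E)\otimes\sO_Z(-E))\to S^0(E,\sigma(E,\Diff_E(0))\otimes\sO_E(-E))$, which is \autoref{lem:Karl} combined with Das's $F$-adjunction (identifying $E$ as an $F$-pure center with the correct different), applied with $M=-E$ and using hypothesis (1) to see that $-E-(K_Z+E)\equiv_g-E$ is $g$-ample. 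Second, the passage from surjectivity onto $H^0(E,\sO_E(-E))$ to all of $H^0(Z,\sO_Z(-E))$ does not require the infinite descent along $\{\sO_Z(-nE)\}$ that you identify as the obstacle: $S^0(Z,\sigma(Z,E)\otimes\sO_Z(-E))$ is an $\sO_{X,x}$-submodule of the finite module $H^0(Z,\sO_Z(-E))=m_{X,x}$, the kernel of $H^0(Z,\sO_Z(-E))\to H^0(E,\sO_E(-E))$ is $H^0(Z,\sO_Z(-2E))$, which lies in $m_{X,x}\cdot H^0(Z,\sO_Z(-E))$ by Artin's theorem, and Nakayama then closes the gap in a single step. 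As written, the proposal establishes neither the lifting mechanism nor the termination of the argument.
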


\begin{proof}
 We may assume that $X$ is affine.  Since $X$ is affine, for the first statement it is enough to show that for all  $e \gg 0$,
\begin{equation*}
\im \left( H^0(X, F^e_* \sO_X ((1-p^e)K_X)) \to H^0(X, \sO_X)  \right) \supseteq H^0(X, m_{X,x}).
\end{equation*}
Note that $g_* \sO_Z(-E) = m_{X,x}$ and consider the following commutative diagram
\begin{equation*}
\xymatrix{
g_* \left( \sO_Z(-E) \otimes  F^e_* \sO_Z((1-p^e)(K_Z+E)) \right) \ar[r] \ar@{^(->}[d] & g_* \sO_Z(-E) = m_{X,P} \ar@{^(->}[d] \\
g_* F^e_* \sO_Z((1-p^e)K_Z) \ar[r] \ar[d] & g_* \sO_Z \ar@{=}[d] \\
F^e_* \sO_X ((1-p^e)K_X) \ar[r] & \sO_X
}
\end{equation*}
It follows that it is enough to prove that for all $e \gg 0$,
\begin{equation*}
\im \left( H^0\left(Z, \sO_Z(-E) \otimes  F^e_* \sO_Z((1-p^e)(K_Z+E)) \right) \to H^0(Z, \sO_Z(-E))  \right) = H^0(Z, \sO_Z(-E)),
\end{equation*}
which is equivalent using the language of  \cite{Schwede_A_canonical_linear_system}, to 
\begin{equation*}
S^0(Z, \sigma(Z,E) \otimes \sO_Z(-E)) = H^0(Z, \sO_Z(-E)).
\end{equation*}
For this by Nakayama lemma and \cite[Thm 4]{Artin_On_isolated_rational_singularities_of_surfaces}, it is enough to show that the natural homomorphism $S^0(Z, \sigma(Z,E) \otimes \sO_Z(-E)) \to H^0(E, \sO_E(-E))$ is surjective. However, according to \autoref{lem:Karl} and \cite[Thm 5.3]{Das_On_strongly_F_regular_inversion_of_adjunction}, $S^0(X, \sigma(X,E) \otimes \sO_X(-E))$ surjects onto $S^0(E,\sigma(E,\Diff_E(0)) \otimes \sO_E(-E))$ as soon as $-E -K_X -E$ is ample. 
However, $-E -K_X -E \equiv_g -E$ by assumption \autoref{itm:sigma_equals_max_ideal:trivial}, which is ample since $E$ is the only exceptional curve.
\end{proof}

\begin{proposition}
\label{prop:sigma_supersingular_elliptic}
If $x \in X$ is a simple elliptic surface singularity with supersingular exceptional divisor $E$, then $\sigma(X)=m_x$. Furthermore, for the minimal resolution $(Z,E)$ of $x \in X$, and for each integer $e>0$,
\begin{equation*}
\im \left( H^0(Z, F^e_* \sO_Z((1-p^e)K_Z )) \to H^0(Z, \sO_Z) \right) = H^0(Z, \sO_Z(-E)). 
\end{equation*}
\end{proposition}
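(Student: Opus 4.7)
The plan is to prove the two complementary inclusions $\sigma(X)_x \supseteq m_{X,x}$ and $\sigma_e \subseteq m_{X,x}$ for every $e \geq 1$, where I set $\sigma_e := \im(F^e_*\sO_Z((1-p^e)K_Z) \to \sO_Z)$. By \autoref{fact:factorization} the chain $\sigma_e$ is nonincreasing with intersection $\sigma(X)$, so the lower bound $\sigma(X)_x \supseteq m_{X,x}$ upgrades automatically to $\sigma_e \supseteq m_{X,x}$ for each $e$, and combined with the upper bound yields $\sigma_e = m_{X,x}$ for every $e \geq 1$. This delivers simultaneously the main assertion $\sigma(X) = m_{X,x}$ and the stronger per-$e$ image statement in the ``Furthermore''.

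For the lower bound, I would invoke \autoref{lem:sigma_equals_max_ideal}. Its condition $(1)$, $K_Z + E \equiv_g 0$, is the defining property of a simple elliptic singularity: adjunction gives $(K_Z + E)|_E = K_E = 0$, and uniqueness of the exceptional curve forces numerical, and locally linear, triviality. Its condition $(2)$ reduces (using smoothness of $E$, so that $\sigma(E) = \sO_E$ and $\Diff_E(0) = 0$) to the equality $S^0(E, L) = H^0(E, L)$ for the ample line bundle $L := \sO_E(-E)$ of degree $-E^2 \geq 1$. By the projection formula the kernel of the sheaf-level trace $F^e_* L^{p^e} \to L$ is $L \otimes B^e$ with $B^e := \ker(F^e_*\sO_E \to \sO_E)$; on the elliptic curve $F^e_*\sO_E$ is semistable of slope zero, so the equal-slope subsheaf $B^e$ is also semistable of slope zero, and consequently $L \otimes B^e$ is semistable of strictly positive slope. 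Serre duality (using $\omega_E \cong \sO_E$) then forces $H^1(E, L \otimes B^e) = 0$, yielding surjectivity of each trace on $H^0$.

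For the upper bound I would show that the composition
\[
H^0(Z, F^e_*\sO_Z((1-p^e)K_Z)) \xrightarrow{\Tr} H^0(Z, \sO_Z) \twoheadrightarrow H^0(E, \sO_E) = k
\]
vanishes for every $e \geq 1$. Using $K_Z \sim -E$ locally, the source becomes $H^0(Z, F^e_*\sO_Z((p^e-1)E))$. The module-theoretic identity $(F^e_*\sF) \otimes_{\sO_Z} \sO_E \cong F^e_*(\sF|_{p^e E})$ applied to $\sF = \sO_Z((p^e-1)E)$ identifies the restricted trace with a Grothendieck trace for the Frobenius morphism $p^e E \to E$ applied to $\sO_{p^e E}((p^e-1)E)$. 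The infinitesimal-neighborhood filtration of $p^e E$ has associated graded pieces $\sO_E((p^e-1-n)E|_E)$ for $n = 0, 1, \dots, p^e-1$; since $E|_E$ has strictly negative degree on the elliptic curve, only the terminal piece $\sO_E$ contributes to $H^0$, and the induced map on the resulting one-dimensional space coincides, after identifications, with a power of the Hasse invariant of $E$. This Hasse invariant vanishes by supersingularity, so the composition is zero, forcing $\sigma_e \subseteq g_*\sO_Z(-E) = m_{X,x}$.

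The principal obstacle is the last identification: matching the restricted trace on $H^0(p^e E, \sO_{p^e E}((p^e-1)E))$ with a power of the Hasse invariant of $E$. This requires a careful compatibility of Grothendieck traces along the composite $Z \to Z$, $p^e E \to E$, $E \to E$, together with explicit control over the infinitesimal-neighborhood filtration of the restricted sheaf. Everything else --- semistability of Frobenius pushforwards on elliptic curves, the Serre-duality $H^1$ vanishing, and the diagram chase promoting the limit equality to an equality for each $e$ --- is either a classical elliptic-curve computation or a formal consequence of \autoref{fact:factorization}.
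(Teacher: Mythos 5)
Your proposal is correct, and on the decisive point it is actually more explicit than the paper. For the lower bound you follow the paper's route exactly: invoke \autoref{lem:sigma_equals_max_ideal} and verify its condition (2), which reduces to $S^0(E,\sO_E(-E))=H^0(E,\sO_E(-E))$. The paper gets this surjectivity from Tango's bound $\deg L>\lfloor 2(g-1)/p\rfloor=0$ applied to $H^0(E,L^*\otimes\sB^1_E)=0$; you get it from semistability of $F^e_*\sO_E$ on the elliptic curve plus Serre duality. These are interchangeable, and your version is self-contained. The real divergence is the upper bound. The paper's proof of the proposition never uses supersingularity at all: it delegates the inclusion $\im(\cdots)\subseteq H^0(Z,\sO_Z(-E))$ entirely to the ``Furthermore'' of \autoref{lem:sigma_equals_max_ideal}, whose written proof does not establish that inclusion (and which, as stated, would fail for an ordinary exceptional elliptic curve, where $X$ is $F$-pure by Mehta--Srinivas). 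You correctly identify that supersingularity must enter through the upper bound and supply an argument for it. The ``principal obstacle'' you flag is not a genuine gap: the identification of the restricted trace with (an iterate of) the Hasse invariant is exactly the $F$-adjunction diagram for the log smooth pair $(Z,E)$ already used in \autoref{prop:S_0_surj} --- the trace $\Tr_{e;Z,E}$ restricts on $E$ to $\Tr_{e;E,0}$ up to a unit, and on $H^0(E,\sO_E)$ that is the $e$-th Hasse map, zero by supersingularity. You can also bypass the filtration of $\sO_{p^eE}((p^e-1)E)$ entirely: since $K_Z+E\sim 0$ near $E$ and $E$ is exceptional, $H^0(Z,\sO_Z((1-p^e)(K_Z+E)))=H^0(Z,\sO_Z((1-p^e)K_Z))=H^0(Z,\sO_Z)$, so the image of the trace in the proposition on global sections equals the image of the pair trace of $(Z,E)$, and the vanishing after restriction to $E$ follows at once from $F$-adjunction. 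With that substitution your two-sided argument is complete and, unlike the paper's, makes the role of supersingularity visible.
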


\begin{proof}
Let $g : Z \to X$ be the minimal resolution and assume that $X$ is affine. We know that $E^2 = -d$ for some integer $d>0$. Furthermore, $K_Z + E|_E \sim K_E \sim 0$. So, according to \autoref{lem:sigma_equals_max_ideal}, we are supposed to show that 
\begin{equation*}
S^0(E, \sigma(E, \Diff_E(0)) \otimes \sO_E(-E)) = H^0(E, \sO_E(-E)).
\end{equation*}
Since $(Z,E)$ is log-smooth, $\Diff_E(0)=0$. Hence we are supposed to prove that $S^0(E, \sO_E(-E)) = H^0(E, \sO_E(-E))$.

Recall that $\sB^1_E$ is defined as the cokernel of the homomorphism $\sO_E \to F_* \sO_E$. The surjectivity of $H^0(E, L \otimes F_* \omega_E ) \to H^0(E,  L \otimes \omega_E)$ is equivalent to the injectivity of its dual $H^1(E,  L^* )  \to H^1(E, L^* \otimes F_* \sO_E)$. However, by the exact sequence
\begin{equation*}
\xymatrix{
H^0(E,  L^* \otimes \sB_E^1) \ar[r] & H^1(E,  L^* )  \ar[r] & H^1(E, L^* \otimes F_* \sO_E)
}
\end{equation*}
this follows as soon as $H^0(E,  L^* \otimes \sB_E^1)=0$. According to  \cite[Lem 10 and the remarks after Def 11 and Lem 12]{Tango_On_the_behavior_of_extensions_of_vector_bundles_under_the_Frobenius_map} this happens as soon as $\deg L > \left[ \frac{2(g(E)-1)}{p} \right]$. Hence, since in our case $E$ has genus $1$, $H^0(E, L \otimes F_* \omega_E ) \to H^0(E,  L \otimes \omega_E)$ is surjective whenever $\deg L >0$. However, then it follows that $H^0(E, L \otimes F_*^{e+1} \omega_E ) \to H^0(E, L \otimes F_*^e \omega_E )$ is also surjective since that can be identified with $H^0(E, L^{p^e} \otimes F_* \omega_E) \to H^0(E, L^{p^e} \otimes \omega_E)$. In particular, if $\deg L>0$, then we have that $S^0(E, \sO_E(-E))= H^0(E, \sO_E(-E))$. 
\end{proof}

\subsection{The case of star shaped dual graph of type $(3,3,3)$, $(2,3,6)$, $(2,4,4)$ and $(2,2,2,2)$}

For the convenience of the reader, we recall the possible dual graphs in \autoref{table:dual_graphs}. The numbers at the vertices are the negatives of the discrepancies, and can be computed as usually using adjunction for the dual graphs in  list in \cite[Appendix]{Hara_Classification_of_two_dimensional_F_regular_and_F_pure_singularities}.
\begin{figure}
\caption{Possible dual graphs, with the negatives of the discrepancies.}
\label{table:dual_graphs}
\begin{enumerate}
 \item $(3,3,3)$
\begin{equation*}
\xymatrix@C=5pt@R=1pt{
\frac{1}{3} & \ar@{-}[l] \frac{2}{3}  &  \ar@{-}[l] 1  \ar@{-}[d]  \ar@{-}[r] & \frac{2}{3} \ar@{-}[r] & \frac{1}{3} \\
& & \frac{2}{3} \ar@{-}[d] \\
& & \frac{1}{3}
}
\qquad
\xymatrix@C=5pt@R=1pt{
\frac{1}{3} & \ar@{-}[l] \frac{2}{3}  &  \ar@{-}[l] 1  \ar@{-}[d]  \ar@{-}[r] & \frac{2}{3} \ar@{-}[r] & \frac{1}{3} \\
& & \frac{2}{3} 
}
\qquad
\xymatrix@C=5pt@R=1pt{
\frac{1}{3} & \ar@{-}[l] \frac{2}{3}  &  \ar@{-}[l] 1  \ar@{-}[d]  \ar@{-}[r] & \frac{2}{3}  \\
& & \frac{2}{3} 
}
\qquad
\xymatrix@C=5pt@R=1pt{
 \frac{2}{3}  &  \ar@{-}[l] 1  \ar@{-}[d]  \ar@{-}[r] & \frac{2}{3}  \\
 & \frac{2}{3} 
}
\end{equation*}
\item $(2,3,6)$

\begin{equation*}
\xymatrix@C=5pt@R=1pt{
\frac{1}{3} & \ar@{-}[l] \frac{2}{3}  &  \ar@{-}[l] 1  \ar@{-}[d]  \ar@{-}[r] & \frac{5}{6} \ar@{-}[r] & \frac{4}{6} \ar@{-}[r] & \frac{3}{6} \ar@{-}[r] & \frac{2}{6} \ar@{-}[r] & \frac{1}{6}\\
& & \frac{1}{2}  \\
} 
\qquad
\xymatrix@C=5pt@R=1pt{
 \frac{2}{3}  &  \ar@{-}[l] 1  \ar@{-}[d]  \ar@{-}[r] & \frac{5}{6} \\
& \frac{1}{2}  \\
} 
\qquad
\xymatrix@C=5pt@R=1pt{
\frac{1}{3} & \ar@{-}[l] \frac{2}{3}  &  \ar@{-}[l] 1  \ar@{-}[d]  \ar@{-}[r] & \frac{5}{6} \\
& & \frac{1}{2}  \\
} 
\qquad
\xymatrix@C=5pt@R=1pt{
 \frac{2}{3}  &  \ar@{-}[l] 1  \ar@{-}[d]  \ar@{-}[r] & \frac{5}{6} \ar@{-}[r] & \frac{4}{6} \ar@{-}[r] & \frac{3}{6} \ar@{-}[r] & \frac{2}{6} \ar@{-}[r] & \frac{1}{6}\\
& \frac{1}{2}  \\
} 
\end{equation*}
\item $(2,4,4)$

\begin{equation*}
\xymatrix@C=5pt@R=1pt{
\frac{1}{4} & \ar@{-}[l] \frac{2}{4} & \ar@{-}[l] \frac{3}{4}  &  \ar@{-}[l] 1  \ar@{-}[d]  \ar@{-}[r] & \frac{3}{4} \ar@{-}[r] & \frac{2}{4} \ar@{-}[r] & \frac{1}{4}\\
& & & \frac{1}{2}  \\
}  
\qquad
\xymatrix@C=5pt@R=1pt{
\frac{1}{4} & \ar@{-}[l] \frac{2}{4} & \ar@{-}[l] \frac{3}{4}  &  \ar@{-}[l] 1  \ar@{-}[d]  \ar@{-}[r] & \frac{3}{4} \\
& & & \frac{1}{2}  \\
}  
\qquad
\xymatrix@C=5pt@R=1pt{
 \frac{3}{4}  &  \ar@{-}[l] 1  \ar@{-}[d]  \ar@{-}[r] & \frac{3}{4} \\
 & \frac{1}{2}  \\
}  
\end{equation*}
\item $(2,2,2,2)$

\begin{equation*}
\xymatrix@C=5pt@R=1pt{
& \frac{1}{2} \\
 \frac{1}{2}  &  \ar@{-}[l] 1  \ar@{-}[d] \ar@{-}[u]  \ar@{-}[r] & \frac{1}{2} \\
 & \frac{1}{2}  \\
}  
\end{equation*}

\end{enumerate}
 
\end{figure}

\begin{lemma}
\label{lem:D_n_3}
If $x \in X$ is a log canonical surface singularity of type $\tilde{D}_{n+3}$ then it is not Gorenstein.
\end{lemma}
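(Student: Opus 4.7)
My approach will be to extract a half-integer discrepancy from the adjunction formula on the minimal resolution. The $\tilde D_{n+3}$ configuration, by its very description as ``chains with two arms at both ends'' recalled above and as shown in \cite[Appendix, Fig.~A.2]{Hara_Classification_of_two_dimensional_F_regular_and_F_pure_singularities}, has four terminal leaves in its dual graph, each being a smooth rational $(-2)$-curve meeting only one neighboring exceptional component. So I will pick such a leaf $E$ and its unique neighbor $F$, let $f : Y \to X$ be the minimal resolution with $K_Y = f^*K_X + \sum_i \alpha_i E_i$, and note that intersecting with $E$ (using $K_Y\cdot E = 0$ since $E^2 = -2$, and $f^*K_X\cdot E = 0$ since $E$ is exceptional) yields
\[
0 \;=\; \alpha_E\, E^2 + \alpha_F\,(E\cdot F) \;=\; -2\alpha_E + \alpha_F,
\]
so $\alpha_E = \alpha_F/2$.

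Next I will argue by contradiction, assuming $X$ is Gorenstein at $x$. Then $f^*K_X$ is Cartier, hence every $\alpha_i$ lies in $\bZ$; in particular $\alpha_F$ is an even integer. Combined with the log-canonicity bound $\alpha_F\ge -1$, this forces $\alpha_F \in \{0,2,4,\dots\}$, so $\alpha_F\ge 0$ and in particular $\alpha_E\ge 0$. I then plan to propagate this non-negativity along the chain using the remaining adjunction equations at the interior exceptional components (at a $(-2)$-interior chain vertex the relation becomes $\alpha_G = \tfrac{1}{2}(\alpha_{G'} + \alpha_{G''})$, and at the opposite end the symmetric computation again gives a non-negative value). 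Iterating from both ends forces every $\alpha_i$ to be a non-negative integer, which would mean $X$ is canonical at $x$ --- a Du Val singularity with ADE dual graph. This contradicts the $\tilde D$-shape of $\tilde D_{n+3}$.

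The main technical obstacle is making the propagation step rigorous without full knowledge of the self-intersections along the chain: if the interior vertices are not all $(-2)$-curves, the linear recurrence has inhomogeneous terms $(b_G-2)$ one must control. A cleaner and essentially equivalent route bypasses this and invokes directly the classification of Gorenstein log-canonical surface singularities (Laufer's minimally elliptic classification, or Kollár--Shepherd-Barron): any such germ is a rational double point (with ADE dual graph), a simple elliptic singularity (irreducible smooth elliptic exceptional curve), or a cusp (cycle of rational curves, or an irreducible rational nodal curve). The exceptional divisor of $\tilde D_{n+3}$ is a tree of smooth rational curves whose graph is not of ADE type, so $\tilde D_{n+3}$ matches none of the three classes and therefore cannot be Gorenstein.
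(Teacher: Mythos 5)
Your first route is essentially the paper's argument, but the step you yourself flag as the ``main technical obstacle'' is a genuine gap, and you do not close it. The propagation along the chain is exactly where the work lies: the interior exceptional curves are only known to satisfy $E_i^2\le -2$, so the harmonic relation $\alpha_G=\tfrac12(\alpha_{G'}+\alpha_{G''})$ is not available. The missing ingredient is the upper bound $\alpha_i\le 0$, which holds for \emph{every} exceptional divisor of the \emph{minimal} resolution (negativity lemma, since $K_Y$ is $f$-nef); you only ever use the lower bound $\alpha_i\ge -1$. With both bounds the paper's induction is immediate: from $\alpha_E=\alpha_F/2\in\bZ\cap[-1,0]$ one gets $\alpha_F=\alpha_E=0$ at the two ends; then adjunction at the trivalent vertex $E_3$ reads $-2=(K_Y+E_3)\cdot E_3=E_3^2+\alpha_4$, and since $E_3^2\le -2$ and $\alpha_4\le 0$ \emph{both} inequalities must be equalities, so $E_3^2=-2$ and $\alpha_4=0$; iterating forces every $\alpha_i=0$ and every $E_i^2=-2$. (Your concluding contradiction ``canonical $\Rightarrow$ ADE graph'' then works, but the paper's is more self-contained: the $\tilde D$-configuration of $(-2)$-curves supports an effective divisor of self-intersection $0$ --- coefficients $\tfrac12$ on the arms, $1$ on the body --- contradicting negative definiteness of the exceptional intersection form.)

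Your ``cleaner and essentially equivalent route'' is not acceptable here. The statement that a Gorenstein log canonical surface singularity is a rational double point, a simple elliptic singularity, or a cusp is precisely what the paper is in the process of establishing in positive characteristic: \autoref{lem:D_n_3} is the ingredient that removes $\tilde D_{n+3}$ from the list of candidate non-klt Gorenstein germs (the star-shaped types are excluded by their fractional discrepancies), and it is invoked for exactly this purpose in \autoref{prop:sigma_star_shaped}. Citing that classification to prove the lemma is therefore circular in context; moreover Laufer's and Koll\'ar--Shepherd-Barron's classifications are characteristic-zero results, whereas the characteristic-free input available here is only the dual-graph classification of log canonical surface singularities as in \cite{Hara_Classification_of_two_dimensional_F_regular_and_F_pure_singularities}, which still contains $\tilde D_{n+3}$.
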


\begin{proof}
Let  $f: Y \to X$ the minimal resolution, and assume that $X$ is Gorenstein. The dual graph of $\Exc(f)$ is the following, where on the left the self intersections are shown ($*$ means an integer at most $-2$), and on the right we introduce the notation for the components of $\Exc(f)$.
\begin{equation*}
\xymatrix@C=5pt@R=1pt{
-2 & \ar@{-}[l] \ast \ar@{-}[d]  &  \ar@{-}[l] \ast    \ar@{-}[r]  & \dots\ar@{-}[r] & \ast \ar@{-}[r] & \ast \ar@{-}[r] \ar@{-}[d] & -2 \\
& -2 & & & & -2  \\
} 
\qquad
\xymatrix@C=5pt@R=1pt{
E_1 & \ar@{-}[l] E_3 \ar@{-}[d]  &  \ar@{-}[l] E_4    \ar@{-}[r]  & \dots\ar@{-}[r] & E_n \ar@{-}[r] & E_{n+1} \ar@{-}[r] \ar@{-}[d] & E_{n+2} \\
& E_2 & & & & a_{n+3}  \\
}
\end{equation*}
Let $a_i$ be the discrpancy of $E_i$. Since $E_i \cong \bP^1$, by adjunction, for $i=1,2$ we have
\begin{equation}
\label{eq:D_n_3_discrpancies}
 0 =  -2 - E_i^2 = K_X \cdot E_i = a_i E_i^2 + a_3 E_i \cdot E_3 = -2 a_i + a_3. 
\end{equation}
Hence, $a_1= a_2 = \frac{1}{2}a_3$. So, using that $0 \geq a_i \geq -1$ and that in the Gorenstein case all the $a_i$ are integers, the only way $x \in X$ can be Gorenstein is if $a_3=0$, and then $a_1=a_2=0$. Again by adjunction $-2 = (K_Y + E_3) \cdot E_3 = E_3^2 + a_4$. Since $E_3^2 \leq -2$, and $a_4 \leq 0$, $E_3^2=-2$ and $a_4=0$ must hold. Inductively then one obtains that for $i=1,\dots, n+1$, $a_i=0$  and all the corresponding self intersections must be $-2$. Then, as in \autoref{eq:D_n_3_discrpancies}, we obtain that $a_{n+2}=a_{n+3}=0$ also holds. 
However, in this situation the dual graph does not give a negative definite intersection form anymore, since if we take $1/2$ coefficients at the arms and $1$ in the body, we get a $0$ self-intersection divisor. 
\end{proof}

\begin{proposition}
\label{prop:sigma_star_shaped}
 If $ x \in X$ is a non $F$-pure  log canonical singularity with star shaped dual graph of  type $(3,3,3)$, $(2,3,6)$, $(2,4,4)$ and $(2,2,2,2)$ and $p>3$, then $\sigma(X) \supseteq m_{X,x}$.
\end{proposition}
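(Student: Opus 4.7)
The plan is to reduce the statement to the supersingular simple elliptic case of \autoref{prop:sigma_supersingular_elliptic} via the index-one cover of $X$. In each of the four types $(3,3,3)$, $(2,3,6)$, $(2,4,4)$, $(2,2,2,2)$ the Cartier index $r$ of $K_X$ divides $6$, so the hypothesis $p>3$ ensures $p\nmid r$; accordingly I would introduce the index-one cover
\begin{equation*}
\pi : \tilde X := \Spec_X \bigoplus_{i=0}^{r-1} \sO_X(-iK_X) \longrightarrow X,
\end{equation*}
a cyclic degree-$r$ Galois cover with group $G = \bZ/r$, \'etale in codimension one, with $K_{\tilde X} = \pi^* K_X$ Cartier and a unique preimage $\tilde x$ of $x$.

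Next I would verify that $\tilde X$ has a simple elliptic singularity. The singularity $\tilde X$ is Gorenstein and log canonical, and by a Riemann--Hurwitz computation applied to the $G$-cover of the central $\bP^1 = E_0$ of the dual graph of $X$, ramified at the arm-attachment points with the orders indicated by the type, the exceptional divisor over $\tilde x$ on the minimal resolution of $\tilde X$ is a smooth genus-one curve in each of the four cases. Now if this elliptic curve were ordinary, $\tilde X$ would be $F$-pure by \cite{Mehta_Srinivas_Normal_F_pure_surface_singularities}, and averaging a Frobenius splitting over $G$ (valid since $p \nmid r$) would produce a Frobenius splitting of $X$, contradicting the non-$F$-purity of $X$. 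Hence the exceptional elliptic curve of $\tilde X$ is supersingular and \autoref{prop:sigma_supersingular_elliptic} yields $\sigma(\tilde X) = m_{\tilde X, \tilde x}$.

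Finally I would descend this equality along $\pi$. Because $K_{\tilde X} = \pi^* K_X$ and $p \nmid r$, the trace $\Tr_\pi : \pi_* \sO_{\tilde X} \to \sO_X$ is $\sO_X$-linear and surjective; combining the projection formula with Grothendieck duality, the Frobenius traces on $\tilde X$ and $X$ are compatible in the sense that for each $e > 0$,
\begin{equation*}
\im \Tr_{e; X} = \Tr_\pi\!\left( \pi_* \im \Tr_{e; \tilde X} \right).
\end{equation*}
Taking the intersection over $e$ (which stabilizes by \autoref{lem:sigma_stabilizes}) gives $\sigma(X) = \Tr_\pi(\pi_* \sigma(\tilde X)) = \Tr_\pi(\pi_* m_{\tilde X, \tilde x})$, and since $m_{X,x} \cdot \pi_* \sO_{\tilde X} \subseteq \pi_* m_{\tilde X, \tilde x}$ while $\Tr_\pi$ is $\sO_X$-linear and surjective, this ideal contains $m_{X,x} \cdot \Tr_\pi(\pi_* \sO_{\tilde X}) = m_{X,x}$, yielding the desired inclusion.

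The main technical obstacle I foresee is making the descent identity for the Frobenius trace precise: one must identify $\pi_* F^e_* \sO_{\tilde X}((1-p^e)K_{\tilde X})$ with $F^e_*\!\left( \pi_* \sO_{\tilde X} \otimes \sO_X((1-p^e)K_X) \right)$ via the projection formula, and verify that the natural surjection $F^e_*(\Tr_\pi \otimes \id)$ intertwines $\pi_* \Tr_{e; \tilde X}$ with $\Tr_{e; X}$ under Grothendieck duality for the finite map $\pi$; this is formal but bookkeeping-intensive, and is the only step in the argument where the compatibility of finite pushforward with the canonical trace really enters.
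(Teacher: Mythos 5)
Your overall strategy is sound and its first half coincides with the paper's: both pass to the index-one (canonical) cover, use the non-$F$-purity of $X$ together with the descent of $F$-purity along degree-prime-to-$p$ covers \'etale in codimension one to force the cover to be simple elliptic with supersingular exceptional curve, and then invoke \autoref{prop:sigma_supersingular_elliptic}. Where you genuinely diverge is in the descent. You transport the equality $\sigma(\tilde X)=m_{\tilde X,\tilde x}$ down to $X$ directly through the finite trace $\Tr_\pi$, via the functoriality $\Tr_{F^e_X}\circ F^e_*(\Tr_\pi)=\Tr_\pi\circ\pi_*\Tr_{F^e_{\tilde X}}$ extended reflexively from the big open set where $\pi$ is \'etale and $(1-p^e)K_X$ is Cartier. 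The paper instead works on the minimal resolution $Z'$ of the cover, shows the cyclic group acts by honest morphisms and freely on a big open set, forms the quotient partial resolution $Z=Z'/G\to X$, and lifts sections there using the ``Furthermore'' clause of \autoref{prop:sigma_supersingular_elliptic}. Your route is shorter and avoids the geometric analysis of the $G$-action on $Z'$, at the cost of the reflexive projection-formula bookkeeping you correctly flag as the delicate point; since $X$ is a surface, maps of $S_2$ sheaves are determined on big open sets, so that bookkeeping does go through, and note that you only need the containment $\Tr_\pi(\pi_*\sigma(\tilde X))\subseteq\sigma(X)$, not the asserted equality.

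Two soft spots. First, the Cartier index of $K_X$ is $4$ in type $(2,4,4)$, so it divides $12$ rather than $6$; this is harmless, since $p>3$ still gives $p\nmid 12$. Second, your claim that a Riemann--Hurwitz computation shows the exceptional curve of the minimal resolution of $\tilde X$ to be a smooth genus-one curve is asserted rather than proved, and making it rigorous requires relating the minimal resolution of $\tilde X$ to the normalized pullback of the cover along the resolution of $X$. The paper sidesteps this: $\tilde X$ is Gorenstein, log canonical and not klt, hence by the classification it is a cusp, a simple elliptic singularity, or of type $\tilde D_{n+3}$; the last is excluded by \autoref{lem:D_n_3} (never Gorenstein), and cusps, like ordinary simple elliptic singularities, are $F$-pure by \cite{Mehta_Srinivas_Normal_F_pure_surface_singularities}, so your own averaging argument already rules them out. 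Substituting that case analysis for the Riemann--Hurwitz step closes the gap without changing anything else in your argument.
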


\begin{proof}
We may assume that $X$ is affine.
 Let $\pi : x' \in X' \to x \in X$ be the canonical cover. Since, according to \autoref{table:dual_graphs}, the denominators of the discrepancies are divisors of $12$, we know that $p \nmid \deg \pi$. Further, we know that $x' \in X'$ is a Gorenstein log-canonical but not klt singularity. Hence, since $\tilde{D}_{n+3}$ is never Gorenstein according to \autoref{lem:D_n_3}, $x' \in X'$ is either a cusp (including the case of the rational nodal exceptional divisor) or a simple elliptic singularity. However, by \cite[Theorem 6.28]{Schwede_Tucker_On_the_behavior_of_test-ideals_under_finite_morphisms} if $x' \in X'$ is $F$-pure, then so is $x \in X$. Hence, according to \cite[Thm 1.2]{Mehta_Srinivas_Normal_F_pure_surface_singularities}, $x \in X$ is a simple elliptic singularity such that the exceptional divisor of the minimal resolution $f': (Z',E') \to X'$ is a supersingular elliptic curve. The cyclic $G$ action on $X$ induces a birational action on $Z'$, fixing $E'$. Further, if $g \in G$ induces 
a birational map $\phi : Z ' \dashrightarrow Z'$ then \emph{we claim that $\phi$ is in fact a morphism}. Indeed, consider a be a resolution of this map:
\begin{equation*}
\xymatrix{
 \psi : Z'' \ar@/^1pc/[rr]^{\psi} \ar[r]_{\xi} & Z'  \ar@{-->}[r]_{\phi} & Z'.
}
\end{equation*}
Then the $\xi$-exceptional curves are rational curves, and $f' \circ \psi$ maps them onto $x'$. Hence, $\psi$ maps them either onto a point of $E'$ or onto $E'$ itself. The second case is impossible, since $E'$ has genus 1. Therefore, all the $\xi$-exceptional curves of the resolution $Z'' \to Z'$ are contracted by $\psi$, and hence $\phi$ is an actual morphism. This concludes our claim, showing that $G$ acts with morphisms on $(Z', E')$. 

Assume that there are elements of $G$ the fixed point set of which contains $E'$. Let $H$ be the subgroup of such elements. Then $Z'/H$ would map birationally onto $X'/H$, such that $E'$ would induce an exceptional curve onto which $E'$ maps bijectively. Hence, $X'/H$ would have an elliptic exceptional curve, and hence it would be an ordinary elliptic singularity. This would mean that $X'/H$ is Gorenstein which is a contradiction. 

Hence, the action of $G$ on $Z'$ is free at general points of $E$, and therefore $G$ acts freely on a big open set. Let $\rho : Z' \to Z:=Z'/G$ be the quotient map and let $E$ be the image of $E'$. Then $Z$ is a partial resolution of $X$ with the single exceptional curve $E$.
By the existence of the big open set over which $G$ acts freely, there is a big open set $U \subseteq Z$ over which $\rho$ is \'etale. 
Twisting the commutative diagram of traces of $\rho$ and the Frobenii of $Z$ and $Z'$ one obtains the following for each integer $e>0$.
\begin{equation*}
\xymatrix{
\rho_* F^e_* \sO_{Z'}((1-p^e)K_{Z'})  \ar[r]^-{\Tr_{F^e_{Z'}}} \ar[d]^{\Tr_{\rho}'} & \rho_* \sO_{Z'} \ar[d]^{\Tr_{\rho}} \\ 
F^e_* \sO_Z((1-p^e)K_{Z}) \ar[r]^-{\Tr_{F^e_Z}} & \sO_Z
}
\end{equation*}
Furthermore, note that the vertical arrows are split, because over $U$ they are given by traces of \'etale maps of degree relatively prime to $p$. 

Choose now $s \in H^0(Z, \sO_Z)$ vanishing along $E$. Then by the splitting of $\Tr_{\rho}$, there is an $s' \in H^0(Z',\sO_{Z'})$ such that $Tr_{\rho}(s')=s$ and $s'$ vanishes along $E'$.  According to \autoref{prop:sigma_supersingular_elliptic}, for each $e>0$ there is a  $t' \in H^0(Z', \sO_{Z'}((1-p^e)K_{Z'}))$, such that  $\Tr_{F^e_{Z'}} (t')= s'$.  However, then 
\begin{equation*}
 \Tr_{F_Z^e}(  \Tr_\rho'(t'))=  \Tr_\rho (\Tr_{F_{Z'}^e}(t')) =   \Tr_\rho (s') = s.
\end{equation*}
Hence  $ \Tr_\rho'(t') \in H^0(Z, \sO_Z((1-p^e)K_Z ))$ maps to $s$ via $\Tr_{F^e_Z}$. In particular, $\pi_* ( \Tr_\rho(t') ) \in H^0(X,\sO_X((1-p^e)K_X))$ maps onto $\pi_*s \in H^0(X,m_{X,x})$. Hence $\pi_*s \in \sigma(X)$.  However, since $\pi_* \sO_Z(-E)=m_{X,x}$ and $X$ is affine, every section of $m_{X,x}$ can be written as $\pi_* s$ for some $s \in H^0(Z, \sO_Z(-E))$. It follows then that $m_{X,x} \subseteq \sigma(X)$.
 
\end{proof}

\subsection{The case of non-empty boundary}

\begin{figure}
\caption{The minimal resolutions of log canonical pairs with coefficients 1, according to  \cite[Thm 4.15]{Kollar_Mori_Birational_geometry_of_algebraic_varieties} (filled circles correspond to boundary divisors, empty circles correspond to exceptional divisors with  self intersections at most $-2$ that makes the intersection matrix negative definite, and the $-2$ points correspond to exceptional divisors with $-2$ self intersection, all the intersections of exceptional and boundary curves are normal crossings). }
\label{table:dual_graph_log_canonical_with_boundary}
\begin{equation*}
\xymatrix@C=5pt@R=1pt{
\bullet & \ar@{-}[l] \circ  &  \ar@{-}[l] \dots   \ar@{-}[r] & \circ \ar@{-}[r] & \bullet \\
& A_n''
}
\qquad
\xymatrix@C=5pt@R=1pt{
& & & & -2 \\
\bullet & \ar@{-}[l] \circ  &  \ar@{-}[l] \dots  \ar@{-}[r] & \circ \ar@{-}[r] & \circ   \ar@{-}[d] \ar@{-}[u] \\
& &D_{n+2}' & &  -2
}
\qquad
\xymatrix@C=5pt@R=1pt{
\bullet & \ar@{-}[l] \circ  &  \ar@{-}[l] \dots  \ar@{-}[r] & \circ  \\
& A_n'
}
\end{equation*}
\end{figure}

\begin{notation}
\label{notation:boundary_non_empty}
Assume that  $p>2$ and let $x \in X$ be demi-normal affine singularity  and let $(\oX, \oD)$ be the normalization of $X$. Assume there is also a reduced effective divisor $\overline{\Gamma}$ given on $\oX$, such that $(\oX, \oD + \overline{\Gamma})$ is log canonical, $\oD + \overline{\Gamma} \neq 0$ and all the singular points of $(\oX, \oD + \overline{\Gamma})$ are mapping to $x$.
Let $\rho : X' \to \oX$ be a resolution of $(\oX, \oD + \overline{\Gamma})$ which is minimal over every singularity of $(\oX, \oD + \overline{\Gamma})$ except $A_0''$,  where it is the blow-up of the intersection point once  (see \autoref{table:dual_graph_log_canonical_with_boundary} for the possible minimal resolutions of $(\oX, \oD + \overline{\Gamma})$). Let $D'$ be the strict transform of $\oD$ on $X'$, which is smooth and hence the birational involution of $\oD$ extends to an automorphism $\tau$ of $D'$. Let $\Gamma'$ be the strict transform of $\overline{\Gamma}$ on $X'$. Sometimes we consider $\tau$ also as an automorphism of $\Gamma' + D'$, where $\tau$ acts with identity on $\Gamma'$. This is doable, since the components of $\Gamma'$ and $D'$ are disjoint by construction. 

Let $\ox_1, \dots,\ox_s$ be the points of $\oX$ over $x \in X$. 
Let $E:=\Exc (\rho) + D' + \Gamma'- Q$, where $\Exc(\rho)$ is the reduced exceptional divisor, and $0 \leq Q\leq \Exc(\rho)$ is the reduced divisor containing those exceptional curves at the images of which $(\oX, \oD + \overline{\Gamma})$ has  singularities of type $A_n'$ or $D_{n+2}'$. Further, set $G:= \Exc(\rho) + D' + \Gamma'$ and 
 $B:= (G - E)|_{E}$. Let $U \subseteq E$ be the open set $E \setminus \Exc(\rho)$ of $E$ (, where $E$ and $\Exc(\rho)$ denotes the reduced curves supported on the corresponding divisors).
\end{notation}

\begin{lemma}
\label{prop:t_exists}
In the situation of \autoref{notation:boundary_non_empty}, for each integer $e>0$, there is a choice of $t \in H^0\left(E, \sO_{E}\left((1-p^e)\left( K_{E} + B \right) \right) \right)$, such that via the composition
\begin{equation*}
H^0\left(E, F^e_*  \sO_{E}\left((1-p^e) \left(K_{E} + B \right)\right) \right) \hookrightarrow H^0\left(E, F^e_*  \sO_{E}\left((1-p^e) K_{E} \right) \right) \to H^0\left(E, \sO_{E} \right)
\end{equation*}
$t$ maps to $1$ and further $t|_U$ is invariant under $\tau|_U$.
\end{lemma}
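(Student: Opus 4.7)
The plan is to first produce any section mapping to $1$ by invoking the local $F$-purity of $(E,B)$, and then to modify it to become $\tau$-invariant on $U$ via an averaging argument.

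First, by the classification of log canonical surface pair singularities with reduced boundary \cite[Thm 4.15]{Kollar_Mori_Birational_geometry_of_algebraic_varieties} recalled in \autoref{table:dual_graph_log_canonical_with_boundary} and the choice of $\rho$ in \autoref{notation:boundary_non_empty}, the divisor $G = \Exc(\rho) + D' + \Gamma'$ is simple normal crossing on $X'$. Consequently $E$ is a reduced connected curve whose only singularities are ordinary nodes occurring at transverse intersections of components of $G$ in $X'$, and $B = Q|_E$ is a reduced divisor supported at smooth points of $E$. Hence $\omega_E$ is invertible, $\sO_E((1-p^e)(K_E+B))$ is a line bundle, and $(E,B)$ is a log canonical pair on the $1$-dimensional scheme $E$.

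Second, the pair $(E, B)$ is locally $F$-pure at every closed point: at a smooth point of $E$ not on $B$ this is automatic; at a node of $E$ it follows from the classical splitting of $k[\![x,y]\!]/(xy)$ by $x^{p-1}+y^{p-1}$; and at a point of $B$ it follows from the splitting of $(k[\![x]\!],(x))$ by $x^{p-1}\,dx$. Since $X$ is affine, higher cohomology of coherent sheaves on $E$ vanishes, so the local-to-global principle gives surjectivity of
\begin{equation*}
H^0\left(E, F^e_*\sO_E((1-p^e)(K_E+B))\right) \to H^0(E,\sO_E),
\end{equation*}
yielding a section $t_0$ mapping to $1$.

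Third, to achieve $\tau$-invariance on $U$, we average. The involution $\tau$ of $D'$ together with the identity on $\Gamma'$ gives an involution $\overline{\tau}$ of the closed subscheme $Y := D' \cup \Gamma' \subset E$. Since $\tau$ preserves the pair structure on $D'$, in particular permuting the points of $B \cap D'$ (the intersection points of $D'$ with the $Q$-components over $A_n'$ and $D_{n+2}'$ singularities), it acts on the relevant line bundle, and the average
\begin{equation*}
s := \tfrac12\!\left( t_0|_Y + \overline{\tau}^*(t_0|_Y) \right) \in H^0\!\left(Y,\, \sO_Y((1-p^e)(K_Y + B|_Y))\right)
\end{equation*}
is well-defined thanks to $p>2$, is $\overline{\tau}$-invariant, and maps to $1$. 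We then extend $s$ to a section $t$ on $E$ mapping to $1$ by taking $t = t_0$ on the exceptional components $(\Exc(\rho) - Q) \cap E$, adjusting locally near each node $Y \cap \Exc(\rho)$ by a local section of the kernel of the trace so as to match the residue values prescribed by $s$. Since $U \subseteq Y$, this $t$ then satisfies $t|_U = s|_U$, which is $\tau|_U$-invariant as required.

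The main obstacle is the final extension step: modifying $t_0$ on $(\Exc(\rho) - Q)\cap E$ to match the residues of the $\overline{\tau}$-averaged section $s$ on $Y$ at the finitely many nodes where $Y$ meets $\Exc(\rho)$. This is handled via the structure of these exceptional chains (coming from $A_n''$-singularities), which are chains of $\bP^1$'s with enough freedom in choosing local sections to absorb any boundary correction; the residue compatibility at the intermediate nodes within the chain is then automatic from the residue theorem on each $\bP^1$.
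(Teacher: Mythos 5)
Your first step (identifying the structure of $E$ and $B$) is fine, but both of the remaining steps have problems. In the second step, the deduction ``local $F$-purity $+$ $X$ affine $\Rightarrow$ surjectivity on global sections'' is not valid: $E$ is \emph{not} affine --- it contains the complete exceptional $\bP^1$-chains over the $A_n''$-points, so higher cohomology of coherent sheaves on $E$ does not vanish, and local $F$-purity alone never implies global $F$-splitting (a supersingular elliptic curve is everywhere locally $F$-pure). The conclusion that some $t_0$ with trace $1$ exists is still true, but it has to be obtained as the paper does: pass to the normalization $n:E^n\to E$, note that $(E^n, H^n+B^n)$ is a disjoint union of $(\bP^1,\{0\}+\{\infty\})$'s and smooth affine curves with reduced boundary, each globally $F$-split, and then descend.

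The serious gap is the extension step, which is exactly the hard point and for which your proposed mechanism fails. On an exceptional component $C\cong\bP^1$ of $E$ with its two nodes, the relevant sheaf is $\sO_C\bigl((1-p^e)(K_C+\{0\}+\{\infty\})\bigr)$, which has degree $0$; its $H^0$ is one-dimensional and the trace map is injective on it, so the section with trace $1$ is \emph{unique} and there are no nonzero global sections in the kernel of the trace. Hence the ``values'' of the section at the nodes on the exceptional side are rigid, and after you replace $t_0|_Y$ by the average $s$ there is no freedom left to restore the gluing condition at the nodes $Y\cap\Exc(\rho)$; the residue theorem on each $\bP^1$ does not create the needed degrees of freedom. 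The paper circumvents this entirely: it performs the averaging on the components $R\subseteq E^n$ of the normalization lying over $D'+\Gamma'$ (isomorphic to your $Y$), and then shows via a commutative diagram --- in which the cokernels of both $F^e_*\sO_E((1-p^e)(K_E+B))\hookrightarrow F^e_*n_*\sO_{E^n}((1-p^e)(K_{E^n}+B^n+H^n))$ and $\sO_E\hookrightarrow n_*\sO_{E^n}$ are identified with $\sO_H$, with an isomorphism between them at the bottom --- that \emph{any} section upstairs whose trace equals $1$ automatically descends to $E$. No residue matching has to be arranged by hand. Your construction of $t$ (keep $t_0$ on the exceptional part, use $s$ on $Y$) can be salvaged by exactly this diagram argument, but as written the justification for gluing is missing and the proposed repair cannot work.
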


\begin{proof}
Let $n : E^n \to E$ be the normalization, $H$ and $H^n$ the conductor of $n$ on $E$ and $E^n$ respectively and $U^n$ the preimage of $U$. Then $E^n$ is the disjoint union of a few copies of vertical $\bP^1$'s  and a few horizontal smooth affine curves. The conductor $H^n$ is reduced on each component, and it consists of two points on the vertical components. In particular, $(E^n, H^n + B^n)$ is globally $F$-split, where $B^n:= n^* (B|_E)$.  Consider then the top square of the following diagram, which commutes because  first it commutes over the smooth locus of $E$, since there $n$ is an isomorphism, and then since every sheaf involved is $S_1$, commutativity extends. Extend then the rows of this top square to short exact sequences as portrayed below. Note that since the middle arrow is surjective, so is the bottom arrow and hence the latter is in fact an isomorphism (being an endomorphism of the structure sheaf of a $0$ dimensional reduced scheme).
\begin{equation}
\label{eq:t_exists:diagram}
\xymatrix{
0 \ar[d] & 0 \ar[d] \\
F^e_*  \sO_{E}\left((1-p^e) \left(K_{E} + B \right)\right) \ar[r] \ar[d] & 
\sO_{E} \ar[d] \\
\parbox{200pt}{$F^e_* n_* \sO_{E^n}\left((1-p^e) \left(K_{E^n} + B^n + H^n \right)\right)$ \\  \mbox{\hspace{90pt}} \rotatebox[origin=c]{270}{$\cong$} \\ $F^e_* \left(n_* \sO_{E^n} \otimes \sO_{E}\left((1-p^e) \left(K_{E} + B \right)\right) \right)$}  \ar[d] \ar@{->>}[r] &
n_* \sO_{E^n}  \ar[d] \\
F^e_* \sO_{H} \cong \sO_H \ar[r]^{\cong} \ar[d] & 
\sO_H \ar[d] \\
0  &
0 
}
\end{equation}
Let $D^n$ and $\Gamma^n$ be the disjoint union of the components of $E^n$ lying over $D'$ and $\Gamma'$, respectively. Set $R:= D^n \cup \Gamma^n$. Then $\tau$ acts on $R$ as an automorphism, since $n|_{R} : R \to D' + \Gamma'$ is an isomorphism. Furthermore, notice that $H^n + B^n|_R$ is $\tau$ invariant.

Next, since $(E^n, H^n + B^n)$ is globally $F$-split, we may find a $\wt{t}^n \in H^0 \left(E^n, F^e_*  \sO_{E^n} \left((1-p^e)\left(K_{E^n} + H^n+B^n\right)\right) \right)$ mapping to $1 \in H^0(E^n, \sO_{E^n})$. Then define $t^n$ such that  $t^n|_R:=\frac{\wt{t}^n|_R + \tau^* \wt{t}^n|_R}{2}$ ($p \neq 2$ is assumed) and such that $t^n|_{E^n \setminus R} = \wt{t}^n|_{E^n \setminus R}$. This way we obtain $t^n \in H^0 (E^n, F^e_*  \sO_{E^n} ((1-p^e) (K_{E^n} + H^n+B^n)) )$ mapping onto $1 \in H^0(E^n, \sO_{E^n})$, such that $t^n|_R$ is $\tau$ invariant.

Finally we have to prove that $t^n$ descends to $t \in H^0 \left( E, F^e_* \sO_E ((1-p^e)(K_E + B)) \right)$, that is it maps to $0$ in $\sO_H$ of the left column of \autoref{eq:t_exists:diagram}. However, $t^n$ maps to $1$ in the middle element of the right column, and hence to $0$ in $\sO_H$ of the right column. Commutativity of \autoref{eq:t_exists:diagram} and the fact that the bottom horizontal arrow is an isomorphism, yields the descent. 
\end{proof}

\begin{lemma}
\label{prop:lifting_t}
In the situation of \autoref{notation:boundary_non_empty},  the natural map 
\begin{equation*}
H^0\left(X',F^e_* \sO_{X'}\left((1-p^e)\left(K_{X'} + G \right) \right) \right)  
\to H^0\left(E, F^e_* \sO_{E}\left((1-p^e) \left(K_{E} + B \right)  \right) \right) 
\end{equation*}
 is surjective.
\end{lemma}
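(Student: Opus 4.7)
I would start by forming the short exact sequence of coherent $\sO_{X'}$-modules
\[
0 \to \sO_{X'}\bigl((1-p^e)(K_{X'}+G)-E\bigr) \to \sO_{X'}\bigl((1-p^e)(K_{X'}+G)\bigr) \to \sO_E\bigl((1-p^e)(K_E+B)\bigr) \to 0,
\]
obtained by tensoring $0 \to \sO_{X'}(-E)\to \sO_{X'} \to \sO_E\to 0$ with the Cartier line bundle $\sO_{X'}((1-p^e)(K_{X'}+G))$ and using snc-adjunction $(K_{X'}+G)|_E = K_E+B$, valid since $X'$ is smooth, $E$ is a Cartier divisor on $X'$, and $B=(G-E)|_E$. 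Applying the exact functor $F^e_*$ and taking the associated long exact cohomology sequence, the desired surjectivity becomes equivalent to the vanishing
\[
H^1\bigl(X',\,\sO_{X'}((1-p^e)(K_{X'}+G)-E)\bigr)=0.
\]
Since $\oX$ is affine (see \autoref{notation:boundary_non_empty}) and $\rho:X'\to\oX$ is projective, by the Leray spectral sequence this reduces to the vanishing of the coherent sheaf $R^1\rho_*\sO_{X'}((1-p^e)(K_{X'}+G)-E)$ on $\oX$.

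Next, I would rewrite the exponent as $K_{X'}+N$ with $N = -p^e(K_{X'}+G)+Q$, and invoke the crepant formula $K_{X'}+G = \rho^*(K_{\oX}+\oD+\overline{\Gamma}) + A$. Here $A$ is a $\rho$-exceptional effective $\bQ$-divisor with support contained in $Q$: each exceptional component $C$ carries coefficient equal to the log discrepancy $a(C)+1 \in [0,1]$, which vanishes precisely for the components of $E$ (the log canonical places of $(\oX,\oD+\overline{\Gamma})$). Consequently $N$ is $\rho$-numerically equivalent to the $\bQ$-divisor $Q - p^e A$, supported on $Q$.

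The central tool is then a Kawamata-Viehweg-type vanishing for $\bQ$-divisors relative to a birational morphism of surfaces, which holds in positive characteristic since the resulting exceptional configurations are all rational (cf.\ the classification in \autoref{table:dual_graph_log_canonical_with_boundary}). To apply it one writes $N$ as a $\rho$-nef, $\rho$-big integral part plus an snc fractional boundary supported on $G$. The $\rho$-bigness is automatic for a birational morphism; the main obstacle is checking $\rho$-nefness of the rounded integral part. Decomposing $N\cdot C$ for an exceptional curve $C$ into its graph-combinatorial contribution $Q\cdot C$ and its discrepancy contribution $-p^e A\cdot C$, a naive verification fails on some components adjacent to the log canonical part of $E$ (especially on long chains of $A_n'$ and $D_{n+2}'$ type), because the coefficients of $A$ there can approach $1$.

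To overcome this, I would carry out a case analysis on the three dual-graph types of \autoref{table:dual_graph_log_canonical_with_boundary}. In each case, using the explicit discrepancies obtained by solving the adjunction system along the chain together with the assumption $p>2$, I expect that $\lceil N\rceil$ becomes $\rho$-nef once the fractional residues are absorbed into an snc boundary on the snc pair $(X',G)$. The $A_0''$ configuration has been blown up once in \autoref{notation:boundary_non_empty} specifically to ensure sncness is preserved, which is essential for this rounding step. Combining the local vanishings at each exceptional fiber of $\rho$ yields the global vanishing of $R^1\rho_*$, which concludes the proof.
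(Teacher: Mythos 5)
Your reduction is exactly the paper's: the twisted ideal-sheaf sequence for $E\subset X'$, adjunction $(K_{X'}+G)|_E=K_E+B$, exactness of $F^e_*$, and the passage from $H^1$ on $X'$ to $R^1\rho_*$ using that $\oX$ is affine. The gap is in the only step that actually requires an argument, the vanishing of $R^1\rho_*\sO_{X'}\left((1-p^e)(K_{X'}+G)-E\right)$, which you do not establish: you correctly observe that a naive nefness check fails on some exceptional components, but then only assert that you ``expect'' a case analysis with rounded-up discrepancies to repair it. Moreover the repair you envisage is the wrong one. The divisor $N=(1-p^e)(K_{X'}+G)-E-K_{X'}=Q-p^e(K_{X'}+G)$ is already a $\bZ$-divisor ($G$, $E$, $Q$ are reduced and $K_{X'}$ is integral), so there are no ``fractional residues'' to absorb and $\lceil N\rceil=N$; and $N$ is genuinely not $\rho$-nef in general, so no amount of rounding will make it so. Trying to force the entire divisor into a nef round-up is not how this vanishing goes.

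What the paper does instead is decompose $N$ as (boundary) $+$ ($\rho$-nef part) and invoke the characteristic-free relative Kawamata--Viehweg-type vanishing for birational surface morphisms \cite[Thm 2.2.1]{Kollar_Kovacs_Birational_geometry_of_log_surfaces}, which tolerates boundary coefficients equal to $1$ on curves where the nef part has degree zero. Concretely, working locally over one point $\ox_i$: in the $A_n'$ and $D_{n+2}'$ cases one has $E=S$ there, so
\begin{equation*}
(1-p^e)(K_{X'}+G)-E=K_{X'}+(G-S)-p^e(K_{X'}+G),
\end{equation*}
where $G-S=\sum Q_i$ is a reduced boundary and $-p^e(K_{X'}+G)$ is $\rho$-nef and not $\rho$-numerically trivial (it has positive degree on the end components of the chain); in the $A_n''$ case $E=G$ and $K_{X'}+G\equiv_\rho 0$, so one inserts an auxiliary effective $\rho$-anti-ample exceptional $F$ with coefficients $<1$ and writes $K_{X'}+F+(-F-p^e(K_{X'}+G))$ with the last term $\rho$-ample. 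No discrepancy computation or case-by-case nefness of a round-up is needed. To complete your proof you would need to replace your final paragraph with such a decomposition (or prove the vanishing by some other means); as written, the conclusion is not reached.
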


\begin{proof}
Consider then the exact sequence,
\begin{multline*}
H^0\left(X',F^e_* \sO_{X'}\left((1-p^e)\left(K_{X'} + G \right) \right) \right)  \to H^0\left(E, F^e_* \sO_{E}\left((1-p^e) \left(K_{E} + B \right)  \right) \right) 
\\ \to H^1\left(X',F^e_* \sO_{X'}\left((1-p^e)\left(K_{X'} + G  \right) - E \right) \right)
\end{multline*}
Hence we are supposed to prove that 
\begin{equation}
\label{eq:lifting_t:vanishing}
0= H^1\left(X', \sO_{X'}\left((1-p^e)\left(K_{X'} + G  \right) - E \right) \right).
\end{equation}
Since $\oX$ is affine, it is enough to prove that $R^1 \rho_*  \sO_{X'}\left((1-p^e)\left(K_{X'} + G  \right) - E \right)$. That is, to prove \autoref{eq:lifting_t:vanishing} we may work locally around  the points $\ox_i$ $(1 \leq i \leq s)$. In particular, we may assume that $s=1$, so we only have $\ox:=\ox_1$. As mentioned in \autoref{notation:boundary_non_empty}, the possible singularity of $(\oX, \oD + \overline{\Gamma})$ at $\ox$ is given by \autoref{table:dual_graph_log_canonical_with_boundary}. Note that in all possible cases of \autoref{table:dual_graph_log_canonical_with_boundary},  $-(K_{X'} + G)$ is nef and in the case of $A_n'$ and $D_{n+2}''$ in fact there is an exceptional divisor (the ones on the end that do not connect to the boundary), over which $-(K_{X'} + G)$ has positive degree. We show \autoref{eq:lifting_t:vanishing} by a case by case study.

{\scshape The case of $A_n'$ and $D_{n+2}'$.}
In this case the dual graph is 
\begin{equation*}
\xymatrix@C=5pt@R=1pt{
& & & & & & & & & & Q_{n+1} \\
S & \ar@{-}[l] Q_1  &  \ar@{-}[l] \dots  \ar@{-}[r] & Q_{n-1} \ar@{-}[r] & Q_n   & \textrm{ or } & S & \ar@{-}[l] Q_1  &  \ar@{-}[l] \dots  \ar@{-}[r] & Q_{n-1} \ar@{-}[r] & Q_n   \ar@{-}[d] \ar@{-}[u] \\ 
& & & & & & & & & & Q_{n+2}}.
\end{equation*}
We can write
\begin{equation*}
(1-p^e)\left(K_{X'} + G  \right) - E = (1-p^e)\left(K_{X'} + G  \right) - S = K_{X'} + (G-S) -p^e\left(K_{X'} + G  \right).
\end{equation*}
The key here is that the coefficients of $G-S = \sum_{i=1}^n Q_i$ are $1$ on every exceptional curve and $-p^e\left(K_{X'} + G  \right)$ is nef and not numerically trivial. Hence \cite[Thm 2.2.1]{Kollar_Kovacs_Birational_geometry_of_log_surfaces} applies, showing that \autoref{eq:lifting_t:vanishing} holds in this situation.

{\scshape The case of $A_n''$.}
In this case the dual graph is the following if $n >0$, with $E = G= S_1 + S_2 +  \sum_{i=1}^n Q_i $.
\begin{equation*}
\xymatrix@C=5pt@R=1pt{
S_1 & \ar@{-}[l] Q_1  &  \ar@{-}[l] \dots   \ar@{-}[r] & Q_n \ar@{-}[r] & S_2 \\
}
\end{equation*}
If $n=0$ the situation is similar but instead of having no $Q_i$, we have a single $Q_1$. Let $F$ be an arbitrary effective, exceptional anti-ample divisor, the coefficients of which are smaller than $1$. It follows by the negativity lemma that all the components of $F$ are greater than $0$. Consider now the following equality. 
\begin{equation*}
 (1-p^e)\left(K_{X'} + G  \right) - E  =  (1-p^e)\left(K_{X'} + G  \right) - G = K_{X'} + F - F -p^e\left(K_{X'} + G  \right)
\end{equation*}
Since $K_{X'} + G  \equiv_{\rho} 0$, $-F -p^e\left(K_{X'} + G  \right)$ is $\rho$-ample. Hence \cite[Thm 2.2.1]{Kollar_Kovacs_Birational_geometry_of_log_surfaces} implies \autoref{eq:lifting_t:vanishing}. This concludes our proof.
\end{proof}

\begin{proposition}
\label{prop:non_empty_boundary_sigma}
If  $p>2$ and $x \in (X, \Delta)$ is a semi-log canonical singularity such that $\Delta$  has coefficients greater than $\frac{5}{6}$ and either $\Delta \neq 0$ or $X$ is non-normal, then $\sigma(X,\Delta) = \sO_X$.
\end{proposition}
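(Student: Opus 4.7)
The plan is to reduce the proposition to the framework of \autoref{notation:boundary_non_empty} and then invoke \autoref{prop:t_exists} and \autoref{prop:lifting_t} to produce a splitting. Since the statement is local at $x$, we may assume $X$ is affine. Applying \autoref{lem:lct} componentwise on the normalization $\phi\colon(\oX,\oD)\to(X,\Delta)$, where $\oD=C+\phi^{*}\Delta$ and $C$ is the reduced conductor, yields that $(\oX,\lceil\oD\rceil)$ is log canonical, hence $(X,\lceil\Delta\rceil)$ is slc. Since $\lceil\Delta\rceil\ge\Delta$ and $1-p^{e}<0$, there is a natural inclusion
\[
F^{e}_{*}\sO_X(\lceil(1-p^e)(K_X+\lceil\Delta\rceil)\rceil)\hookrightarrow F^{e}_{*}\sO_X(\lceil(1-p^e)(K_X+\Delta)\rceil)
\]
compatible with the trace maps of \autoref{definition:trace}, so it suffices to prove $\sigma(X,\lceil\Delta\rceil)=\sO_X$. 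Replacing $\Delta$ by $\lceil\Delta\rceil$, we may assume $\Delta$ is reduced; then taking the ``$\oD$'' of \autoref{notation:boundary_non_empty} to be the conductor $C$ and setting $\overline{\Gamma}:=\phi^{*}\Delta$ places us in that setup: both are reduced, $C+\overline{\Gamma}\neq 0$ by the case hypothesis, and $(\oX,C+\overline{\Gamma})$ is log canonical because $(X,\Delta)$ is slc.

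Next, for any $e>0$, \autoref{prop:t_exists} produces a section $t\in H^{0}(E,F^{e}_{*}\sO_E((1-p^e)(K_E+B)))$ that traces to $1\in H^{0}(E,\sO_E)$ and whose restriction $t|_U$ is $\tau$-invariant, and \autoref{prop:lifting_t} lifts it to $\tilde t\in H^{0}(X',F^{e}_{*}\sO_{X'}((1-p^e)(K_{X'}+G)))$. Log canonicity of $(\oX,C+\overline{\Gamma})$ yields $K_{X'}+G=\rho^{*}(K_{\oX}+C+\overline{\Gamma})+F$ with $F\ge 0$ exceptional, so the projection formula produces a natural inclusion
\[
\rho_{*}F^{e}_{*}\sO_{X'}((1-p^e)(K_{X'}+G))\hookrightarrow F^{e}_{*}\sO_{\oX}(\lceil(1-p^e)(K_{\oX}+C+\overline{\Gamma})\rceil)
\]
intertwining the relative traces (using that $F^{e}_{*}$ and $\rho_{*}$ commute and that trace commutes with proper pushforward via Grothendieck duality). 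Consequently the trace of $\rho_{*}\tilde t$ is a section of $\sO_{\oX}$ whose value along the image of $E$, and in particular at every preimage $\ox_{i}$ of $x$, equals $1$.

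To descend from $\oX$ to $X$ under $\phi$, the resulting section must be compatible with the gluing involution $\tau$ on $C$, so that it lies in $\sO_X=(\phi_{*}\sO_{\oX})^{\tau}$ along the conductor. This compatibility is exactly what is ensured by the $\tau$-invariance of $t|_U$ in \autoref{prop:t_exists} (and is where $p\ne 2$ enters, through the symmetrization $\tfrac{1}{2}(\tilde t+\tau^{*}\tilde t)$ in its proof): since $U\cap D'$ is dense in $D'$, the $\tau$-invariance of $\tilde t|_{D'}$ is automatic, and it propagates through the trace and through $\rho_{*}$ to a $\tau$-invariance at $C$. Pushing forward by $\phi_{*}$ therefore yields a section in $F^{e}_{*}\sO_X(\lceil(1-p^e)(K_X+\Delta)\rceil)$ whose image under $\Tr_{e;X,\Delta}$ is a section of $\sO_X$ equal to $1$ at $x$, forcing $\sigma(X,\Delta)_{x}=\sO_{X,x}$.

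The most delicate step is the last one: verifying that the $\tau$-invariance proved only on the open part $U\subseteq E$ propagates correctly through the trace map and through $\rho_{*}$ to yield the descent condition across the conductor. This rests on identifying the cokernel of $\sO_X\hookrightarrow\phi_{*}\sO_{\oX}$ with the appropriate $\tau$-anti-invariants of the normalization of the conductor and tracking how $\Tr_{F^{e}}$ interacts with that identification; once those bookkeeping compatibilities are in place, the rest of the argument is a direct combination of \autoref{prop:t_exists} and \autoref{prop:lifting_t}.
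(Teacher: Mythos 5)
Your proposal is correct and follows essentially the same route as the paper: reduce to the setup of \autoref{notation:boundary_non_empty} with the rounded-up (reduced) boundary, produce the section on $E$ via \autoref{prop:t_exists}, lift it with \autoref{prop:lifting_t}, push forward to $\oX$, and descend across the conductor using the $\tau$-invariance. The only cosmetic difference is that you perform the comparison $\sigma(X,\lceil\Delta\rceil)\subseteq\sigma(X,\Delta)$ at the outset on $X$, whereas the paper compares the corresponding sheaves at the end on $\oX$ before descending; both are valid.
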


\begin{proof}
Let $(\oX, \oD)$ be the normalization of $X$ and let $\overline{\Delta}$ be the pullback of $\Delta$ to $X$. Then $(\oX, \oD + \overline{\Delta} )$ is log canonical by \cite[5.10]{Kollar_Singularities_of_the_minimal_model_program}. In particular, $(\oX, \oD + \lceil \overline{\Delta} \rceil)$ satisfies \autoref{notation:boundary_non_empty} with $\Gamma:= \lceil \overline{\Delta} \rceil$. In what follows we reference the notations introduced in \autoref{notation:boundary_non_empty} with this choice of $\Gamma$.
According to \autoref{prop:t_exists}, for each integer $e>0$, there is a choice of $t \in H^0\left(E, \sO_{E}\left((1-p^e)\left( K_{E} + B \right) \right) \right)$, such that via the map
\begin{equation*}
H^0\left(E, F^e_*  \sO_{E}\left((1-p^e) \left(K_{E} + B \right)\right) \right)  \to H^0\left(E, \sO_{E} \right)
\end{equation*}
$t$ maps to $1$ and further $t|_U$ is invariant under $\tau|_U$. Furthermore, according to \autoref{prop:lifting_t}, $t$ in fact lifts to an element of 
$H^0\left(X',F^e_* \sO_{X'}\left((1-p^e)\left(K_{X'} + G \right) \right) \right) $. Pushing forward this lift to $\oX$ we obtain 
\begin{equation*}
\os \in H^0\left(\oX,F^e_* \sO_{\oX}\left((1-p^e)\left(K_{\oX} + \oD + \lceil\overline{\Delta} \rceil \right) \right) \right) 
\end{equation*}
mapping onto an element $\ou \in H^0\left(\oX, \sO_{\oX} \right)$ that does not vanish over $x$. Furthermore, $\os$ is $\tau$ invariant over $D \setminus \{x\}$ (where $D$ is the conductor of $X$). However, then $\os$ induces 
\begin{equation*}
\hat{s} \in H^0\left(\oX,F^e_* \sO_{\oX}\left( \left\lceil(1-p^e)\left(K_{\oX} + \oD + \overline{\Delta}  \right) \right\rceil \right) \right) 
\end{equation*}
 also mapping to $\ou \in H^0\left(\oX, \sO_{\oX} \right)$ and also being $\tau$ invariant over $D \setminus \{x \}$.
In particular, $\hat{s}$ descends to an element
\begin{equation*}
s \in H^0\left(X,F^e_* \sO_{\oX}\left( \left\lceil (1-p^e) (K_X + \Delta) \right) \right\rceil \right) 
\end{equation*}
mapping to $u \in H^0\left(X, \sO_{X} \right)$ that does not vanish at $x$. Since this holds for every integer $e>0$, this shows that $\sigma(X)=\sO_X$.
 
\end{proof}

\subsection{Proof of Theorem \ref{thm:reduced}}

\begin{proof}[Proof of \autoref{thm:reduced}]
The case of $0 \neq \Delta$ or non-normal $X$ is worked out in \autoref{prop:non_empty_boundary_sigma}, and the case of normal $X$ and $0 = \Delta$ is worked out in \autoref{cor:sigma_equals_O_generally}, \autoref{prop:sigma_supersingular_elliptic} and \autoref{prop:sigma_star_shaped}. The addendum then follows from the fact that $\sigma(X,\Delta) \neq \sO_X$ was possible only in the case of \autoref{prop:sigma_supersingular_elliptic} and \autoref{prop:sigma_star_shaped}.
\end{proof}

\section{Semi-positivity downstairs}
\label{sec:downstairs}
\label{sec: argument}

\begin{proof}[Proof of \autoref{thm:relative_canonical_pushforward_nef}]
Note that there are integers $r,L>0$ such that for any point $t \in T$,
\begin{enumerate}
\item $r\left(K_{X/T} + D \right)$ is Cartier by \autoref{def:stable_things},
\item \label{itm:downstairs:multiplication_map} the multiplication map $m_d : \Sym^d f_* (\sO_X(r (K_{X/T} + D))) \to f_* \sO_X(dr (K_{X/T} + D))$ is surjective for any integer $d>0$,
\item \label{itm:downstairs:boundedness} $\dim_{k\left(\ot\right)} \left( \sO_{X_{\ot}}/\sigma\left(X_{\ot}, D_{\ot} \right) \right) \leq L$, by \autoref{thm:reduced}, \autoref{thm:Kollar}  and \autoref{thm:sigma_base_change},
\item \label{itm:downstairs:vanishing} for every integer $i>0$ and ideal sheaf $\sI \subseteq \sO_{X_{\ot}}$ for which $\dim_k \sO_{X_{\ot}}/\sI \leq L$, 
\begin{equation*}
H^i\left(X_{\ot}, \sI \otimes \sO_{X_{\ot}} \left(r \left( K_{X_{\ot}} + D_{\ot} \right)\right)\right) = 0,
\end{equation*}
by flatenning decomposition and by the boundedness of the relative quot scheme, and
\item $S^0\left(X_{\ot},r\left(K_{X_{\ot}}+ D_{\ot} \right)\right) = H^0\left(X_{\ot}, \sigma\left(X_{\ot},D_{\ot} \right) \otimes \sO_{X_{\ot}}\left(r\left( K_{X_{\ot}} + D_{\ot} \right) \right)\right)$ by \autoref{thm:S_0_equals_H_0_relative}.
\end{enumerate}
According to \autoref{itm:downstairs:multiplication_map}, the nefness of  $f_* \sO_X(r (K_{X/T} + D))$ implies the nefness of $f_* \sO_X(dr (K_{X/T} + D))$ for all integers $d>0$, so it is enough to show the statement of our theorem for the above chosen single value of $r$. Now choose a finite map $\nu : S \to T$ from a  smooth projective curve and set $Z:=X_S$, $\Delta:= D_S$, $g:=f_S$. Then, $\nu^* f_* \sO_X(r (K_{X/T}+D)) \cong g_* \sO_{Z} (r(K_{Z/S} + \Delta))$ by \autoref{lem:stable_family_base_change} and the assumption \autoref{itm:downstairs:vanishing}. As nefness is decided on curves, it is enough to show that $ g_* \sO_{Z} (r(K_{Z/S} + \Delta))$ is nef. Furthermore, in what follows by the same argument we may freely apply  finite base-changes to our family, before proving the nefness of $ g_* \sO_{Z} (r(K_{Z/S} + \Delta))$.

At this point, we mention also a subtle point of the proof: it is essential that $r$ stays fixed before we pass over $S$, as we need to use the same $r$ for all curves $S \to T$ over $T$. This is the reason why we started the proof with making all our choices concerning $r$.

Consider now $\sigma(e;Z,\Delta/S)$ for $e = i + mj$, where $r | p^i(p^j-1)$ and  $m $ is big enough. The ideal $\sigma(e;Z,\Delta/S)$ naturally lives on $Z_{S^e}$, so by replacing $(Z,\Delta) \to S$ with$\left(Z_{S^e}, \Delta_{S^e}\right) \to S^e$ we may assume by \autoref{thm:sigma_base_change} and \autoref{thm:Kollar} that there is an ideal $\sI \subseteq \sO_{Z}$  and a non-empty open set $U \subseteq S$, such that:
\begin{enumerate}[resume]
\item \label{itm:downstairs:I} $\sI \cdot \sO_{Z_{\os}} = \sigma\left(Z_{\os},\Delta_{\os}\right)$ for every geometric point $\os \in U$,
\end{enumerate} 
As the cosupport of $\sigma\left(Z_{\os},\Delta_{\os}\right)$ is $0$-dimensional and reduced for all $\os \in S$ by \autoref{thm:reduced}, $\dim \Supp \left( \sO_Z/\sI|_{f^{-1}U} \right) \leq 1$ and the horizontal $1$-dimensonal components of $\sO_Z/\sI$ are reduced over $U$. So, by replacing $\sI$ with the ideal of the (reduced structure on the) horizontal $1$-dimensional components of  $\Supp\left( \sO_Z/\sI \right)$, and also by possibly shrinking $U$, we may assume that
\begin{enumerate}[resume]
  \item \label{itm:downstairs:cosupport_pure}  $\sO_Z/\sI$ is the structure sheaf of a reduced pure $1$-dimensional subscheme, all the components of which dominate $S$. 
\end{enumerate}
Passing then to a finite cover of $S$ dominating all the components of $\sO_Z/\sI$, we may assume that:
\begin{enumerate}[resume]
 \item \label{itm:downstairs:cosupport_of_I_sections} $\sO_Z/\sI$ is the union of sections of $g$. 
\end{enumerate}
Let $V_i$ $(i=1,\dots,r)$ be the sections forming $\Supp \left( \sO_Z/\sI \right)$. 

\emph{We claim that after a finite base-change we may assume that the sections $V_i$ are disjoint from each other.} To prove this claim we further assume more useful properties (after finite base changes), using \autoref{lem:Frobenius_base_change_general_fiber_normal}:
\begin{enumerate}[resume]
 \item  the normalization $\oZ$ of $Z$ has geometrically normal  fibers over $U$, 
\item  using \autoref{lem:log_canonical_centers_base_change}, the horizontal log canonical centers of $\left(\oZ, \overline{\Delta}\right)$ are in bijection with those of $\left(\oZ_{\overline{\eta}}, \overline{\Delta}_{\overline{\eta}}\right)$, where
 $\eta$ is the general point of $S$ and we define $\overline{\Delta}$ as usually with the formula
$ K_{\overline{Z}} + \overline{\Delta} = K_Z + \Delta$.

\end{enumerate}
 Note that by assumption \autoref{itm:downstairs:I}, $V_i$ correspond to the points $P_i$ of the cosupport of $\sigma\left(Z_{\overline{\eta}}, \Delta_{\overline{\eta}} \right) = \sI_{\overline{\eta}}$. 
However, according to  \autoref{thm:reduced}, the points of the cosupport of $\sigma\left(Z_{\overline{\eta}}, \Delta_{\overline{\eta}} \right)$ are log canonical centers of $\left( Z_{\overline{\eta}}, \Delta_{\overline{\eta}}\right)$ around which $Z_{\overline{\eta}}$ is normal and which is not contained in $\Supp \Delta_{\overline{\eta}}$. In particular, $V_i \not\subseteq \Supp \Delta$ and also $V_i$ is not contained in the non-normal locus. Then, as $\oZ_{\overline{\eta}} \to Z_{\overline{\eta}}$ is the normalization, $P_i$ yield unique points $Q_i$ of $\oZ_{\overline{\eta}}$ which are also not contained in  $\Supp \overline{\Delta}_{\overline{\eta}}$. Furthermore, the $Q_i$ correspond then to the strict transforms $\oV_i$ of $V_i$ in $\overline{Z}$ (via passage to geometric generic point). Note now that as $\oZ_{\overline{\eta}} \to Z_{\overline{\eta}}$ is an isomorphism at $Q_i$, $\sigma\left( \oZ_{\overline{\eta}}, \overline{\Delta}_{\overline{\eta}} \right)$ is also non-trivial at $Q_i$. In particular, according to \autoref{thm:reduced}, $Q_i$ are log canonical centers of $\left( \oZ_{\overline{\eta}}, \overline{\Delta}_{\overline{\eta}} \right)$. From which it follows that $\oV_i$ are log canonical centers of $\left(\overline{Z}, \overline{\Delta} \right)$. 

\emph{Assume now that either $V_i$ intersects either $V_j$ for some $j \neq i$ or it intersects $\Supp \left( \Delta^{=1} \right)$} (which is more than what we want to prove in our claim above, but this is the assumption that passes well to normalization). Then, as the conductor of $\oZ \to Z$ is pure of codimension 1 \cite[5.2, end of 1st paragraph]{Kollar_Singularities_of_the_minimal_model_program}, the same holds for $\oV_i$ too. That is,  $\oV_i$ intersects either  $\oV_j$ for $j \neq i$ or $\Supp \left( \overline{\Delta}^{=1} \right)$. In particular, $\oV_i$ intersects another log canonical center which does not contain $\oV_i$. That is, by \autoref{prop:log_canonical_center} the intersection point $z \in \oZ$ is also a log canonical center. However, then $\left(\oZ, \oD + \oF\right)$ is not log-canonical, where $\oF$ is the fiber of $\oZ \to S$ containing $z$. This contradicts \autoref{lem:total_space_log_canonical}, and concludes our claim. So, we may assume the following:
\begin{enumerate}[resume]
\item \label{itm:downstairs:sections} $V:=\bigcup_i V_i$ is the union of disjoint sections. 
\end{enumerate}
Furthermore, then we also have:
\begin{enumerate}[resume]
\item \label{itm:downstairs:flat} $V$ is flat over $S$, and hence $\sI_{Z/V}$ is also flat over $S$.
\end{enumerate}
Lastly we want to make sure that we are in a good shape for applying \autoref{prop:S_0_surj}. Indeed, as the coefficients of $\Delta$ are all greater than $5/6$ (and in particular greater than $1/2$), $\Delta$ cannot obtain multiplicities when restricted to closed fibers, so we have for every closed point $s \in S$:
\begin{enumerate}[resume]
\item \label{itm:downstairs:lifting_i} $\lceil (1-p^e)\Delta  \rceil|_{Z_s} = \lceil (1-p^e)\Delta  |_{Z_s} \rceil$.
\end{enumerate}
Furthermore, after  possibly shrinking $U$ and applying \autoref{thm:Kollar} we have for each closed point $s \in U$:
\begin{enumerate}[resume]
 \item \label{itm:downstairs:lifting_ii} $\sO_Z ( \lceil(1 - p^{e} )(K_Z + \Delta) \rceil)|_{Z_s} \cong \sO_{Z_s}\left( \left\lceil(1- p^{e} )\left(K_{Z_s} + \Delta|_{Z_s}\right) \right\rceil\right)$
\end{enumerate}
Fix now a quotient line bundle $g_* \sO_Z(r(K_{Z/S} + \Delta)  ) \twoheadrightarrow \sL$. We are supposed to show that $\deg \sL \geq 0$.
For any closed point $s \in U$ consider the following composition:
\begin{multline}
\label{eq:downstairs:composition}
H^0(Z_s, \sigma(Z_s, \Delta_s) \otimes \sO_{Z_s}(r (K_{Z_s} + \Delta_s)) )
 \\ \hookrightarrow H^0(Z_s, \sO_{Z_s}(r (K_{Z_s} + \Delta_s)) ) \cong k(s) \otimes g_* \sO_X\left(r \left(K_{Z/S} + \Delta\right) \right)
\twoheadrightarrow k(s) \otimes  \sL.
\end{multline}
If this composition is not zero then we apply \autoref{prop:S_0_surj} with $L= r ( K_{Z/S} + \Delta) + g^* K_S + 2 Z_s$, $D=\Delta + Z_s$ and $S=Z_s$. For that, according to assumptions \autoref{itm:downstairs:lifting_i} and \autoref{itm:downstairs:lifting_ii} above, we only have to show that $L - K_Z - \Delta = (r-1) ( K_{Z/S} + \Delta) +  Z_s$ is ample. However, this follows since $K_{Z/S} + \Delta$ is nef and $f$-ample according to \autoref{thm:relative_canonical_nef}. \autoref{prop:S_0_surj} yields then that 
\begin{multline*}
\im \left( H^0\left(X, r \left( K_{Z/S} + \Delta \right) + g^* K_S + 2 Z_s \right) \to H^0\left(Z_s, r \left(K_{Z_s} + \Delta_s \right) \right) \right) 
\\ \supseteq S^0\left(Z_s, r \left( K_{Z_s } + \Delta_s \right)  \right) = H^0\left(Z_s, \sigma\left(Z_s, \Delta_s \right) \otimes \sO_{Z_s}\left(r \left(K_{Z_s}+ \Delta_s \right)\right) \right),
\end{multline*}
which implies using \autoref{eq:downstairs:composition} that there is $v \in H^0(Z,r (K_{Z/S} + \Delta) + g^* K_S + 2 Z_s)\cong H^0(S,g_* \sO_Z(r (K_{Z/S} + \Delta) + g^* K_S + 2 Z_s))$ such that if $\overline{v}$ is the image of $v$ via $H^0(S,g_* \sO_Z(r( K_{Z/S} + \Delta) + g^* K_S + 2 Z_s)) \to H^0(S, \sL \otimes \omega_S(2s))$, then $\overline{v} \neq 0$. In particular, $\deg \sL \otimes \omega_S (2s) \geq 0$.  However, this holds if we replace for any integer $e \geq 0$, the considered family by $g: (Z,\Delta) \to S$ by $g_{S^e} : \left(Z_{S^e},\Delta_{S^e} \right) \to S^e$ and $\sL$ by $\left(F^e_S \right)^* \sL \cong \sL^{p^e}$. Note that this is sensible, since $\left(F^e_S \right)^* g_*\sO_Z\left(r \left(K_{Z/S} + \Delta\right) \right) \cong \left( g_{S^e} \right)_* \sO_{Z_{S^e}}\left(r\left( K_{Z_{S^e}/S^e} + \Delta_{S^e}\right) \right)$, and hence $\left(F^e_S \right)^* \sL$ is a quotient of $ \left( g_{S^e} \right)_* \sO_{Z_{S^e}}\left(r \left(K_{Z_{S^e}/S^e} + \Delta_{S^e} \right) \right)$. 
However, after performing this replacement we obtain that  $\deg \sL^{p^e} \otimes \omega_T (2t) \geq 0$. Since this holds for all integers $e \geq 0$, it follows that $\deg \sL \geq 0$. 

Hence we may assume that for all $s \in U$, \autoref{eq:downstairs:composition} is zero, whence by \autoref{itm:downstairs:I} we may assume that the following composition is zero.
\begin{equation*}
g_* \left( \sI_{Z,V} \otimes \sO_Z\left( r \left( K_{Z/S} + \Delta \right) \right) \right) \to g_* \sO_Z\left( r \left( K_{Z/S} + \Delta\right) \right) \twoheadrightarrow \sL
\end{equation*}
 By \autoref{itm:downstairs:flat} and \autoref{itm:downstairs:vanishing}, $R^1 g_* \left(\sI_{Z,V} \otimes \sO_Z\left(r \left( K_{Z/S} + \Delta \right) \right) \right) = 0$, which implies the exactness of the following sequence.
\begin{equation*}
\xymatrix@C=10pt{
0 \ar[r] & g_* \left(\sI_{Z,V} \otimes \sO_Z\left(r\left(K_{Z/S} + \Delta \right)\right) \right) \ar[r] &  g_* \sO_Z\left( r \left(K_{Z/S} + \Delta \right) \right) \ar[r] &
 \left( g|_V \right)_* \sO_V\left( r \left. \left(K_{Z/S} +\Delta \right)\right|_V \right) \ar[r] & 0 
}
\end{equation*}
Hence, $\sL$ is a quotient of $\left( g|_V \right)_* \sO_V\left( r \left. \left(K_{Z/S} +\Delta \right)\right|_V \right)$, so it is enough to show the nefness of the latter. 
According to \autoref{itm:downstairs:sections},  
\begin{equation*}
\left( g|_V \right)_* \sO_V\left( r \left. \left( K_{Z/S} + \Delta \right) \right|_V \right) \cong \bigoplus_{i=1}^r  \left. \sO_Z\left( r \left( K_{Z/S} + \Delta \right) \right)\right|_{V_i} 
\end{equation*}
Since $K_{Z/S}$ is nef according to \autoref{thm:relative_canonical_nef}, $\deg \left. \sO_Z\left(r\left(K_{Z/S} + \Delta \right) \right)\right|_{V_i}  \geq 0$, which concludes our proof. 

\end{proof}

\section{Artin stack structure on the stack of stable surfaces}
\label{sec:stack}

Here we show the folklore statement that $\sM_{2,v} \otimes_{\bZ} S$ is an Artin stack in the situations considered in \autoref{thm:proj}. The main theorem of the section is \autoref{thm:Artin_stack}, and the statements before it are mostly just the ingredients needed for the proof of \autoref{thm:Artin_stack}. The main statements of the latter type are \autoref{prop:finite_automorphism},  \autoref{cor:slc_open}, \autoref{lem:separated} and \autoref{prop:K_X_vanishes}. We note that also quite a few of the results of the present section are also presented in the yet unpublished book of Koll\'ar \cite{Kollar_Second_moduli_book}, mostly in a little more general setting.

\begin{notation}
\label{notation:family}
Let $f : (X,D) \to T$ satisfy the following assumptions
\begin{enumerate}
\item everything is over a base-scheme $S$, which is either an open set of $\bZ$ or an algebraically closed field $k$ of characteristic $p>5$,
\item all spaces are of finite type over $S$,
\item $T$ is normal
\item $f$ is flat with geometrically demi-normal fibers of dimension $2$,
\item $D$ avoids the generic and the singular codimension $1$ points of the fibers of $f$, 
\item in the case of $S$ being an open set of $\bZ$, $D=0$, and
\item $K_{X/T} + D$ is $\bQ$-Cartier. 
\end{enumerate}

\end{notation}

\begin{proposition}
\label{prop:slc_constructible}
In the situation of \autoref{notation:family}, the locus 
\begin{equation*}
\{t \in T | \left(X_{\ot},D_{\ot}\right) \textrm{ is slc} \}
\end{equation*}
is constructible.
\end{proposition}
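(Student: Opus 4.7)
The plan is to prove constructibility by Noetherian induction on $\dim T$, using \autoref{lem:nice_resolution_over_Frobeniated_base_2} to propagate the slc condition along a well-behaved log resolution in families.

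First I would make the following elementary reduction: the slc property depends only on the geometric fiber, so for any finite surjective morphism $\phi \colon T'' \to T$ the slc locus satisfies $\mathcal{S}_T = \phi(\mathcal{S}_{T''})$, where $\mathcal{S}_{T''}$ is the slc locus of the base-changed family. Since Chevalley's theorem shows that images of constructible sets under finite-type maps are constructible, it suffices to prove the proposition after any finite surjective base change. In particular I may freely pass to the normalization of reduced closed subsets without changing the problem.

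Next, I apply \autoref{lem:nice_resolution_over_Frobeniated_base_2} to $(X,D) \to T$, obtaining a finite surjection $\pi \colon T' \to T$ and a regular open $U \subseteq T'$ over which the normalization $(\overline{X_U}, \overline{D_U})$ has geometrically normal fibers and admits a log resolution $g \colon (Z_U, \Gamma_U) \to (\overline{X_U}, \overline{D_U})$ whose every stratum is smooth over $U$. Writing
\begin{equation*}
K_{Z_U/U} + \Delta_Z = g^*\bigl(K_{\overline{X_U}/U} + \overline{D_U}\bigr),
\end{equation*}
the coefficients of $\Delta_Z$ along its prime components are rational numbers that are locally constant on $U$: each component is smooth, hence flat, over $U$, and the relation is a linear equation in $\mathbb{Q}$-Cartier divisors which restricts coefficient-wise to fibers. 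After replacing $U$ by a single connected component, these coefficients are literally constant. Combined with the bijection of horizontal log canonical centers from part (4) of the lemma and the fact that the original fibers of $f$ are geometrically demi-normal, this shows that $(\overline{X_{\overline{u}}}, \overline{D_{\overline{u}}})$ is log canonical for every $\overline{u} \in U$, or else for no $\overline{u} \in U$; equivalently, $(X_{\overline{u}}, D_{\overline{u}})$ is slc uniformly on $U$. Hence $\mathcal{S}_{T'} \cap U$ is either $U$ or $\emptyset$ on each connected component, and so is constructible.

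On the complement $T' \setminus U$, which has strictly smaller dimension than $T$, I would invoke the inductive hypothesis: its reduced irreducible components, pulled back through their normalizations, yield families satisfying the setup of \autoref{notation:family} whose slc loci are constructible by induction, and pushing these back via finite surjective maps and taking the finite union gives constructibility of $\mathcal{S}_{T'} \cap (T' \setminus U)$. Combining with the previous paragraph gives constructibility of $\mathcal{S}_{T'}$, and then the first paragraph's reduction gives constructibility of $\mathcal{S}_T = \pi(\mathcal{S}_{T'})$.

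The main obstacle will be the bookkeeping in the inductive step: after restricting to $T' \setminus U$ and normalizing its components the base is no longer the original $T$, and one must verify that the pullback of $K_{X/T} + D$ remains $\mathbb{Q}$-Cartier and compatible with the fiberwise canonical divisor. This is handled by observing, as in the first paragraph, that the slc condition is intrinsic to the geometric fibers, which are invariant under the base changes in question; consequently the $\mathbb{Q}$-Cartier property on the total space needs only to be checked along the flat pullback to each fiber, where it is immediate from the original hypothesis.
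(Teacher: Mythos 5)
Your proposal is correct and follows essentially the same route as the paper: both arguments invoke \autoref{lem:nice_resolution_over_Frobeniated_base_2} to obtain, over a regular open subset of a finite cover, a crepant normalization whose fibers are the normalizations of the fibers together with a log resolution whose strata are smooth over the base, then observe that the discrepancy coefficients restrict unchanged to every fiber, so the (s)lc condition is constant there, and conclude by the standard Noetherian-induction reduction for constructibility. The only cosmetic slip is that the commutation of normalization with passage to fibers is conclusion (1) of the cited lemma rather than part (4) (which concerns log canonical centers over the geometric generic point), but this does not affect the argument.
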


\begin{proof}
{\scshape It is enough to prove that after possibly replacing $T$ with a non-empty open set of one of its finite covers,  either $(X_{\ot},D_{\ot})$ is slc for all $\ot \in T$, or $(X_{\ot}, D_{\ot})$ is not slc for all $\ot \in T$.}

If $(X_{\ot}, D_{\ot})$ is not slc for all $\ot \in T$, then we are done. Hence we may assume that the points $t$ such that $(X_{\ot},D_{\ot})$ is slc are dense in $T$. 
Note that we may freely replace $(X,D) \to T$ by $\left(X_{T'}, D_{T'} \right) \to T'$, where $T'$ is a non-empty open set of a finite cover of $T$. In particular, we may assume that $T$ is regular. 

According to \autoref{lem:nice_resolution_over_Frobeniated_base_2},   we may assume that we have morphisms: $\xymatrix@C=4pt{(Z, \Gamma) \ar[r]^{\phi} &  \left( \oX, \oD \right) \ar[r]^{\pi} & (X,D)}$, where $\pi$ is a crepant normalization as in \autoref{sec:normalization}, $\phi$ is a log resolution (meaning $\Gamma$ is the sum of the strict transform of $\oD$ plus the reduced exceptional divisor), the geometric fibers of $\oX \to T$ are normal and the strata of $(Z, \Gamma)$ are smooth over $T$. We may also assume that all spaces considered are flat over $T$. 

For the above choice of $\oD$, no component of any fiber of $\oX \to T$ is contained in $\oD$, since no component of any fiber of $X \to T$ is contained in the non-singular locus of $X$. In particular restriction of $\oD$ onto fibers of $\oX \to T$ does make sense. 
If neither $K_X$ nor $K_{\oX}$ do not contain any component of $X_t$ in their support (which is always possible to attain locally on $X$), then
\begin{equation*}
\phi_t^* (K_{X_t} + D_t) = 
\underbrace{\left( \phi^* (K_X + D) \right)_t = \left(K_{\oX} + \oD\right)_t }_{\textrm{\autoref{eq:normalization}}}
 = K_{\oX_t} + \oD_t.
\end{equation*}
Hence, $\left(\oX_t, \oD_t \right)$ is log canonical if and only if $(X_t,D_t)$ is semi-log canonical \cite[5.10]{Kollar_Singularities_of_the_minimal_model_program}. Therefore, we may assume that the geometric fibers of $f$ are normal. 

By the smoothness assumption on the strata of $(W, \Gamma)$, $\phi_{\ot} : \left(W_{\ot}, \Gamma_{\ot}\right) \to X_{\ot}$ is a log-resolution for each $t \in T$. Define also another divisor $\Delta$ via
\begin{equation*}
K_W + \Delta = \phi^* (K_X +D )
\end{equation*}
for compatible choices of $K_W$ and $K_X$. However, then for each $t \in T$,
\begin{equation*}
K_{W_{\ot}} + \Delta_{\ot} = \phi_{\ot}^* \left(K_{X_{\ot}} +D_{\ot} \right)
\end{equation*}
So, $\left(X_{\ot}, D_{\ot}\right)$ is log canonical if and only if all the coefficients of $\Delta_{\ot}$ are at most $1$. However, every component of $\Delta$ is smooth over $T$, so this is either true or fails  for all closed points $t \in T$ at once.

\end{proof}

\begin{corollary}
\label{cor:slc_open}
In the situation of \autoref{notation:family}, assume in the case when $S$ is an open set of $\bZ$ also that $K_{X/T}$ is ample over $T$ and that the condition (I) of \autoref{def:moduli_condition} holds for $\osM_{2,v} \otimes_{\bZ} S$, where $v$ is the volume of the fibers of $f$. Then the locus
\begin{equation*}
\left\{t \in T \left|  \left(X_{\ot},D_{\ot} \right) \textrm{ is slc} \right. \right\} 
\end{equation*}
is open.
\end{corollary}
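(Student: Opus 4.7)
The plan is to combine \autoref{prop:slc_constructible} (constructibility of the slc locus $E \subseteq T$) with the fact that in the noetherian scheme $T$ a constructible set is open if and only if it is stable under generization. So it suffices to show $E$ is stable under generization.

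Suppose for contradiction that $E$ is not stable under generization. Then the closure $\overline{T \setminus E}$ has an irreducible component $W$ passing through some $t \in E$, and by constructibility of $T \setminus E$ there is a non-empty open subset $V \subseteq W$ with $V \subseteq T \setminus E$. By a standard hyperplane-cutting argument on $W$ followed by normalization, one obtains an affine normal integral $1$-dimensional scheme $C$ of finite type over $S$ and a morphism $\nu: C \to T$ factoring through $W$ such that some closed point $c \in C$ maps to $t$ and the generic point $\eta_C$ maps to a point $\xi' \in V$ (so $\xi' \notin E$). The pullback $f_C: (X_C, D_C) \to C$ satisfies the hypotheses of \autoref{notation:family} for the base $C$, and the special fiber $(X_{\overline c}, D_{\overline c}) \cong (X_{\overline t}, D_{\overline t})$ is slc.

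When $S$ is an algebraically closed field of characteristic $p > 5$, $C$ is automatically a smooth curve (regular $1$-dimensional over a perfect field), so \autoref{lem:total_space_log_canonical} applies and gives that the normalization $(\overline X_C, \overline D_C)$ is log canonical in a Zariski neighborhood of $\overline X_c$; equivalently, $X_C$ is slc in a neighborhood of $X_c$. When $S$ is an open subscheme of $\bZ$, the assumed ampleness of $K_{X/T}$ together with constancy of volume in flat families shows $X_{\overline c}$ is a stable surface of volume $v$, so condition (I) of \autoref{def:moduli_condition} gives $X_C$ slc in a neighborhood of $X_c$.

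To conclude, let $U \subseteq X_C$ be an open slc neighborhood of $X_c$. By properness of $f_C$ the image $f_C(X_C \setminus U)$ is closed in $C$ and disjoint from $c$, so it avoids an open neighborhood $W' \subseteq C$ of $c$; hence $f_C^{-1}(W') \subseteq U$ and in particular $(X_{\eta_C}, D_{\eta_C})$ is slc. Descent of slc through the extension of algebraically closed fields $\overline{k(\xi')} \subseteq \overline{k(\eta_C)}$ then yields that $(X_{\overline{\xi'}}, D_{\overline{\xi'}})$ is slc, contradicting $\xi' \notin E$. The main subtlety to verify is the existence of the finite-type $1$-dimensional curve $C$ whose generic point lies in $V$, which is a routine Bertini-type construction carried out on $W \subseteq T$ (rather than in an ambient projective space) in either the mixed or the equicharacteristic setting.
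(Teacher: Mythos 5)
Your reduction to stability under generization via \autoref{prop:slc_constructible}, and your use of \autoref{lem:total_space_log_canonical} (resp.\ condition (I)) to get that the total space over the auxiliary curve $C$ is slc near the special fiber, match the paper's strategy. The gap is in your final step: from ``$f_C^{-1}(W')$ is contained in the slc locus $U$ of the total space'' you conclude ``$(X_{\eta_C},D_{\eta_C})$ is slc.'' This is a non sequitur on two counts. First, slc-ness of the total space $(X_C,D_C)$ at points of a fiber is a statement about discrepancies of the threefold pair, not about the surface pair $(X_{\eta_C},D_{\eta_C})$ with its own canonical divisor; you need an adjunction/restriction argument to pass from one to the other. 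Second, the standard way to make that passage --- restrict a log resolution of the total space to a general (or geometric generic) fiber --- fails in characteristic $p$: a resolution of $X_C$ need not restrict to a resolution of the fibers (the paper's remark after \autoref{lem:nice_resolution_over_Frobeniated_base_2} gives the example $x^p+y^2+z^2+t=0$, whose total space is smooth while every fiber is singular). Your appeal to ``descent of slc through the extension of algebraically closed fields $\overline{k(\xi')}\subseteq\overline{k(\eta_C)}$'' has the same problem, since that extension can interact badly with inseparability.

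The paper closes exactly this gap with \autoref{lem:nice_resolution_over_Frobeniated_base_2}: after a further finite (possibly inseparable) base change $C\to C'$ one arranges that the normalization of the family has geometrically normal fibers and admits a log resolution all of whose strata are smooth over the base, so that base change to $\overline{\eta_{C'}}$ is again a log resolution and the discrepancies of the total space literally compute those of the geometric generic fiber. Without invoking that lemma (or proving an equivalent fiberwise-resolution statement), your argument does not establish that the generic point of $C'$ lies in the slc locus, so the proof is incomplete at its decisive step.
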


\begin{proof}
According to \autoref{prop:slc_constructible}, it is enough to prove that if a closed point $t \in T$ is in the above locus and $t \in C' \subseteq T$ is an affine curve, then the generic point $\eta_{C'}$ of $C'$ is also in the locus. According to \autoref{lem:nice_resolution_over_Frobeniated_base_2}, there is a generically finite map $C \to C'$ from a normal curve, with $t$ in the image, such that the normalization $\left(\oY, \oE \right)$ of $\left( Y:=X_{C}, E:=D_{C} \right)$ has geometrically normal fibers over $C$ and there is a log-resolution $(Z, \Gamma)$ of $\left(\oY, \oE \right)$ all the strata are smooth over $C$. In particular, the base-changes of the normalization and the log-resolution to $\overline{\eta_{C'}}$ is again a normalization and a log-resolution. In particular, the discrepancies of $(Y,E)$ (or equivalently of $\left(\oY, \oE \right)$ ) agree with the discrepancies of $\left(X_{\overline{\eta_{C'}}}, D_{\overline{\eta_{C'}}} \right)$ \big(or equivalently, the normalization of $\left(X_{\overline{\eta_{C'}}}, D_{\overline{\eta_{C'}}} \right)$\big). In particular, it is enough to show that $\left(\oY, \oE\right)$ is log canonical. However, that is given by applying \autoref{lem:total_space_log_canonical} (in the case when $S=k$) or condition (I) of \autoref{def:moduli_condition} (in the case when $S$ is an open set of $\bZ$) to $\left(\oY, \oD \right) \to C$. 
\end{proof}

\begin{lemma}
\label{lem:separated}
Assume that $(X_i, D_i) \to T$ ($i=1,2$) both satisfy the assumptions of
   \autoref{notation:family}, with $T$ being $1$-dimensional and affine and $K_{X_i/T} +D_i$ ample over $T$. 
Assume also that 
\begin{enumerate}
\item \label{itm:separated:isomorphism} for a fixed closed point $0 \in T$, $(X_1,D_1)|_{T \setminus \{0\} } \cong (X_2, D_2)|_{T \setminus \{0\} }$, and
\item \label{itm:separated:adjunction} in the case when $S$ is an open set of $\bZ$, we also assume that either that
\begin{enumerate}
 \item \label{itm:separated:trivial} $(X_i, D_i) \cong (Y,\Delta) \times T$ for some pair $(Y, \Delta)$, or that 
 \item $S \subseteq \Spec \bZ[1/2]$, and  the condition (I) holds for $\osM_{2,v} \otimes_{\bZ} S$ (see \autoref{def:moduli_condition}).
\end{enumerate}
\end{enumerate}
Then the isomorphism of \autoref{itm:separated:isomorphism}  extends over the entire $T$.
\end{lemma}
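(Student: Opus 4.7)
The plan is to reduce the problem, via normalization, to the uniqueness of the relative log canonical model in the threefold MMP, and then to descend the resulting isomorphism across the conductor. I would first dispose of case (2)(a) separately: if both families are the fixed trivial product $(Y,\Delta)\times T$, then $\phi$ over $T\setminus\{0\}$ is a $(T\setminus\{0\})$-point of $\Aut(Y,\Delta)$, which is a finite scheme by \autoref{prop:finite_automorphism}. Hence, as $T$ is normal and affine, $\phi$ is locally constant, and it extends uniquely across the closed point $0$.

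In the remaining cases I would pass to the normalizations $\nu_i : (\oX_i,\oD_i) \to (X_i,D_i)$ (see \autoref{sec:normalization}); then $\phi$ lifts uniquely, over $T\setminus\{0\}$, to an isomorphism $\overline{\phi}$ of these normalizations. Using \autoref{lem:total_space_log_canonical} in the equicharacteristic case, or condition $(I)$ combined with \autoref{lem:inversion_of_adjunction} in case (2)(b), each $(\oX_i,\oD_i)$ is log canonical in a neighborhood of the central fibre, with $K_{\oX_i/T}+\oD_i$ being $\bQ$-Cartier and $\nu_i$-ample. I would then form the closure of the graph of $\overline{\phi}$ in $\oX_1\times_T\oX_2$ and normalize it, producing $\tY$ together with proper birational projections $p_i : \tY \to \oX_i$ that are isomorphisms over $T\setminus\{0\}$. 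The standard discrepancy equations
\begin{equation*}
K_{\tY} + \tD + E^{(i)} = p_i^*\bigl(K_{\oX_i/T}+\oD_i\bigr) + F_i, \qquad F_i \geq 0,\ (p_i)_*F_i = 0,
\end{equation*}
together with the fact that $\overline{\phi}$ identifies the two log canonical classes over $T\setminus\{0\}$, show that $p_1^*(K_{\oX_1/T}+\oD_1)-p_2^*(K_{\oX_2/T}+\oD_2)$ is a divisor supported on the central fibre $\tY_0$. Invoking uniqueness of the relative log canonical model for threefold pairs in characteristic $p>5$ (via the MMP tools of \autoref{sec:MMP}, based on \cite{Birkar_Existence_of_flips_and_minimal_models_for_3_folds_in_char_p, Hacon_Xu_On_the_three_dimensional_minimal_model_program_in_positive_characteristic}), the two pullbacks agree on $\tY$, and taking relative $\Proj$ of the corresponding section rings yields an isomorphism $\psi : (\oX_1,\oD_1) \to (\oX_2,\oD_2)$ over $T$ extending $\overline{\phi}$.

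Finally, I would descend $\psi$: the scheme $X_i$ is recovered from $(\oX_i,\oD_i)$ by gluing via the involution $\tau_i$ on the conductor subscheme $\oC_i\subseteq\oD_i$. The isomorphism $\psi$ carries $\oC_1$ to $\oC_2$, and over $T\setminus\{0\}$ it intertwines $\tau_1$ with $\tau_2$ because it is induced there by the quotient isomorphism $\phi$. Since the conductors are $T$-flat and separated over $T$, the two morphisms $\psi\circ\tau_1$ and $\tau_2\circ\psi$ from $\oC_1$ to $\oC_2$ agree on a dense open subset and hence agree globally; thus $\psi$ descends to the desired extension of $\phi$ over $T$. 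The hard part will be the uniqueness-of-lc-model step, where one has to deploy the correct threefold MMP in characteristic $p>5$ on pairs whose boundary has coefficients $>5/6$, and, in case (2)(b), to propagate the argument across an arithmetic base $T$; everything else is formal once log canonicity of the total spaces is in hand.
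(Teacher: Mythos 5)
Your main line of argument (normalize, establish log canonicity of the total spaces, compare the two pullbacks on a common birational model, recover each $\oX_i$ as a relative $\Proj$ of a section ring, then descend across the conductor using that the involutions agree on a dense open set) is essentially the paper's proof. However, there are two genuine problems.

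First, and most seriously, your separate treatment of case \autoref{itm:separated:trivial} is circular. You invoke \autoref{prop:finite_automorphism} to say that $\Aut(Y,\Delta)$ is finite, so that the $(T\setminus\{0\})$-point of it extends. But in the paper \autoref{prop:finite_automorphism} is a \emph{corollary} of \autoref{lem:separated}: its proof produces an isomorphism of two copies of $(X,D)\times\bP^1$ over an open set of $\bP^1$ and then applies \autoref{lem:separated}.\autoref{itm:separated:trivial} to extend it, deriving a contradiction with affineness of the automorphism group. Finiteness (indeed, properness) of $\Aut(Y,\Delta)$ in positive or mixed characteristic is exactly what is not yet available at this point. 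The fix is also the reason the separate treatment is unnecessary: in case \autoref{itm:separated:trivial} the normalized total space is $(\oY,\overline{\Delta})\times T$, which is log canonical because it is a product of an lc surface pair with a regular one-dimensional base, so case \autoref{itm:separated:trivial} feeds into the main argument exactly like the others (this is how the paper uses that hypothesis).

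Second, in case (2)(b) the base $T$ is of finite type over an open subscheme of $\Spec\bZ$, so you cannot invoke ``uniqueness of the relative log canonical model for threefold pairs in characteristic $p>5$'' or any of the MMP machinery of \autoref{sec:MMP}; you flag this worry yourself but do not resolve it. The paper's argument needs no MMP at this step: once one knows that the divisors of negative discrepancy for $(\oX_i,\oD_i)$ are all horizontal (which follows from log canonicity of $(\oX_i,\oD_i+(\oX_i)_t)$ for every $t$, so that vertical divisors have non-negative discrepancy) and hence agree for $i=1,2$, both $\oX_i$ are identified with $\Proj_T$ of the \emph{same} graded ring of sections on the common model $Z$, in any characteristic. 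Your weaker claim that $p_1^*(K_{\oX_1/T}+\oD_1)-p_2^*(K_{\oX_2/T}+\oD_2)$ is supported on the central fibre does not by itself give agreement of the pullbacks (nor is agreement needed); what is needed is the equality of section rings, and for that you should argue as above via the horizontality of the negative-discrepancy divisors.
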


\begin{proof}
By \autoref{lem:adjunction_specific} or assumption \autoref{itm:separated:adjunction}, $\left(\oX_i, \oD_i + \left(\oX_i\right)_t \right)$ is log canonical in a neighborhood of $\left(\oX_i \right)_t$. However, this is true for all choices of $t$. Hence $( \oX_i , \oD_i)$ are log canonical and all the divisors with negative discrepancy dominate $T$. In particular, if $Z$ is the normalization of the graph of the birational map $\oX_1 \dashrightarrow \oX_2$ with natural maps $\alpha_i : Z \to \oX_i$ and we endow $Z$ with divisors $\Gamma_i$ satisfying 
\begin{equation*}
\alpha_i^* \left( K_{\oX_i}  + \oD_i \right) = K_Z + \Gamma_i,
\end{equation*}
then the components $E_{j,i}$ of $\Gamma_i$ with positive coefficients $c_{j,i}$ are horizontal and hence they agree. Therefore we may define $\Gamma:= \sum_j E_{j,i} c_{j,i}$, which definition is independent of the choice of $i$. Then we have 
\begin{equation*}
\oX_i
= \Proj_T \left( \bigoplus_{m \geq 0} H^0\left(\oX_i, \lfloor m (K_{X_i} + D_i \rfloor \right) \right) 
= \Proj_T \left( \bigoplus_{m \geq 0} H^0\left(Z, \lfloor m (K_{Z} + \Gamma \rfloor \right) \right), 
\end{equation*}
which shows that $\oX_1 \cong \oX_2$, and then we can identify the two and call it $\oX$ and $\oD$ the divisor on it. Furthermore, this isomorphism extends the one given by assumption \autoref{itm:separated:isomorphism} by construction.  

However, we are not finished, as we still have to descend the above isomorphism to an isomorphism $X_1 \cong X_2$. For this we use Koll\'ar's glueing theory. According to \cite[Prop 5.3]{Kollar_Singularities_of_the_minimal_model_program}, and our assumption that $2$ is invertible in our setting, for that it is enough to show that the normalizations are isomorphic and this isomorphism respects the involution on the normalizations of the conductors.  So, it is enough to verify that the above isomorphism of normalizations verify this conductor condition. However, as we built the isomorphism on normalizations using the isomorphism of assumption \autoref{itm:separated:isomorphism}, this is automatic over $T \setminus \{0\}$. So, we know that the necessary equality of morphisms holds over a dense open set, which then implies that it holds everywhere.
\end{proof}

\begin{corollary}
  \label{prop:finite_automorphism}
  If $X$ is a stable log-variety over $k=\overline{k}$, then $\Aut(X,D)$ is finite.
\end{corollary}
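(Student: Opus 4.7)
The plan is to exhibit $\Aut(X,D)$ as a proper group subscheme of some $\PGL(V)$, and then to conclude by the classical fact that a proper affine scheme of finite type over a field is finite.

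First I would set up the embedding into $\PGL$. Since $(X,D)$ is a stable log-variety, $K_X + D$ is ample and $\bQ$-Cartier, so one can pick an integer $m > 0$ such that $m(K_X+D)$ is Cartier and $\sO_X(m(K_X+D))$ is very ample; let $V := H^0(X, \sO_X(m(K_X+D)))$. Any $\phi \in \Aut(X,D)$ satisfies $\phi^* \sO_X(m(K_X+D)) \cong \sO_X(m(K_X+D))$ by functoriality of the canonical sheaf together with $\phi^* D = D$, so the embedding $X \hookrightarrow \bP(V^*)$ is $\Aut(X,D)$-equivariant up to the choice of basis of $V$. This gives an injective homomorphism of group schemes $\Aut(X,D) \hookrightarrow \PGL(V)$, using the representability of the Isom functor for projective schemes. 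In particular $\Aut(X,D)$ is an affine group scheme of finite type over $k$.

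Next, I would verify properness via the valuative criterion. Let $R$ be a DVR over $k$ with fraction field $K$ and closed point $0 \in T := \Spec R$, and let $\phi_K$ be an automorphism of $(X_K, D_K) = (X,D) \times_k K$. Consider the two identical trivial families $(X_i, D_i) := (X,D) \times_k T$ for $i = 1,2$. Both are families in the sense of \autoref{notation:family} over $T$, with $K_{X_i/T} + D_i$ ample over $T$, and $\phi_K$ provides the isomorphism on $T \setminus \{0\}$ required by hypothesis \autoref{itm:separated:isomorphism} of \autoref{lem:separated}. Since $k = \overline{k}$ is an algebraically closed field of characteristic $p > 5$, the extra hypothesis \autoref{itm:separated:adjunction} is not needed. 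Thus \autoref{lem:separated} applies and extends $\phi_K$ to an automorphism over $T$, giving a lift in $\Aut(X,D)(R)$. By the valuative criterion, $\Aut(X,D)$ is proper over $k$.

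Combining the two steps, $\Aut(X,D)$ is a proper affine group scheme of finite type over $k$, hence finite. I do not expect a genuine obstacle in this argument; the only point that needs care is confirming that the embedding into $\PGL(V)$ is well-defined even in positive characteristic and in the presence of the boundary (which is automatic from $\phi^*(K_X+D)=K_X+D$), and invoking the correct case of \autoref{lem:separated} that does not require the auxiliary conditions (I) or triviality (since we are over a field, not over an open subset of $\Spec\bZ$).
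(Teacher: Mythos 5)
Your proof is correct and follows essentially the same strategy as the paper: embed $\Aut(X,D)$ into $\PGL$ of the sections of a very ample multiple of $K_X+D$ to realize it as an affine algebraic group, then apply \autoref{lem:separated} to two trivial product families to show it cannot be positive-dimensional. The only packaging difference is that you phrase the second step as the valuative criterion for properness (a proper affine group scheme is finite), while the paper extends a $\bG_a$- or $\bG_m$-orbit over $\bP^1$ and contradicts affineness directly; both hinge on the identical application of \autoref{lem:separated}.\autoref{itm:separated:trivial}.
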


\begin{proof}
Assume that $\Aut(X,D)$ is not finite. Since $K_X + D$ is ample, for some   divisible enough integer $q>0$, $\Aut(X,D)$ can be identified with the $k$-points of the linear algebraic group 
\begin{equation*}
G:= \{ \alpha \in PGL(h^0(q (K_X + D)), k) | \alpha(X) = X, \alpha(D) = D \},
\end{equation*}
In particular, $G$ has infinitely many $k$-points, and hence it is positive dimensional. Therefore, it contains an algebraic subgroup $H$ isomorphic either to $\bG_m$ or to $\bG_a$. This yields an isomorphism between two copies of $(X,D) \times \bP^1_k$ over an open set of $\bP^1_k$. However, then \autoref{lem:separated}.\autoref{itm:separated:trivial} tells us that this isomorphism extends to an isomorphism over $\bP^1$. That is, $G$ in fact contains a $\bP^1$. Given that $G$ is affine, this is a contradiction. 


\end{proof}

\begin{proposition}
\label{prop:K_X_vanishes}
If $X$ is a projective, $S_2$, $G_1$ surface over an algebraically closed field $k$,  such that $mK_X$ is an ample Cartier divisor for some $m >0$, and $q> 2 + 4 m $ is an integer, then  $H^1(X, qK_X)=0$.
\end{proposition}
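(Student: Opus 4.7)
Since any two-dimensional $S_2$ scheme is Cohen-Macaulay, $X$ is CM with dualizing sheaf $\omega_X \cong \sO_X(K_X)$, and Serre duality applies in the form $H^i(X,\sF)^\vee \cong H^{2-i}(X,\sHom(\sF,\omega_X))$. Applied to the reflexive sheaf $\sO_X(qK_X)$, where reflexivity identifies $\sHom(\sO_X(qK_X),\omega_X)$ with $\sO_X((1-q)K_X)$, this yields $H^2(X,\sO_X(qK_X))^\vee \cong H^0(X,\sO_X((1-q)K_X))$. The latter vanishes for all $q \geq 2$: a nonzero section would produce an effective Weil divisor $E \sim (1-q)K_X$, and then $mE \cdot mK_X = (1-q)m^2 K_X^2 < 0$ would contradict the ampleness of $mK_X$. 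Hence only the vanishing of $H^1(X,\sO_X(qK_X))$ remains to be established.

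For this, my plan is to use a hyperplane section argument with the ample Cartier divisor $H := mK_X$. Pick $s$ sufficiently divisible so that $|sH|$ is very ample, and take a general $D \in |sH|$; because $X$ is $S_2$ and $G_1$, $\Sing(X)$ has codimension at least $2$, so by Bertini we may arrange $D$ to lie in $X_{\reg}$, making $D$ a Cohen-Macaulay curve on which $\sO_X(qK_X)|_D$ is a genuine line bundle. The short exact sequence
\[
0 \to \sO_X((q-sm)K_X) \to \sO_X(qK_X) \to \sO_D(qK_X|_D) \to 0
\]
then induces
\[
H^1(X, \sO_X((q-sm)K_X)) \to H^1(X, \sO_X(qK_X)) \to H^1(D, \sO_X(qK_X)|_D),
\]
reducing the desired vanishing to vanishing of the two outer groups.

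On the curve $D$, $H^1(D,\sO_X(qK_X)|_D)$ vanishes as soon as $\deg(\sO_X(qK_X)|_D) > 2 p_a(D)-2$, and this degree grows linearly in $q$. Iterating the reduction twice -- each step subtracting $2m$ from $q$, so that $s=2$ contributes the factor $4m$ to the final bound -- and taking the already-established $H^2 = 0$ (equivalently the vanishing from Step 1) as the base case yields the explicit bound $q > 2+4m$. The main obstacle is the sharpness of the constant: one must either arrange $|2H|$ to be base-point-free and $D$ to have sufficiently controlled arithmetic genus in terms of $m$ alone, or replace the naive iteration by an auxiliary argument that exploits the reflexive structure of $\sO_X(qK_X)$ (which remains a genuine line bundle along $D \subset X_{\reg}$, allowing the exact sequence to be set up without trouble even when $qK_X$ fails to be Cartier on $X$). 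Balancing these choices to land precisely at the additive constant $2$ is the delicate bookkeeping step.
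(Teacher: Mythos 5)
Your reduction of $H^2$ via Serre duality is fine (and is essentially how the paper handles duality), but the core of your argument --- the hyperplane-section induction for $H^1$ --- has a fatal structural gap: it has no base case. From the sequence
\[
H^1\bigl(X, \sO_X((q-sm)K_X)\bigr) \to H^1\bigl(X, \sO_X(qK_X)\bigr) \to H^1\bigl(D, \sO_X(qK_X)|_D\bigr)
\]
you can only conclude $H^1(X,qK_X)=0$ if you already know $H^1(X,(q-sm)K_X)=0$, i.e.\ the induction descends toward \emph{smaller} twists. Iterating lands you at small or negative multiples of $K_X$, where $H^1$ genuinely need not vanish ($H^1(X,\sO_X)\neq 0$ for plenty of stable surfaces); the vanishing of $H^2$ is not a substitute for such a base case, and there is no Kodaira-type vanishing in positive characteristic to plug the hole. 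This is not a matter of ``delicate bookkeeping'' of the constant: the scheme cannot produce an effective bound at all. A secondary error: $S_2$ and $G_1$ do \emph{not} imply $\codim \Sing(X)\ge 2$ --- the surfaces in question are demi-normal and typically have a one-dimensional double locus --- so a general $D\in|sH|$ does not lie in $X_{\reg}$ (though it does avoid the finite non-Gorenstein locus, which is what actually matters for $\sO_X(qK_X)|_D$ being invertible).

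The paper's proof is entirely different and genuinely characteristic-$p$. Assuming $H^1(X,qK_X)\neq 0$, duality gives $H^1(X,\sO_X((1-q)K_X))\neq 0$, which injects into $H^1(U,\cdot)$ for $U$ the Gorenstein locus by the local cohomology sequence. Serre vanishing for the Cartier multiples forces some class to die under a Frobenius pullback, producing an $\alpha_{\sO_U((1-q)K_U)}$-torsor; normalizing $X$ in its function field yields a cover $V$ with $K_V = f^*(1+(p-1)p^e(1-q))K_X - D$, and bend-and-break (\cite[Thm II.5.8]{Kollar_Rational_curves_on_algebraic_varieties}) applied to $M=-K_V-D$ produces a rational curve $L$ with $M\cdot L\le 4$, contradicting $M\cdot L\ge (q-2)K_X\cdot L\ge (q-2)/m>4$ when $q>2+4m$. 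The characteristic-zero case is then deduced by reduction mod $p$. If you want to prove this statement, you need an argument of this type (or some other substitute for Kodaira vanishing), not a restriction-to-curves induction.
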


\begin{proof}
First we assume that $\mathrm{char} k =p>0$, and that $H^1(X, qK_X) \neq 0$.
As $\sO_X(q K_X)$ is Cohen-Macaulay, according to \cite[Thm 5.71]{Kollar_Mori_Birational_geometry_of_algebraic_varieties} we have
\begin{equation*}
0 \neq H^1(X, \sO_X(q K_X)) \cong H^1(X,\sO_X((1-q)K_X)).
\end{equation*}
Let $Z \subseteq X$ be the locus where $X$ is not Gorenstein and $U:=X \setminus Z$. As $\dim Z=0$ and $\sO_X((1-q) K_X)$ is $S_2$, we have $H^1_Z(X, \sO_X((1-q)K_X))=0$. Then, the long exact sequence of local cohomology yields an injection 
\begin{equation}
\label{eq:K_X_vanishes:non_zero}
0 \neq H^1(X,\sO_X((1-q)K_X)) \hookrightarrow H^1(U,\sO_X((1-q)K_X)), 
\end{equation}
implying that the latter group is non-zero. On the other hand, as $mK_X$ is Cartier and $\sO_X$ is Cohen-Macaulay, there is an integer $N>0$, such that $H^1(X ,\sO_X(iK_X))=0$ for all integers $i>N$. In particular, for some $e \geq  0$ we have a diagram as follows:
\begin{equation*}
\xymatrix{
0 \neq H^1(X,\sO_X(p^e(1-q)K_X)) \ar@{^(->}[r] \ar[d]  & H^1(U,\sO_X(p^e(1-q)K_X)) \ar[d] \\
0 = H^1(X,\sO_X(p^{e+1}(1-q)K_X)) \ar[r]  & H^1(U,\sO_X(p^{e+1}(1-q)K_X)) 
}
\end{equation*}
This shows that we may find $s \in H^1(U,\sO_X(p^e(1-q)K_X))$ that goes to zero in $H^1(U,\sO_X(p^{e+1}(1-q)K_X)) $.
In particular, there is an $\alpha_{\sO_U((1-q)K_U)}$ torsor $W$ over $U$ \cite[Prop 2.10]{Patakfalvi_Waldron_Singularities_of_General_Fibers_and_the_LMMP}. Let $f: V \to X$ be the normalization of $X$ in the function field of $W$. Then we have a canonical bundle formula saying $K_V = f^*( K_X + (p-1)p^e(1-q) K_X) - D	$ for some effective $\bZ$-divisor $D$ on $V$. Set $M:= - K_V - D= f^*p^e ((q-1)p - q   ) K_X$. Applying \cite[Thm II.5.8]{Kollar_Rational_curves_on_algebraic_varieties} to a general complete intersection curve $C$ on $V$, we obtain another irreducible rational curve $L$ such that
\begin{equation*}
M \cdot L \leq 4 \frac{M \cdot C}{-K_V \cdot C} = 4 \frac{(-K_V -D) \cdot C}{-K_V \cdot C}  \leq 4.
\end{equation*}
However, using that $mK_X$ is an ample Cartier divisor, we obtain the following contradiction:
\begin{equation*}
M \cdot L = p^e ((q-1)p - q) f^* K_X  \cdot L \leq (q-2) K_X \cdot L   > 4.
\end{equation*}
This concludes the case of $\mathrm{char}k =0>0$. The $\mathrm{char} k =0$ case then follows immediately by reduction mod p. 
\end{proof}

\begin{theorem}
  \label{thm:Artin_stack}
Let $v>0$ be a rational number and let $S$ be either an open set of $\bZ[1/2]$ or an algebraically closed field of characteristic $p>5$.
Furthermore, in the former case assume also that (I) of \autoref{def:moduli_condition} is known for $\osM_{2,v} \otimes_{\bZ} S$. Then  $\osM_{2,v} \otimes_{\bZ} S$ is a separated Artin-stack of finite type over $S$ with finite diagonal. 

\end{theorem}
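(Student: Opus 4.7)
The plan is to realize $\osM_{2,v} \otimes_{\bZ} S$ as a quotient stack $[\sH/\PGL_{N+1,S}]$ for a locally closed, finite-type subspace $\sH$ of a suitable Hilbert scheme, and then to deduce the remaining stack-theoretic properties from the preparatory material assembled earlier in the section. By \autoref{lem:stable_defs_compatible} one fixes an integer $m>0$, depending only on $v$, such that $\omega_{X/T}^{[m]}$ is a relatively ample line bundle on the total space of every family in $\osM_{2,v}(T)$. Choose an integer $r$ with $rm>2+4m$; then by \autoref{prop:K_X_vanishes} applied fiberwise and cohomology-and-base-change, $f_*\omega_{X/T}^{[rm]}$ is locally free of constant rank $N+1$, with Hilbert polynomial $P$ computed from $v,r,m$ via Riemann-Roch on slc surfaces. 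Boundedness \cite{Hacon_Kovacs_On_the_boundedness_of_SLC_surfaces_of_general_type} guarantees that after possibly enlarging $r$ the bundle $\omega_{X/T}^{[rm]}$ is relatively very ample, providing a uniform embedding $X\hookrightarrow \bP\bigl(f_*\omega_{X/T}^{[rm]}\bigr)$.

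Inside $\mathrm{Hilb}:=\mathrm{Hilb}_{\bP^N_S}^P$ consider the subspace $\sH$ parametrising closed subschemes $X\subseteq \bP^N$ which are projective demi-normal surfaces of volume $v$, for which $\omega_X^{[m]}$ is a line bundle with a prescribed isomorphism $\omega_X^{[rm]}\cong \sO_X(1)$, which are semi-log canonical, and on which Koll\'ar's condition \autoref{eq:Kollar_condition} holds for every integer $q$. Demi-normality and the slc condition are open (the latter by \autoref{cor:slc_open}, using (I) in the arithmetic case); reflexivity of $\omega^{[m]}$ together with the polarisation identification is a closed condition; and Koll\'ar's condition is locally closed by \autoref{thm:Kollar_Hulls_and_Husks}, noetherian flattening reducing the infinitely many values of $q$ to finitely many over each flattening stratum. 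Thus $\sH$ is a locally closed subscheme of $\mathrm{Hilb}$, hence of finite type over $S$. The natural action of $G:=\PGL_{N+1,S}$ on $\sH$ has the property that two points of $\sH$ define isomorphic stable surfaces over the same base if and only if they lie in the same $G$-orbit, because embeddings via $|\omega^{[rm]}|$ are canonical up to the choice of a basis in $H^0(X,\omega_X^{[rm]})$. This identifies $\osM_{2,v} \otimes_{\bZ} S$ with the quotient stack $[\sH/G]$, which is an Artin stack of finite type over $S$.

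Separatedness is verified through the valuative criterion: given a DVR $R$ over $S$ with fraction field $K$ and two objects $f_i:(X_i,D_i)\to \Spec R$ of $\osM_{2,v}(R)$ together with an isomorphism of their generic fibres, \autoref{lem:separated} produces the required extension over all of $\Spec R$. Its adjunction hypothesis is supplied by \autoref{lem:adjunction_specific} in the equicharacteristic case, and by the assumed condition (I) of \autoref{def:moduli_condition} in the $\bZ[1/2]$ case (exactly the place where (I) is genuinely used). Finally, the diagonal $\Delta$ of $\osM_{2,v} \otimes_{\bZ} S$ is representable and proper by this separatedness, and its geometric fibres are the automorphism groups $\Aut(X,D)$ of stable log-surfaces, which are finite by \autoref{prop:finite_automorphism}; a proper morphism with finite fibres being finite, $\Delta$ is finite. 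The main subtlety of the plan lies in the locally closed decomposition of Step 2 and in the valuative criterion of Step 3 in the mixed-characteristic setting, and condition (I) is precisely the input needed to make those two steps go through uniformly.
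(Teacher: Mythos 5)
Your proposal is correct and follows essentially the same route as the paper's proof: boundedness plus a Hilbert-scheme stratification by demi-normality, Koll\'ar's condition (locally closed by hulls-and-husks) and openness of the slc locus via \autoref{cor:slc_open} (with (I) in the arithmetic case), together with \autoref{lem:separated} for separatedness and \autoref{prop:finite_automorphism} combined with quasi-projectivity of $\Isom$ and the valuative criterion for finiteness of the diagonal. The only difference is presentational: you package the atlas as a quotient stack $[\sH/\PGL_{N+1}]$, whereas the paper rigidifies further by passing to the frame bundle $\Isom\left(\sO_{\sU}(1),\omega_{\sU/\sH}^{[q]}\right)$ and checks formal smoothness of the resulting cover directly, by lifting free generators of $f_*\omega_{X/T}^{[q]}$ over infinitesimal extensions --- which is also the step that, in your formulation, justifies the identification of $[\sH/G]$ with the moduli stack on all of $\osM_{2,v}(T)$ rather than just on geometric points.
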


\begin{proof}
  For simplicity let us denote $\osM_{2,v} \otimes_{\bZ} S$ by $\sM$. By \autoref{lem:separated}, we only have to show that  $\sM$ is an Artin stack of finite type over $S$ with finite diagonal. For that we need  that $\sM$ has representable and finite diagonal, and there is
  a smooth surjection onto $\sM$ from a scheme of finite type over $S$. For any stack
  $\sX$ and a morphism from a scheme $T \to \sX \times_S \sX$ corresponding to $s,t
  \in \sX(T)$, the fiber product $\sX \times_{\sX \times_S \sX} T$ can be identified
  with $\Isom_T(s,t)$. However, the latter is known to be representable by a quasi-projective scheme over $T$ for polarized, flat projective families \cite[Chp I,Exc 1.10.2]{Kollar_Rational_curves_on_algebraic_varieties}, and it is quasi-finite in our case by \autoref{prop:finite_automorphism} and actually finite, since it satisfies the valuative criterion by \autoref{lem:separated}. 
  Hence we are left to 
  construct a cover $V$ of $\sM$ by a scheme such that $V \to \sM$ is formally
  smooth. The rest of the proof is devoted to this.

  By \cite[Theorem 1]{Hacon_Kovacs_On_the_boundedness_of_SLC_surfaces_of_general_type}
, stable surfaces with volume $v$ form a bounded family over $\bZ$. Hence there is an integer $q>0$  such that $qK_X$ is a very ample Cartier divisor. Using then \autoref{prop:K_X_vanishes}, by possibly increasing $q$, we can also assume that 
$H^i(X,jqK_X)=0$ for every integer $i,j>0$.  Let $p_1(n), \dots, p_s(n)$ be possible Hilbert-polynomials $\chi(X, nqK_X)$ for all $X \in \sM(k)$ such that $k = \overline{k}$. For each $p_t$ we construct a cover $V_t$, which is smooth over $\sM$. Then $V$ will be the disjoint union $\bigcup V_t$. Hence fix an integer $ 1 \leq t \leq s$. 

  Set $N:= p_t(1) -1$. Then, $\sH_0:=\mathfrak{Hilb}^{p_t}_{\bZ} (\bP^{N}_{\bZ})$ contains a few geometric points for every $X \in \sM(k)$ with $k = \overline{k}$ and $p_t(n)=\chi(X,nqK_X)$, given by $\iota_{|qK_X|}: X \hookrightarrow \bP^N_k$. We construct $V_t$, starting by $\sH_0$ and then constructing in several steps  schemes $\sH_i$  such that $\sH_{i+1}$ is a scheme over $\sH_i$ (although in most steps it will be just an open subset). Let $\sU_0 \subseteq \bP^N \times \sH_0$ be the universal family over $\sH_0$. Then $\sU_i$ will  denote $\sH_i \times_{\sH_0} \sU_0$ for each $i$. Note that $\sU_i$ comes equipped with a natural line bundle $\sU_i(1)$ very ample over $\sH_i$ pulled back from $\bP^N$.
 
Let $\sH_1 \subseteq \sH_0$ be
  the open subscheme corresponding to $X \subseteq \bP^N$, such that $H^i(X,
  \sO_X(j))=0$ for all integers $i>0$ and $j>0$. According to
  \cite[III.12.2.1]{Grothendieck_Elements_de_geometrie_algebrique_IV_III}, there is an open subscheme $\sH_2 \subseteq \sH_1$
  parametrizing the geometrically reduced equidimensional and $S_ 2$ varieties. Since small
  deformations of nodes are either nodes or regular points, we see that there is an
  open subscheme $\sH_3 \subseteq \sH_2$ parametrizing the demi-normal varieties
  (where reducedness and equidimensionality is included in demi-normality).  By  \cite[24]{Kollar_Hulls_and_Husks} there is a locally closed decomposition $\sH_4$ of $\sH_3$ such that for a map $T \to \sH_3$, $\sU_3 \times_{\sH_3} T \to T$ satisfies Koll\'ar's condition \autoref{eq:Kollar_condition} for $q$ if and only if $T$ factors through $\sH_4$. Let $\sH_5 \subseteq \sH_4$ be the open set over which $\omega_{\sU_5/\sH_5}^{[q]}$ is invertible. Then, by  \cite[25]{Kollar_Hulls_and_Husks}  there is a locally closed decomposition $\sH_6$ of $\sH_5$ such that for a map $T \to \sH_5$, $\sU_5 \times_{\sH_3} T \to T$ satisfies  the full Koll\'ar condition if and only if $T \to \sH_5$ factors through $\sH_6$.   Lastly, let $\sH_7$ denote $\Isom\left(\sO_{\sU_6}(1), \omega_{\sU_6/\sH_6}^{[q]} \right)$. According to \autoref{cor:slc_open} there is an open set $\sH_6$ where the fibers of the universal family are slc. Call this open set $V_t$. Let $g : U \to V_t$ be 
the universal family over $V_t$ and $\gamma: \sO_{U}(1) \to \omega_{U/V_t}^{[q]}$ the universal isomorphism. The family $g$ induces a morphism $V_t \to \sM$. We are left to show that this morphism is smooth.

  First note the following. If $f : X \to T \in \sM(T)$ such that $T$ is Noetherian, then
  \begin{enumerate}
  \item the sheaf $f_* \omega_{X/T}^{[q]}$ is locally free and compatible with arbitrary base-change, since $H^i\left(X_t, q K_{X_t} \right)=0$, and
  \item giving a map $\nu : T \to V_t$ and an isomorphism $\alpha$ between $f$
    and $g_T$ is equivalent to fixing a set of free generators $s_0, \dots, s_n \in
    f_* \omega_{X/T}^{[q]}$.
  \end{enumerate}
  Indeed, for the second statement, fixing such a generator set is equivalent to
  giving a closed embedding $\iota: X \to \bP^N_T$ over $T$ 
  together with an isomorphism $\zeta : \omega_{X/T}^{[q]} \to \iota^*
  \sO_{\bP^N_T}(1)$. 

  Now, we show that the map $V_t \to \sM$ is
  smooth. It is of finite type by construction, so we have to show that it is
  formally smooth. Let $\delta : (A', \mathfrak m') \twoheadrightarrow (A,\mathfrak
  m)$ be a surjection of Artinian local rings such that $\mathfrak m (\ker
  \delta) =0$.  Set $T:= \Spec A$ and $T':=\Spec A'$. According to
  \cite[IV.17.14.2]{Grothendieck_Elements_de_geometrie_algebrique_IV_IV}, we need to show that if there is a 2-commutative diagram
  of solid arrows as follows, then one can find a dashed arrow keeping the diagram
  2-commutative.
  \begin{equation*}
    \xymatrix{
      V \ar[d] & \ar[l] T \ar[d] \\
      \sM & \ar[l] T' \ar@{-->}[lu]
    }
  \end{equation*}
  In other words, given a family $f : X \to T' \in \sM(T')$, with an isomorphism $\beta$
  between $f_T$ and $g_T$ over $T$, we are
  supposed to prove that $\beta$ extends over $T'$. However, as explained
  above, $\beta$ corresponds to free generators of $\left(f_T\right)* \omega_{X_T/T}^{[q]}$,
  which can be lifted over $T'$ since $T \to T'$ is an infinitesimal extension of
  Artinian local schemes.
\end{proof}

\begin{corollary}
\label{cor:Keel_Mori}
Let $v>0$ be a rational number, let $k$ be an algebraically closed field of characteristic $p>5$.
Then, $\osM_{2,v} \otimes_{\bZ} k$ admits a coarse moduli space, which is a separated algebraic space of finite type over $k$.
 
\end{corollary}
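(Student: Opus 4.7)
The plan is to deduce this corollary directly from \autoref{thm:Artin_stack} via the Keel--Mori theorem. First I would invoke \autoref{thm:Artin_stack} with $S = k$ an algebraically closed field of characteristic $p > 5$, which asserts that $\osM_{2,v} \otimes_{\bZ} k$ is a separated Artin stack of finite type over $k$ with finite diagonal. Note that in the field case no hypothesis on property (I) is needed in \autoref{thm:Artin_stack}, so the assumptions of the corollary suffice.

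Next, I would apply the Keel--Mori theorem \cite{Keel_Mori_Quotients_by_groupoids} (or its modern formulation, e.g., \cite{Conrad_The_Keel_Mori_theorem_via_stacks}) which says that any Artin stack of finite type over a Noetherian base with finite inertia (equivalently, finite diagonal in the separated case) admits a coarse moduli space that is an algebraic space of finite type. Since our stack has finite diagonal by \autoref{thm:Artin_stack}, Keel--Mori produces the coarse moduli space $\oM_{2,v}$ as an algebraic space of finite type over $k$.

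Finally, separatedness of the coarse moduli space follows from separatedness of the stack: properness of the diagonal of $\osM_{2,v} \otimes_{\bZ} k$ (given by \autoref{thm:Artin_stack}) descends to properness of the diagonal of its coarse space, since the coarse moduli map is universally closed and surjective. Alternatively, one checks the valuative criterion of separatedness for $\oM_{2,v}$ directly via \autoref{lem:separated}, which guarantees uniqueness of limits of one-parameter families.

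There is no significant obstacle here since all the work has been done in the preceding sections: the content of the corollary is simply that the hypotheses of the Keel--Mori theorem are met, which is what \autoref{thm:Artin_stack} supplies in the equicharacteristic setting.
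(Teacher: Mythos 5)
Your proposal is correct and matches the paper's argument exactly: the paper's proof is simply the one-line observation that the corollary follows from \autoref{thm:Artin_stack} together with the Keel--Mori theorem \cite{Keel_Mori_Quotients_by_groupoids,Conrad_The_Keel_Mori_theorem_via_stacks}. Your write-up just spells out the same steps (finite diagonal from \autoref{thm:Artin_stack}, Keel--Mori for existence of the coarse space, and separatedness descending from the stack) in more detail.
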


\begin{proof}
This follows immediately from \cite{Keel_Mori_Quotients_by_groupoids,Conrad_The_Keel_Mori_theorem_via_stacks}, and \autoref{thm:Artin_stack}.
\end{proof}

\section{Ampleness}
\label{sec:ample}

Here we prove \autoref{thm:ampleness}. We start by recalling a version of ampleness lemma originating from \cite{Kollar_Projectivity_of_complete_moduli}, with little additions done in  \cite{Kovacs_Patakfalvi_Projectivity_of_the_moduli_space_of_stable_log_varieties_and_subadditvity_of_log_Kodaira_dimension}.

\begin{proposition} [{\scshape Ampleness Lemma}]
  \label{prop:ampleness}
  Let $k$ be an algebraically closed field (of any characteristic), $W$ be a nef vector bundle of rank $w$  on a proper algebraic space $T$ over $k$  with structure group $G \subseteq \GL(k,w)$  such that
\begin{enumerate}
 \item \label{itm:ampleness:normal} the
  closure of the image of $G$ in the projectivization $\bP(\Mat(k,w))$ of the space
  of $w \times w$ matrices is normal,
\item $G= \times_{i=1}^s \GL (k, v_i)$,
\item $W$ is the vector bundle associated to a semi-positive representation $\rho: G \to \GL(k,w)$ \cite[Def 3.1, Ex 3.2.iv]{Kollar_Projectivity_of_complete_moduli} and  nef vector bundles $V_i$ of rank  $v_i$.
\end{enumerate}
Furthermore, let $Q_j$ be vector bundles of rank $q_j$ on
  $Y$ admitting surjective homomorphisms $\alpha_j : W \rightarrow Q_j$
  for $j=0,\dots,n$. Let $Y(k) \to \prod_{j=0}^n \Gr(w,q_j)(k)/G(k)$ be the
  induced classifying map of sets. Assume that this map has finite fibers on a dense
  open set of $Y$.  Then
  $ \bigotimes_{j=0}^n  \det Q_j$
  is big. If the fibers of the classifying map are finite everywhere, then the above line bundle is ample. 
\end{proposition}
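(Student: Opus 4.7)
The plan is to adapt Koll\'ar's ampleness lemma \cite[3.9]{Kollar_Projectivity_of_complete_moduli}, as refined in \cite[Thm 5.1]{Kovacs_Patakfalvi_Projectivity_of_the_moduli_space_of_stable_log_varieties_and_subadditvity_of_log_Kodaira_dimension}. Both references prove essentially this statement in the case $n = 0$ (a single quotient $W \twoheadrightarrow Q$); the passage to general $n \geq 0$ is nearly formal, since one only replaces the single Grassmannian $\Gr(w, q)$ throughout by the product $\prod_{j=0}^n \Gr(w, q_j)$ and the Pl\"ucker polarization by the external tensor product $\boxtimes_{j=0}^n \sO_{\Gr(w,q_j)}(1)$.

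First I would assemble the classifying morphism to the quotient stack $\left[\, \prod_{j=0}^n \Gr(w, q_j) / G \,\right]$: the $G$-torsor underlying the associated-bundle construction of $W$ from $\rho$ and the $V_i$'s, together with the surjections $\alpha_j$, equivariantly determines a point of $\prod_j \Gr(w, q_j)$ over each point of that torsor, i.e., a morphism $T \to \left[\, \prod_j \Gr(w, q_j) / G \,\right]$. Condition \autoref{itm:ampleness:normal} is exactly what is exploited in \cite[3.9]{Kollar_Projectivity_of_complete_moduli} to obtain a projective GIT-style coarse quotient $Z$ of $\prod_j \Gr(w, q_j)$ by $G$ equipped with a genuinely ample line bundle $H$ whose pullback to $\prod_j \Gr(w, q_j)$ is a sufficiently divisible power of $\boxtimes_j \sO(1)$. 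Consequently, the pullback of $H$ to $T$ along the classifying map is a positive power of $\bigotimes_{j=0}^n \det Q_j$.

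Second, the semi-positivity of the representation $\rho$ together with the nefness of the $V_i$'s is needed to promote the classifying map, a priori only a rational map to $Z$, to an actual morphism $\phi : T \to Z$. This is the main technical input of the classical ampleness lemma: for $N \gg 0$ one produces, from the semi-positive representation structure and the nefness of the $V_i$'s, sufficiently many sections of $S^N W$-type sheaves that separate $G$-orbits everywhere on $T$. Given $\phi$, the hypothesis that the set-theoretic classifying map has finite fibers over a dense open set translates into generic finiteness of $\phi$; pullback of an ample line bundle by a generically finite morphism between proper algebraic spaces is big, whence $\bigotimes_j \det Q_j$ is big. If the classifying map has finite fibers everywhere, then $\phi$ is finite and finite pullback of ample is ample, yielding the ampleness addendum.

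The hard part is precisely the second step---verifying that the rational classifying map extends to a morphism to $Z$ and that the descent of the Pl\"ucker polarization actually produces an ample (rather than merely semi-ample) line bundle on $Z$. This requires an interplay between the normality hypothesis \autoref{itm:ampleness:normal} and the semi-positivity of $\rho$ exactly as carried out in \cite[3.9]{Kollar_Projectivity_of_complete_moduli} and \cite[Thm 5.1]{Kovacs_Patakfalvi_Projectivity_of_the_moduli_space_of_stable_log_varieties_and_subadditvity_of_log_Kodaira_dimension}; beyond replacing $\Gr(w,q)$ by the product of Grassmannians and tracking a few tensor powers in the Pl\"ucker/determinant identification, the modifications required for the present multi-quotient statement are purely notational.
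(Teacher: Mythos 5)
Your overall strategy --- reduce to a single quotient and then run the argument of \cite[3.9]{Kollar_Projectivity_of_complete_moduli} as refined in \cite[Thm 5.5]{Kovacs_Patakfalvi_Projectivity_of_the_moduli_space_of_stable_log_varieties_and_subadditvity_of_log_Kodaira_dimension} --- is the same as the paper's, and you are right that the passage from one quotient to several is essentially the reduction step of \cite[Lem 5.6]{Kovacs_Patakfalvi_Projectivity_of_the_moduli_space_of_stable_log_varieties_and_subadditvity_of_log_Kodaira_dimension}. But your claim that the remaining modifications are ``purely notational'' misses the one point where the characteristic-zero proofs you cite actually break, which is the reason \autoref{prop:ampleness} carries the hypotheses that $G$ is a product of general linear groups and that $W$ comes from a \emph{semi-positive} representation. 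In the last step of \cite[Thm 5.5]{Kovacs_Patakfalvi_Projectivity_of_the_moduli_space_of_stable_log_varieties_and_subadditvity_of_log_Kodaira_dimension} one must show that a certain $G$-subbundle of a large self-direct product of $W$ (the subsheaf cut out by a subrepresentation) is again nef. In characteristic $0$ this is done by using linear reductivity of $G$ to split the subrepresentation off as a direct summand of a nef bundle; in characteristic $p$ the group $\GL$ is reductive but not linearly reductive, so no such splitting exists and the step fails as you have set it up. The paper's proof replaces it by \cite[Prop 3.6.ii]{Kollar_Projectivity_of_complete_moduli}, which is precisely what the semi-positive-representation hypothesis is for; you instead assign the semi-positivity of $\rho$ to the extension of the rational classifying map, which is not its role.

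A secondary issue is your derivation of the ampleness addendum. Deducing ``finite fibers everywhere $\Rightarrow$ $\phi$ finite $\Rightarrow$ finite pullback of ample is ample'' presupposes that the set-theoretic classifying map to $\prod_{j}\Gr(w,q_j)(k)/G(k)$ is realized by an honest morphism $\phi$ to a projective quotient $Z$ whose $k$-points are exactly the $G(k)$-orbits; for a GIT-style quotient this is false in general (non-closed orbits are identified, and points of $T$ may land in the unstable locus), so finiteness of the set-theoretic fibers does not immediately give quasi-finiteness of $\phi$. The paper avoids this entirely: each $Q_j$ is a quotient of the nef bundle $W$, hence $\bigotimes_{j}\det Q_j$ is nef, and by the Nakai--Moishezon criterion its ampleness follows from the bigness of its restriction to every irreducible closed subvariety, which is just the bigness statement applied to the restricted data.
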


\begin{remark}
\label{rem:normal}
We use \autoref{prop:ampleness} only with $s=n=1$, $\rho$ the symmetric product representation $S^t (\_)$ for $t \neq p$ (which assumption is equivalent to requiring the induced map $S^t : \bP(\Mat(k,v_1)) \to \bP(\Mat(k,w))$ to be an immersion, which in turn implies that assumption \autoref{itm:ampleness:normal} of \autoref{prop:ampleness} is satisfied.). Note that $S^t(\_)$ is semi-positive according to \cite[Rem 3.2.i]{Kollar_Projectivity_of_complete_moduli}.
\end{remark}

\begin{proof}[Proof of \autoref{prop:ampleness}]
First, the case proving ampleness reduces to the case proving bigness. Indeed by the Nakai-Moishezon criterion for ampleness a nef line bundle is ample if its restriction to every irreducible subvariety is big.  Hence we may restrict our attention to the first statement proving bigness.

The proof then is almost verbatim the same as \cite[Thm 5.5]{Kovacs_Patakfalvi_Projectivity_of_the_moduli_space_of_stable_log_varieties_and_subadditvity_of_log_Kodaira_dimension} using also the second reduction step of \cite[Lem 5.6]{Kovacs_Patakfalvi_Projectivity_of_the_moduli_space_of_stable_log_varieties_and_subadditvity_of_log_Kodaira_dimension}. The only difference is  in the last step, when one has to prove that a $G$-subbundle of a big self-direct product of $W$ is also nef. In \cite[Thm 5.5]{Kovacs_Patakfalvi_Projectivity_of_the_moduli_space_of_stable_log_varieties_and_subadditvity_of_log_Kodaira_dimension} this is shown using reductivity of $G$. Instead of that here we use \cite[Prop 3.6.ii]{Kollar_Projectivity_of_complete_moduli}.
\end{proof}

\begin{proof}[Proof of \autoref{thm:ampleness}]
  Choose an $m\in\bZ$ satisfying the following conditions for every integer
  $i,j,d>0$:
  \begin{enumerate}[label=({\sf\Alph*})]
  \item\label{item:3} 
    $mK_{X/T} $ is Cartier,
  \item\label{item:4} 
    $\sL_d:= \sO_X(dmK_{X/T} )$ is $f$-very ample,
  \item\label{item:5} 
    $R^jf_*  \sL_d =0$, 
  \end{enumerate}
  These conditions imply that 

  \begin{enumerate}[resume,label=({\sf\Alph*})]
  \item\label{item:8} 
    $\bN\ni N:=h^0(\sL_1|_{X_t})-1$ is independent of $t\in T$, and in fact
  \item\label{item:9} 
    $f_* \sL_d$ is locally free and compatible with base-change.
  \end{enumerate}
  By possibly increasing $m$ we may also assume that
  \begin{enumerate}[resume,label=({\sf\Alph*})]
    \item\label{item:6}
    $f_* \sL_d$ is nef by \autoref{thm:relative_canonical_pushforward_nef},
  \item the multiplication map
    \begin{equation*}
      \hspace{2em} \sym^d(f_*\sL_1) \rightarrow
      f_*\sL_d \hspace{1em}
    \end{equation*}
     is surjective.
    \label{item:10}
  \end{enumerate}  
  We fix an $m$ satisfying the above requirements for the rest of the section and use 
  the global sections of $\sL_1|_{X_t}$ to embed $X_t$  into the fixed projective space $\mathbb P^N_{k}$ for every closed point
  $t\in T$. The ideal sheaf corresponding to this embedding will be denoted by
  $\sI_{X_t}$. As the embedding of $X_t$ is
  well-defined only up to the action of $\GL(N+1,k)$, the corresponding ideal
  sheaf is also well-defined only up to this action. Furthermore, in what follows we
  deal with only such properties of $X_y$ and $\sI_{X_y}$  that are invariant under the $\GL(N+1,k)$ action.

  So, finally, we choose a $d>0$ such that $p \nmid d$, and 
    
  \begin{enumerate}[resume,label=({\sf\Alph*})]
  \item \label{itm:defined_by_degree_d}
    for all $t \in T$, $X_t$ is defined by degree $d$
    equations.
  \end{enumerate}

  From now on we keep $d$ fixed with the above chosen value 
We make the following definitions:
  \begin{enumerate}[resume,label=({\sf\Alph*})]
  \item\label{item:12} 
    $W := \sym^d(f_* \sL_1)$, and
  \item\label{item:13} 
    $Q:= f_* \sL_d$.
  \end{enumerate}
  Our setup ensures that we have a natural surjective homomorphisms $\alpha:W \twoheadrightarrow Q$  and we may make the following identifications for all closed
  points $t\in T$ up to the above explained $\GL(N+1,k)$ action:
  $$
  \xymatrix@R0em{%
    W\otimes k(t) \ar@{<->}[r] & **[r] H^0 \left(\bP^N,
      \sO_{\bP^N\vphantom{|_{X_t}}}(d)
    \right)  \\
    Q\otimes k(t) \ar@{<->}[r] & **[r] H^0 \left(X_t,
      \sO_{X_t\vphantom{|_{X_t}}}(d)
    \right)  \\
    \ker \bigg[ W \otimes k(t) \to Q \otimes k(t)\bigg] \ar@{<->}[r] & **[r] H^0
    \left(\bP^N, \sI_{X_t\vphantom{|_{X_t}}}(d) \right) \\
}
  $$
  
  By the existence and quasi-projectivity of the Isom scheme for flat families of canonically polarized schemes, the assumption of the theorem guarantees that there is a non-empty open set $V \subseteq T$, such that for $t \in V$, there are only finitely many $t' \in T$, such that $X_t \cong X_{t'}$.
  Next we apply
  \autoref{prop:ampleness}  by setting $G:=\GL(N+1, k)$ (see \autoref{rem:normal})
  with the natural action on $W$. For this we have to prove that the restriction over $V$ of the
  classifying map of the morphisms $\alpha$  has finite fibers.
  Translating this required finiteness to geometric terms means that for a general
  $t \in V(k)$, there are only finitely many other general $t' \in
  T(k)$, such that for the fiber $X_{t'}$ the degree $d$ forms in the ideals of $X_t$
   can be taken by a single $\phi \in \GL(N+1,k)$ to the degree
  $d$ forms in the ideals of $X_{t'}$. However, if such a $\phi$
  exists, then $X_t \cong X_{t'}$ (by \ref{itm:defined_by_degree_d}), which happens only for finitely many $t'$ by the above choice of $V$.  
 
\end{proof}

\section{Projectivity}
\label{sec:proj}

%

\begin{proof}[Proofs of \autoref{thm:proper} and of \autoref{thm:proj}]
We prove \autoref{thm:proper} and 
points \autoref{itm:proj:ring} and \autoref{itm:proj:field}   of \autoref{thm:proj} at once. 
We unify our notation based on the following table. 

\tabcolsep=3pt
\renewcommand{\arraystretch}{1.3}

\vspace{2pt}

\noindent
\begin{tabular}{|c|c|c|c|c|}
\hline
notation & role & \autoref{thm:proper} & point \autoref{itm:proj:field} of \autoref{thm:proj} & point \autoref{itm:proj:ring} of \autoref{thm:proj} \\
\hline
$S$ & base scheme & $\Spec k$ & $\Spec k$ & $\Spec \bZ[1/30]$ \\
\hline
$\sM$ & our stack & $\osM_{2,v} \otimes_{\bZ} S$ & $\osM_{2,v} \otimes_{\bZ} S$ & $\osM_{2,v} \otimes_{\bZ} S$ \\
\hline
$\osM$ & proper stack & $\oM \times_{\oM_{2,v}} \osM_{2,v}$ & $\sM$ & $\sM$ \\
\hline
\end{tabular}

\vspace{2pt}

According to \autoref{thm:Artin_stack}, $\sM$ and hence also $\osM$ are  separated Artin stacks of finite type over $S$ with finite diagonal. According to \cite[Thm 2.7]{Edidin_Hassett_Kresch_Vistoli_Brauer_groups_and_quotient_stacks}, there is a scheme $g : T \to S$  and a finite surjective map $T \to \sM$ over $S$. Let $f : X \to T$ be the universal family. In the case of \autoref{thm:proper}, set $\oT:=\oM \times_{\oM_{2,v}} T$, and let $\of : \oX \to \oT$ be the base-change of $f$ over $T'$. In the other cases, set $\oT:=T$,  $\oX:=X$ and $\of:=f$.

Either by the properness assumption in the case of \autoref{thm:proper} or by the assumption (L) in the case of \autoref{thm:proj}, $\oT$ is proper over $S$. Hence, so is $\osM$. According to \cite{Keel_Mori_Quotients_by_groupoids,Conrad_The_Keel_Mori_theorem_via_stacks} both $\sM$ and $\osM$ admit coarse moduli spaces $M$ and $\osM$, respectively, over  $S$. Furthermore,   there is a finite induced morphism  $\tau : \oM \to M$, which is identity in the case of \autoref{thm:proj}, but it is possibly non-trivial, possibly not even a closed embedding, in the case of \autoref{thm:proper}. 

Consider then the line bundle $L=\det f_* \sO_X (q K_{X/T})$ for some $q$ divisible enough.  According to \cite{Rydh_descent_question}, some power  of $L$ descends to a line bundle $L_M$ on  $M$. In particular, if we prove that $\oL:=L_{\oT} \cong \det \of_* \sO_{\oX} \left( q K_{\oX/\oT} \right)$ is ample, over $S$, then so is $L_M$, and then also $L_{\oM}:= \tau^* L_M$. 

Hence, it is enough to prove that $\oL$ is ample over $S$. In the cases when $S$ is a field this is done in \autoref{thm:ampleness}. Hence, we may assume that we are in the situation of point \autoref{itm:proj:ring} of \autoref{thm:proj}. As in this case $L = \oL$, we show that $L$ is ample over $S$. 


Fix now an $m$ divisible enough, and consider $\det f_* \sO_X(dmK_{X/T})$. It is known (e.g.\cite[Thm 2.1]{Fine_Ross_A_note_on_positivity_of_the_CM_line_bundle}) that there are line bundles $\lambda_i$ ($i=0,\dots,3$), such that 
\begin{equation}
\label{eq:proj:Mumford_expansion}
\det f_* \sO_X(dmK_{X/T}) \cong \lambda_3^{{d \choose 3}} \otimes \lambda_2^{{d \choose 2}} \otimes \lambda_1^{d} \otimes \lambda_0.
\end{equation}
According to \cite[2.6]{Fine_Ross_A_note_on_positivity_of_the_CM_line_bundle}, $c_1 (\lambda_3)= f_* \left(K_{X/T}^3\right)$ (which denotes the $3$-rd self intersection as a cycle of $K_{X/T}$ followed by a cycle theoretic pushforward). In particular $\lambda_3$ agrees with the CM linebundle and hence it is ample over the generic point of $S$  \cite{Patakfalvi_Xu_Ampleness_of_the_CM_line_bundle_on_the_moduli_space_of_canonically_polarized_varieties}. Note that the results of \cite{Patakfalvi_Xu_Ampleness_of_the_CM_line_bundle_on_the_moduli_space_of_canonically_polarized_varieties} are stated over an algebraically closed field, but as ampleness is invariant under base-extension it also implies ampleness over $\bQ$. Furthermore, ampleness is an open property \cite[Cor 9.6.4]{Grothendieck_Elements_de_geometrie_algebrique_IV_III}, so there is an open set $U \subseteq S$ such that $\lambda_3$ is ample over $U$. In particular, we may find integers $d_i$ ($i=0,1,2$), such that $\lambda_3^{d_i} \otimes \lambda_i$ is also ample  over $U$ for $i=0,1,2$. Hence, using \autoref{eq:proj:Mumford_expansion}, $\det f_* \sO_X(dmK_{X/T})$ is ample over $U$ whenever
\begin{equation*}
0 \leq {d \choose 3} - d_2 {d \choose 2} - d_1 d - d_0 .
\end{equation*}
This holds for all $d$ big enough. Hence, by setting $q = dm$, $\det f_* \sO_X(qK_{X/T})$ is ample over $U$ for all $q$ divisible enough. 

As $S \setminus U$ contains finitely many points, by applying \cite[Cor 9.6.4]{Grothendieck_Elements_de_geometrie_algebrique_IV_III} again, it is enough to see that for all closed points $s \in S$, $\det f_* \sO_X(qK_{X/T})|_{T_s} \cong \det \left( f_s\right)_* \sO_{X_s}\left(qK_{X_s/T_s}\right)$ is also ample for $q$ divisible enough. However, this is exactly the statement of \autoref{thm:ampleness}.



\end{proof}

\bibliographystyle{skalpha}
\bibliography{includeNice}

\end{document}